\documentclass[a4paper, 12pt]{article}

\usepackage{amsmath} 
\usepackage{theorem}
\usepackage{amssymb}
\usepackage{amsfonts}
\usepackage{latexsym}
\usepackage{amscd}
\usepackage{diagramb}
\input GrCalc4.sty
\usepackage{graphicx}
\usepackage{enumerate}

\usepackage{pxfonts}

\usepackage{stmaryrd}

\DeclareMathAlphabet{\mathpzc}{OT1}{pzc}{m}{it}

\headsep0mm
\topmargin-1cm
\textheight24cm
\textwidth16cm
\oddsidemargin0cm
\evensidemargin0cm

\usepackage{xspace,colortbl}
\usepackage{color}


\definecolor{verde}{rgb}{0.,0.7,0.}
\definecolor{indigo}{rgb}{.18, .34, .78}
\definecolor{indigo1}{rgb}{.18, .24, .78}
\definecolor{indigo2}{rgb}{.18, .14, .78}
\definecolor{indigo3}{rgb}{.18, 0., .78}
\definecolor{rojo}{rgb}{1,0,0}

\definecolor{negro}{rgb}{0,0,0}
\definecolor{grey}{rgb}{0.5,0.5,0.5}

\definecolor{lila}{rgb}{.46, .16, .78}
\definecolor{lila1}{rgb}{.46, .16, .86}
\definecolor{lila2}{rgb}{.56, .16, .86}
	\definecolor{lila3}{rgb}{.63, .16, .78}
\definecolor{lila4}{rgb}{.7, .16, .78}
\definecolor{lila5}{rgb}{.78, .26, .78}
\definecolor{lila6}{rgb}{.6, 0., .78}

\theoremstyle{plain}
\theoremheaderfont{\normalfont\bfseries}

\newtheorem{thm}{Theorem}[section]

\newtheorem{lma}[thm]{Lemma}
\newtheorem{cor}[thm]{Corollary}
\newtheorem{defn}[thm]{Definition}

\newtheorem{prop}[thm]{Proposition}

\theorembodyfont{\rmfamily}

\newtheorem{rem}[thm]{Remark}
\newtheorem{ex}[thm]{Example}
\newcommand{\qed}{\hfill\quad\fbox{\rule[0mm]{0,0cm}{0,0mm}}  \par\bigskip}

\newcommand{\x}{\mbox{-}}

\newcommand{\s}{\hspace{0,06cm}}
\newcommand{\Cat}{\operatorname {Cat}}
\newcommand{\bEM}{{\rm bEM}}
\newcommand{\EM}{{\rm EM}}

\newcommand{\Mnd}{{\rm Mnd}}
\newcommand{\Comnd}{{\rm Comnd}}
\newcommand{\Bimnd}{{\rm Bimnd}}
\newcommand{\QB}{{\rm QB}}
\newcommand{\CQB}{{\rm CQB}}
\newcommand{\Tensor}{{\rm Tensor}}
\newcommand{\Fun}{{\rm Fun}}

\newcommand{\Del}{\boxtimes}
\newcommand{\comp}{\circ}
\newcommand{\iso}{\cong}

\newcommand{\C}{{\mathcal C}}
\newcommand{\Tau}{{\mathcal T}}
\newcommand{\M}{{\mathcal M}}
\newcommand{\D}{{\mathcal D}}
\newcommand{\F}{{\mathcal F}}

\newcommand{\N}{{\mathcal N}}
\newcommand{\A}{{\mathcal A}}
\newcommand{\B}{{\mathcal B}}

\newcommand{\X}{{\mathcal X}}
\newcommand{\Ll}{{\mathcal L}}

\newcommand{\U}{{\mathcal U}}
\newcommand{\YD}{{\mathcal YD}}

\newcommand{\crta}{\overline}

\newcommand{\Id}{\operatorname {Id}}
\newcommand{\id}{\operatorname {id}}

\newcommand{\Epsilon}{\varepsilon}


\def\K{{\mathcal K}}  

\def\u#1{\underline{#1}}

\newcommand{\Mod}{\operatorname{Mod}}
\newcommand{\Bimod}{\operatorname{Bimod}}

\newcommand{\cref}[1]{C.~\ref{c:#1}}

\newcommand{\lelabel}[1]{\label{le:#1}}
\newcommand{\leref}[1]{Lemma~\ref{le:#1}}
\newcommand{\eqlabel}[1]{\label{eq:#1}}
\newcommand{\equref}[1]{(\ref{eq:#1})}
\newcommand{\thlabel}[1]{\label{th:#1}}
\newcommand{\thref}[1]{Theorem~\ref{th:#1}}
\newcommand{\delabel}[1]{\label{de:#1}}
\newcommand{\deref}[1]{Definition~\ref{de:#1}}
\newcommand{\prlabel}[1]{\label{pr:#1}}
\newcommand{\prref}[1]{Proposition~\ref{pr:#1}}
\newcommand{\colabel}[1]{\label{co:#1}}
\newcommand{\coref}[1]{Corollary~\ref{co:#1}}

\newcommand{\rmlabel}[1]{\label{rm:#1}}
\newcommand{\rmref}[1]{Remark~\ref{rm:#1}}
\newcommand{\selabel}[1]{\label{se:#1}}
\newcommand{\seref}[1]{Section~\ref{se:#1}}
\newcommand{\sslabel}[1]{\label{ss:#1}}
\newcommand{\ssref}[1]{Subsection~\ref{ss:#1}}

\headheight 14.5pt

\begin{document}

\title{ A bicategorical approach \\ to actions of monoidal categories} 
\author{Bojana Femi\'c \vspace{6pt} \\
{\small Facultad de Ingenier\'ia, \vspace{-2pt}}\\
{\small  Universidad de la Rep\'ublica} \vspace{-2pt}\\
{\small  Julio Herrera y Reissig 565,} \vspace{-2pt}\\
{\small  11 300 Montevideo, Uruguay}}

\date{}

\maketitle

\begin{abstract}
We characterize in terms of bicategories actions of monoidal categories to representation categories of algebras. 
For that purpose we introduce cocycles in any 2-category $\K$ and the category of Tambara modules over a monad $B$ in $\K$.  
We show that in an appropriate setting the above action of categories is given by a 2-cocycle in the Eilenberg-Moore category 
for the monad $B$. Furthermore, we introduce (co)quasi-bimonads in $\K$ and their respective 2-categories. We show that 
the categories of Tambara (co)modules over a (co)quasi-bimonad in $\K$ are monoidal, and how the 2-cocycles in the 
Eilenberg-Moore category corresponding to their actions are related to the Sweedler's and Hausser-Nill 2-cocycles in $\K$. 
We define (strong) Yetter-Drinfel`d modules in $\K$ as 1-endocells of the 2-category $\Bimnd(\K)$ of bimonads in $\K$, 
which we introduced in a previous paper. We prove that the monoidal category of Tamabra strong Yetter-Drinfel`d modules in $\K$ 
acts on the category of relative modules in $\K$. 
Finally, we show how the above-mentioned results on actions of categories come from pseudofunctors between appropriate bicategories. 
Our results are 2-categorical generalizations of several results known in the literature. 

\bigbreak
{\em Mathematics Subject Classification (2010): 16T10, 16T25, 18D05, 18D10.}

\medskip

{\em Keywords: monoidal categories, action of categories, bicategories, 2-(co)monads, Eilenberg-Moore categories, quasi-bialgebras, Yetter-Drinfel`d modules}
\end{abstract}

\section{Introduction}

The purpose of this paper is to give a bicategorical interpretation of the actions of monoidal categories to representation categories of algebras by 
stating a general result in bicategorical terms. The main examples fitting this framework, and that we started with, are the following. On the one hand, 
the action of the category of comodules of a (coquasi-)bialgebra $H$ to the category ${}_A\M$ of modules over an $H$-module algebra $A$, 
which is provided by a crossed product on $A\# H$, 
from \cite{Sch} and \cite{Balan} in the coquasi case, and similarly on the other hand, the action of the category of modules of a (quasi-)bialgebra $H$ 
to the category ${}_A\M$, where $A$ is an $H$-comodule algebra. 

Crossed products were given a 2-categorical interpretation in \cite{LS} leading to the notions of a wreath, mixed wreath and their co- versions. 
In \cite{Femic5} studying the latter we formulated Sweedler's 2-cocycle, a 2-cocycle of the type of Hausser and Nill introduced in \cite{HN} 
and the dual versions of the two, all in a 2-categorical setting. The former comes packed in the data of a wreath, while the latter is packed in the data of a mixed wreath.  
In the same work we also introduced bimonads 
in 2-categories (not as opmonoidal monads as in \cite{McC, Moe}, but rather following the approach of \cite{Wisb} which suited better our purposes). 
All this presents a sufficient tool to express the above-mentioned actions of categories in bicategorical terms. (In this paper we do not study the dual statements, 
where ${}_A\M$ would be substituted by ${}^C\M$ being $C$ an $H$-(co)module coalgebra. They correspond to the cowreath and mixed cowreath 
constructions in the setting we explained above.)

Let $\K$ denote a 2-category and $\Mnd(\K)$ the 2-category of monads in $\K$, \cite{St1}. In \cite{Femic5} we introduced modules over monads and comodules over comonads in $\K$, as certain 1-cells. 
In the present paper we introduce {\em Tambara modules} over a monad $B$ in $\K$ as $B$-modules which are simultaneously objects in $\Mnd(\K)(B)$, the strict monoidal category 
of 1-endocells in $\Mnd(\K)$ over the 0-cell $B$, with a suitable compatibility condition. Moreover, we introduce quasi-bimonads and coquasi-bimonads in $\K$ 
and their respective 2-categories. We prove in \thref{quasi-bim monoidal} 
that the category of Tambara modules over a quasi-bimonad is non-strict monoidal. In a similar fashion we define Tambara comodules over a comonad $F$ (which are simultaneously objects in $\Mnd(\K)(B)$!) 
and by sort of duality we have that the category of Tambara comodules over a coquasi-bimonad is monoidal. The converse statements - if the category of Tambara (co)modules is monoidal that 
$F$ is a (co)quasi-bimonad - hold true under the assumption that the 1-cells of $\K$ posses elements. This is fulfilled for example in 
the 2-category $\Tensor$ of tensor categories, whose objects are tensor categories and given two such objects $\C$ and $\D$, the category $\Tensor(\C,\D):=\C\x\D\x\Bimod$ is the category of 
$C\x\D$-bimodule categories. A (co)quasi-bimonad in $\Tensor$ is a structure 
involving coring categories which we introduced in \cite{Femic3}.

The first main result of the present paper is \thref{main}. To formulate the statement we introduce the notion of 2- and 3-cocycles in any 2-category 
$\K$ as certain 2-cells. We consider a monad $B$ in $\K$ and a monoidal category $\C$ such that there is a quasi-monoidal functor $\F: \C\to\Mnd(\K)(B)$
that factors through a faithful quasi-monoidal 
functor $\U: \C\to\Mnd(\K)(B)$. We prove that there is an action of $\C$ on the category of $B$-modules in $\K$ if and only 
if there is an invertible normalized 2-cocycle over the 0-cell $B$ in the Eilenberg-Moore category $\EM^M(\K)$ for monads in $\K$. Specifying 
this result to the above-mentioned monoidal categories of Tambara (co)modules over a (co)quasi-bimonad $F$ in $\K$, we characterize in 
\prref{Sch case} and \prref{Martin case} the corresponding actions of categories. 
They yield the existence of what we call Sweedler's, respectively Hausser-Nill, Hopf datum in $\K$. 
Our result on the action of categories in the case of quasi-bimonads in $\K$ 
is a 2-categorical generalization of 
what Hausser and Nill proved in \cite[Section 9]{HN} in the setting of modules over a commutative ring. 

\medskip

When we introduced bimonads in 2-categories in \cite{Femic5} and the corresponding 2-category $\Bimnd(\K)$ of bimonads, we observed that the 
endomorphism 1-cells in $\Bimnd(\K)$ are a 2-categorical version of Yetter-Drinfel`d modules. In the present paper we call them {\em strong 
Yetter-Drinfel`d modules in $\K$}. We also consider such Yetter-Drinfel`d modules over a 0-cell {\em i.e.} bimonad $F$ which simultaneously are 
Tambara modules over a monad $B$. They form a monoidal category. We prove in \thref{YD action} that this monoidal category acts on the category of {\em relative 
$(F,B)$-modules in $\K$}, which we also introduce in the present paper. 

On the other hand, we introduce the 2-category $\tau\x\Bimnd(\K)$ of $\tau$-bimonads in $\K$ and show that there is an embedding 2-functor 
$\tau\x\Bimnd(\K)\to\Bimnd(\K)$. 
Fixing a 0-cell in $\tau\x\Bimnd(\K)$ yields a monoidal embedding of categories $\tau\x\Bimnd(\K)(F)\to\Bimnd(\K)(F)$. When $\K$ is induced by 
the category of vector spaces over a filed $k$, the subcategory in this embedding is the category of Yetter-Drinfel`d modules over $k$. 
For this reason the objects of $\tau\x\Bimnd(\K)(F)$ we call {\em classical  Yetter-Drinfel`d modules in $\K$}. In particular, 
classical Yetter-Drinfel`d modules in $\K$ (including those over a field) are strong. We prove that the corresponding 
monoidal category of ``Tambara classical Yetter-Drinfel`d modules'' is a monoidal subcategory of the category of ``Tambara strong Yetter-Drinfel`d modules''. 
Consequently, the former category acts on the category of relative $(F,B)$-modules in $\K$. This generalizes \cite[Theorem 2.3]{RL} to the 2-categorical setting.

On the other hand, it is well-known that a category action $\C\times\M\to\M$ of a monoidal category $\C$ corresponds (bijectively) to a monoidal functor $\C\to\Fun(\M, \M)$ 
to the monoidal category of endofunctors on $\M$. Moreover, given a bi/2-category $\K$ and a 0-cell $\A$ in $\K$, one has that 1-endocells on $\A$ form 
a (strict) monoidal category $\K(\A,\A)$. Then giving a pseudofunctor $\Tau: \K\to\Cat$, to the 2-category of categories, 
it induces a monoidal functor $\Tau_\A: \K(\A,\A)\to 
\Fun(\Tau(\A), \Tau(\A))$. We introduce new bi/2-categories and construct pseudofunctors from these to $\Cat$. The rest of our main results are 
\thref{pseudo bimonad}, \thref{pseudo bicat} and \prref{pseudo Martin}, in which we recover all previously obtained results on actions of monoidal categories. 

\medskip

As for the organization of the paper, we start by introducing 2- and 3-cocycles in $\K$, Sweedler's and Hausser-Nill Hopf datum in $\K$, the category of Tambara modules and 
show when it is monoidal. In Section 3 we define quasi-bimonads and coquasi-bimonads and their respective 2-categories. We prove in \thref{quasi-bim monoidal} that the 
category of Tambara modules over a quasi-bimonad in $\K$ is non-strict monoidal. In Section 4 we prove \thref{main} which studies the action of a monoidal category, from which there is a 
forgetful functor to $\Mnd(\K)(B)$ for a monad $B$ in $\K$, to the category of left $B$-modules in $\K$. Section 5 and 6 are dedicated to specializing the latter Theorem to 
the categories of Tambara modules over a (co)quasi-bimonad $F$ so that $B$ is an $F$-(co)module monad. \seref{Yetter} section studies Yetter-Drinfel`d modules in $\K$. Here is where we prove the 
action of the monoidal category of Tambara strong Yetter-Drinfel`d modules to the category of relative $(F,B)$-modules in $\K$. In the last section we construct pseudofunctors from 
certain bi/2-categories to $\Cat$ recovering the previously obtained results on actions of categories.

\section{Notation, terminology and preliminary results}

We assume the reader has basic knowledge of 2-categories, (co)monads, (co)wreaths, Eilenberg-Moore categories for 2-categories and actions of 
monoidal categories. For reference we recommend \cite{Be, Bo, Neu, St1, LS, EGNObook}. 
Throughout $\K$ will denote a 2-category, the horizontal composition of 2-cells we will denote by $\times$ and the vertical one by $\comp$. 
The Eilenberg-Moore category with respect to monads we will denote by $\EM^M(\K)$ and the one for comonads by $\EM^C(\K)$. 

In \cite[Definition 2.3]{Femic5} we defined modules over a monad 
and comodules over a comonad. 
As a matter of fact, such definition of modules appears in \cite{LS} under the name ``generalized t-algebra with domain $\X$ (in the definition below $t=T, \X=\B$). 
We recall here left modules and right comodules, the rest of the structures is defined analogously. 

\begin{defn} 
Let $(\A, T, \mu, \eta)$ be a monad and $(\A, D, \Delta, \Epsilon)$ a comonad in $\K$.
\begin{enumerate}[(a)]
\item A 1-cell $F:\B\to\A$ in $\K$ is called a {\em left $T$-module} if there is a 2-cell $\nu: TF\to F$ such that $\nu(\mu\times\Id_F)=\nu(\Id_T\times\nu)$
and $\nu(\eta\times\Id_F)=\Id_F$ holds.
\item A 1-cell $F:\A\to\B$ in $\K$ is called a {\em right $D$-comodule} if there is a 2-cell $\rho: F\to FD$ such that $(\Id_F\times\Delta)\rho=(\rho\times\Id_D)\rho$
and $(\Id_F\times\Epsilon)\rho=\Id_F$ holds.
\end{enumerate}
\end{defn}

As the composition of composable 1-cells in $\K$ gives a monoidal structure on such 1-cells and the corresponding 2-cells, we will use freely string diagram notation in our computations. 
Multiplication and unit of a monad, commultiplication and counit of a comonad, left action and coaction and right coaction we write respectively: 
$$
\gbeg{2}{1}
\gmu \gnl
\gend \qquad 
\gbeg{1}{1}
\gu{1} \gnl
\gend  \qquad 
\gbeg{2}{1}
\gcmu \gnl
\gend  \qquad 
\gbeg{1}{1}
\gcu{1} \gnl
\gend  \qquad 
\gbeg{2}{1}
\glm \gnl
\gend  \qquad 
\gbeg{1}{1}
\glcm \gnl
\gend  \qquad 
\gbeg{1}{1}
\grcm \gnl
\gend\quad .
$$

Let $\Mnd(\K)$ denote the 2-category of monads and $\Comnd(\K)$ of comonads. 
We will use 1-cells $(X, \tau)$ in the four 2-categories: $\Mnd(\K), \Mnd(\K^{op}), \Comnd(\K)$ and $\Comnd(\K^{op})$, where 
$\K^{op}$ denotes the 2-category which differs from $\K$ in that the 1-cells appear in reversed order (the diagrams there are left-right symmetric to those for $\K$), 
and even we will deal with pairs $(B, \tau)$ which will simultaneously be 1-cells in all the four 2-categories. For this reason, 
to simplify the formulations we will often say for the distributive law $\tau$ that it is: ``left monadic, right monadic, left comonadic, right comonadic'', respectively. 
Although it has nothing to do with (co)monadic adjunctions, this terminology we find more concise than to say that $\tau$ is a distributive law 
``with respect to the (co)monadic structure in the left/right coordinate'', for example. Similarly, for a 2-cell $\zeta$ in $\K$ which is a 2-cell in some of, or in various of the above four 
2-categories, we will often say ``$\tau$ is natural with respect to $\zeta$''. 

At last let us say that we are going to use the term ``Yang-Baxter equation'' in a more general form. Namely, we will use it for an identity between 2-cells acting on three a priori 
different 1-cells in $\K$, resembling the Rademeister move III.

\bigskip

Now that we fixed notation and terminology, we proceed to some new definitions and first results.

\begin{defn} \delabel{2-coc in K}
Let $\rho$ be an operator that to any pair of composable 1-cells $Y,Z$ 
assigns a 2-cell $\rho_{Y,Z}: YZ\to YZ$ in $\K$, such that $(\zeta\times\xi)\comp\rho_{Y,Z}=\rho_{Y',Z'}\comp(\zeta\times\xi)$ for every pair of 2-cells 
$\zeta: Y\to Y', \xi: Z\to Z'$. Then $\rho$ is called a {\em 2-cocycle in $\K$} if the following holds: 
\begin{equation} \eqlabel{inv 2-coc}
(\Id_X\times\rho_{Y,Z})\comp \rho_{X,YZ}=(\rho_{X,Y}\times\Id_Z)\comp \rho_{XY,Z}.
\end{equation}
for every triple of composable 1-cells $X: \C\to\D, Y: \B\to\C, Z: \A\to \B$ in $K$. Moreover, $\rho$ is {\em a normalized 2-cocycle} if 
$\rho_{Id_{\A}, X}=\rho_{X,Id_{\A}}=id_X$ for every 1-cell $X: \A\to\A$. 
\end{defn}

Differently stated, a 2-cocycle is a family 
$$\rho=(\rho_{Y,Z}: YZ\to YZ \hspace{0,2cm}\vert\hspace{0,2cm} Y,Z \in\K)$$
of natural morphisms in the product category $\K(\B, \C)\times\K(\A, \B)$, one for each object $(Y,Z)\in\K(\B, \C)\times\K(\A, \B)$, 
satisfying condition \equref{inv 2-coc}.

If a 2-cocycle $\rho$ is invertible, being his inverse the operator $\rho^{-1}_{V,W}: VW\to VW$, then observe that it fulfills the identity: 
\begin{equation} \eqlabel{2-coc}
\rho_{X,YZ}^{-1}\comp(\Id_X\times\rho_{Y,Z}^{-1})=
\rho_{XY,Z}^{-1}\comp(\rho_{X,Y}^{-1}\times\Id_Z)
\end{equation}
The above definition can be generalized to any $n$-cocycle in the obvious way, we write out the 3-cocycle condition, the rest is not of our interest in this moment. 
Given three composable 1-cells $X,Y,Z$ an operator $\rho: XYZ\to XYZ$ natural in the three components is a {\em 3-cocycle in $\K$} if: 
\begin{equation} \eqlabel{3-coc}
(\Id_X\times\rho_{Y,Z,W})\comp \rho_{X,YZ,W}\comp(\rho_{X,Y,Z}\times\Id_W)=\rho_{X,Y,ZW}\comp\rho_{XY,Z,W}
\end{equation}
holds, where $W$ is a fourth 1-cell which is composable with $Z$. We say that $\rho$ is {\em a normalized 3-cocycle} if 
$\rho_{Id_{\D}, X,Y}=\rho_{X,Id_{\C},Y}=\rho_{X,Y, Id_{\B}}=id_{XY}$ for every 1-cells $X,Y$ as above. 

\medskip

For any 0-cell $\A$ in $\K$ there is a strict monoidal 1-category $\K(\A)$ whose objects are 1-cells $X:\A\to\A$ and 
morphisms 2-cells $\zeta: X\to Y$ in $\K$. The objects of $\K(\A)$ are composable 1-cells in $\K$. If the monoidal category 
$\K(\A)$ happens to be a tensor category (most importantly abelian), then the above definition of a 2-cocycle recovers that of 
2-cocycles in the Yetter cohomology, called by its introduction in \cite{Y1}, but it was independantly introduced also in \cite{Dav}.

\medskip 


In \cite[Definition 4.5]{Femic6} we introduced monad Hopf data. There we required that the 2-cells $\psi_{B,F}$ 
and $\mu_M$ have specific forms. However, a monad Hopf datum can be considered in a more general setting, we resume it here. 
Consider a wreath $F$ around $B$ given by $(B,F,\psi, \mu_M, \eta_M)$ with the associated 7 axioms. Suppose there are 2-cells 
$\Epsilon_B:=
\gbeg{1}{2}
\got{1}{B} \gnl
\gcu{1}  \gnl
\gend, 
\Epsilon_F:=
\gbeg{1}{2}
\got{1}{F} \gnl
\gcu{1}  \gnl
\gend$ so that 
$\gbeg{2}{3}
\got{1}{B} \got{1}{B} \gnl
\gmu \gnl
\gvac{1} \hspace{-0,22cm} \gcu{1}  \gnl
\gend=
\gbeg{2}{3}
\got{1}{B} \got{1}{B} \gnl
\gcl{1} \gcl{1} \gnl
\gcu{1} \gcu{1} \gnl
\gend$. When you apply $\Epsilon_B$ and independantly $\Epsilon_F$ to the 7 axioms of the wreath, you get 14 new axioms. 
The obtained data, together with $\Epsilon_B, \Epsilon_F$, where we only suppose that $B$ is a monad, is a {\em monad Hopf datum}. Though, if we lack of the 2-cell $\Epsilon_B$ 
and we apply only $\Epsilon_F$ to the 7 axioms of the wreath $(B,F,\psi, \mu_M, \eta_M)$, we obtain the following 7 axioms: \vspace{-0,5cm} 
 \begin{center} \hspace{-1,5cm} 
\begin{tabular}{p{7.2cm}p{0cm}p{6cm}}
\begin{equation}\eqlabel{F mod alg}
\gbeg{3}{5}
\got{1}{B}\got{1}{B}\got{1}{F}\gnl
\gcl{1} \glmptb \gnot{\hspace{-0,34cm}\psi} \grmptb \gnl
\grm \gcl{1} \gnl
\gwmu{3} \gnl
\gob{3}{B}
\gend=
\gbeg{3}{5}
\got{1}{B}\got{1}{B}\got{1}{F}\gnl
\gmu \gcn{1}{1}{1}{0} \gnl
\gvac{1} \hspace{-0,32cm} \grm \gnl
\gvac{1} \gcl{1} \gnl
\gvac{1} \gob{1}{B}
\gend
\end{equation} & & 
\begin{equation}\eqlabel{F mod alg unit}
\gbeg{2}{4}
\got{3}{F} \gnl
\gu{1} \gcl{1} \gnl
\grm \gnl
\gob{1}{B}
\gend=
\gbeg{2}{4}
\got{1}{F} \gnl
\gcu{1} \gnl
\gu{1} \gnl
\gob{1}{B}
\gend
\end{equation}
\end{tabular}
\end{center}
\vspace{-0,6cm}
$$ \textnormal{ \footnotesize  module monad}  \hspace{5cm}  \textnormal{\footnotesize module monad unity} $$ \vspace{-0,7cm}

\pagebreak

\begin{center} 
\begin{tabular} {p{6cm}p{1cm}p{6cm}} 
\begin{equation} \eqlabel{weak action} 
\gbeg{3}{6}
\got{1}{B} \got{1}{F} \got{1}{F} \gnl
\glmptb \gnot{\hspace{-0,34cm}\psi} \grmptb \gcl{1} \gnl
\gcl{1} \glmptb \gnot{\hspace{-0,34cm}\psi} \grmptb \gnl
\glmptb \gnot{\hspace{-0,34cm}\sigma} \grmpt \gcl{1} \gnl
\gwmu{3} \gnl
\gob{3}{B}
\gend=
\gbeg{3}{5}
\got{1}{B} \got{1}{F} \got{1}{F}\gnl
\gcl{1} \glmptb \gnot{\hspace{-0,34cm}\mu_M} \grmptb \gnl
\grm \gcl{1} \gnl
\gwmu{3} \gnl
\gob{3}{B}
\gend
\end{equation} & & 
\begin{equation} \eqlabel{weak action unity} 
\gbeg{3}{4}
\got{1}{} \got{3}{B}\gnl
\glmpb \gnot{\hspace{-0,34cm}\eta_M} \grmpb \gcl{1} \gnl
\gcu{1} \gmu \gnl
\gob{4}{B}
\gend=
\gbeg{3}{5}
\got{1}{B}\gnl
\gcl{1} \glmpb \gnot{\hspace{-0,34cm}\eta_M} \grmpb \gnl
\grm \gcl{1} \gnl
\gwmu{3} \gnl
\gob{3}{B}
\gend
\end{equation} 
\end{tabular}
\end{center}
\vspace{-0,6cm}
$$ \textnormal{ \footnotesize twisted action}  \hspace{5,5cm}  \textnormal{\footnotesize twisted action unity} $$ \vspace{-0,7cm}

\begin{center} \hspace{4cm} 
\begin{tabular} {p{6cm}p{1cm}p{6.8cm}} 
\begin{equation}\eqlabel{2-cocycle condition}
\gbeg{3}{6}
\got{1}{F} \got{1}{F} \got{1}{F}\gnl
\glmpb \gnot{\beta} \gcmpb \grmptb \gnl
\gcl{1} \glmptb \gnot{\hspace{-0,34cm}\mu_M} \grmptb \gnl
\glmpt \gnot{\hspace{-0,34cm}\sigma} \grmptb \gcl{1} \gnl
\gvac{1} \gmu \gnl
\gob{4}{B}
\gend=
\gbeg{3}{6}
\got{1}{F} \got{1}{F} \got{1}{F} \gnl
\glmptb \gnot{\hspace{-0,34cm}\mu_M} \grmptb \gcl{1} \gnl
\gcl{1} \glmptb \gnot{\hspace{-0,34cm}\psi} \grmptb \gnl
\glmpt \gnot{\hspace{-0,34cm}\sigma} \grmptb \gcl{1} \gnl
\gvac{1} \gmu \gnl
\gob{4}{B}
\gend
\end{equation} & & 
\begin{equation}\eqlabel{normalized 2-cocycle}
\gbeg{3}{6}
\got{1}{} \got{3}{F}\gnl
\glmpb \gnot{\hspace{-0,34cm}\eta_M} \grmpb \gcl{1} \gnl
\gcl{1} \glmptb \gnot{\hspace{-0,34cm}\psi} \grmptb \gnl
\glmpt \gnot{\hspace{-0,34cm}\sigma} \grmptb \gcl{1} \gnl
\gvac{1} \gmu \gnl
\gob{4}{B}
\gend=
\gbeg{1}{4}
\got{1}{F}\gnl
\gcu{1} \gnl
\gu{1} \gnl
\gob{1}{B}
\gend=
\gbeg{3}{5}
\got{1}{F}\gnl
\gcl{1} \glmpb \gnot{\hspace{-0,34cm}\eta_M} \grmpb \gnl
\glmpt \gnot{\hspace{-0,34cm}\sigma} \grmptb \gcl{1} \gnl
\gvac{1} \gmu \gnl
\gob{4}{B}
\gend
\end{equation} 
\end{tabular}
\end{center} \vspace{-0,5cm}
$$ \textnormal{ \footnotesize 2-cocycle condition}  \hspace{5,5cm}  \textnormal{\footnotesize normalized 2-cocycle} $$ \vspace{-0,7cm}
Here 
$\gbeg{2}{3}
\got{1}{B} \got{1}{F} \gnl
\grm \gnl
\gob{1}{B} 
\gend:=
\gbeg{2}{4}
\got{1}{B} \got{1}{F} \gnl
\glmptb \gnot{\hspace{-0,34cm}\psi_{B,F}} \grmptb \gnl
\gcu{1} \gcl{1} \gnl
\gob{3}{B} 
\gend, 
\sigma:=
\gbeg{2}{4}
\got{1}{F} \got{1}{F} \gnl
\glmptb \gnot{\hspace{-0,34cm}\mu_M} \grmptb \gnl
\gcu{1} \gcl{1} \gnl
\gob{3}{B} 
\gend$ and $\beta: FFF\to FFF$ is a 3-cocycle on $FFF$ in $\K$. The 2-cell \vspace{0,7cm}

\noindent $\beta$ in \equref{2-cocycle condition} is an identity in a wreath. In the context of this paper it will appear non-trivial in \seref{rep coq}. 
In \cite{Femic5, Femic6} we showed that the upper data (where $\beta$ is identity) determines Sweedler's 
crossed product in a 2-categorical setting. To the above 2-cell $\sigma$ we will refer to as to Sweedler's 2-cocycle in $\K$. 
For this reason, the data $(B,F,\psi,\mu_M,\eta_M,\Epsilon_F, \beta)$, where $B$ is a monad and $F$ a 1-cell, both over a 0-cell $\A$, so that 
\equref{F mod alg}--\equref{normalized 2-cocycle} hold, we will call {\em Sweedler's Hopf datum}. 
The relation between 2-cocycles in \equref{2-cocycle condition} and \equref{2-coc} will be clarified in \leref{relation Sweedler cocycle}.

\medskip

The name ``module monad'' in \equref{F mod alg}--\equref{F mod alg unit} in the literature is usually termed {\em $F$ measures $B$}. A priori, $F$ is not a monad. 
Observe that if the 2-cocycle $\sigma$ is trivial, that is if $\sigma=\eta_B(\Epsilon_F\times\Epsilon_F)$, 
\equref{weak action}--\equref{weak action unity} state that $B$ is a proper right $F$-module, 
where a priori non-associative product on $F$ is given by 
$\gbeg{2}{3}
\got{1}{F} \got{1}{F} \gnl
\gmu \gnl
\gob{2}{F} 
\gend:=
\gbeg{2}{4}
\got{1}{F} \got{1}{F} \gnl
\glmptb \gnot{\hspace{-0,34cm}\mu_M} \grmptb \gnl
\gcl{1} \gcu{1} \gnl
\gob{1}{F} 
\gend$.

\bigskip

In a similar fashion as we introduced Sweedler's Hopf datum, we do the folowing. Consider a mixed wreath 
$(B,F, \psi, \Delta_M, \Epsilon_M)$, that is, a cowreath $F$ around a monad $B$, and assume there is a 
2-cell $\eta_F:=
\gbeg{1}{3} \gnl
\gu{1} \gnl
\gob{1}{F} 
\gend$. Apply $\eta_F$ to the 7 axioms of the mixed wreath to obtain: 

 \begin{center} \hspace{-1,5cm} 
\begin{tabular}{p{7.2cm}p{0cm}p{6cm}}
\begin{equation}\eqlabel{F comod alg}
\gbeg{3}{5}
\got{1}{B}\got{3}{B}\gnl
\gcl{1} \glcm \gnl
\glmptb \gnot{\hspace{-0,34cm}\psi} \grmptb \gcl{1} \gnl
\gcl{1} \gmu \gnl
\gob{1}{F} \gob{2}{B}
\gend=
\gbeg{3}{5}
\got{1}{\hspace{0,22cm}B}\got{1}{\hspace{0,22cm}B}\gnl
\gvac{1} \hspace{-0,34cm} \gmu \gnl
\gvac{1} \hspace{-0,22cm} \glcm \gnl
\gvac{1} \gcl{1} \gcl{1} \gnl
\gvac{1} \gob{1}{F} \gob{1}{B}
\gend
\end{equation} & &
\begin{equation}\eqlabel{F comod alg unit}
\gbeg{2}{4}
\got{3}{} \gnl
\gvac{1} \gu{1} \gnl
\glcm \gnl
\gob{1}{F} \gob{1}{B}
\gend=
\gbeg{2}{4}
\got{1}{} \gnl
\gu{1} \gu{1} \gnl
\gcl{1} \gcl{1} \gnl
\gob{1}{F} \gob{1}{B}
\gend
\end{equation}

\\  {\hspace{2cm} \footnotesize comodule monad} & &  { \hspace{1cm} \footnotesize comodule monad unity} \\
\end{tabular}
\end{center} 

\begin{center} \hspace{-1,4cm} 
\begin{tabular} {p{7cm}p{0.6cm}p{6cm}} 
\begin{equation} \eqlabel{quasi coaction} 
\gbeg{4}{5}
\gvac{3} \got{1}{B} \gnl
\gvac{2} \glcm \gnl
\glmpb \gnot{\Delta_M} \gcmpb \grmptb \gcl{1} \gnl
\gcl{1} \gcl{1} \gmu \gnl
\gob{1}{F} \gob{1}{F} \gob{2}{B}
\gend=
\gbeg{3}{6}
\got{1}{B} \gnl
\gcl{1} \gcn{1}{1}{2}{1} \gelt{\s\Phi_{\lambda}} \gcn{1}{1}{0}{1} \gnl
\glmptb \gnot{\hspace{-0,34cm}\psi} \grmptb \gcl{1} \gcl{2} \gnl
\gcl{1} \glmptb \gnot{\hspace{-0,34cm}\psi} \grmptb \gnl
\gcl{1} \gcl{1} \gmu \gnl
\gob{1}{F} \gob{1}{F} \gob{2}{B}
\gend
\end{equation} & &
\begin{equation} \eqlabel{quasi coaction counity} 
\gbeg{2}{5}
\got{3}{B} \gnl
\glcm \gnl
\gbmp{\Epsilon_M} \gcl{1} \gnl
\gmu \gnl
\gob{2}{B} 
\gend=
\gbeg{2}{5}
\got{1}{B} \gnl
\gcl{1} \gu{1} \gnl
\gcl{1} \gbmp{\Epsilon_M} \gnl
\gmu \gnl
\gob{2}{B} 
\gend
\end{equation} 
\end{tabular}
\end{center} \vspace{-0,5cm}
$$ \textnormal{ \footnotesize quasi coaction}  \hspace{5,5cm}  \textnormal{\footnotesize quasi coaction counity} $$ \vspace{-0,7cm}

\begin{center} 
\begin{tabular}{p{7.2cm}p{1cm}p{6.8cm}}
\begin{equation} \eqlabel{3-cocycle cond fi-lambda} 
\gbeg{5}{5}
\gvac{2} \gcn{1}{1}{2}{1} \gelt{\s\Phi_{\lambda}} \gcn{1}{1}{0}{1} \gnl
\glmpb \gnot{\Delta_M} \gcmpb \grmptb \gcl{1} \gcl{1} \gnl
\gcl{1} \gcl{1}  \glmptb \gnot{\hspace{-0,34cm}\psi} \grmptb \gcl{1} \gnl
\glmptb \gnot{\beta} \gcmptb \grmptb \gmu \gnl
\gob{1}{F} \gob{1}{F} \gob{1}{F} \gob{2}{B} \gnl
\gend=
\gbeg{5}{5}
\gcn{1}{1}{2}{1} \gelt{\s\Phi_{\lambda}} \gcn{2}{2}{0}{5}  \gnl
\gcl{1} \gcl{1} \gnl
\gcl{1} \glmptb \gnot{\Delta_M} \gcmpb \grmpb \gcl{1} \gnl
\gcl{1} \gcl{1} \gcl{1} \gmu \gnl
\gob{1}{F} \gob{1}{F} \gob{1}{F} \gob{2}{B} \gnl
\gend
\end{equation} &  &
\begin{equation}\eqlabel{normalized 3-cocycle fi-lambda}
\gbeg{3}{5}
\gcn{1}{1}{2}{1} \gelt{\s\Phi_{\lambda}} \gcn{1}{1}{0}{1} \gnl 
\gbmp{\Epsilon_M} \gcl{1} \gcl{2} \gnl
\glmptb \gnot{\hspace{-0,34cm}\psi} \grmptb \gnl
\gcl{1} \gmu \gnl
\gob{1}{F} \gob{2}{B} \gnl
\gend=
\gbeg{2}{3}
\gu{1} \gu{1} \gnl
\gcl{1} \gcl{1} \gnl
\gob{1}{F} \gob{1}{B} \gnl
\gend=
\gbeg{3}{4}
\gcn{1}{1}{2}{1} \gelt{\s\Phi_{\lambda}} \gcn{1}{1}{0}{1} \gnl 
\gcl{1} \gbmp{\Epsilon_M} \gcl{1} \gnl
\gcl{1} \gmu \gnl
\gob{1}{F} \gob{2}{B} \gnl
\gend
\end{equation}
\end{tabular}
\end{center} \vspace{-0,5cm}
$$ \textnormal{ \footnotesize 2-cocycle condition for $\Phi_{\lambda}$}  \hspace{5cm}  \textnormal{\footnotesize normalized 2-cocycle $\Phi_{\lambda}$} $$ 
Here 
$\gbeg{2}{3}
\got{3}{B} \gnl
\glcm \gnl
\gob{1}{F} \gob{1}{B}
\gend:=
\gbeg{2}{4}
\got{1}{B} \gnl
\gcl{1} \gu{1} \gnl
\glmptb \gnot{\hspace{-0,34cm}\psi} \grmptb \gnl
\gob{1}{F} \gob{1}{B} 
\gend, 
\gbeg{3}{3}
\gcn{1}{1}{2}{1} \gelt{\s\Phi_{\lambda}} \gcn{1}{1}{0}{1} \gnl  %
\gcl{1} \gcl{1} \gcl{1} \gnl
\gob{1}{F} \gob{1}{F} \gob{1}{B} \gnl
\gend:=
\gbeg{3}{4}
\gvac{1} \gu{1} \gnl
\glmpb \gnot{\Delta_M} \gcmptb \grmpb \gnl
\gcl{1} \gcl{1} \gcl{1} \gnl
\gob{1}{F} \gob{1}{F} \gob{1}{B} \gnl
\gend$ and $\beta: FFF\to FFF$ is a 3-cocycle on $FFF$. The 2-cell $\beta$ in \equref{3-cocycle cond fi-lambda} is an identity in a mixed wreath. 
As above, 
it 
will be non-trivial in the context appearing in \seref{rep quasi}. 
In \cite{Femic5} we showed that the above data (where $\beta$ is identity) determines a comodule algebra $B$ over a bimonad $F$ in a 2-categorical setting. 
(In \cite{HN} $F$ was actually a quasi-bialgebra over a commutative ring $R$ and the rest of the structure is the same, for $\K$ the 2-category induced by the monoidal category of modules over $R$. )
For this reason, the data $(B,F, \psi, \Delta_M, \Epsilon_M, \eta_F, \beta)$, where $B$ is a monad and $F$ a 1-cell, both over a 0-cell $\A$, so that the identities 
\equref{F comod alg} -- \equref{normalized 3-cocycle fi-lambda} hold, we will call {\em a Hausser-Nill datum}.  In \leref{relation HN cocycle} we will explain the relation 
between the ``2-cocycle'' $\Phi_{\lambda}$ in \equref{3-cocycle cond fi-lambda} and a 2-cocycle in $\K$.

\medskip

Again, the name ``comodule monad'' in \equref{F comod alg}--\equref{F comod alg unit} is conditional, we could say that {\em $F$ comeasures $B$}. A priori, $F$ is not a comonad. 
Observe that if the 2-cocycle $\Phi_{\lambda}$ is trivial, \equref{quasi coaction}--\equref{quasi coaction counity} state that $B$ is a proper left $F$-comodule, 
where a priori non-coassociative coproduct on $F$ is given by 
$\gbeg{2}{3}
\got{2}{F} \gnl
\gcmu \gnl
\gob{1}{F} \gob{1}{F} 
\gend:=
\gbeg{3}{4}
\got{1}{F} \gnl
\glmptb \gnot{\Delta_M} \gcmpb \grmpb \gnl
\gcl{1} \gcl{1} \gcu{1} \gnl
\gob{1}{F} \gob{1}{F} 
\gend$.

\subsection{Categories of Tambara modules}

Let us recall the 2-category of monads $\Mnd(\K)$. Its 0-cells are monads $(\A, B)\cong (\A, B, \mu, \eta)$, 
1-cells are morphisms of monads, that is $(X, \psi): (\A, B)\to(\A', B')$ where $X:\A\to\A'$ is a 1-cell in $\K$ and 
$\psi: B'X\to XB$ is a 2-cell in $\K$ so that \equref{psi laws} hold, and 2-cells are $\zeta: (X,\psi)\to(Y,\psi')$ where $\zeta: X\to Y$ is a 2-cell in $\K$ so that 
\equref{2-cells in Mnd} holds. 
\begin{center} 
\begin{tabular}{p{8cm}p{1cm}p{4.5cm}}
\begin{equation}\eqlabel{psi laws}
\gbeg{3}{5}
\got{1}{B'}\got{1}{B'}\got{1}{X}\gnl
\gcl{1} \glmpt \gnot{\hspace{-0,34cm}\psi} \grmptb \gnl
\glmptb \gnot{\hspace{-0,34cm}\psi} \grmptb \gcl{1} \gnl
\gcl{1} \gmu \gnl
\gob{1}{X} \gob{2}{B}
\gend=
\gbeg{3}{5}
\got{1}{B'}\got{1}{B'}\got{1}{B'}\gnl
\gmu \gcn{1}{1}{1}{0} \gnl
\gvac{1} \hspace{-0,32cm} \glmptb \gnot{\hspace{-0,34cm}\psi} \grmptb  \gnl
\gvac{1} \gcl{1} \gcl{1} \gnl
\gvac{1} \gob{1}{X} \gob{1}{B}
\gend,
\qquad
\gbeg{2}{4}
\got{3}{X} \gnl
\gu{1} \gcl{1} \gnl
\glmptb \gnot{\hspace{-0,34cm}\psi} \grmptb \gnl
\gob{1}{X} \gob{1}{B}
\gend=
\gbeg{3}{4}
\got{1}{X} \gnl
\gcl{1} \gu{1} \gnl
\gcl{1} \gcl{1} \gnl
\gob{1}{X} \gob{1}{B}
\gend
\end{equation} & & 
\begin{equation}\eqlabel{2-cells in Mnd}
\gbeg{2}{4}
\got{1}{B'} \got{1}{X}\gnl
\glmptb \gnot{\hspace{-0,34cm}\psi} \grmptb \gnl
\gbmp{\zeta}  \gcl{1} \gnl
\gob{1}{Y} \gob{1}{B}
\gend=
\gbeg{2}{4}
\got{1}{B'} \got{1}{X}\gnl
\gcl{1} \gbmp{\zeta} \gnl
\glmptb \gnot{\hspace{-0,34cm}\psi'} \grmptb \gnl
\gob{1}{Y} \gob{1}{B}
\gend
\end{equation}
\end{tabular}
\end{center} 
The horizontal composition of 1-cells: $(X,\tau_{B,X}):(\A, B)\to(\A', B')$ and $(Y,\tau_{B',Y}):(\A', B')\to(\A'', B'')$, 
where $\tau_{B,X}: B'X\to XB$ and $\tau_{B',Y}: B'' Y\to Y B'$, is given by: 
\begin{equation} \eqlabel{tau B XY}
(Y,\tau_{B',Y})(X,\tau_{B,X})=(YX, \tau_{B,YX})=(YX, \hspace{0,3cm}
\gbeg{3}{4}
\got{1}{B''} \got{1}{Y} \got{1}{X} \gnl
\glmptb \gnot{\hspace{-0,34cm}\tau_{B', Y}} \grmptb \gcl{1} \gnl
\gcl{1} \glmptb \gnot{\hspace{-0,34cm}\tau_{B,X}} \grmptb \gnl
\gob{1}{Y} \gob{1}{X} \gob{1}{B}
\gend).
\end{equation}
The vertical and horizontal composition of 2-cells is given as in $\K$.
The identity 1-cell on a 0-cell $(\A, B)$ is given by: $(\id_{\A}, \Id_B): (\A, B)\to(\A, B)$. %
The identity 2-cell on a 1-cell $(X,\tau_{B,X}): (\A, B)\to(\A', B')$ is given by $\Id_X$. 

\medskip

The 2-category of comonads is $\Comnd(\K)=\Mnd(\K_{op})$, where $\K_{op}$ differs from $\K$ in that the 2-cells are reversed with respect to those in $\K$. This means 
that in $\Comnd(\K)$ one has 2-cells $\phi: XB\to B'X$ and $\zeta:X\to Y$ which satisfy the up-side down versions of diagrams \equref{psi laws} -- \equref{2-cells in Mnd}. 

\smallskip

The strict monoidal category $\Mnd(\K)(B)$ has then for objects monad morphisms $(X, \psi): (\A, B)\to(\A, B)$, where $X:\A\to\A$ is a 1-cell and 
$\psi: BX\to XB$ is a 2-cell in $\K$, and morphisms are $\zeta: (X,\psi)\to(Y,\psi')$, where $\zeta: X\to Y$ is a 2-cell in $\K$, so that \equref{psi laws} 
and \equref{2-cells in Mnd} hold with $B=B'$. From now on we will denote $\Tau(\A, B):=\Mnd(\K)(B)$. 
This monoidal category was studied in \cite{BC} and the notation is to 
evoke Tambara who studied in \cite{Tamb} a monoidal category of {\em transfer morphisms} which is $\Tau(\A, B)$ for $\K$ being 
the 2-category induced by the monoidal category of vector spaces. 


\medskip

It is clear that $B$ is a (left and right) module over itself. Given a 2-cell $\tau_{B,B}:BB\to BB$ it is very well known that the composition 1-cell $BB$ is a monad with structure 2-cells:
\begin{equation}  \eqlabel{wreath (co)product}
\nabla_{BB}=
\gbeg{3}{4}
\got{1}{B} \got{1}{B} \got{1}{B} \got{1}{B}  \gnl
\gcl{1}  \glmptb \gnot{\hspace{-0,34cm}\tau_{B,B}} \grmptb \gcl{1} \gnl
\gmu \gmu \gnl
\gob{2}{B} \gob{2}{B}
\gend\hspace{1,5cm}
\eta_{BB}=
\gbeg{2}{3}
\gu{1} \gu{1} \gnl
\gcl{1} \gcl{1} \gnl
\gob{1}{B} \gob{1}{B}
\gend
\end{equation}
if and only if 
$(B, \tau_{B,B})$ is a 1-cell both in $\Mnd(\K)$ and in $\Mnd(\K^{op})$. 
{\em We will assume throughout that given a monad $B$ the 2-cell $\tau_{B,B}$ is both a left and a right monadic distributive law. }

Given two left $B$-modules $X,Y:\A'\to\A$ in $\K$, for a 2-cell $\zeta: X\to Y$ we will say that it is left $B$-linear, or a morphism of left $B$-modules, if the following is fulfilled: 
$$\gbeg{2}{4}
\got{1}{B} \got{1}{X} \gnl
\glm \gnl
\gvac{1} \gbmp{\zeta} \gnl
\gob{1}{} \gob{1}{Y} 
\gend=
\gbeg{2}{4}
\got{1}{B} \got{1}{X} \gnl
\gcl{1} \gbmp{\zeta} \gnl
\glm \gnl
\gob{1}{} \gob{1}{Y.} 
\gend
$$

\begin{defn}
Let $B:\A\to\A$ be a monad in $\K$. We denote by ${}_{(\A, B)}\Tau$ the following category. 
Its objects are triples $(X,\tau_{B,X}, \nu)$ where $(X,\tau_{B,X})$ are objects of $\Tau(\A, B)$ such that $(X, \nu)$ is a left $B$-module in $K$ and the action 
$\nu: (BX, \tau_{B, BX})\to (X, \tau_{B, X})$ is a morphism in $\Tau(\A, B)$. 
Morphisms of ${}_{(\A, B)}\Tau$ are left $B$-linear morphisms in $\Tau(\A, B)$. 

The objects of ${}_{(\A, B)}\Tau$ we will call {\em left Tambara $B$-modules}. 
\end{defn}

When the action $\nu: (BX, \tau_{B, BX})\to (X, \tau_{B, X})$ is a morphism in $\Tau(\A, B)$, we will also say that {\em $\tau_{B,X}$ is natural with respect to the left action}, meaning: 
\begin{equation} \eqlabel{nat lm}
\gbeg{3}{5}
\got{1}{B} \got{1}{B} \got{1}{X} \gnl
\gcl{1} \glm \gnl
\glmptb \gnot{\tau_{B,X}} \gcmp \grmptb \gnl
\gcl{1} \gvac{1} \gcl{1} \gnl
\gob{1}{X} \gob{3}{B}
\gend=
\gbeg{3}{5}
\got{1}{B} \got{1}{B} \got{1}{X} \gnl
\glmptb \gnot{\hspace{-0,34cm}\tau_{B,B}} \grmptb \gcl{1} \gnl
\gcl{1} \glmptb \gnot{\hspace{-0,34cm}\tau_{B,X}} \grmptb \gnl
\glm \gcl{1} \gnl
\gvac{1} \gob{1}{X} \gob{1}{B}
\gend
\end{equation}

We will abuse notation in that we will denote also by $(X,\tau_{B,X})$ the objects of ${}_{(\A, B)}\Tau$ without expliciting the $B$-action. 

Observe that $(B, \tau_{B,B})$ is an object of ${}_{(\A, B)}\Tau$ and that given any object $(X,\tau_{B,X})$ in ${}_{(\A, B)}\Tau$ 
the left $B$-action on $X$ is a morphism in ${}_{(\A, B)}\Tau$. 


\begin{prop} \prlabel{B act XY quasi-bim}
Let $B:\A\to\A$ be a monad in $\K$ such that there is a 2-cell 
$\gbeg{2}{3}
\got{2}{B} \gnl
\gcmu \gnl 
\gob{1}{B} \gob{1}{B} \gnl
\gend$ 
satisfying:
$$
\gbeg{3}{5}
\got{1}{B} \got{3}{B} \gnl
\gwmu{3} \gnl
\gvac{1} \gcl{1} \gnl
\gwcm{3} \gnl
\gob{1}{B}\gvac{1}\gob{1}{B}
\gend=
\gbeg{4}{5}
\got{2}{B} \got{2}{B} \gnl
\gcmu \gcmu \gnl
\gcl{1} \glmptb \gnot{\hspace{-0,34cm}\tau_{B,B}} \grmptb \gcl{1} \gnl
\gmu \gmu \gnl
\gob{2}{B} \gob{2}{B} \gnl
\gend\quad\textnormal{and}\quad 
\gbeg{2}{3}
\gu{1}  \gu{1} \gnl
\gcl{1} \gcl{1} \gnl
\gob{1}{B} \gob{1}{B}
\gend=
\gbeg{2}{3}
\gu{1} \gnl
\hspace{-0,34cm} \gcmu \gnl
\gob{1}{B} \gob{1}{B.}
\gend 
$$
If $(X,\tau_{B,X}),(Y,\tau_{B,Y})\in {}_{(\A, B)}\Tau$, then $(XY,\tau_{B,XY})\in {}_{(\A, B)}\Tau$ with the $B$-action given by:   
\begin{equation} \eqlabel{B act XY tau}
\gbeg{3}{5}
\got{1}{B} \got{3}{XY} \gnl
\gcl{1} \gvac{1} \gcl{2} \gnl
\gcn{1}{1}{1}{3}  \gnl
\gvac{1} \glm \gnl
\gvac{2} \gob{1}{XY} 
\gend=
\gbeg{3}{5}
\got{2}{B} \got{1}{X} \got{1}{Y} \gnl
\gcmu \gcl{1} \gcl{2} \gnl
\gcl{1} \glmptb \gnot{\hspace{-0,34cm}\tau_{B,X}} \grmptb \gnl
\glm \glm \gnl
\gvac{1} \gob{1}{X} \gob{3}{Y}
\gend
\end{equation}
and where $\tau_{B,XY}$ is from \equref{tau B XY}. 
\end{prop}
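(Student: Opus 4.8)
The plan is to verify the three conditions in the definition of ${}_{(\A,B)}\Tau$ for the triple $(XY,\tau_{B,XY},\nu_{XY})$, where $\nu_{XY}$ is the composite 2-cell defined by \equref{B act XY tau} and $\tau_{B,XY}$ is given by \equref{tau B XY}. So I would proceed in three steps: (i) show $(XY,\tau_{B,XY})$ is an object of $\Tau(\A,B)$; (ii) show $(XY,\nu_{XY})$ is a left $B$-module; (iii) show $\nu_{XY}:(B(XY),\tau_{B,B(XY)})\to(XY,\tau_{B,XY})$ is a morphism in $\Tau(\A,B)$, i.e. that $\tau_{B,XY}$ is natural with respect to $\nu_{XY}$ in the sense of \equref{nat lm}.

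Step (i) is essentially free: the horizontal composition \equref{tau B XY} of the monad morphisms $(X,\tau_{B,X})$ and $(Y,\tau_{B,Y})$ in $\Mnd(\K)$ is automatically a monad morphism, so \equref{psi laws} holds for $\tau_{B,XY}$ with no extra hypothesis. For step (ii), the module axioms for $\nu_{XY}$ — associativity $\nu_{XY}(\mu\times\Id)=\nu_{XY}(\Id_B\times\nu_{XY})$ and unitality $\nu_{XY}(\eta\times\Id)=\Id$ — I would prove by string-diagram manipulation: the unit axiom follows by feeding the unit into $\Delta$ and using the second hypothesis $(\eta\times\eta=\Delta\eta)$ together with the unit axioms of the $B$-actions $\nu_X,\nu_Y$; the associativity axiom follows by using the first displayed hypothesis on $B$ (a ``weak comultiplicativity'' of $\Delta$ relative to $\tau_{B,B}$), the associativity of $\nu_X$ and $\nu_Y$, and the naturality \equref{nat lm} of $\tau_{B,X}$ with respect to the left action on $X$. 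Concretely, one pushes the two copies of $B$ produced by $\Delta$ past each other using $\tau_{B,B}$ and then reorganizes, and the hypothesis on $B$ is exactly what makes the two sides match.

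For step (iii), I must check \equref{nat lm} for $(XY,\tau_{B,XY},\nu_{XY})$: feeding two $B$'s and then acting via $\nu_{XY}$ on $XY$, compared with first applying $\tau_{B,B}$, then $\tau_{B,XY}$, then the action. Expanding $\tau_{B,XY}$ via \equref{tau B XY} into the $\tau_{B,Y}$-then-$\tau_{B,X}$ stack and expanding $\nu_{XY}$ via \equref{B act XY tau}, this reduces to a combination of: the naturality \equref{nat lm} for $\tau_{B,X}$ and for $\tau_{B,Y}$ separately, the Yang–Baxter-type compatibility of $\tau_{B,B}$ with itself (which holds because $\tau_{B,B}$ is both left and right monadic, as assumed throughout), and again the first hypothesis on $\Delta$ relating $\Delta$, $\tau_{B,B}$ and $\mu$.

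The main obstacle is step (iii), and within it the bookkeeping of moving the comultiplication $\Delta$ on $B$ through the braiding-like cells $\tau_{B,B}$, $\tau_{B,X}$, $\tau_{B,Y}$: one has to commute a $\Delta$ emitted ``early'' past a $\tau_{B,B}$ that acts ``late'', and the displayed hypothesis $\gbeg{3}{5}\got{1}{B} \got{3}{B} \gnl\gwmu{3} \gnl\gvac{1} \gcl{1} \gnl\gwcm{3} \gnl\gob{1}{B}\gvac{1}\gob{1}{B}\gend=\gbeg{4}{5}\got{2}{B} \got{2}{B} \gnl\gcmu \gcmu \gnl\gcl{1} \glmptb \gnot{\hspace{-0,34cm}\tau_{B,B}} \grmptb \gcl{1} \gnl\gmu \gmu \gnl\gob{2}{B} \gob{2}{B} \gnl\gend$ is precisely the identity that licenses this, so the proof is a matter of applying it at the right spot and then collapsing the resulting diagram using the module and distributive-law axioms. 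I do not expect any genuinely new idea to be needed beyond careful diagrammatic rewriting.
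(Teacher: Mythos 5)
Your steps (i) and (ii) are correct and coincide with what the paper actually proves: the paper's entire written proof is your step (ii), carried out exactly as you describe, using the naturality \equref{nat lm} of $\tau_{B,X}$ with respect to $\nu_X$, the monadic distributive law \equref{psi laws} for $\tau_{B,X}$, the multiplicativity and unitality of $\Delta$, and the module axioms for $\nu_X,\nu_Y$. You are also right that step (iii) is where the real content of ``$\in {}_{(\A,B)}\Tau$'' lies; note that the paper's proof is silent on it, so you are not overlooking an argument the paper supplies.

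The gap is in how you discharge step (iii). After using \equref{nat lm} for $X$ and for $Y$ to pull the incoming $B$-strand past the two actions, what remains is to show that $\psi_{B,X}=(\nu_X\times\Id_B)\circ(\Id_B\times\tau_{B,X})\circ(\Delta\times\Id_X)$ is itself a morphism $(BX,\tau_{B,BX})\to(XB,\tau_{B,XB})$ in $\Tau(\A,B)$, and this needs two identities: (a) $(\Delta\times\Id_B)\circ\tau_{B,B}=(\Id_B\times\tau_{B,B})\circ(\tau_{B,B}\times\Id_B)\circ(\Id_B\times\Delta)$, i.e.\ $\tau_{B,B}$ is a \emph{comonadic} distributive law with respect to $\Delta$, so that the comultiplication emitted ``early'' can be pushed past the ``late'' $\tau_{B,B}$; and (b) the Yang--Baxter equation on $BBX$ mixing $\tau_{B,B}$ with $\tau_{B,X}$ (the analogue of \equref{YBE BBX}), needed to interchange the order in which the two $B$-strands cross $X$. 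Neither is among the hypotheses, and your two justifications fail at exactly these points: the standing assumption that $\tau_{B,B}$ is a left and right \emph{monadic} distributive law only governs its compatibility with $\mu$ and $\eta$ and implies no Yang--Baxter relation whatsoever; and the first displayed hypothesis relates $\Delta$ to $\mu$ through $\tau_{B,B}$, which is logically independent of (a) and does not let you commute $\Delta$ past a bare $\tau_{B,B}$. In the setting where this Proposition is actually applied (Tambara modules over a quasi-bimonad, Section 3) both (a) and (b) are explicitly imposed --- $\tau_{F,F}$ is required to be comonadic and the objects satisfy \equref{comonadic d.l.} and \equref{YBE BBX} --- so the intended monoidal structure is unharmed; but with only the stated hypotheses your step (iii) cannot be completed as written, and you should either add (a) and (b) as hypotheses or weaken the conclusion to the left $B$-module structure on $XY$.
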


\begin{proof}
The proof that $XY$ is a $B$-module via \equref{B act XY tau} is direct, using that $\tau_{B,X}$ is natural with respect to 
$\gbeg{2}{3}
\got{1}{B} \got{1}{X} \gnl
\glm \gnl 
\gob{3}{X} \gnl
\gend$ and that it is a left monadic distributive law. 
\qed\end{proof}

As a matter of fact, that \equref{B act XY tau} endows $XY$ with a structure of a left $B$-module is a consequence of the following result. 

\begin{prop} \prlabel{B act XM psi}
Let $B:\A\to\A$ be a monad, $(X, \psi_{B,X})$ an object in $\Tau(\A,B)$ and $M:\A'\to\A$ a left $B$-module in $\K$. 
Then $XM$ is a left $B$-module via
\begin{equation} \eqlabel{B act XY psi}
\gbeg{3}{4}
\got{1}{B} \got{3}{XM} \gnl
\gcn{1}{1}{1}{3} \gvac{1} \gcl{1} \gnl
\gvac{1} \glm \gnl
\gvac{2} \gob{1}{XM} 
\gend=
\gbeg{3}{4}
\got{1}{B} \got{1}{X} \got{1}{M} \gnl
\glmptb \gnot{\hspace{-0,34cm}\psi_{B,X}} \grmptb \gcl{1} \gnl
\gcl{1} \glm \gnl
\gob{1}{X} \gob{3}{M}
\gend
\end{equation}
\end{prop}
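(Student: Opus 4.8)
The plan is to verify the two module axioms for the 2-cell defined by \equref{B act XY psi}: associativity with respect to the multiplication $\mu$ of $B$, and unitality with respect to $\eta$. Write $\nu_M : BM \to M$ for the given left $B$-action on $M$ and $\widehat{\nu} : BXM \to XM$ for the candidate action. In string-diagram terms $\widehat{\nu}$ is the composite that first slides $B$ past $X$ using $\psi_{B,X}$ and then applies $\nu_M$. First I would check unitality: precomposing $\widehat{\nu}$ with $\eta \times \Id_{XM}$, the unit axiom for $\psi_{B,X}$ (the second identity in \equref{psi laws}, which says $\psi_{B,X}(\eta\times\Id_X)=\Id_X\times\eta$) turns the $\psi$-box into the unit of $B$ feeding directly into $M$; then the unit axiom $\nu_M(\eta\times\Id_M)=\Id_M$ of the $B$-module $M$ finishes it, giving $\Id_{XM}$.

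For associativity I would compare the two composites $BBXM \rightrightarrows XM$. Starting from $\widehat{\nu}(\Id_B\times\widehat{\nu})$, I would push the inner copy of $B$ past $X$ first, then the outer copy past $X$; the two $\psi$-boxes stacked on the $X$-strand can be merged using the multiplicativity axiom for $\psi_{B,X}$ — the first identity in \equref{psi laws}, which relates $\psi_{B,X}(\mu\times\Id_X)$ to two successive applications of $\psi$ and $\mu$. After that merge, the two copies of $B$ that have arrived on the $M$-strand are combined using the associativity axiom $\nu_M(\mu\times\Id_M)=\nu_M(\Id_B\times\nu_M)$ of the left $B$-module $M$. Reading the resulting diagram from the other side gives exactly $\widehat{\nu}(\mu\times\Id_{XM})$, which is the desired identity.

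Since $\psi_{B,X}$ here is only required to make $(X,\psi_{B,X})$ an object of $\Tau(\A,B)=\Mnd(\K)(B)$, i.e. a monad endomorphism over $B$, no comonad or distributive-law-with-itself hypotheses on $B$ are needed — in contrast to \prref{B act XY quasi-bim}, where the coproduct on $B$ and the condition that $\tau_{B,B}$ be monadic on both sides were used to first split $B$ and then reassemble. Indeed \prref{B act XY quasi-bim} is recovered by taking $M=Y$ with its $B$-action and $\psi_{B,X}=\tau_{B,X}$, then precomposing $\widehat{\nu}$ with $\Delta\times\Id_{XY}$: the splitting of $B$ by the coproduct, followed by $\tau_{B,X}$ on one branch and $\nu_Y$ on the other, is precisely \equref{B act XY tau}, and one checks this composite is again a $B$-action using \prref{B act XM psi} together with the compatibility of $\Delta$ with $\mu$, $\eta$ and $\tau_{B,B}$ hypothesized there.

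The only mild subtlety — the step I expect to be the main obstacle in writing it cleanly rather than conceptually — is bookkeeping the order in which the two copies of $B$ travel across the $X$-strand, so that the two $\psi_{B,X}$-boxes are stacked in exactly the configuration appearing on the left-hand side of the multiplicativity axiom in \equref{psi laws}; getting the strands in the wrong order would require an extra naturality move. Everything else is a routine diagram chase, so I would simply present the two computations as short sequences of string diagrams, citing \equref{psi laws} and the module axioms of $M$ at each step, and then remark that \prref{B act XY quasi-bim} follows as the special case described above.
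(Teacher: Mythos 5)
Your proof is correct and is exactly the direct verification the paper has in mind (the paper omits it, noting in Remark \rmref{psi versus tau} only that "the proof is direct"): unitality follows from the unit axiom in \equref{psi laws} together with unitality of the $B$-action on $M$, and associativity from the multiplicativity axiom in \equref{psi laws}, the interchange law, and associativity of the $B$-action on $M$. Your closing observation on how \prref{B act XY quasi-bim} is recovered via $\psi_{B,X}=(\nu\times\Id_B)(\Id_B\times\tau_{B,X})(\Delta\times\Id_X)$ also matches the paper's Remark \rmref{psi versus tau}.
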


\begin{rem} \rmlabel{psi versus tau}
The proof of the above Proposition is direct. Its result was used in the literature for $\K$ being the 2-category induced by the 
monoidal category of vector spaces, see {\em e.g.} \cite{Sch}. In the proof of  \prref{B act XY quasi-bim} one is actually proving 
that under specified conditions $(X, \psi_{B,X})$ is an object in $\Tau(\A,B)$ with $\psi_{B,X}$ given by: 
$$\psi_{B,X}=
\gbeg{3}{5}
\got{2}{B} \got{1}{X} \gnl
\gcmu \gcl{1} \gnl
\gcl{1}  \glmptb \gnot{\hspace{-0,34cm}\tau_{B,X}} \grmptb \gnl
\glm \gcl{1} \gnl
\gob{1}{} \gob{1}{X} \gob{1}{B.} 
\gend$$
\end{rem}

Let us record few more properties that are in the line of the above results. Let $B:\A\to\A$ be a monad with a 2-cell $\Epsilon_B:B\to\Id_{\A}$ so that 
$\Id_{\A}$ is a left $B$-module by $\Epsilon_B$. The left hand-side version of the definition of a module monad from \equref{F mod alg}, saying that a 
left $B$-module and monad $F:\A\to\A$ is a left $B$-module monad, is: 
\begin{equation} \eqlabel{left mod monad}
\gbeg{3}{5}
\got{1}{B} \got{1}{F} \got{3}{F}\gnl
\gcl{1} \gwmu{3} \gnl
\gcn{1}{1}{1}{3} \gvac{1} \gcl{2} \gnl
\gvac{1} \glm \gnl
\gob{5}{F}
\gend=
\gbeg{3}{5}
\got{1}{B} \got{1}{F} \got{1}{F} \gnl
\glmptb \gnot{\hspace{-0,34cm}\psi} \grmptb \gcl{1} \gnl
\gcl{1} \glm \gnl
\gwmu{3} \gnl
\gob{3}{F}
\gend \hspace{2,5cm}
\gbeg{3}{4}
\got{1}{B} \gnl
\gcl{1} \gu{1} \gnl
\glm \gnl
\gob{3}{F}
\gend=
\gbeg{2}{4}
\got{1}{B} \gnl
\gcu{1} \gnl
\gu{1} \gnl
\gob{1}{F.}
\gend
\end{equation} 
The following are straightforwardly proved:

\begin{prop}
Let  $(F:\A\to\A, \mu_F, \eta_F)$ be a monad and a left $B$-module and assume that $(F,\psi_{B,F})$ is an object of $\Tau(\A,B)$. 
Then $F$ is a left $B$-module monad if and only if $\mu_F$ and $\eta_F$ are left $B$-linear, where $FF$ is a left $B$-module by \equref{B act XY psi}. 
\end{prop}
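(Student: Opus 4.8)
The claim is an ``iff'' between two compatibility statements, so the plan is to prove the two implications separately, and in each case to reduce everything to the single defining identity \equref{left mod monad} (its multiplication half and its unit half) by unfolding the $B$-module structure on $FF$ given by \equref{B act XY psi}. First I would fix the notation: $F$ carries $\mu_F\colon FF\to F$, $\eta_F\colon \Id_\A\to F$ and a left $B$-action $\nu\colon BF\to F$, while $FF$ carries the action $\nu_{FF}\colon B\,FF\to FF$ which, by \equref{B act XY psi} with $M=F$, is ``first apply $\psi_{B,F}$ to push $B$ past the first $F$, then act by $\nu$ on the second $F$''. With these in hand, $B$-linearity of $\mu_F$ is precisely the equation $\nu\comp(\Id_B\times\mu_F)=\mu_F\comp\nu_{FF}$, and unwinding $\nu_{FF}$ turns the right-hand side into exactly the left-hand string diagram of the first identity in \equref{left mod monad}; meanwhile $\nu\comp(\Id_B\times\mu_F)$ is visibly the right-hand side after one use of associativity of $\mu_F$. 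So the first identity of \equref{left mod monad} is literally the statement ``$\mu_F$ is $B$-linear'', read in string diagrams.

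Next I would treat the unit. $B$-linearity of $\eta_F\colon\Id_\A\to F$ means $\nu\comp(\Id_B\times\eta_F)=\eta_F\comp\epsilon$, where $\epsilon=\Epsilon_B\colon B\to\Id_\A$ is the $B$-action on the unit object $\Id_\A$ (which we have assumed makes $\Id_\A$ a left $B$-module). Reading this in string diagrams, the left side is the ``cap into $\eta_F$ then act'' diagram, i.e.\ the left-hand side of the second identity in \equref{left mod monad}, and the right side is ``counit $\Epsilon_B$ then unit $\eta_F$'', i.e.\ its right-hand side. Hence the unit half of \equref{left mod monad} is exactly ``$\eta_F$ is $B$-linear''. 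Combining the two paragraphs: $F$ is a left $B$-module monad (by definition both halves of \equref{left mod monad} hold) if and only if both $\mu_F$ and $\eta_F$ are left $B$-linear, which is what we wanted; note that the hypothesis that $(F,\psi_{B,F})\in\Tau(\A,B)$ is exactly what is needed for \equref{B act XY psi} to define the $B$-action on $FF$, and it is used only to make sense of the statement, not in the equivalence itself.

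The only genuine content beyond bookkeeping is checking that the two string diagrams agreed upon really coincide after the substitution $\nu_{FF}=\nu\comp(\Id_B\times\nu)\comp(\psi_{B,F}\times\Id_F)$; this is a short naturality/associativity manipulation and should be displayed once for $\mu_F$ and once for $\eta_F$. I expect the main (and modest) obstacle to be purely presentational: drawing the chain of string-diagram equalities so that the reader sees that no axiom of $F$ as a $B$-module monad is used other than the one being matched, and in particular that associativity of $\mu_F$ and the module axiom $\nu\comp(\mu\times\Id_F)=\nu\comp(\Id_B\times\nu)$ are invoked only in the ``obvious'' direction. Since \prref{B act XM psi} already guarantees that \equref{B act XY psi} is a well-defined $B$-action, there is nothing further to verify, and the proof is complete.
\qed
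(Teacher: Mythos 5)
Your proof is correct and is exactly the intended argument: the paper leaves this Proposition as ``straightforwardly proved'', and indeed, after unfolding the $B$-action on $FF$ from \equref{B act XY psi}, the two halves of \equref{left mod monad} are literally the statements that $\mu_F$ and $\eta_F$ are left $B$-linear (the unit half using that $\Id_\A$ is a $B$-module via $\Epsilon_B$). Two small slips worth fixing: your displayed formula for $\nu_{FF}$ should read $(\Id_F\times\nu)\comp(\psi_{B,F}\times\Id_F)$ (no outer $\nu$, and $\Id_F$ rather than $\Id_B$), and no appeal to associativity of $\mu_F$ is needed, since the left-hand diagram of \equref{left mod monad} is already $\nu\comp(\Id_B\times\mu_F)$ verbatim.
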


\begin{prop}
Let $(F,\psi_{B,F})$ be an object of $\Tau(\A,B)$ and consider $BF$ as a left $B$-module by $\mu_B\times id_F$ and $FB$ by \equref{B act XY psi}. 
Then $\psi_{B,F}:BF\to FB$ is left $B$-linear. 
\end{prop}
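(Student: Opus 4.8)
\emph{Proof plan.} The strategy is to unwind the two module structures in sight and recognise the desired linearity identity as one of the two compatibility axioms in \equref{psi laws}.

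First I would record the left $B$-action on $BF$: it is $\mu_B\times\Id_F$, i.e.\ the acting copy of $B$ multiplies the leftmost tensor factor of $BF$. Next I would unwind the left $B$-action on $FB$ obtained from \equref{B act XY psi} with $X=F$ and $M=B$, where $B$ is regarded as a left $B$-module over itself via $\mu_B$. That action sends $B\otimes F\otimes B$ first through $\psi_{B,F}\times\Id_B$, producing two adjacent copies of $B$, and then multiplies them by $\Id_F\times\mu_B$; so it equals $(\Id_F\times\mu_B)\comp(\psi_{B,F}\times\Id_B)$.

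Then I would spell out what it means for $\psi_{B,F}\colon BF\to FB$ to be left $B$-linear for these two actions. Precomposing the action on $FB$ with $\Id_B\times\psi_{B,F}$ and comparing with $\psi_{B,F}$ precomposed with the action on $BF$, the required equality reads
$$\psi_{B,F}\comp(\mu_B\times\Id_F)=(\Id_F\times\mu_B)\comp(\psi_{B,F}\times\Id_B)\comp(\Id_B\times\psi_{B,F}),$$
which is exactly the first identity of \equref{psi laws} instantiated at $X=F$ (with $B'=B$), i.e.\ the statement that $\psi_{B,F}$ is multiplicative in the monad variable. Hence nothing beyond invoking that axiom is needed; one could equally draw the two string diagrams and note that they are literally the two sides of \equref{psi laws}.

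There is no genuine obstacle here; the only point requiring care is the bookkeeping in \equref{B act XY psi}, namely that the copy of $B$ created by $\psi_{B,F}$ is precisely the one that subsequently acts on $M=B$, so that the $B$-action on $FB$ is $(\Id_F\times\mu_B)\comp(\psi_{B,F}\times\Id_B)$ and lines up with the right-hand side of \equref{psi laws}.
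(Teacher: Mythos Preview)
Your proposal is correct and matches the paper's approach: the paper declares this proposition to be ``straightforwardly proved'' without further detail, and your argument---unwinding the two actions and recognising the resulting linearity condition as the first identity in \equref{psi laws}---is exactly the intended straightforward verification.
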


\section{Quasi-bimonads and coquasi-bimonads in 2-categories} \selabel{quasi}

Given a monad $B$ on a 0-cell $\A$ in $\K$. The identity 1-cell $\Id_{\A}$ is trivially a comonad 
and we can consider the monad of the 2-cells $\Id_{\A}\to B$ in $\K$, which is indeed a convolution algebra in the monoidal category $\K(\A)$.

\begin{defn}
A quasi-bimonad in $\K$ is an octuple $(\A, F, \mu, \eta, \Delta, \Epsilon, \tau_{F,F}, \Phi)$, where $(\A, F, \mu, \eta)$ is a monad, 
$\tau_{F,F}:FF\to FF$ is a left and right monadic and comonadic distributive law, the 2-cells 
$\Delta=\gbeg{2}{3}
\got{2}{F} \gnl
\gcmu \gnl 
\gob{1}{F} \gob{1}{F} \gnl
\gend$ and 
$\Epsilon=\gbeg{2}{2}
\got{1}{F} \gnl
\gcu{1} \gnl 
\gend$ 
satisfy:
$$
\gbeg{2}{4}
\got{2}{F} \gnl
\gcmu \gnl
\gcu{1} \gcl{1} \gnl
\gob{1}{} \gob{1}{F}
\gend=
\gbeg{1}{4}
\got{1}{F} \gnl
\gcl{2} \gnl
\gob{1}{F}
\gend=
\gbeg{2}{4}
\got{2}{F} \gnl
\gcmu \gnl
\gcl{1} \gcu{1} \gnl
\gob{1}{F} 
\gend, \quad
\gbeg{3}{5}
\got{1}{F} \got{3}{F} \gnl
\gwmu{3} \gnl
\gvac{1} \gcl{1} \gnl
\gwcm{3} \gnl
\gob{1}{F}\gvac{1}\gob{1}{F}
\gend=
\gbeg{4}{5}
\got{2}{F} \got{2}{F} \gnl
\gcmu \gcmu \gnl
\gcl{1} \glmptb \gnot{\hspace{-0,34cm}\tau_{F,F}} \grmptb \gcl{1} \gnl
\gmu \gmu \gnl
\gob{2}{F} \gob{2}{F} \gnl
\gend,\quad 
\gbeg{2}{3}
\gu{1}  \gu{1} \gnl
\gcl{1} \gcl{1} \gnl
\gob{1}{F} \gob{1}{F}
\gend=
\gbeg{2}{3}
\gu{1} \gnl
\hspace{-0,34cm} \gcmu \gnl
\gob{1}{F} \gob{1}{F}
\gend,\quad 
\gbeg{2}{3}
\got{1}{F} \got{1}{F} \gnl
\gcl{1} \gcl{1} \gnl
\gcu{1}  \gcu{1} \gnl
\gend=
\gbeg{2}{3}
\got{1}{F} \got{1}{F} \gnl
\gmu \gnl
\gvac{1} \hspace{-0,2cm} \gcu{1} \gnl
\gend,\quad 
\gbeg{1}{2}
\gu{1} \gnl
\gcu{1} \gnl
\gob{1}{}
\gend=
\Id_{id_{\A}}
$$
and the 2-cell $\Phi: \Id_\A\to FFF$ is convolution invertible and normalized: 
$\gbeg{3}{3}
\gcn{1}{1}{2}{1} \gelt{\s\Phi} \gcn{1}{1}{0}{1} \gnl  %
\gcl{1} \gcu{1} \gcl{1} \gnl
\gob{1}{F} \gob{3}{F} 
\gend=
\gbeg{3}{3}
\gu{1} \gu{1} \gnl
\gcl{1} \gcl{1} \gnl
\gob{1}{F} \gob{1}{F} 
\gend
$ and it obeys:  
\begin{center} \hspace{-0,6cm}
\begin{tabular}{p{6.6cm}p{-1cm}p{8.6cm}}
\begin{equation} \eqlabel{quasi coass.}
\gbeg{5}{7}
\gvac{3} \got{2}{F} \gnl
\gcn{1}{1}{2}{0} \gelt{\Phi} \gcn{1}{1}{0}{1} \gcmu \gnl
\gcn{1}{3}{0}{0} \gcn{1}{1}{1}{0} \glmptb \gnot{\hspace{-0,34cm}\tau_{F,F}} \grmptb \gcl{1} \gnl
\gvac{1} \gcn{1}{1}{0}{0} \hspace{-0,24cm} \gcmu \hspace{-0,2cm} \gmu \gnl 
\gvac{2} \hspace{-0,2cm} \glmptb \gnot{\hspace{-0,34cm}\tau_{F,F}} \grmptb \gcl{1} \gcl{2} \gnl
\gvac{1} \gmu \gmu \gnl
\gvac{1} \gob{2}{F} \gob{2}{F} \gob{1}{F} 
\gend=
\gbeg{6}{8}
\gvac{1} \got{2}{F} \gnl
\gvac{1} \gcmu \gnl
\gcn{2}{1}{3}{2} \gcl{2} \gnl
\gcmu \gvac{1} \gcn{1}{1}{2}{1} \gelt{\Phi} \gcn{1}{1}{0}{1} \gnl 
\gcl{2} \gcl{1} \glmptb \gnot{\hspace{-0,34cm}\tau_{F,F}} \grmptb \gcl{1} \gcl{2} \gnl
\gvac{1} \glmptb \gnot{\hspace{-0,34cm}\tau_{F,F}} \grmptb \glmptb \gnot{\hspace{-0,34cm}\tau_{F,F}} \grmptb \gnl
\gmu \gmu \gmu \gnl
\gob{2}{F} \gob{2}{F} \gob{2}{F} 
\gend
\end{equation} & & 
\begin{equation} \eqlabel{3-coc. cond.} 
\gbeg{9}{12}
\gvac{1} \gcn{1}{2}{2}{-2} \gelt{\s\Phi} \gcn{1}{1}{0}{2} \gvac{1} \gcn{1}{1}{2}{0} \gelt{\s\Phi} \gcn{2}{2}{0}{4}  \gnl
\gvac{2} \gcn{2}{1}{1}{2} \hspace{-0,22cm} \glmptb \gnot{\hspace{-0,34cm}\tau_{F,F}} \grmptb \gcn{1}{1}{2}{5} \gnl
\gcl{1} \gvac{2} \glmptb \gnot{\hspace{-0,34cm}\tau_{F,F}} \grmptb \gcn{1}{1}{1}{5} \gvac{2} \gcl{4} \gcl{5} \gnl
\gwmu{4} \gcn{1}{1}{1}{5} \gvac{2} \gcl{1} \gnl
\gvac{2} \gcn{1}{7}{0}{0} \gcn{1}{2}{2}{0} \gelt{\s\Phi} \gcn{1}{1}{0}{1} \gcl{1} \gcl{1} \gnl
\gvac{4} \gcn{1}{1}{1}{0} \glmptb \gnot{\hspace{-0,34cm}\tau_{F,F}} \grmptb \gcl{1} \gnl
\gvac{2} \gcn{1}{3}{0}{0} \gcn{1}{2}{0}{0} \gcn{1}{1}{0}{0} \hspace{-0,22cm} \gcmu \hspace{-0,2cm} \gmu \gcn{1}{1}{1}{0} \gnl
\gvac{5} \hspace{-0,34cm} \glmptb \gnot{\hspace{-0,34cm}\tau_{F,F}} \grmptb \gcl{1} \glmptb \gnot{\hspace{-0,34cm}\tau_{F,F}} \grmptb \gcn{1}{1}{2}{1} \gnl
\gvac{4} \gmu \gmu \gcn{1}{1}{1}{0}  \glmptb \gnot{\hspace{-0,34cm}\tau_{F,F}} \grmptb\gnl
\gvac{5} \hspace{-0,22cm} \gcl{1} \gvac{1} \glmptb \gnot{\hspace{-0,34cm}\tau_{F,F}} \grmptb \gcn{1}{1}{2}{1} \gcn{1}{2}{2}{2} \gnl
\gvac{5} \gwmu{3} \gmu \gnl
\gvac{3} \gob{2}{F} \gob{3}{F} \gob{2}{F} \gob{2}{F} \gnl
\gend=
\gbeg{7}{7}
\gcn{1}{1}{2}{1} \gelt{\s\Phi} \gcn{1}{1}{0}{1} \gcn{1}{1}{2}{1} \gelt{\s\Phi} \gcn{1}{1}{0}{1} \gnl
\gcn{1}{1}{1}{0} \gcn{1}{1}{1}{0} \glmptb \gnot{\hspace{-0,34cm}\tau_{F,F}} \grmptb\gcn{2}{2}{1}{4} \gcn{1}{2}{-1}{2}  \gnl
\gcn{1}{2}{0}{0} \gcn{1}{1}{0}{0} \hspace{-0,22cm} \gcmu \gcn{1}{1}{0}{2} \gnl
\gvac{1} \glmptb \gnot{\hspace{-0,34cm}\tau_{F,F}} \grmptb \gcl{1} \gcmu \gcl{1} \gcl{2} \gnl
\gmu \gmu \gcl{1} \glmptb \gnot{\hspace{-0,34cm}\tau_{F,F}} \grmptb \gnl
\gcn{1}{1}{2}{2} \gvac{1} \gcn{1}{1}{2}{2} \gvac{1} \gmu \gmu \gnl
\gob{2}{F} \gob{2}{F} \gob{2}{F} \gob{2}{F} \gnl
\gend
\end{equation}
\end{tabular}
\end{center} 
$$ \textnormal{ \hspace{-3cm} \footnotesize quasi coassociativity}  \hspace{5,5cm}  \textnormal{\footnotesize 3-cocycle condition} $$ 
We will often write shortly $(\A, F, \Phi)$ for a quasi-bimonad $(\A, F, \mu, \eta, \Delta, \Epsilon, \tau_{F,F}, \Phi)$. 
\end{defn}

When we deal with Tambara modules over a quasi-bimonad $(\A, F, \Phi)$ it makes sense to require that the 2-cells $\tau_{F,X}$ assigned 
to Tambara modules $X$ be also (right) comonadic distributive laws. As a matter of fact, this property will be needed 
in order to prove that the category of Tambara modules over a quasi-bimonad $(\A, F, \Phi)$ is monoidal. For this proof we will also need two more assumptions. 
Then it becomes natural to introduce the following 2-category. 

\medskip 

Let $\QB(\K)$ denote the 2-category of quasi-bimonads in $\K$, it consists of the following: 

\medskip

\underline{0-cells:} are quasi-bimonads $(\A, F, \tau_{F,F}, \Phi)\cong(\A, F, \mu, \eta, \Delta, \Epsilon, \tau_{F,F}, \Phi)$ in $\K$ such that 
\begin{equation} \eqlabel{Phi nat new} \hspace{-2cm}
\gbeg{4}{5}
\got{7}{F} \gnl
\gcn{1}{1}{2}{1} \gelt{\s\Phi} \gcn{1}{1}{0}{1} \gcl{3} \gnl  %
\gcl{2} \gcl{2} \gcl{2} \gnl
\gob{1}{F} \gob{1}{F} \gob{1}{F} \gob{1}{F} 
\gend=
\gbeg{2}{6}
\got{1}{F} \gnl
\gcl{1} \gcn{1}{1}{2}{1} \gelt{\s\Phi} \gcn{1}{1}{0}{1} \gnl 
\glmptb \gnot{\hspace{-0,34cm}\tau_{F,F}} \grmptb \gcl{1} \gcl{1} \gnl
\gcl{1} \glmptb \gnot{\hspace{-0,34cm}\tau_{F,F}} \grmptb \gcl{1} \gnl
\gcl{1} \gcl{1} \glmptb \gnot{\hspace{-0,34cm}\tau_{F,F}} \grmptb \gnl
\gob{1}{F} \gob{1}{F} \gob{1}{F} \gob{1}{F} 
\gend
\end{equation} 
$$\textnormal{\hspace{-1cm} \footnotesize $\tau_{F,\Id_\A}$ is natural w.r.t. $\Phi$}$$
holds;

\underline{1-cells:} are pairs $(X,\tau_{F,X}): (\A, F, \tau_{F,F}, \Phi)\to(\A', F', \tau_{F',F'}, \Phi')$ where 
$(X, \tau_{F,X}): (\A, F, \mu, \eta)\to(\A', F', \mu', \eta')$ is a 1-cell in $\Mnd(\K)$ and
$(X, \tau_{F,X}): (\A, F, \Delta, \Epsilon)\to(\A', F', \Delta', \Epsilon')$ is a 1-cell in $\Comnd(\K^{op})$, that is, the following identities hold:
\vspace{-1,4cm}
\begin{center} \hspace{-0,6cm}
\begin{tabular}{p{7.4cm}p{0cm}p{8cm}}
\begin{equation}\eqlabel{monadic d.l.}
\gbeg{3}{5}
\got{1}{F'}\got{1}{F'}\got{1}{X}\gnl
\gcl{1} \glmpt \gnot{\hspace{-0,34cm}\tau_{F,X}} \grmptb \gnl
\glmptb \gnot{\hspace{-0,34cm}\tau_{F,X}} \grmptb \gcl{1} \gnl
\gcl{1} \gmu \gnl
\gob{1}{X} \gob{2}{F}
\gend=
\gbeg{3}{5}
\got{1}{F'}\got{1}{F'}\got{1}{X}\gnl
\gmu \gcn{1}{1}{1}{0} \gnl
\gvac{1} \hspace{-0,34cm} \glmptb \gnot{\hspace{-0,34cm}\tau_{F,X}} \grmptb  \gnl
\gvac{1} \gcl{1} \gcl{1} \gnl
\gvac{1} \gob{1}{X} \gob{1}{F}
\gend;
\quad
\gbeg{2}{5}
\got{3}{X} \gnl
\gu{1} \gcl{1} \gnl
\glmptb \gnot{\hspace{-0,34cm}\tau_{F,X}} \grmptb \gnl
\gcl{1} \gcl{1} \gnl
\gob{1}{X} \gob{1}{F}
\gend=
\gbeg{3}{5}
\got{1}{X} \gnl
\gcl{1} \gu{1} \gnl
\gcl{2} \gcl{2} \gnl
\gob{1}{X} \gob{1}{F}
\gend
\end{equation} & &
 \begin{equation}\eqlabel{comonadic d.l.}
\gbeg{3}{5}
\got{2}{F'} \got{1}{X} \gnl
\gcmu \gcl{1} \gnl
\gcl{1} \glmptb \gnot{\hspace{-0,34cm}\tau_{F,X}} \grmptb \gnl
\glmptb \gnot{\hspace{-0,34cm}\tau_{F,X}} \grmptb \gcl{1} \gnl
\gob{1}{X}\gob{1}{F} \gob{1}{F} 
\gend=
\gbeg{3}{5}
\got{1}{F'} \got{1}{X} \gnl
\gcl{1} \gcl{1} \gnl
\glmpt \gnot{\hspace{-0,34cm}\tau_{F,X}} \grmptb \gnl
\gcn{1}{1}{1}{0} \hspace{-0,22cm} \gcmu \gnl
\gob{1}{X} \gob{1}{F} \gob{1}{F} 
\gend;
\quad
\gbeg{3}{5}
\got{1}{F'} \got{1}{X} \gnl
\gcl{1} \gcl{1} \gnl
\glmptb \gnot{\hspace{-0,34cm}\tau_{F,X}} \grmptb \gnl
\gcl{1} \gcu{1} \gnl
\gob{1}{X}
\gend=
\gbeg{3}{5}
\got{1}{F'} \got{1}{X} \gnl
\gcl{1} \gcl{1} \gnl
\gcu{1} \gcl{2}  \gnl
\gob{3}{X}
\gend
\end{equation}
\end{tabular}
\end{center} 
and the following compatibility conditions between $\tau_{F,X}$ and $\Phi$, on one side, and between $\tau_{F,X}$ and $\tau_{F,F}$ on the other, 
are fulfilled:
\begin{center} 
\begin{tabular}{p{4.5cm}p{1,5cm}p{4.5cm}}
\begin{equation} \eqlabel{Phi nat} \hspace{-2,2cm}
\gbeg{4}{5}
\got{1}{X} \gnl
\gcl{1} \gcn{1}{1}{2}{1} \gelt{\s\Phi} \gcn{1}{1}{0}{1} \gnl  %
\gcl{2} \gcl{2} \gcl{2} \gcl{2} \gnl
\gob{1}{X} \gob{1}{F} \gob{1}{F} \gob{1}{F} 
\gend=
\gbeg{2}{6}
\got{7}{X} \gnl
\gcn{1}{1}{2}{1} \gelt{\s\Phi'} \gcn{1}{1}{0}{1} \gcl{1} \gnl 
\gcl{1} \gcl{1} \glmptb \gnot{\hspace{-0,34cm}\tau_{F,X}} \grmptb \gnl
\gcl{1} \glmptb \gnot{\hspace{-0,34cm}\tau_{F,X}} \grmptb \gcl{1} \gnl
\glmptb \gnot{\hspace{-0,34cm}\tau_{F,X}} \grmptb \gcl{1} \gcl{1} \gnl
\gob{1}{X} \gob{1}{F} \gob{1}{F} \gob{1}{F} 
\gend
\end{equation} & & 
\begin{equation} \eqlabel{YBE BBX} 
\gbeg{3}{5}
\got{1}{F'} \got{1}{F'} \got{1}{X} \gnl
\gcl{1} \glmptb \gnot{\hspace{-0,34cm}\tau_{F,X}} \grmptb \gnl
\glmptb \gnot{\hspace{-0,34cm}\tau_{F,X}} \grmptb \gcl{1} \gnl
\gcl{1} \glmptb \gnot{\hspace{-0,34cm}\tau_{F,F}} \grmptb \gnl
\gob{1}{X} \gob{1}{F} \gob{1}{F}
\gend=
\gbeg{3}{5}
\got{1}{F'} \got{1}{F'} \got{1}{X} \gnl
\glmptb \gnot{\hspace{-0,34cm}\tau_{F',F'}} \grmptb \gcl{1} \gnl
\gcl{1} \glmptb \gnot{\hspace{-0,34cm}\tau_{F,X}} \grmptb \gnl
\glmptb \gnot{\hspace{-0,34cm}\tau_{F,X}} \grmptb \gcl{1} \gnl
\gob{1}{X} \gob{1}{F} \gob{1}{F.}
\gend
\end{equation} 
\end{tabular}
\end{center} 
$$ \textnormal{ \hspace{-1,7cm} \footnotesize $\tau_{\Id_\A,X}$ is natural w.r.t. $\Phi$} \hspace{4,5cm} \textnormal{\footnotesize YBE for $FFX$} $$ \vspace{-0,7cm}

\medskip

\underline{2-cells:} are 2-cells in $\Mnd(\K)$, that is, $\zeta: (X,\tau_{F,X})\to(Y,\tau_{F,Y})$ 
so that the identity:
\begin{equation}  \eqlabel{2-cells QB}
\gbeg{2}{4}
\got{1}{F'} \got{1}{X}\gnl
\glmptb \gnot{\hspace{-0,34cm}\tau_{F,X}} \grmptb \gnl
\gbmp{\zeta} \gcl{1} \gnl
\gob{1}{Y} \gob{1}{F}
\gend=
\gbeg{2}{4}
\got{1}{F'} \got{1}{X}\gnl
\gcl{1} \gbmp{\zeta} \gnl
\glmptb \gnot{\hspace{-0,34cm}\tau_{F,Y}} \grmptb \gnl
\gob{1}{Y} \gob{1}{F}
\gend
\end{equation} 
holds.



The horizontal composition of 1-cells, the vertical and horizontal composition of 2-cells, 
the identity 1-cell on a 0-cell and 
the identity 2-cell on a 1-cell 
are given as in $\Mnd(\K)$.

\bigskip

\begin{rem}
For any $(X, \tau_{F,X})\in\Mnd(\K)$ the 2-cell 
$\tau_{FF,X}=
\gbeg{3}{4}
\got{1}{F'} \got{1}{F'} \got{1}{X} \gnl
\gcl{1} \glmptb \gnot{\hspace{-0,34cm}\tau_{F,X}} \grmptb \gnl
\glmptb \gnot{\hspace{-0,34cm}\tau_{F,X}} \grmptb \gcl{1} \gnl
\gob{1}{X} \gob{1}{F} \gob{1}{F} 
\gend$ 
fulfills the conditions \equref{monadic d.l.}--\equref{comonadic d.l.}. On the other hand, note that 
$\tau_{\Id_\A,X}$ is nothing but identity 2-cell on $X$ in $\K$. Then to say that $\tau_{\Id_\A,X}$ is natural with respect to 
$\Phi: \Id_\A\to FFF$, is to say that: $(X\times\Phi)\comp\tau{\Id_\A,X}=\tau_{FFF,X}\comp(\Phi'\times X)$ holds, which is \equref{Phi nat}. A similar 
situation happens in \equref{Phi nat new}.  
\end{rem}

The strict monoidal category $\QB(\K)(F)$ then has for objects pairs $(X, \tau_{F,X})$ where $X:\A\to\A$ is a 1-cell and $\tau_{F,X}: FX\to XF$ is a 2-cell in $\K$, 
so that \equref{monadic d.l.} -- \equref{YBE BBX} hold with $\A=\A'$ and $F=F'$. Morphisms of $\QB(\K)(F)$ are the same as those of $\Tau(\A, F)$. 

\medskip

\begin{defn} \delabel{module cat over F Phi}
The category of left Tambara modules over a quasi-bimonad $(\A, F, \Phi)$ we denote by ${}_{(\A,F,\Phi)}\Tau$ and we define it as follows. Its objects are 
triples $(X,\tau_{F,X}, \nu)$, where $(X,\tau_{F,X})$ are objects of $\QB(\K)(F)$ so that $(X, \nu)$ is a left $F$-module in $\K$ and the action 
$\nu: (FX, \tau_{F, FX})\to (X, \tau_{F, X})$ is a morphism in $\QB(\K)(F)$. 
Morphisms of ${}_{(\A,F,\Phi)}\Tau$ are left $F$-linear morphisms in $\QB(\K)(F)$. 
\end{defn}

Again, we will abuse notation and we will denote by $(X,\tau_{F,X})$ the objects of ${}_{(\A,F,\Phi)}\Tau$ without expliciting the $F$-action. 

\medskip

Loosely speaking, the objects of ${}_{(\A,F,\Phi)}\Tau$ are pairs $(X,\tau_{F,X})$ so that $X$ is a left $F$-module and the 2-cell $\tau_{F,X}$ 
is both monadic and comonadic with respect to $F$, it is natural with respect to the left $F$-module action on $X$ and satisfies \equref{Phi nat} and 
\equref{YBE BBX}. The 2-cell $\tau_{F,F}$ is left and right monadic and left and right comonadic distributive law. 
Morphisms of ${}_{(\A,F,\Phi)}\Tau$ are left $F$-linear 2-cells $\zeta: X\to Y$ in $\K$ so that \equref{2-cells in Mnd} holds with $B=B'=F$.

\bigskip

Let us now introduce some useful tool for the further computations. If $F$ is  a monad, then so is $FF$ with structure morphisms \equref{wreath (co)product}, 
and similarly $FFF$. 
We now set the following notation: 
\begin{equation} \eqlabel{FF module} 
\gbeg{3}{4}
\got{1}{FF} \got{3}{XY} \gnl
\gcn{2}{1}{1}{3} \gcl{1}  \gnl
\gvac{1} \glmf \gcn{1}{1}{-1}{-1} \gnl
\gvac{2} \gob{1}{XY} 
\gend=
\gbeg{3}{4}
\got{1}{F} \got{1}{F} \got{1}{X} \got{1}{Y} \gnl
\gcl{1} \glmptb \gnot{\hspace{-0,34cm}\tau_{F,X}} \grmptb \gcl{1} \gnl
\glm \glm \gnl
\gvac{1} \gob{1}{X} \gob{3}{Y}
\gend\hspace{2,4cm} 
\gbeg{3}{4}
\got{1}{FFF} \got{3}{XYZ} \gnl
\gcn{2}{1}{1}{3} \gcl{1}  \gnl
\gvac{1} \glmf \gcn{1}{1}{-1}{-1} \gnl
\gvac{2} \gob{1}{XYZ} 
\gend=
\gbeg{4}{5}
\got{1}{F} \got{1}{F} \got{1}{F} \got{1}{X} \got{1}{Y} \got{1}{Z} \gnl
\gcl{1} \gcl{1} \glmptb \gnot{\hspace{-0,34cm}\tau_{F,X}} \grmptb \gcl{1} \gcl{2} \gnl
\gcl{1} \glmptb \gnot{\hspace{-0,34cm}\tau_{F,X}} \grmptb \glmptb \gnot{\hspace{-0,34cm}\tau_{F,Y}} \grmptb \gnl
\glm \glm \glm \gnl
\gvac{1} \gob{1}{X} \gvac{1} \gob{1}{Y} \gvac{1} \gob{1}{Z} 
\gend
\end{equation}
Observe that by \equref{B act XY tau} one has: 
$$
\gbeg{4}{4}
\got{1}{} \got{1}{F} \got{3}{XYZ} \gnl
\gvac{1} \gcn{2}{1}{1}{3} \gcl{1}  \gnl
\gvac{2} \glm \gnl
\gvac{3} \gob{1}{XYZ} 
\gend=
\gbeg{4}{4}
\got{1}{F} \got{3}{XYZ} \gnl
\glmpt \gnot{\hspace{-0,34cm}\Delta^2} \grmpb \gcl{1}  \gnl
\gvac{1} \glmf \gcn{1}{1}{-1}{-1}  \gnl
\gvac{2} \gob{1}{XYZ} 
\gend
$$
where $\Delta^2=(\Delta\times id_F)\Delta=(\id_F\times\Delta)\Delta: F\to FFF$. 

\begin{prop} \prlabel{FFF rules}
Let $(\A, F, \Phi)$ be a quasi-bimonad in $\K$ and $(X,\tau_{F,X}),(Y,\tau_{F,Y}), (Z, \tau_{F,Z})\in {}_{(\A,F,\Phi)}\Tau$. 
The following relations hold true: 
\begin{center} 
\begin{tabular}{p{9cm}p{0.5cm}p{5cm}}
\begin{equation} \eqlabel{FF module rule}
\gbeg{5}{6}
\got{1}{FFF} \gvac{1} \got{1}{FFF} \got{3}{XYZ} \gnl
\gcn{2}{2}{1}{5} \gcn{2}{1}{1}{3} \gcl{1}  \gnl
\gvac{3} \glmf \gcn{1}{1}{-1}{-1} \gnl
\gvac{2} \gcn{2}{1}{1}{3} \gcl{1}  \gnl
\gvac{3} \glmf \gcn{1}{1}{-1}{-1} \gnl
\gvac{3} \gob{3}{XYZ} 
\gend=
\gbeg{3}{6}
\got{1}{FFF} \got{3}{FFF} \got{1}{XYZ} \gnl
\gwmu{3} \gvac{1} \gcl{2}  \gnl
\gvac{1} \gcn{2}{1}{1}{5} \gnl
\gvac{3} \glmf \gcn{1}{2}{-1}{-1} \gnl
\gvac{4} \gob{1}{XYZ} 
\gend
\end{equation} & & 
\begin{equation*} 
\gbeg{3}{5}
\got{1}{} \got{3}{XYZ} \gnl
\gu{1} \gvac{1} \gcl{1}  \gnl
\gcn{2}{1}{1}{3} \gcl{1}  \gnl
\gvac{1} \glmf \gcn{1}{1}{-1}{-1} \gnl
\gvac{2} \gob{1}{XYZ} 
\gend=
\gbeg{3}{5}
\got{1}{XYZ} \gnl
\gcl{3}  \gnl
\gob{1}{XYZ} 
\gend
\end{equation*} 
\end{tabular}
\end{center}

\begin{equation} \eqlabel{nat FFF}
\gbeg{4}{5}
\got{1}{F} \got{2}{FFF} \got{2}{XYZ} \gnl
\glmptb \gnot{\hspace{-0,34cm}\tau_{F,FFF}} \grmptb \gcn{1}{1}{3}{1} \gnl
\gcl{1} \glmptb \gnot{\hspace{-0,34cm}\tau_{F,XYZ}} \grmptb \gnl
\glmf \gcn{1}{1}{-1}{-1} \gcn{1}{1}{-1}{-1} \gnl
\gob{2}{XYZ} \gob{1}{F} 
\gend=
\gbeg{3}{5}
\got{1}{F} \got{2}{FFF} \got{2}{XYZ} \gnl
\gcn{2}{2}{1}{5} \glmf \gcn{1}{2}{-1}{-1} \gnl
 \gnl
\gvac{2} \glmptb \gnot{\hspace{-0,34cm}\tau_{F,XYZ}} \grmptb  \gnl
\gvac{1} \gob{2}{XYZ} \gob{1}{F} 
\gend
\end{equation}
\end{prop}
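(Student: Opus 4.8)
The plan is to prove both identities by a direct string-diagram computation, after unfolding the iterated $FFF$-action on $XYZ$ according to its definition in \equref{FF module} into the three ``diagonal'' legs $F\rightharpoonup X$, $F\rightharpoonup Y$, $F\rightharpoonup Z$. The point to notice first is that the action in \equref{FF module} is assembled from the left $F$-actions $\nu^X,\nu^Y,\nu^Z$ and the distributive laws $\tau_{F,X},\tau_{F,Y},\tau_{F,Z}$ in the same spirit as the left $B$-actions of \prref{B act XM psi} and \prref{B act XY quasi-bim}, but now with the monad $FFF$ --- carrying the iterated wreath multiplication \equref{wreath (co)product} --- in the role of $B$. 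From this viewpoint the two clauses of \equref{FF module rule} are exactly the statement that $XYZ$, equipped with the action \equref{FF module}, is a left $FFF$-module; notice that $\Phi$ is not used.

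For the unit clause, $\eta_{FFF}=\eta_F\times\eta_F\times\eta_F$, and the unit half of \equref{monadic d.l.} lets each copy of $\eta_F$ slide past the positional braidings $\tau_{F,X},\tau_{F,Y}$ until it stands directly over its target leg, where $\nu^X(\eta_F\times\Id_X)=\Id_X$ (and similarly for $Y$ and $Z$) finishes it. For the associativity clause one writes the left-hand side as ``act diagonally, then act diagonally'' on $FFF\cdot FFF\cdot XYZ$ and the right-hand side as ``multiply in $FFF$, then act diagonally'', and then works leg by leg: the two successive actions on $X$ fuse, by associativity of $\nu^X$, into a single action by the $\mu_F$-product; the monadic-distributive-law identity (first part of \equref{monadic d.l.}) rewrites ``braiding the $\mu_F$-product of two copies of $F$ past $X$'' as ``braiding each copy past $X$ in turn''; and the residual mismatch between the two sides --- a reshuffling of the $\tau_{F,F}$-braidings coming from \equref{wreath (co)product} against the $\tau_{F,X},\tau_{F,Y},\tau_{F,Z}$-braidings --- is absorbed using the Yang--Baxter equations \equref{YBE BBX} for $FFX$, $FFY$, $FFZ$ together with the naturality \equref{nat lm} of each $\tau_{F,X_i}$ with respect to $\nu^{X_i}$. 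Since $\tau_{F,F}$ is assumed left and right monadic, the monad structure \equref{wreath (co)product} on $FFF$ and the iterated braidings that implicitly occur are well defined and well behaved (cf. the Remark after \equref{2-cells QB}), so nothing beyond ``$X,Y,Z\in{}_{(\A,F,\Phi)}\Tau$'' is needed.

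Identity \equref{nat FFF} is the exact analogue of \equref{nat lm} with $FFF$ in place of $B$: it asserts that the $FFF$-action is natural with respect to the distributive laws, where $\tau_{F,FFF}$ and $\tau_{F,XYZ}$ are the iterated braidings (the outer $F$ moved past $F_1,F_2,F_3$, resp. past $X,Y,Z$). Unfolding both sides and again proceeding leg by leg, commuting the outer $F$'s braiding past the action $\nu^{X_i}$ on the $i$-th leg is precisely \equref{nat lm} for $X_i$, and commuting it past the two other positional braidings (and past the relevant copies of $\tau_{F,F}$) is \equref{YBE BBX}; reassembling the three legs yields the claim. I expect the only real difficulty to be combinatorial: in the associativity clause of \equref{FF module rule} there are six copies of $F$ and three module legs, hence nine crossings to track, and one must apply \equref{monadic d.l.}, \equref{YBE BBX}, \equref{nat lm} and the module associativities in a carefully chosen order for the two sides to meet; once the legs are separated there is no conceptual obstacle.
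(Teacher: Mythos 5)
Your proof is correct and follows essentially the same route as the paper's: the paper's own (very compressed) argument for \equref{FF module rule} likewise consists of invoking that the $F$-action is a morphism in $\QB(\K)(F)$ (your appeal to \equref{nat lm} with $F$ in place of $B$), the Yang--Baxter identity \equref{YBE BBX} applied once and then twice, the monadic distributive law \equref{monadic d.l.}, and the left $F$-module axioms, while for \equref{nat FFF} it uses \equref{YBE BBX} followed by the same naturality of the action. Your leg-by-leg organisation and the reading of \equref{FF module rule} as the statement that $XYZ$ is a left $FFF$-module are just a more explicit packaging of the same computation.
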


\begin{proof}
For the first identity first apply that the $F$-action is a morphism in $\QB(\K)(F)$, then apply the Yang-Baxter identity \equref{YBE BBX} (first once, then twice) 
and finally the monadic distributive law. The second identity is obvious, and for the third one apply first the Yang-Baxter identity \equref{YBE BBX} (first once, then twice) 
and then the fact that the $F$-action is a morphism in $\QB(\K)(F)$. 
\qed\end{proof}

We now may prove:

\begin{thm} \thlabel{quasi-bim monoidal}
For a quasi-bimonad $(\A,F,\Phi)$ in $\K$, 
the category ${}_{(\A,F,\Phi)}\Tau$ is non-strict monoidal with the unit 
object $(\Id_{\A}, id_F)$ with the trivial $F$-action, the tensor product as in \prref{B act XY quasi-bim}, the unit constraints given by identities and the associativity constraint 
$\alpha: (XY)Z\to X(YZ)$ given by: 
\begin{equation} \eqlabel{assoc. lm}
\alpha_{X,Y,Z}=
\gbeg{6}{6}
\gvac{3} \got{1}{X} \got{1}{Y} \got{1}{Z} \gnl
\gcn{1}{1}{2}{1} \gelt{\s\Phi} \gcn{1}{1}{0}{1} \gcl{1} \gcl{2} \gcl{3} \gnl 
\gcl{1} \gcl{1} \glmptb \gnot{\hspace{-0,34cm}\tau_{F,X}} \grmptb \gnl
\gcl{1} \glmptb \gnot{\hspace{-0,34cm}\tau_{F,X}} \grmptb \glmptb \gnot{\hspace{-0,34cm}\tau_{F,Y}} \grmptb \gcl{1} \gnl
\glm \glm \glm \gnl
\gvac{1} \gob{1}{X} \gob{1}{} \gob{1}{Y} \gob{1}{} \gob{1}{Z} 
\gend=
\gbeg{3}{4}
\got{3}{XYZ} \gnl
\gelt{\s\Phi} \gcl{1}  \gnl
\glmf \gcn{1}{1}{-1}{-1} \gnl
\gvac{1} \gob{1}{XYZ} 
\gend
\end{equation}
for $(X,\tau_{F,X}),(Y,\tau_{F,Y}), (Z, \tau_{F,Z})\in {}_{(\A,F,\Phi)}\Tau$. 
\end{thm}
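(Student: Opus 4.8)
The plan is to verify directly that the data $\bigl({}_{(\A,F,\Phi)}\Tau, \boxtimes, (\Id_{\A},id_F), \alpha, \text{id}, \text{id}\bigr)$ satisfies the axioms of a (non-strict) monoidal category. The tensor product of objects is \prref{B act XY quasi-bim}, which already tells us $XY$ is again a Tambara module; on morphisms $\zeta\boxtimes\xi$ is simply the horizontal composite in $\K$, and one must check it is again $F$-linear and a morphism in $\QB(\K)(F)$ — this is routine from the compatibility of $\tau_{F,-}$ with the actions. The main content is thus: (i) $\alpha_{X,Y,Z}$ as defined in \equref{assoc. lm} is a well-defined morphism in ${}_{(\A,F,\Phi)}\Tau$, i.e.\ it is $F$-linear and compatible with the $\tau$'s; (ii) $\alpha$ is natural in all three variables; (iii) $\alpha$ is invertible; (iv) the pentagon axiom holds; (v) the triangle axiom holds (which, since the unit constraints are identities, amounts to the normalization of $\Phi$).

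First I would establish that $\alpha_{X,Y,Z}$ is a morphism in the category. $F$-linearity: one must show $\alpha_{X,Y,Z}$ intertwines the $B=F$-actions on $(XY)Z$ and $X(YZ)$ given by \equref{B act XY tau} (equivalently, by the iterated formula, acting through $\Delta^2$). Using the rewriting recorded just before \prref{FFF rules} — that the $F$-action on $XYZ$ factors as $\Delta^2$ followed by the $FFF$-action of \equref{FF module} — this reduces to showing that pre-composing $\Phi$ with $\Delta^2$ in the two bracketings agree after the $FFF$-action, which is exactly the quasi-coassociativity \equref{quasi coass.} together with \equref{FF module rule} and \equref{nat FFF}. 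Compatibility of $\alpha$ with the $\tau_{F,-}$ structure (i.e.\ that $\alpha$ is a 2-cell in $\QB(\K)(F)$, $\tau_{F,X(YZ)}\comp(F'\times\alpha)=(\alpha\times F)\comp\tau_{F,(XY)Z}$) uses \equref{Phi nat} (naturality of $\tau$ w.r.t.\ $\Phi$), the YBE \equref{YBE BBX}, and the fact that each $\tau_{F,-}$ is a monadic/comonadic distributive law; this is precisely the kind of bookkeeping that \prref{FFF rules} was set up to handle. Naturality of $\alpha$ in $X,Y,Z$ follows because each building block of \equref{assoc. lm} is natural: $\Phi$ is a fixed 2-cell, the $\tau_{F,-}$'s are natural families (condition \equref{2-cells QB}), and the actions are natural.

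Next, invertibility: since $\Phi$ is convolution invertible, $\alpha_{X,Y,Z}^{-1}$ is obtained by replacing $\Phi$ with $\Phi^{-1}$ in the ``short'' form $\bigl(\text{$FFF$-act}\bigr)\comp(\Phi^{-1}\times XYZ)$; that $\alpha\comp\alpha^{-1}=\text{id}=\alpha^{-1}\comp\alpha$ follows from \equref{FF module rule} (composing the two $FFF$-actions into one via the monad multiplication of $FFF$), the convolution inverse property $\mu^{(3)}(\Phi\times\Phi^{-1})=\eta^{(3)}(\Epsilon\times\Epsilon\times\Epsilon)$-type identity for $FFF$, and the unital part of \equref{FF module rule} (action of the unit is trivial). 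The triangle axiom: with unit constraints identities it says the composite involving $\alpha_{X,\Id_\A,Y}$ collapses to the identity on $XY$; this follows from the normalization $\bigl(\id_F\times\Epsilon\times\id_F\bigr)\Phi=\eta\times\eta$ (the normalization condition imposed in the definition of a quasi-bimonad) together with the module-monad unit axiom \equref{left mod monad} and the counit axioms, which make the middle $F$-strand act trivially on $\Id_\A$.

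The hard part — and the heart of the proof — will be the pentagon axiom, which must come down to the 3-cocycle condition \equref{3-coc. cond.} for $\Phi$. The strategy is to write both sides of the pentagon, $\alpha_{X,Y,ZW}\comp\alpha_{XY,Z,W}$ versus $(\id_X\boxtimes\alpha_{Y,Z,W})\comp\alpha_{X,YZ,W}\comp(\alpha_{X,Y,Z}\boxtimes\id_W)$, using the short form of $\alpha$; each $\boxtimes$ of an identity with an $\alpha$ introduces a $\Phi$ ``shifted'' onto the appropriate strands via $\tau_{F,-}$ (here \equref{Phi nat} is used to move $\Phi'$ past the module strands), and each composite of two $FFF$-actions is fused into a single $FFFF$- (or $FFF$ re-bracketed) action via the multiplicativity identity \equref{FF module rule} and \equref{nat FFF}. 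After these reductions both sides become a single action of $F^{\boxtimes?}$ on $XYZW$ precomposed with a 2-cell $\Id_\A\to F^{\otimes}$ built from three copies of $\Phi$ and several $\tau_{F,F}$'s — and the claim is that the two such 2-cells coincide, which is verbatim \equref{3-coc. cond.}. The bookkeeping of which $\Phi$ sits on which strand, and getting the distributive laws $\tau_{F,F}$ in \equref{3-coc. cond.} to line up with the ones produced by moving $\Phi$ through \equref{Phi nat} and \equref{YBE BBX}, is where all the difficulty lies; I expect this to be a lengthy but mechanical string-diagram chase once the reduction lemmas of \prref{FFF rules} are in hand.
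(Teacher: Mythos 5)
Your proposal is correct and follows essentially the same route as the paper's proof: $F$-linearity of $\alpha$ reduces to quasi-coassociativity \equref{quasi coass.} via \equref{FF module rule}, the Tambara-morphism property of $\alpha$ follows from \equref{Phi nat new} and \equref{nat FFF}, the pentagon collapses (after the same fusion of iterated $FFF$-actions) to the 3-cocycle condition \equref{3-coc. cond.}, and the triangle is the normalization of $\Phi$. The only differences are cosmetic — you spell out invertibility via $\Phi^{-1}$ and well-definedness on morphisms, which the paper leaves as immediate.
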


\begin{proof}
Let us first show that $\alpha_{X,Y,Z}$ is an isomorphism in ${}_{(\A,F,\Phi)}\Tau$. It is clearly invertible. 
We see that the identity \equref{2-cells in Mnd} is fulfilled: 
\begin{equation} \eqlabel{alfa is Tambara} 
\gbeg{3}{5}
\gvac{1} \got{1}{F} \got{2}{XYZ} \gnl
\gvac{1} \glmptb \gnot{\hspace{-0,34cm}\tau_{F,XYZ}} \grmptb \gnl
\gelt{\s\Phi} \gcl{1} \gcl{2} \gnl
\glmf \gcn{1}{1}{-1}{-1} \gnl
\gob{2}{XYZ} \gob{1}{F} 
\gend\stackrel{\equref{Phi nat new} }{=}
\gbeg{3}{6}
\got{1}{F} \gvac{1} \got{2}{XYZ} \gnl
\gcl{1} \gelt{\s\Phi} \gcl{2} \gnl
\glmptb \gnot{\hspace{-0,34cm}\tau_{F,FFF}} \grmptb \gnl
\gcl{1} \glmptb \gnot{\hspace{-0,34cm}\tau_{F,XYZ}} \grmptb \gnl
\glmf \gcn{1}{1}{-1}{-1} \gcn{1}{1}{-1}{-1} \gnl
\gob{2}{XYZ} \gob{1}{F} 
\gend\stackrel{\equref{nat FFF} }{=}
\gbeg{3}{6}
\got{2}{F} \gvac{1} \got{1}{XYZ} \gnl
\gcn{1}{1}{2}{2} \gvac{1} \gelt{\s\Phi} \gcl{2} \gnl
\gcn{2}{2}{2}{5} \glmf \gcn{1}{2}{-1}{-1} \gnl
 \gnl
\gvac{2} \glmptb \gnot{\hspace{-0,34cm}\tau_{F,XYZ}} \grmptb  \gnl
\gvac{2} \gob{1}{XYZ} \gob{1}{\hspace{0,24cm}F} 
\gend
\end{equation} 
Observe that left $F$-module structure on $(XY)Z$ and on $X(YZ)$ are both given via 
$\psi_{F,XY}$ (see \equref{B act XY psi}). Now, left $F$-linearity of $\alpha_{X,Y,Z}:(XY)Z\to X(YZ)$ can be stated like this: 
$$
\gbeg{4}{6}
\got{1}{} \got{1}{F} \got{3}{XYZ} \gnl
\gvac{1} \gcn{2}{1}{1}{3} \gcl{1}  \gnl
\gvac{1} \gelt{\s\Phi}  \glm \gnl
\gvac{1} \gcn{2}{1}{1}{3} \gcl{1}  \gnl
\gvac{2} \glmf \gcn{1}{1}{-1}{-1} \gnl
\gvac{3} \gob{1}{XYZ} 
\gend=
\gbeg{3}{6}
\got{2}{F} \gvac{1} \got{1}{XYZ} \gnl
\gcn{1}{1}{2}{2} \gvac{1} \gelt{\s\Phi} \gcl{2} \gnl
\gcn{2}{2}{2}{5} \glmf \gcn{1}{2}{-1}{-1} \gnl
 \gnl
\gvac{2} \glm \gnl
\gvac{3} \gob{1}{XYZ} 
\gend\qquad\Leftrightarrow\qquad
\gbeg{4}{6}
\got{1}{} \got{1}{F} \got{3}{XYZ} \gnl
\gvac{1} \glmpt \gnot{\hspace{-0,34cm}\Delta^2} \grmpb \gcl{1}  \gnl
\gelt{\s\Phi} \gvac{1} \glmf \gcn{1}{1}{-1}{-1}  \gnl
\gvac{1} \gcn{2}{1}{-1}{3} \gcl{1}  \gnl
\gvac{2} \glmf \gcn{1}{1}{-1}{-1} \gnl
\gvac{3} \gob{1}{XYZ} 
\gend=
\gbeg{3}{6}
\got{2}{F} \gvac{1} \got{1}{XYZ} \gnl
\gcn{1}{1}{2}{2} \gvac{1} \gelt{\s\Phi} \gcl{2} \gnl
\gcn{2}{1}{2}{3} \glmf \gcn{1}{2}{-1}{-1} \gnl
\gvac{1} \glmpt \gnot{\hspace{-0,34cm}\Delta^2} \grmpb \gnl
\gvac{2} \glmf \gcn{1}{1}{-1}{-1} \gnl
\gvac{3} \gob{1}{XYZ} 
\gend\qquad\stackrel{\equref{FF module rule}}{\Leftrightarrow}\qquad
\gbeg{4}{6}
\got{1}{} \got{1}{F} \got{3}{XYZ} \gnl
\gelt{\s\Phi} \glmpt \gnot{\hspace{-0,34cm}\Delta^2} \grmpb \gcl{3} \gnl
\gwmu{3}  \gnl
\gcn{1}{1}{3}{5} \gnl
\gvac{2} \glmf \gcn{1}{1}{-1}{-1} \gnl
\gvac{3} \gob{1}{XYZ} 
\gend=
\gbeg{3}{6}
\got{1}{F} \gvac{2} \got{1}{XYZ} \gnl
\glmptb \gnot{\hspace{-0,34cm}\Delta^2} \grmp \gelt{\s\Phi} \gcl{3} \gnl
\gwmu{3}  \gnl
\gcn{1}{1}{3}{5} \gnl
\gvac{2} \glmf \gcn{1}{1}{-1}{-1} \gnl
\gvac{3} \gob{1}{XYZ} 
\gend 
$$
which is equivalent to \equref{quasi coass.} (choose $X=Y=Z=F$ and compose with $\eta_F$). In the pentagon axiom for $\alpha$: 
\begin{equation}  \eqlabel{pentagon alfa}
(\Id_X\times\alpha_{Y,Z,W})\comp\alpha_{X,YZ,W}\comp(\alpha_{X,Y,Z}\times\Id_W)=\alpha_{X,Y,ZW}\comp\alpha_{XY,Z,W}
\end{equation} 
by the comonadic distributive law we have: 
$$\alpha_{XY,Z,W}=
\gbeg{4}{7}
\got{1}{} \got{1}{} \got{5}{XYZ} \gnl
\gvac{1} \gelt{\s\Phi} \gvac{2} \gcl{3}  \gnl
\gvac{1} \gcl{1} \gnl
\glmp \gcmpt \gnot{\hspace{-0,34cm}\Delta\times\Id\times\Id} \gcmpb \grmp \gnl
\gvac{2} \gcn{2}{1}{1}{3} \gcl{1}  \gnl
\gvac{3} \glmf \gcn{1}{1}{-1}{-1} \gnl
\gvac{4} \gob{1}{XYZ} 
\gend\qquad
\alpha_{X,YZ,W}=
\gbeg{4}{7}
\got{1}{} \got{1}{} \got{5}{XYZ} \gnl
\gvac{1} \gelt{\s\Phi} \gvac{2} \gcl{3}  \gnl
\gvac{1} \gcl{1} \gnl
\glmp \gcmpt \gnot{\hspace{-0,34cm}\Id\times\Delta\times\Id} \gcmpb \grmp \gnl
\gvac{2} \gcn{2}{1}{1}{3} \gcl{1}  \gnl
\gvac{3} \glmf \gcn{1}{1}{-1}{-1} \gnl
\gvac{4} \gob{1}{XYZ} 
\gend\qquad
\alpha_{X,Y,ZW}=
\gbeg{4}{7}
\got{1}{} \got{1}{} \got{5}{XYZ} \gnl
\gvac{1} \gelt{\s\Phi} \gvac{2} \gcl{3}  \gnl
\gvac{1} \gcl{1} \gnl
\glmp \gcmpt \gnot{\hspace{-0,34cm}\Id\times\Id\times\Delta} \gcmpb \grmp \gnl
\gvac{2} \gcn{2}{1}{1}{3} \gcl{1}  \gnl
\gvac{3} \glmf \gcn{1}{1}{-1}{-1} \gnl
\gvac{4} \gob{1}{XYZ} 
\gend
$$
then \equref{pentagon alfa} becomes: 
$$
\gbeg{8}{9}
\got{2}{} \gvac{4} \got{1}{XYZ} \got{2}{W} \gnl
\gvac{3} \gelt{\s\Phi} \gvac{1} \gelt{\s\Phi} \gcl{2} \gcl{4} \gnl
\gvac{3} \gcl{1} \gvac{1} \glmf \gcn{1}{2}{-1}{-1} \gnl
\gelt{\s\Phi} \gvac{1} \glmp \gcmptb \gnot{\hspace{-0,34cm}\Id\times\Delta\times\Id} \gcmpb \grmp \gnl
\gcl{1} \gvac{2} \gcl{1} \glmptb \gnot{\tau_{F,XYZ}} \gcmp \grmptb \gnl
\glmpt \gnot{\eta_F\times\Id_{FFF}} \gcmpb \grmpb \glmf \gcn{1}{1}{-1}{-1} \glm \gnl
\gvac{1} \gcl{1} \glmptb \gnot{\tau_{F,XYZ}} \gcmp \grmpb \gcn{1}{1}{5}{1} \gnl
\gvac{1} \glmf \gcn{1}{1}{-1}{-1} \glm \gnl
\gvac{2} \gob{1}{XYZ}  \gob{5}{W}
\gend=
\gbeg{8}{9}
\got{2}{} \gvac{5} \got{1}{XYZ} \got{2}{W} \gnl
\gvac{4} \gelt{\s\Phi} \gvac{2} \gcl{3} \gcl{4} \gnl
\gvac{4} \gcl{1} \gvac{1} \gnl
\gvac{1} \gelt{\s\Phi} \gvac{1} \glmp \gcmptb \gnot{\hspace{-0,34cm}\Delta\times\Id\times\Id} \gcmpb \grmp \gnl
\gvac{1} \gcl{1} \gvac{2} \gcl{1} \glmptb \gnot{\tau_{F,XYZ}} \gcmp \grmpb \gnl
 \glmp \gcmpt \gnot{\hspace{-0,34cm}\Id\times\Id\times\Delta} \gcmpb \grmp\glmf \gcn{1}{1}{-1}{-1} \glm \gnl
\gvac{2} \gcl{1} \glmptb \gnot{\tau_{F,XYZ}} \gcmp \grmpb \gcn{1}{1}{5}{1} \gnl
\gvac{2} \glmf \gcn{1}{1}{-1}{-1} \glm \gnl
\gvac{3} \gob{1}{XYZ}  \gob{5}{W}
\gend\qquad\Leftrightarrow\qquad
\gbeg{8}{9}
\got{2}{} \gvac{4} \got{1}{XYZ}  \gnl
\gvac{3} \gelt{\s\Phi} \gvac{1} \gelt{\s\Phi} \gcl{2} \gnl
\gvac{3} \gcl{1} \gvac{1} \glmf \gcn{1}{2}{-1}{-1} \gnl
\gelt{\s\Phi} \gvac{1} \glmp \gcmptb \gnot{\hspace{-0,34cm}\Id\times\Delta\times\Id} \gcmpb \grmp \gnl
\gcl{1} \gvac{2} \gcl{1} \glmptb \gnot{\tau_{F,XYZ}} \gcmp \grmpb \gnl
\glmpt \gnot{\eta_F\times\Id_{FFF}} \gcmpb \grmpb \glmf \gcn{1}{1}{-1}{-1} \gcl{2} \gnl
\gvac{1} \gcl{1} \glmptb \gnot{\tau_{F,XYZ}} \gcmp \grmpb \gnl
\gvac{1} \glmf \gcn{1}{1}{-1}{-1} \gwmu{3} \gnl
\gvac{2} \gob{1}{XYZ}  \gob{5}{F}
\gend=
\gbeg{8}{9}
\got{2}{} \gvac{5} \got{1}{XYZ}  \gnl
\gvac{4} \gelt{\s\Phi} \gvac{2} \gcl{3} \gnl
\gvac{4} \gcl{1} \gvac{1} \gnl
\gvac{1} \gelt{\s\Phi} \gvac{1} \glmp \gcmptb \gnot{\hspace{-0,34cm}\Delta\times\Id\times\Id} \gcmpb \grmp \gnl
\gvac{1} \gcl{1} \gvac{2} \gcl{1} \glmptb \gnot{\tau_{F,XYZ}} \gcmp \grmpb \gnl
 \glmp \gcmpt \gnot{\hspace{-0,34cm}\Id\times\Id\times\Delta} \gcmpb \grmp\glmf \gcn{1}{1}{-1}{-1} \gcl{2} \gnl
\gvac{2} \gcl{1} \glmptb \gnot{\tau_{F,XYZ}} \gcmp \grmpb  \gnl
\gvac{2} \glmf \gcn{1}{1}{-1}{-1} \gwmu{3} \gnl
\gvac{3} \gob{1}{XYZ}  \gob{5}{F}
\gend
$$
the latter by \equref{nat FFF} is equivalent to: 
$$
\gbeg{11}{11}
\got{2}{} \gvac{7} \got{1}{XYZ}  \gnl
\gvac{1} \gelt{\s\Phi} \gvac{2} \gelt{\s\Phi} \gvac{2} \gelt{\s\Phi} \gvac{1} \gcl{4} \gnl
\gvac{1} \gcl{1} \gvac{2} \gcl{1} \gvac{2} \gcl{2} \gnl
\glmp \gnot{\eta_F\times\Id_{FFF}} \gcmpb \grmpb \glmp \gcmptb \gnot{\hspace{-0,34cm}\Id\times\Delta\times\Id} \gcmpb \grmp \gnl
\gvac{1} \gcl{3} \glmptb \gnot{\tau_{F,FFF}} \gcmp \grmptb \glmpt \gnot{\tau_{F,FFF}} \gcmp \grmptb \gnl
\gvac{2}  \gcl{1} \gvac{1} \glmptb \gnot{\tau_{F,FFF}} \gcmp \grmpt \glmptb \gnot{\tau_{F,XYZ}} \gcmp \grmptb  \gnl
\gvac{2} \gwmu{3} \glmptb \gnot{\tau_{F,XYZ}} \gcmp \grmptb \gvac{1} \gcl{1} \gnl
\gvac{1} \gwmu{3} \gvac{1} \gcl{2} \gvac{1} \gwmu{3} \gnl
\gvac{2} \gcn{1}{1}{1}{5} \gvac{5} \gcl{2} \gnl
\gvac{4} \glmf \gcn{1}{1}{-1}{-1} \gnl
\gvac{5} \gob{1}{XYZ}  \gob{5}{F}
\gend=
\gbeg{11}{9}
\got{2}{} \gvac{6} \got{1}{XYZ}  \gnl
\gvac{1} \gelt{\s\Phi} \gvac{3} \gelt{\s\Phi} \gvac{2} \gcl{3} \gnl
\gvac{1} \gcl{1} \gvac{3} \gcl{1} \gnl
\glmp \gcmptb \gnot{\hspace{-0,34cm}\Id\times\Id\times\Delta} \gcmpb \grmp \glmp \gcmpt \gnot{\hspace{-0,34cm}\Delta\times\Id\times\Id} \gcmpb \grmp \gnl
\gvac{1} \gcl{1} \glmptb \gnot{\tau_{F,FFF}} \gcmp \grmptb \gvac{1} \glmpt \gnot{\tau_{F,XYZ}} \gcmp \grmptb \gnl
\gvac{1} \gmu \gvac{1} \glmptb \gnot{\tau_{F,XYZ}} \gcmp \grmptb \gvac{1} \gcl{1}  \gnl
\gvac{1} \gcn{1}{1}{2}{5} \gvac{2} \gcl{1} \gvac{1} \gwmu{3} \gnl
\gvac{3} \glmf \gcn{1}{1}{-1}{-1} \gvac{1} \gcl{1} \gnl
\gvac{4} \gob{1}{XYZ}  \gob{5}{F}
\gend
$$
which by the monadic distributive law is further equivalent to: 
$$
\gbeg{10}{11}
\got{2}{} \gvac{6} \got{2}{XYZ}  \gnl
\gvac{1} \gelt{\s\Phi} \gvac{2} \gelt{\s\Phi} \gvac{2} \gelt{\s\Phi} \gcn{1}{4}{2}{2} \gnl
\gvac{1} \gcl{1} \gvac{2} \gcl{1} \gvac{2} \gcl{2} \gnl
\glmp \gnot{\eta_F\times\Id_{FFF}} \gcmpb \grmpb \glmp \gcmptb \gnot{\hspace{-0,34cm}\Id\times\Delta\times\Id} \gcmpb \grmp \gnl
\gvac{1} \gcl{3} \glmptb \gnot{\tau_{F,FFF}} \gcmp \grmptb \glmpt \gnot{\tau_{F,FFF}} \gcmp \grmptb \gnl
\gvac{2}  \gcl{1} \gvac{1} \glmptb \gnot{\tau_{F,FFF}} \gcmp \grmptb \gcl{1} \gcn{1}{2}{2}{0} \gnl
\gvac{2} \gwmu{3} \gvac{1} \gmu \gnl
\gvac{1} \gwmu{3} \gvac{2} \glmpb \gnot{\tau_{F,XYZ}} \gcmpb \grmp \gnl
\gvac{2} \gcn{3}{1}{1}{7} \gvac{1} \gcl{1} \gcl{2} \gnl
\gvac{5} \glmf \gcn{1}{1}{-1}{-1} \gnl
\gvac{5} \gob{2}{XYZ}  \gob{1}{F}
\gend=
\gbeg{9}{9}
\got{2}{} \gvac{6} \got{1}{XYZ}  \gnl
\gvac{1} \gelt{\s\Phi} \gvac{3} \gelt{\s\Phi} \gvac{2} \gcl{3} \gnl
\gvac{1} \gcl{1} \gvac{3} \gcl{1} \gnl
\glmp \gcmptb \gnot{\hspace{-0,34cm}\Id\times\Id\times\Delta} \gcmpb \grmp \glmp \gcmpt \gnot{\hspace{-0,34cm}\Delta\times\Id\times\Id} \gcmpb \grmp \gnl
\gvac{1} \gcl{1} \glmptb \gnot{\tau_{F,FFF}} \gcmp \grmptb \gvac{1} \gcl{1} \gcn{1}{2}{3}{1} \gnl
\gvac{1} \gmu \gvac{1} \gwmu{3}  \gnl
\gvac{1} \gcn{1}{1}{2}{7} \gvac{3} \glmpt \gnot{\tau_{F,XYZ}} \gcmp \grmptb\gnl
\gvac{4} \glmf \gcn{1}{1}{-1}{-1} \gcl{1} \gnl
\gvac{5} \gob{1}{XYZ}  \gob{3}{F}
\gend$$
and this (choose $X=Y=Z=F$ and compose with $\eta_F$) is equivalent to: 
\begin{equation} \eqlabel{name 3-coc}
\gbeg{10}{6}
\gvac{1} \gelt{\s\Phi} \gvac{2} \gelt{\s\Phi} \gvac{2} \gelt{\s\Phi} \gnl
\gvac{1} \gcl{1} \gvac{2} \gcl{1} \gvac{2} \gcl{1} \gnl
\glmp \gnot{\eta_F\times\Id_{FFF}} \gcmpb \grmpb \glmp \gcmptb \gnot{\hspace{-0,34cm}\Id\times\Delta\times\Id} \gcmpb \grmp \glmp \gnot{\Id_{FFF}\times\eta_F} \gcmpb \grmp \gnl
\gvac{1} \gwmu{4} \gwmu{4} \gnl
\gvac{3} \hspace{-0,34cm} \gwmu{5} \gnl
\gvac{5} \gob{1}{FFFF}
\gend=
\gbeg{9}{8}
\gvac{1} \gelt{\s\Phi} \gvac{2} \gelt{\s\Phi} \gvac{2} \gelt{\s\Phi} \gnl
\gvac{1} \gcl{1} \gvac{2} \gcl{1} \gvac{2} \gcl{2} \gnl
\glmp \gnot{\eta_F\times\Id_{FFF}} \gcmpb \grmp \glmp \gcmptb \gnot{\hspace{-0,34cm}\Id\times\Delta\times\Id} \gcmpb \grmp \gnl
\gvac{1} \gcl{3} \glmptb \gnot{\tau_{F,FFF}} \gcmp \grmptb \glmpt \gnot{\tau_{F,FFF}} \gcmp \grmptb \gnl
\gvac{2}  \gcl{1} \gvac{1} \glmptb \gnot{\tau_{F,FFF}} \gcmp \grmptb \gcl{1} \gnl
\gvac{2} \gwmu{3} \gvac{1} \gmu \gnl
\gvac{1} \gwmu{3} \gvac{2} \gcn{1}{1}{2}{2} \gnl
\gvac{2} \gob{1}{FFF}  \gob{8}{F}
\gend=
\gbeg{8}{6}
\gvac{1} \gelt{\s\Phi} \gvac{3} \gelt{\s\Phi} \gnl
\gvac{1} \gcl{1} \gvac{3} \gcl{1} \gnl
\glmp \gcmptb \gnot{\hspace{-0,34cm}\Id\times\Id\times\Delta} \gcmpb \grmp \glmp \gcmpt \gnot{\hspace{-0,34cm}\Delta\times\Id\times\Id} \gcmpb \grmp \gnl
\gvac{1} \gcl{1} \glmptb \gnot{\tau_{F,FFF}} \gcmp \grmptb \gvac{1} \gcl{1} \gnl
\gvac{1} \gmu \gvac{1} \gwmu{3}  \gnl
\gvac{1} \gob{2}{FFF}  \gob{5}{F}
\gend=
\gbeg{8}{5}
\gvac{1} \gelt{\s\Phi} \gvac{3} \gelt{\s\Phi} \gnl
\gvac{1} \gcl{1} \gvac{3} \gcl{1} \gnl
\glmp \gcmpt \gnot{\hspace{-0,34cm}\Id\times\Id\times\Delta} \gcmpb \grmp \glmp \gcmptb \gnot{\hspace{-0,34cm}\Delta\times\Id\times\Id} \gcmp \grmp \gnl
\gvac{2} \gwmu{4}  \gnl
\gvac{3} \gob{2}{FFFF}
\gend
\end{equation}
Observe that this is precisely \equref{3-coc. cond.} (recall the comonad structure of $FFF$ and apply \equref{Phi nat}).

The unity constraints taken as identities are left $F$-linear by the $\Delta\x\Epsilon$ compatibility of $F$. Their coherence with $\alpha$ means: 
$$
\gbeg{2}{4}
\got{3}{X\Id Y} \gnl
\gelt{\s\Phi} \gcl{1}  \gnl
\glmf \gcn{1}{1}{-1}{-1} \gnl
\gvac{1} \gob{1}{X\Id Y} 
\gend=
\gbeg{5}{6}
\gvac{3} \got{1}{X} \got{1}{Y} \gnl
\gcn{1}{1}{2}{1} \gelt{\s\Phi} \gcn{1}{1}{0}{1} \gcl{1} \gcl{3} \gnl 
\gcl{1} \gcu{1} \glmptb \gnot{\hspace{-0,34cm}\tau_{F,X}} \grmptb \gnl
\gcn{2}{1}{1}{3} \gcl{1} \gcl{1} \gnl 
\gvac{1} \glm \glm \gnl
\gvac{2} \gob{1}{X} \gob{1}{} \gob{1}{Y} 
\gend=
\gbeg{2}{4}
\got{1}{X} \got{1}{Y} \gnl
\gcl{2} \gcl{2} \gnl
\gob{1}{X} \gob{1}{Y} 
\gend
$$
which is fulfilled because $\Phi$ is normalized. 
\qed\end{proof}

The following is direct to prove and it justifies the name ``3-cocycle'' for $\Phi$:

\begin{lma}
If $\rho_{F,F,F}=
\gbeg{3}{5}
\got{5}{FFF} \gnl
\gelt{\s\Phi}\gvac{1}   \gcl{2}  \gnl 
\gcl{1} \gnl
\gwmu{3} \gnl
\gob{3}{FFF} 
\gend$ is a 3-cocycle on $FFF$ in $\K$, where 
$\rho_{FF,F,F}=
\gbeg{9}{6}
\got{9}{FFFF} \gnl
\gvac{1} \gelt{\s\Phi} \gvac{2} \gcl{3} \gnl 
\gvac{1} \gcl{1} \gnl
\glmp \gcmpt \gnot{\hspace{-0,34cm}\Delta\times\Id\times\Id} \gcmpb \grmp \gnl
\gvac{2} \gwmu{3} \gnl
\gob{7}{FFFF} 
\gend$ and so on, then $\Phi$ satisfies \equref{name 3-coc}, {\em i.e.} \equref{3-coc. cond.}. 
\end{lma}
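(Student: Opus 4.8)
The plan is to show that the defining condition \equref{3-coc} of a $3$-cocycle in $\K$, when specialized to the quadruple $X=Y=Z=W=F$, is literally the chain of identities \equref{name 3-coc}; since the latter was already identified with \equref{3-coc. cond.} in the course of the proof of \thref{quasi-bim monoidal}, this gives the claim.

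Concretely, writing out \equref{3-coc} at $(F,F,F,F)$ produces
$$(\Id_F\times\rho_{F,F,F})\comp\rho_{F,FF,F}\comp(\rho_{F,F,F}\times\Id_F)=\rho_{F,F,FF}\comp\rho_{FF,F,F},$$
an identity of $2$-cells $FFFF\to FFFF$. The first step is to substitute the explicit shapes of the operators involved: $\rho_{F,F,F}$ is left multiplication by $\Phi$ in the monad $FFF$ (structure $2$-cells as in \equref{wreath (co)product}, built from $\tau_{F,F}$), while $\rho_{FF,F,F}$, $\rho_{F,FF,F}$ and $\rho_{F,F,FF}$ are obtained from $\Phi$ by first applying the respective ``comultiplication pattern'' $\Delta\times\Id\times\Id$, $\Id\times\Delta\times\Id$, $\Id\times\Id\times\Delta$ on its three output legs and then multiplying all legs into the target word via the iterated multiplication of $FFFF$. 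In the whiskered terms $\Id_F\times\rho_{F,F,F}$ and $\rho_{F,F,F}\times\Id_F$ the spectator leg, once the multiplication of $FFF$ is written out, forces precisely the factors $\eta_F\times\Id_{FFF}$ and $\Id_{FFF}\times\eta_F$ that appear in \equref{name 3-coc}.

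The second step is to unfold every occurrence of the monad multiplication of $FFF$ (and of $FFFF$, coming from composing two of the above operators) into its normal form as a string of copies of $\tau_{F,F}$ and of $\mu$. Performing this on the left-hand side of the displayed equation yields the first member of \equref{name 3-coc} (three copies of $\Phi$ carrying the patterns $\eta_F\times\Id_{FFF}$, $\Id\times\Delta\times\Id$, $\Id_{FFF}\times\eta_F$, all multiplied together), and on the right-hand side the last member (two copies of $\Phi$ carrying $\Id\times\Id\times\Delta$ and $\Delta\times\Id\times\Id$); the two intermediate members of \equref{name 3-coc} are the obvious rewriting stages, obtained by sliding the $\tau_{F,F}$'s past one another via the Yang-Baxter identity and naturality, and by using coassociativity of $\Delta$. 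Hence \equref{3-coc} at $(F,F,F,F)$ \emph{is} \equref{name 3-coc}, and by the computation in \thref{quasi-bim monoidal} (which rewrites \equref{name 3-coc} into \equref{3-coc. cond.} using the comonad structure of $FFF$ and \equref{Phi nat}) the conclusion follows.

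The only real work is the diagrammatic bookkeeping in the second step: one must keep careful track of which of the several copies of $\tau_{F,F}$ is invoked at each stage of unfolding the iterated multiplication, and of the precise way the spectator legs of the whiskered operators absorb the units $\eta_F$. Once the two sides are brought into the normal form displayed in \equref{name 3-coc}, the identification --- and with it the lemma --- is immediate.
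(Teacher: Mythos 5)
Your proposal is correct and is precisely the ``direct'' verification the paper has in mind (the paper states the lemma without proof): specializing \equref{3-coc} to $(F,F,F,F)$, recognizing each $\rho$ as left multiplication in the monad $FFFF$ by the corresponding element $(\eta_F\times\Phi)$, $((\Id\times\Delta\times\Id)\Phi)$, etc.\ (the unit axioms for $\tau_{F,F}$ absorbing the spectator legs), collapsing the composites by associativity, and evaluating at $\eta_{FFFF}$ yields exactly the equality of the outer members of \equref{name 3-coc}, which the proof of \thref{quasi-bim monoidal} already identified with \equref{3-coc. cond.}. The only wording to tighten is that \equref{3-coc} at $(F,F,F,F)$ is an identity of operators $FFFF\to FFFF$ while \equref{name 3-coc} is an identity of $2$-cells $\Id_{\A}\to FFFF$, so one passes from the former to the latter by composing with the unit --- exactly the ``choose $X=Y=Z=F$ and compose with $\eta_F$'' move used throughout the paper.
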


If 1-cells of the 2-category $\K$ in question posses elements, then the converse of the above Theorem is also true. In this case, if $\alpha$ is an associativity constraint, 
one defines $\Phi:= \alpha_{F,F,F}(\eta_F\times\eta_F\times\eta_F)$. The named condition is fulfilled for example for the 2-category $\Tensor$ of tensor categories. Its objects 
are tensor categories, and given two such objects $\C$ and $\D$, the category $\Tensor(\C,\D)$ defining 1- and 2-cells is given by the category of $C\x\D$-bimodule categories $\C\x\D\x\Bimod$. Then 1-cells are $C\x\D$-bimodule categories $\M$ and given two 
$C\x\D$-bimodule categories $\M$ and $\N$, a 2-cell is given by a $C\x\D$-bilinear functor $\F:\M\to\N$. 
The composition of 1-cells is given by the relative Deligne product (over the corresponding tensor category). 
A quasi-bimonad in $\Tensor$ is then a $\C$-bimodule monoidal category $\M$ and a ``quasi'' coring category with a $\C$-bilinear functor $\Phi: \C\to\M\Del_{\C}\M\Del_{\C}\M$ so that 
the suitable compatibility conditions are satisfied. We defined corings (and quasi-corings, but there the prefix {\em quasi} had a different meaning) in \cite{Femic3}.

\bigskip

Dually to quasi-bimonads we define coquasi-bimonads in $\K$.

\begin{defn}
A coquasi-bimonad in $\K$ is an octuple $(\A, F, \mu, \eta, \Delta, \Epsilon, \tau_{F,F}, \omega)$, where $(\A, F, \Delta, \Epsilon)$ is a comonad, 
$\tau_{F,F}:FF\to FF$ is a left and right monadic and comonadic distributive law, the 2-cells 
$\mu=\gbeg{2}{3}
\got{1}{F} \got{1}{F} \gnl
\gmu \gnl 
\gob{2}{F} \gnl
\gend$ and 
$\eta=\gbeg{2}{2}
\gu{1} \gnl 
\gob{1}{F} \gnl
\gend$ 
satisfy:
$$
\gbeg{2}{4}
\got{1}{} \got{1}{F} \gnl
\gu{1} \gcl{1} \gnl
\gmu \gnl
\gob{2}{F}
\gend=
\gbeg{1}{4}
\got{1}{F} \gnl
\gcl{2} \gnl
\gob{1}{F}
\gend=
\gbeg{2}{4}
\got{1}{F} \got{1}{} \gnl
\gcl{1} \gu{1} \gnl
\gmu \gnl
\gob{2}{F} 
\gend, \quad
\gbeg{3}{5}
\got{1}{F} \got{3}{F} \gnl
\gwmu{3} \gnl
\gvac{1} \gcl{1} \gnl
\gwcm{3} \gnl
\gob{1}{F}\gvac{1}\gob{1}{F}
\gend=
\gbeg{4}{5}
\got{2}{F} \got{2}{F} \gnl
\gcmu \gcmu \gnl
\gcl{1} \glmptb \gnot{\hspace{-0,34cm}\tau_{F,F}} \grmptb \gcl{1} \gnl
\gmu \gmu \gnl
\gob{2}{F} \gob{2}{F} \gnl
\gend,\quad 
\gbeg{2}{3}
\gu{1}  \gu{1} \gnl
\gcl{1} \gcl{1} \gnl
\gob{1}{F} \gob{1}{F}
\gend=
\gbeg{2}{3}
\gu{1} \gnl
\hspace{-0,34cm} \gcmu \gnl
\gob{1}{F} \gob{1}{F}
\gend,\quad 
\gbeg{2}{3}
\got{1}{F} \got{1}{F} \gnl
\gcl{1} \gcl{1} \gnl
\gcu{1}  \gcu{1} \gnl
\gend=
\gbeg{2}{3}
\got{1}{F} \got{1}{F} \gnl
\gmu \gnl
\gvac{1} \hspace{-0,2cm} \gcu{1} \gnl
\gend,\quad 
\gbeg{1}{2}
\gu{1} \gnl
\gcu{1} \gnl
\gob{1}{}
\gend=
\Id_{id_{\A}}
$$
and the 2-cell $\omega: FFF\to\Id_\A$ is convolution invertible and normalized: 
$\gbeg{3}{3}
\got{1}{F} \got{3}{F} \gnl
\gcl{1} \gu{1} \gcl{1} \gnl
\gcn{1}{1}{1}{2} \gelt{\s\omega} \gcn{1}{1}{1}{0} \gnl
\gend=
\gbeg{2}{3}
\got{1}{F} \got{1}{F} \gnl
\gcl{1} \gcl{1} \gnl
\gcu{1} \gcu{1} \gnl
\gend
$ and it obeys:  
$$\gbeg{4}{4}
\got{3}{FFF} \gnl
\gwcm{3}  \gnl
\gelt{\s\omega} \glmp \gnot{\hspace{-0,34cm}\mu^2} \grmptb \gnl
\gob{5}{F} \gnl
\gend=
\gbeg{3}{4}
\got{3}{FFF} \gnl
\gwcm{3}  \gnl
\glmptb \gnot{\hspace{-0,34cm}\mu^2} \grmp \gelt{\s\omega} \gnl
\gob{1}{F} \gnl
\gend \qquad\text{and}\qquad
\gbeg{10}{6}
\gvac{4} \got{1}{FFFF} \gnl
\gvac{2} \gwcm{5} \gnl
\gvac{1} \hspace{-0,34cm} \gwcm{4} \gwcm{4} \gnl
\glmp \gnot{\Epsilon_F\times\Id_{FFF}} \gcmptb \grmp \glmp \gcmptb \gnot{\hspace{-0,34cm}\Id\times\mu\times\Id} \gcmpt \grmp \glmp \gnot{\s\Id_{FFF}\times\Epsilon_F} \gcmptb \grmp \gnl
\gvac{1} \gcl{1} \gvac{2} \gcl{1} \gvac{3} \gcl{1} \gnl
\gvac{1} \gelt{\s\omega} \gvac{2} \gelt{\s\omega} \gvac{3} \gelt{\s\omega} \gnl
\gend=
\gbeg{8}{5}
\gvac{3} \got{1}{FFFF} \gnl
\gvac{1} \gwcm{5}  \gnl
\glmp \gcmptb \gnot{\hspace{-0,34cm}\Id\times\Id\times\mu} \gcmp \grmp \glmp \gcmptb \gnot{\hspace{-0,34cm}\mu\times\Id\times\Id} \gcmp \grmp \gnl
\gvac{1} \gcl{1} \gvac{3} \gcl{1} \gnl
\gvac{1} \gelt{\s\omega} \gvac{3} \gelt{\s\omega} \gnl
\gend
$$
\end{defn}

The 2-category of coquasi-bimonads in $\K$ we denote by $\CQB(\K)$, its 0-cells are coquasi-bimonads $(\A, F, \omega)$, 1- and 2-cells have the same form as in $\QB(\K)$ 
and they satisfy the same axioms \equref{monadic d.l.} -- \equref{comonadic d.l.} and \equref{YBE BBX}--\equref{2-cells QB}, while the axioms \equref{Phi nat new} and \equref{Phi nat} 
change to the first two in: 
\begin{equation} \eqlabel{dual cond}
\gbeg{4}{5}
\got{1}{F'} \got{1}{F'} \got{1}{F'} \got{1}{X} \gnl
\gcl{2} \gcl{2} \gcl{2} \gcl{3} \gnl
\gnl
\gcn{1}{1}{1}{2} \gelt{\s\omega'} \gcn{1}{1}{1}{0} \gnl
\gob{7}{X} \gnl
\gend=
\gbeg{2}{6}
\got{1}{F'} \got{1}{F'} \got{1}{F'} \got{1}{X} \gnl
\gcl{1} \gcl{1} \glmptb \gnot{\hspace{-0,34cm}\tau_{F,X}} \grmptb \gnl
\gcl{1} \glmptb \gnot{\hspace{-0,34cm}\tau_{F,X}} \grmptb \gcl{1} \gnl
\glmptb \gnot{\hspace{-0,34cm}\tau_{F,X}} \grmptb \gcl{1} \gcl{1} \gnl
\gcl{1} \gcn{1}{1}{1}{2} \gelt{\s\omega} \gcn{1}{1}{1}{0} \gnl 
\gob{1}{X} \gnl
\gend \hspace{2,5cm} 
\gbeg{4}{5}
\got{1}{F} \got{1}{F} \got{1}{F} \got{1}{F} \gnl
\gcl{2} \gcl{2} \gcl{2} \gcl{2} \gnl
\gnl
\gcl{1} \gcn{1}{1}{1}{2} \gelt{\s\omega} \gcn{1}{1}{1}{0} \gnl  %
\gob{1}{F} \gnl
\gend=
\gbeg{2}{6}
\got{1}{F} \got{1}{F} \got{1}{F} \got{1}{F} \gnl
\glmptb \gnot{\hspace{-0,34cm}\tau_{F,F}} \grmptb \gcl{1} \gcl{1} \gnl
\gcl{1} \glmptb \gnot{\hspace{-0,34cm}\tau_{F,F}} \grmptb \gcl{1} \gnl
\gcl{1} \gcl{1} \glmptb \gnot{\hspace{-0,34cm}\tau_{F,F}} \grmptb \gnl
\gcn{1}{1}{1}{2} \gelt{\s\omega} \gcn{1}{1}{1}{0} \gcl{1} \gnl 
\gob{7}{F} \gnl
\gend \hspace{2,5cm} 
\gbeg{3}{5}
\got{1}{F'} \got{1}{F'} \got{1}{X} \gnl
\gcl{1} \glmptb \gnot{\hspace{-0,34cm}\tau_{F,X}} \grmptb \gnl
\glmptb \gnot{\hspace{-0,34cm}\tau_{F,X}} \grmptb \gcl{1} \gnl
\gcl{1} \glmptb \gnot{\hspace{-0,34cm}\tau_{F,F}} \grmptb \gnl
\gob{1}{X} \gob{1}{X} \gob{1}{F}
\gend=
\gbeg{3}{5}
\got{1}{F'} \got{1}{F'} \got{1}{X} \gnl
\glmptb \gnot{\hspace{-0,34cm}\tau_{F',F'}} \grmptb \gcl{1} \gnl
\gcl{1} \glmptb \gnot{\hspace{-0,34cm}\tau_{F,X}} \grmptb \gnl
\glmptb \gnot{\hspace{-0,34cm}\tau_{F,X}} \grmptb \gcl{1} \gnl
\gob{1}{X} \gob{1}{X} \gob{1}{F}
\gend
\end{equation} 
$$ \textnormal{ \hspace{-1cm} \footnotesize $\tau_{\Id_\A,X}$ is natural w.r.t. $\omega$}  \hspace{1,8cm} \textnormal{ \footnotesize $\tau_{F,\Id_\A}$ is natural w.r.t. $\omega$}  \hspace{2,4cm}
\textnormal{\footnotesize YBE for $FFX$} $$ \vspace{-0,7cm}

\medskip

\begin{defn}
The category of right Tambara comodules over a coquasi-bimonad $(\A, F, \omega)$ we denote by $\Tau^{(\A,F,\omega)}$ and we define it as follows. Its objects are 
triples $(X,\tau_{F,X}, \rho)$, where $(X,\tau_{F,X})$ are objects of $\CQB(\K)(F)$ so that $(X, \rho)$ is a right $F$-comodule in $\K$ and the coaction 
$\rho: (X, \tau_{F, X})\to (XF, \tau_{F, XF})$ is a morphism in $\CQB(\K)(F)$. 
Morphisms of $\Tau^{(\A,F,\omega)}$ are right $F$-colinear morphisms in $\CQB(\K)(F)$. 
\end{defn}

The dual of \thref{quasi-bim monoidal} holds: 
the category $\Tau^{(\A,F,\omega)}$ of right Tambara $F$-comodules over a coquasi-bimonad $F$ in $\K$, 
is monoidal. 
Its associativity constraint is given by \equref{assoc. rcm} and $XY$ is a right $F$-comodule via \equref{F coact XY tau}, for $(X,\tau_{F,X}),(Y,\tau_{F,Y}), (Z, \tau_{F,Z})\in\Tau^{(\A,F,\omega)}$. 
\begin{center} 
\begin{tabular}{p{8cm}p{0cm}p{6.4cm}}
\begin{equation} \eqlabel{assoc. rcm}
\alpha_{X,Y,Z}=
\gbeg{6}{6}
\got{1}{X} \gvac{1} \got{1}{Y} \got{3}{Z} \gnl
\grcm \grcm \grcm \gnl
\gcl{1} \glmptb \gnot{\hspace{-0,34cm}\tau_{F,Y}} \grmptb \glmptb \gnot{\hspace{-0,34cm}\tau_{F,Z}} \grmptb \gcl{2} \gnl
\gcl{1} \gcl{1} \glmptb \gnot{\hspace{-0,34cm}\tau_{F,Z}} \grmptb \gcn{2}{1}{1}{1} \gnl
\gcl{1} \gcl{1} \gcl{1} \gcn{1}{1}{1}{2} \gelt{\omega} \gcn{1}{1}{1}{0} \gnl   
\gob{1}{X} \gob{1}{Y} \gob{1}{Z}  
\gend=
\gbeg{3}{5}
\got{1}{XYZ} \gnl
\gcl{3} \hspace{-0,42cm} \glmf  \gnl
\gvac{1} \gcl{1} \gcn{1}{1}{1}{3}  \gnl
\gvac{3} \gelt{\omega} \gnl
\gob{3}{XYZ} 
\gend
\end{equation} & & \vspace{0,24cm}
\begin{equation} \eqlabel{F coact XY tau}
\gbeg{3}{5}
\got{1}{XY} \gnl
\grcm \gnl
\gcl{1} \gcn{2}{1}{1}{3}  \gnl
\gcl{1} \gvac{1} \gcl{1} \gnl
\gob{1}{XY} \gob{3}{F} 
\gend=
\gbeg{3}{5}
\got{1}{X} \got{3}{Y} \gnl
\grcm \grcm \gnl
\gcl{1} \glmptb \gnot{\hspace{-0,34cm}\tau_{F,Y}} \grmptb \gcl{1} \gnl
\gcl{1} \gcl{1} \gmu \gnl
\gob{1}{X} \gob{1}{Y} \gob{2}{F.} 
\gend
\end{equation}
\end{tabular}
\end{center} 
Here we used the notation: 
$$
\gbeg{3}{4}
\got{1}{XYZ} \gnl
\gcl{1} \hspace{-0,42cm} \glmf  \gnl
\gvac{1} \gcl{1} \gcn{1}{1}{1}{3}  \gnl
\gob{2}{XYZ} \gob{3}{F} 
\gend=
\gbeg{4}{5}
\got{1}{X} \gvac{1} \got{1}{Y} \gvac{1} \got{1}{Z} \gnl
\grcm \grcm \grcm \gnl
\gcl{1} \glmptb \gnot{\hspace{-0,34cm}\tau_{F,Y}} \grmptb \glmptb \gnot{\hspace{-0,34cm}\tau_{F,Z}} \grmptb \gcl{2} \gnl
\gcl{1} \gcl{1} \glmptb \gnot{\hspace{-0,34cm}\tau_{F,Z}} \grmptb \gcl{1} \gnl
\gob{1}{X} \gob{1}{Y} \gob{1}{Z} \gob{1}{F} \gob{1}{F}\gob{1}{F}
\gend
$$
similarly as before and the dual statement of \prref{FFF rules} holds.

\section{Actions of monoidal categories}

Let $B:\A\to\A$ be a monad in $\K$ and $\C$ a monoidal category such that there is a quasi-monoidal faithful 
functor $\U: \C\to\Tau(\A,B)$. We will denote the objects of $\C$ by $\crta X$, and write $\U(\crta X)=(X, \tau_{B,X})$. The monoidal category $\C$ may be non-strict, 
we denote by $\alpha: (\crta X\crta Y)\crta Z\to \crta X(\crta Y\crta Z)$ its associativity constraint, 
for $\crta X,\crta Y,\crta Z\in\C$. Thus $\U(\alpha)$ is a morphism in $\Tau(\A,B)$. The unity constraints we will consider though as identities. 
Let ${}_B\K$ denote the category whose objects are left $B$-modules $M: \A'\to\A$ 
for any 0-cell $\A'$ in $\K$ and morphisms are left $B$-linear 2-cells in $\K$.  

\begin{thm} \thlabel{main} 
Let $B:\A\to\A$ be a monad and $\C$ as above. Suppose that 
there is a quasi-monoidal functor $\F:\C\to\Tau(\A,B)$ that factors through $\U: \C\to\Tau(\A,B)$ and let us write $\F(\crta X)=(X,\psi_{B,X})$. The following are equivalent:
\begin{enumerate}
\item there is an action $\C\times {}_B\K\to {}_B\K$ of $\C$ on ${}_B\K$ given by 
$(\crta X, M) \mapsto XM$ 
where $XM$ is a left $B$-module via \equref{B act XY psi} and where the category action associativity isomorphism 
$r_{X,Y,M}: (XY)M\to X(YM)$ (respectively $\rho_{X,Y,M}: X(YM)\to (XY)M$) satisfies: 
\begin{equation} \eqlabel{ro asoc isom novo}
r_{X,Y,M}=
\gbeg{4}{5}
\got{1}{X} \got{1}{Y} \got{3}{M} \gnl
\gcl{1} \gcl{1} \gu{1} \gcl{1} \gnl
\glmptb \gnot{r_{X,Y,B}}  \gcmptb \grmptb \gcl{1} \gnl
\gcl{1} \gcl{1} \glm \gnl
\gob{1}{X} \gob{1}{Y} \gob{3}{M}
\gend
\end{equation}
for every $\crta X,\crta Y\in\C$ and $M\in {}_B\K$; 
and the unity-action isomorphism is taken to be identity;
\item for every $\crta X,\crta Y\in\C$ there is an invertible 2-cell $\hat\rho: (X, \psi_{B,X})(Y, \psi_{B,Y})\to (X, \psi_{B,X})(Y, \psi_{B,Y})$ 
in $\EM^M(\K)$ that is a normalized 2-cocycle \equref{inv 2-coc} (respectively \equref{2-coc}) in $\EM^M(\K)$. 
\end{enumerate}
\end{thm}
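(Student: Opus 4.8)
The plan is to transport the whole picture into the $2$-category $\EM^M(\K)$ and then match, item by item, the coherence data of an action of $\C$ with the defining conditions of a $2$-cocycle.

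First I would fix the functorial set-up. By \prref{B act XM psi} each object $(X,\psi_{B,X})$ of $\Tau(\A,B)$ sends, via \equref{B act XY psi}, a left $B$-module $M$ to a left $B$-module $XM$; on morphisms one whiskers, and this is functorial by a direct computation of the kind mentioned after \prref{B act XM psi}. Since horizontal composition in $\Mnd(\K)$ is \equref{tau B XY} and $\F$ is quasi-monoidal, a short string-diagram argument shows that the left $B$-module structure on $(XY)M=\F(\crta X\crta Y)M$ agrees on the nose with the one on $X(YM)$, that $\crta X\mapsto(M\mapsto XM)$ is strictly multiplicative on objects, and that it sends the unit of $\C$ to $\Id_{{}_B\K}$. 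In other words $\F_{\crta X}:=(M\mapsto XM)$ is post-composition with the $1$-endocell $(X,\psi_{B,X})$ of $\EM^M(\K)$ over the $0$-cell $(\A,B)$, acting on ${}_B\K$ regarded as the category of $1$-cells of $\EM^M(\K)$ with target $(\A,B)$; horizontal composition there is strictly associative, and the left $B$-module $B$ (with $\mu_B$) is free, hence corepresents the forgetful functor ${}_B\K\to\K$.

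For $(1)\Rightarrow(2)$: the action associator $r_{X,Y,M}\colon(XY)M\to X(YM)$ is, by the above, an invertible left $B$-linear $2$-cell from the $1$-cell $(XY)M$ to itself, natural in $M$, and \equref{ro asoc isom novo} says exactly that this family is the whiskering along the variable $M$ of its value at $M=B$. As $B$ corepresents the forgetful functor, that value is an invertible $2$-cell $\hat\rho_{\crta X,\crta Y}\colon(X,\psi_{B,X})(Y,\psi_{B,Y})\to(X,\psi_{B,X})(Y,\psi_{B,Y})$ of $\EM^M(\K)$. Naturality of $r$ in the $\C$-variables makes $\hat\rho$ natural; the unit-action isomorphism being the identity forces $\hat\rho_{\mathbf 1,\crta X}=\hat\rho_{\crta X,\mathbf 1}=\id$, i.e. $\hat\rho$ is normalized; and the pentagon for $r$, once the common whiskering by $M$ is cancelled and quasi-monoidality of $\F$ together with its factorization through the faithful $\U$ is used to identify the contribution of the associativity constraint of $\C$ with the strict reassociation of horizontal composites in $\EM^M(\K)$, is precisely \equref{inv 2-coc}. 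Running the same argument with $r$ replaced by its inverse $\rho$ gives \equref{2-coc}. Conversely, for $(2)\Rightarrow(1)$, given a normalized invertible $2$-cocycle $\hat\rho$ in $\EM^M(\K)$ over $(\A,B)$ one puts the action on objects to be $(\crta X,M)\mapsto XM$ via \equref{B act XY psi}, on morphisms to be whiskering, the unit-action to be the identity, and $r_{X,Y,M}$ to be the right-hand side of \equref{ro asoc isom novo}, i.e. $\hat\rho_{\crta X,\crta Y}$ whiskered with $M$; then $r$ is a natural isomorphism because $\hat\rho$ is invertible and a $2$-cell of $\EM^M(\K)$, the unit and triangle coherences hold because $\hat\rho$ is normalized, and the pentagon follows by whiskering \equref{inv 2-coc} with $M$ and re-inserting the instance of $\F(\alpha_{\crta X,\crta Y,\crta Z})$ supplied by quasi-monoidality of $\F$. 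The two passages are mutually inverse.

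The step I expect to be the main obstacle is exactly the reconciliation of the action pentagon with the untwisted identity \equref{inv 2-coc} when $\C$ is not strict: the pentagon carries the constraint $\F(\alpha_{\crta X,\crta Y,\crta Z})$ whereas \equref{inv 2-coc} has none, so one must verify that this term is absorbed precisely by the strict reassociation of $1$-endocells in $\EM^M(\K)$, which is where one genuinely needs $\F$ quasi-monoidal and factoring through the faithful $\U$ so that $\F(\crta X\crta Y)=\F(\crta X)\F(\crta Y)$ holds on the nose. The accompanying point that \equref{ro asoc isom novo} really does define a bijection between admissible $r$'s and admissible $\hat\rho$'s (its surjectivity resting on corepresentability of the forgetful functor by the free module $B$) is the only other nontrivial issue; invertibility, naturality, $B$-linearity and normalization are then routine.
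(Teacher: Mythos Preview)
Your overall strategy matches the paper's: define $\crta\rho_{X,Y}:=r_{X,Y,B}\comp(\Id_{XY}\times\eta_B)$ and conversely recover $r$ by whiskering with $M$ (your ``corepresentability by $B$''), then translate each coherence condition on $r$ into an identity for $\crta\rho$. The paper carries this out by explicit string-diagram computations, arriving at \equref{natur}, \equref{vert comp M}, \equref{2-cells EM^M one psi}, \equref{monad law ro new}/\equref{monad law ro}, \equref{normalized in EM}, and then observes that these say precisely that $\crta\rho$ induces an invertible normalized $2$-cocycle in $\EM^M(\K)$.

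There is, however, a genuine gap in your handling of the pentagon. You assert that $\F(\alpha)$ is ``absorbed precisely by the strict reassociation of $1$-endocells in $\EM^M(\K)$'', but this is false: strictness of horizontal composition of $1$-cells in $\EM^M(\K)$ only tells you that the underlying $1$-cells $(XY)Z$ and $X(YZ)$ coincide, not that the $2$-cell $\U(\alpha)=\F(\alpha)$ is an identity. The pentagon for $r$ translates to \equref{monad law ro new} (respectively \equref{monad law ro}), which \emph{retains} $\alpha$ explicitly. The paper's citation of \equref{inv 2-coc} in the theorem statement is to be read in this sense, as the remark immediately after the proof makes clear: point~2) is equivalent to the identities \equref{natur}, \equref{vert comp M}, \equref{2-cells EM^M one psi}, \equref{monad law ro new} and \equref{normalized in EM}. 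Your factorization argument only yields $\F(\crta X\crta Y)=\F(\crta X)\F(\crta Y)$ on underlying $1$-cells, which is not enough. In the subsequent applications (\prref{Sch case}, \prref{Martin case}) the associator $\alpha$ is genuinely nontrivial---see \equref{assoc. lm}, \equref{assoc. rcm}---and its presence in the $2$-cocycle identity is exactly what produces the $\beta$ terms in the Sweedler and Hausser--Nill Hopf data.
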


\begin{proof}
We know from \prref{B act XM psi} that \equref{B act XY psi} determines a left $B$-module structure on $XM$. 
Then it is straightforward to prove that the category action functor is well-defined on morphisms. 
Given a natural isomorphism $r_{X,Y,M}$ satisfying \equref{ro asoc isom novo}, define $\crta\rho_{X,Y}: XY\to XYB$ by $\crta\rho_{X,Y}=r_{X,Y,B}(id_{XY}\times\eta_B)$. Conversely, given 
a 2-cell $\crta\rho_{X,Y}: XY\to XYB$ natural in $X$ and $Y$ define 
\begin{equation} \eqlabel{ro asoc isom}
r_{X,Y,M}=
\gbeg{3}{4}
\got{1}{X} \got{1}{Y} \got{1}{M} \gnl
\glmptb \gnot{\hspace{-0,34cm}\crta\rho_{X,Y}} \grmptb \gcl{1} \gnl
\gcl{1} \glm \gnl
\gob{1}{XY} \gob{3}{M.}
\gend
\end{equation}
We are going to prove that $r_{X,Y,M}$ ({\em i.e. } $\rho_{X,Y,M}$) is an isomorphism in ${}_B\K$ natural in $X,Y$ and $M$, and it satisfies the pentagon and the triangle 
defining the coherence of the action if and only if $\crta\rho_{X,Y}$ induces an invertible and normalized 2-cocycle in $\EM^M(\K)$. 
Observe that 
$r_{X,Y,M}$ and $\rho_{X,Y,M}$ are 
equal on 1-cells and that left $B$-module structure on $(XY)M$ and on $X(YM)$ are both given via $\psi_{B,XY}$ (see \equref{B act XY psi}). The only difference 
between the two of them, one encounters when dealing with the pentagon axiom, as we will see below. 

For the naturality of $r_{X,Y,M}$ take 2-cells {\em i.e.} morphisms $\zeta: \crta X\to \crta {X'}, \xi: \crta Y\to \crta {Y'}$ in $\C$ and a $B$-linear morphism 
$\beta: M\to M'$ in $\K$. The naturality then translates into the condition: 
\begin{equation} \eqlabel{natur}
\gbeg{3}{4}
\got{1}{X} \got{1}{Y} \gnl
\glmptb \gnot{\crta\rho_{X,Y}} \gcmptb \grmpb \gnl
\gbmp{\zeta} \gbmp{\xi} \gcl{1} \gnl
\gob{1}{X'} \gob{1}{Y'} \gob{1}{B}
\gend=
\gbeg{3}{4}
\got{1}{X} \got{1}{Y} \gnl
\gbmp{\zeta} \gbmp{\xi} \gnl
\glmptb \gnot{\crta\rho_{X',Y'}} \gcmptb \grmpb \gnl
\gob{1}{X'} \gob{1}{Y'} \gob{1}{B}
\gend
\end{equation}
(the morphism $\beta$ ``cancels out'' in the calculation, we will comment on it further below).  

If we suppose that the inverse $r_{X,Y,M}^{-1}$ of $r_{X,Y,M}$ also has the form \equref{ro asoc isom}, 
then $\Id_{XYM}=r_{X,Y,M}^{-1}\comp r_{X,Y,M}$ and $\Id_{XYM}=r_{X,Y,M}\comp r_{X,Y,M}^{-1}$, where $\comp$ denotes the vertical composition of 2-cells in $\K$, are equivalent to: 
\begin{equation} \eqlabel{vert comp M}
\gbeg{4}{5}
\gvac{1} \got{1}{X}  \got{1}{Y} \gnl
\gvac{1} \glmptb \gnot{\crta\rho_{X,Y}} \gcmptb \grmpb \gnl
\glmpb \gnot{\crta\rho_{X,Y}^{-1}} \gcmptb \grmpb \gcl{1} \gnl
\gcl{1} \gcl{1} \gmu \gnl
\gob{1}{X} \gob{1}{Y} \gob{2}{B}
\gend=
\gbeg{3}{4}
\got{1}{X}  \got{1}{Y} \gnl
\gcl{2} \gcl{2} \gu{1} \gnl
\gvac{2} \gcl{1} \gnl
\gob{1}{X} \gob{1}{Y} \gob{1}{B}
\gend\quad\text{and}\qquad
\gbeg{4}{5}
\gvac{1} \got{1}{X}  \got{1}{Y} \gnl
\gvac{1} \glmptb \gnot{\crta\rho_{X,Y}^{-1}} \gcmptb \grmpb \gnl
\glmpb \gnot{\crta\rho_{X,Y}} \gcmptb \grmpb \gcl{1} \gnl
\gcl{1} \gcl{1} \gmu \gnl
\gob{1}{X} \gob{1}{Y} \gob{2}{B}
\gend=
\gbeg{3}{4}
\got{1}{X}  \got{1}{Y} \gnl
\gcl{2} \gcl{2} \gu{1} \gnl
\gvac{2} \gcl{1} \gnl
\gob{1}{X} \gob{1}{Y} \gob{1}{B}
\gend
\end{equation}
where $\crta\rho_{X,Y}^{-1}$ is a formal symbol with the clear meaning. 
$B$-linearity of $r_{X,Y,M}$ is equivalent to: 
\begin{equation}  \eqlabel{2-cells EM^M}
\gbeg{4}{6}
\gvac{1} \got{1}{B} \got{1}{X} \got{1}{Y}\gnl
\gvac{1} \glmptb \gnot{\hspace{-0,34cm}\psi_{B,X}} \grmptb \gcl{1} \gnl
\gvac{1} \gcl{1} \glmptb \gnot{\hspace{-0,34cm}\psi_{B,Y}} \grmptb \gnl
\glmpb \gnot{\crta\rho_{X,Y}} \gcmptb \grmptb \gcl{1} \gnl
\gcl{1} \gcl{1} \gmu \gnl
\gob{1}{X} \gob{1}{Y} \gob{2}{B}
\gend=
\gbeg{3}{6}
\got{1}{B} \gvac{1} \got{1}{X} \got{1}{Y}\gnl
\gcl{1} \glmpb \gnot{\crta\rho_{X,Y}} \gcmptb \grmpb \gnl
\glmptb \gnot{\hspace{-0,34cm}\psi_{B,X}} \grmptb \gcl{1} \gcl{1} \gnl
\gcl{1} \glmptb \gnot{\hspace{-0,34cm}\psi_{B,Y}} \grmptb \gcl{1} \gnl
\gcl{1} \gcl{1} \gmu \gnl
\gob{1}{X} \gob{1}{Y} \gob{2}{B}
\gend
\end{equation} 
Observe that by \equref{tau B XY} the upper identity we may also write as follows: 
\begin{equation}  \eqlabel{2-cells EM^M one psi}
\gbeg{4}{5}
\gvac{1} \got{1}{B} \got{1}{X} \got{1}{Y}\gnl
\gvac{1} \glmptb \gnot{\psi_{B,XY}} \gcmptb \grmptb \gnl
\glmpb \gnot{\crta\rho_{X,Y}} \gcmptb \grmpb \gcl{1} \gnl
\gcl{1} \gcl{1} \gmu \gnl
\gob{1}{X} \gob{1}{Y} \gob{2}{B}
\gend=
\gbeg{3}{5}
\got{1}{B} \gvac{1} \got{1}{X} \got{1}{Y}\gnl
\gcl{1} \glmpb \gnot{\crta\rho_{X,Y}} \gcmptb \grmptb \gnl
\glmptb \gnot{\psi_{B,XY}} \gcmptb \grmptb \gcl{1} \gnl
\gcl{1} \gcl{1} \gmu \gnl
\gob{1}{X} \gob{1}{Y} \gob{2}{B}
\gend
\end{equation} 
Let $\crta X,\crta Y,\crta Z\in\C$ and $M\in {}_B\K$, then the pentagon for $r_{X,Y,M}$ reads: 
\begin{equation}  \eqlabel{pentagon new}
(\Id_X\times r_{Y,Z,M})\comp r_{X,YZ,M}\comp(\alpha_{X,Y,Z}\times\Id_M)=r_{X,Y,ZM}\comp r_{XY,Z,M}
\end{equation} 
where $\alpha_{X,Y,Z}:(XY)Z\to X(YZ)$ is the associativity constraint of $\C$. 
Observe that the pentagon for  $\rho_{X,Y,M}$ reads: 
\begin{equation}  \eqlabel{pentagon}
(\alpha_{X,Y,Z}\times\Id_M)\comp\rho_{XY,Z,M}\comp\rho_{X,Y,ZM}=\rho_{X,YZ,M}\comp(\Id_X\times\rho_{Y,Z,M}).
\end{equation} 
By \equref{ro asoc isom} the above two pentagon identities translate into: 
\begin{center} 
\begin{tabular}{p{7cm}p{0cm}p{7cm}}
\begin{equation} \eqlabel{monad law ro new}
\gbeg{5}{6}
\got{1}{X} \got{1}{Y} \got{1}{Z}\gnl
\glmptb \gnot{\alpha} \gcmptb \grmpb \gnl
\glmptb \gnot{\hspace{0,7cm}\crta r_{X,YZ}} \gcmpt \gcmptb \gcmpb \grmpb \gnl
\gcl{1} \glmpb \gnot{\crta r_{Y,Z}} \gcmptb \grmptb \gcl{1} \gnl
\gcl{1} \gcl{1} \gcl{1} \gmu \gnl
\gob{1}{X} \gob{1}{Y} \gob{1}{Z} \gob{2}{B}
\gend=
\gbeg{5}{6}
\gvac{1} \got{1}{X} \got{1}{Y} \got{1}{Z}\gnl
\glmp \gnot{\hspace{0,7cm}\crta r_{XY,Z}} \gcmptb \gcmptb \gcmptb \grmpb \gnl
\glmpb \gnot{\crta r_{X,Y}} \gcmptb \grmptb \gcl{1} \gcl{1} \gnl
\gcl{1} \gcl{1} \glmptb \gnot{\hspace{-0,34cm}\psi_{B,Z}} \grmptb \gcl{1} \gnl
\gcl{1} \gcl{1} \gcl{1} \gmu \gnl
\gob{1}{X} \gob{1}{Y} \gob{1}{Z} \gob{2}{B}
\gend
\end{equation}  & & 
\begin{equation} \eqlabel{monad law ro}
\gbeg{5}{6}
\gvac{1} \got{1}{X} \got{1}{Y} \got{3}{Z}\gnl
\gvac{1} \glmptb \gnot{\crta\rho_{X,Y}} \gcmptb \grmpb \gcl{1} \gnl
\gvac{1} \gcl{1} \gcl{1} \glmptb \gnot{\hspace{-0,34cm}\psi_{B,Z}} \grmptb \gnl
\glmpb \gnot{\hspace{0,34cm}\crta\rho_{XY,Z}} \gcmptb \gcmptb \grmptb \gcl{1} \gnl
\glmptb \gnot{\alpha} \gcmptb \grmpb \gmu \gnl
\gob{1}{X} \gob{1}{Y} \gob{1}{Z} \gob{2}{B}
\gend=
\gbeg{5}{5}
\gvac{1} \got{1}{X} \got{1}{Y} \got{1}{Z}\gnl
\gvac{1} \gcl{1} \glmptb \gnot{\crta\rho_{Y,Z}} \gcmptb \grmpb \gnl
\glmpb \gnot{\hspace{0,34cm}\crta\rho_{X,YZ}} \gcmptb \gcmptb \grmptb \gcl{1} \gnl
\gcl{1} \gcl{1} \gcl{1} \gmu \gnl
\gob{1}{X} \gob{1}{Y} \gob{1}{Z} \gob{2}{B}
\gend
\end{equation} 
\end{tabular}
\end{center}
respectively. 
The coherence of the associativity constraint of $\C$ and the unity constraints (which are identities) mean: 
\begin{equation} \eqlabel{normalized in EM}
\gbeg{3}{4}
\got{1}{X} \got{3}{M} \gnl
\glmptb \gnot{\hspace{-0,34cm}\crta\rho_{X,\Id}} \grmpb \gcl{1} \gnl
\gcl{1} \glm \gnl
\gob{1}{X} \gob{3}{M}
\gend=
\gbeg{2}{4}
\got{1}{X} \got{1}{M} \gnl
\gcl{2} \gcl{2} \gnl
\gob{1}{X} \gob{1}{M}
\gend=
\gbeg{3}{4}
\got{1}{} \got{1}{X} \got{1}{M} \gnl
\glmpb \gnot{\hspace{-0,34cm}\crta\rho_{\Id,X}} \grmptb \gcl{1} \gnl
\gcl{1} \glm \gnl
\gob{1}{X} \gob{3}{M}
\gend\qquad\Leftrightarrow\qquad
\gbeg{2}{5}
\got{1}{X} \gnl
\gcl{1} \gnl
\glmptb \gnot{\hspace{-0,34cm}\crta\rho_{X,\Id}} \grmpb \gnl
\gcl{1} \gcl{1} \gnl
\gob{1}{X} \gob{1}{B}
\gend=
\gbeg{2}{4}
\got{1}{X} \gnl
\gcl{1} \gu{1} \gnl
\gcl{1} \gcl{1} \gnl
\gob{1}{X} \gob{1}{B}
\gend=
\gbeg{3}{5}
\got{1}{} \got{1}{X} \gnl
\gvac{1} \gcl{1} \gnl
\glmpb \gnot{\hspace{-0,34cm}\crta\rho_{\Id,X}} \grmptb \gnl
\gcl{1} \gcl{1} \gnl
\gob{1}{X} \gob{1}{B}
\gend
\end{equation}
We have not detailed the proofs of the above claims 
but the idea is the same in all of them: one uses the 
associativity of the $B$-action on $M$, then one choses $M=B$ and composes the equation in question with the unit $\eta_B$ of the monad $B$ at the appropriate place. 
Note that \equref{vert comp M} means that the 2-cells $\crta\rho_{X,Y}$ and $\crta\rho_{X,Y}^{-1}$ induce two 2-cells inverse to each other in the 2-category $\EM^M(\K)$, 
\equref{2-cells EM^M} means that $\crta\rho_{X,Y}$ induces a 2-cell in $\EM^M(\K)$, and \equref{monad law ro new}, \equref{monad law ro} mean that $\crta\rho$ induces an 
operator which is an (inverse) 2-cocycle in $\EM^M(\K)$, while \equref{normalized in EM} means that it is normalized. 
\qed\end{proof}

\medskip

Observe that the point 2) in the above Theorem is equivalent to saying that the identities \equref{natur}, \equref{vert comp M}, 
\equref{2-cells EM^M one psi}, \equref{monad law ro new} {\em i.e.} \equref{monad law ro} and \equref{normalized in EM} hold.

\section{Representation category of coquasi-bimonads in $\K$} \selabel{rep coq}

Let $(F, \omega)$ be a coquasi-bimonad 
and $B:\A\to\A$ a monad in $\K$. 
Suppose that for every $(X, \tau_{F,X}, \rho)\in\Tau^{(\A, F, \omega)}$ - including when $X=F$ - 
there is a distributive law $\tau_{B,X}$ such that $(X, \tau_{B,X})\in\Tau(\A, B)$, the coaction $\rho: (X, \tau_{B,X})\to(XF, \tau_{B,XF})$ 
is a morphism in $\Tau(\A, B)$ ({\em i.e.} $\tau_{B,X}$ is natural with respect to $\rho$, \equref{nat rcm}) 
and the Yang-Baxter equation \equref{YBE BFX} is fulfilled. We require moreover that $\tau_{B, \Id_{\A}}$ be natural 
with respect to $\omega$, \equref{nat B omega}. 

\hspace{-0,6cm}
\begin{tabular}{p{4.3cm}p{0cm}p{4.5cm}p{0cm}p{5.3cm}}
\begin{equation}\eqlabel{nat rcm}
\gbeg{3}{5}
\got{1}{B} \got{3}{X} \gnl
\gcl{1} \gvac{1} \gcl{1} \gnl
\glmptb \gnot{\tau_{B,X}} \gcmp \grmptb \gnl
\grcm \gcl{1} \gnl
\gob{1}{X} \gob{1}{F} \gob{1}{B} 
\gend=
\gbeg{3}{5}
\got{1}{B} \got{1}{X} \gnl
\gcl{1} \grcm \gnl
\glmptb \gnot{\hspace{-0,34cm}\tau_{B,X}} \grmptb \gcl{1} \gnl
\gcl{1} \glmptb \gnot{\hspace{-0,34cm}\tau_{B,F}} \grmptb \gnl
\gob{1}{X} \gob{1}{F} \gob{1}{B.} 
\gend
\end{equation}  & & 
\begin{equation} \eqlabel{YBE BFX} 
\gbeg{3}{5}
\got{1}{B} \got{1}{F} \got{1}{X} \gnl
\gcl{1} \glmptb \gnot{\hspace{-0,34cm}\tau_{F,X}} \grmptb \gnl
\glmptb \gnot{\hspace{-0,34cm}\tau_{B,X}} \grmptb \gcl{1} \gnl
\gcl{1} \glmptb \gnot{\hspace{-0,34cm}\tau_{B,F}} \grmptb \gnl
\gob{1}{X} \gob{1}{F} \gob{1}{B}
\gend=
\gbeg{2}{5}
\got{1}{B} \got{1}{F} \got{1}{X} \gnl
\glmptb \gnot{\hspace{-0,34cm}\tau_{B,F}} \grmptb \gcl{1} \gnl
\gcl{1} \glmptb \gnot{\hspace{-0,34cm}\tau_{B,X}} \grmptb \gnl
\glmptb \gnot{\hspace{-0,34cm}\tau_{F,X}} \grmptb \gcl{1} \gnl
\gob{1}{X} \gob{1}{F} \gob{1}{B}
\gend
\end{equation}   & &  \vspace{-0,5cm}
\begin{equation}\eqlabel{nat B omega}
\gbeg{4}{5}
\got{1}{B} \got{1}{F} \got{1}{F} \got{1}{F} \gnl
\gcl{2} \gcl{2} \gcl{2} \gcl{2} \gnl
\gnl
\gcl{1} \gcn{1}{1}{1}{2} \gelt{\s\omega} \gcn{1}{1}{1}{0} \gnl  %
\gob{1}{B} \gnl
\gend=
\gbeg{4}{6}
\got{1}{B} \got{1}{F} \got{1}{F} \got{1}{F} \gnl
\glmptb \gnot{\hspace{-0,34cm}\tau_{B,F}} \grmptb \gcl{1} \gcl{1} \gnl
\gcl{1} \glmptb \gnot{\hspace{-0,34cm}\tau_{B,F}} \grmptb \gcl{1} \gnl
\gcl{1} \gcl{1} \glmptb \gnot{\hspace{-0,34cm}\tau_{B,F}} \grmptb \gnl
\gcn{1}{1}{1}{2} \gelt{\s\omega} \gcn{1}{1}{1}{0} \gcl{1} \gnl 
\gob{7}{B} \gnl
\gend  
\end{equation} 
\end{tabular}

In particular, for $X=F$ we get that $\tau_{B,F}$ is comonadic with respect to $F$ and obeys \equref{nat rcm} for $X=F$.
Let ${}_{\tau_B}\Tau^{(\A, F, \omega)}$ denote the category whose objects are triples $(X, \tau_{B,X}, \tau_{F,X})$ with the 
above properties and morphisms are 
$\zeta: (X, \tau_{B,X}, \tau_{F,X})\to (Y, \tau_{B,Y}, \tau_{F,Y})$ given by right $F$-colinear morphisms $\zeta: X\to Y$ 
in $\Mnd(\K)(B)$ and in $\Mnd(\K)(F)$ (that is, both $\tau_{B,X}$ and $\tau_{F,X}$ are 
natural with respect to $\zeta$). The latter means that the following are fulfilled: 
\begin{equation} \eqlabel{morphisms in comod}
\gbeg{2}{4}
\got{1}{X} \gnl
\gbmp{\zeta} \gnl
\grcm \gnl
\gob{1}{X} \gob{1}{F} 
\gend=
\gbeg{2}{4}
\got{1}{X} \gnl
\grcm \gnl
\gbmp{\zeta} \gcl{1} \gnl
\gob{1}{X} \gob{1}{F} 
\gend,\hspace{1cm} 
\gbeg{3}{4}
\got{1}{B} \got{1}{X} \gnl
\glmptb \gnot{\hspace{-0,34cm}\tau_{B,X}} \grmptb \gnl
\gbmp{\zeta} \gcl{1} \gnl
\gob{1}{Y} \gob{1}{B} 
\gend=
\gbeg{3}{4}
\got{1}{B} \got{1}{X} \gnl
\gcl{1} \gbmp{\zeta} \gnl
\glmptb \gnot{\hspace{-0,34cm}\tau_{B,Y}} \grmptb \gnl
\gob{1}{Y} \gob{1}{B} 
\gend,\hspace{1cm} 
\gbeg{3}{4}
\got{1}{F} \got{1}{X} \gnl
\glmptb \gnot{\hspace{-0,34cm}\tau_{F,X}} \grmptb \gnl
\gbmp{\zeta} \gcl{1} \gnl
\gob{1}{Y} \gob{1}{F} 
\gend=
\gbeg{3}{4}
\got{1}{F} \got{1}{X} \gnl
\gcl{1} \gbmp{\zeta} \gnl
\glmptb \gnot{\hspace{-0,34cm}\tau_{F,Y}} \grmptb \gnl
\gob{1}{Y} \gob{1}{F.} 
\gend
\end{equation}
We already know that the category $\Tau^{(\A, F, \omega)}$ is monoidal. In order to prove that so is ${}_{\tau_B}\Tau^{(\A, F, \omega)}$ it rests to check 
the conditions \equref{YBE BFX} and \equref{nat rcm} for $\tau_{B,XY}$ as in \equref{tau B XY}, and that $\alpha$ from \equref{assoc. rcm} is a morphism in $\Tau(\A, B)$. 
We comment this last fact, the rest we leave to the reader as an easy check. The proof that $\alpha$ is a morphism in $\Tau(\A, B)$ is analogous to the proof \equref{alfa is Tambara} 
that $\alpha$ from \equref{assoc. lm} is a morphism in $\Tau(\A, F)$. In the current case we use \equref{nat B omega} (in place of \equref{Phi nat new} from before) and 
\equref{YBE BFX} (in place of \equref{YBE BBX}, which we used to prove \equref{nat FFF}).

\medskip

Assume moreover that $B$ is a right $F$-module monad in the sense of \equref{F mod alg}--\equref{F mod alg unit}. 
In this setting we require for the 2-cells $\tau_{B,X}$ to be also natural with respect to the right $F$-module action on $B$: 
\begin{equation} \eqlabel{nat rm}
\gbeg{3}{5}
\got{1}{B} \got{1}{F} \got{1}{X} \gnl
\grmo \gcl{1} \gvac{1} \gcl{1} \gnl
\glmptb \gnot{\tau_{B,X}} \gcmp \grmptb \gnl
\gcl{1} \gvac{1} \gcl{1} \gnl
\gob{1}{X} \gob{3}{B}
\gend=
\gbeg{3}{5}
\got{1}{B} \got{1}{F} \got{1}{X} \gnl
\gcl{1} \glmptb \gnot{\hspace{-0,34cm}\tau_{F,X}} \grmptb \gnl
\glmptb \gnot{\hspace{-0,34cm}\tau_{B,X}} \grmptb \gcl{1} \gnl
\gcl{1} \grm \gnl
\gob{1}{X} \gob{1}{B} 
\gend
\end{equation}
As above, it is directly proved that $\tau_{B,XY}$ has the latter property, so the new category is also monoidal. We will again abuse notation and 
denote this category also by ${}_{\tau_B}\Tau^{(\A, F, \omega)}$, understanding implicitly the added properties once we assume that $B$ is a right $F$-module monad. 

\medskip

\begin{lma} \lelabel{psi_2}
Let $(F, \omega)$ be a coquasi-bimonad, $B:\A\to\A$ be a right $F$-module monad with $\psi_{B,F}$ given by \equref{psi_2 for BF}, 
$(X, \rho)$ a right $F$-comodule and $\tau_{B,X}$ 
such that $\rho: (X, \tau_{B,X})\to(XF, \tau_{B,XF})$ is a morphism in $\Tau(\A, B)$. 
Then $(X, \psi_{B,X})\in\Tau(\A, B)$ with $\psi_{B,X}$ being given via:  
\begin{center} 
\begin{tabular}{p{5cm}p{1cm}p{5cm}}
\begin{equation}\eqlabel{psi_2 for BF}
\psi_{B,F}=
\gbeg{3}{5}
\got{1}{B} \got{2}{F} \gnl
\gcl{1} \gcmu \gnl
\glmptb \gnot{\hspace{-0,34cm}\tau_{B,F}} \grmptb \gcl{1} \gnl
\gcl{1} \grmo \gcl{1} \gnl
\gob{1}{F} \gob{1}{B} 
\gend
\end{equation} & & 
\begin{equation}\eqlabel{psi_2 for X}
\psi_{B,X}=
\gbeg{3}{5}
\got{1}{B} \got{1}{X} \gnl
\gcl{1} \grcm \gnl
\glmptb \gnot{\hspace{-0,34cm}\tau_{B,X}} \grmptb \gcl{1} \gnl
\gcl{1} \grm \gnl
\gob{1}{X} \gob{1}{B.} 
\gend
\end{equation}
\end{tabular}
\end{center}
\end{lma}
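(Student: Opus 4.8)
The plan is to verify that the 2-cell $\psi_{B,X}$ of \equref{psi_2 for X} satisfies the two defining axioms \equref{psi laws} of a 1-cell of $\Tau(\A,B)=\Mnd(\K)(B)$ (with $B'=B$): compatibility with the unit $\eta_B$ and with the multiplication $\mu_B$ of $B$. The data I would feed in: $(X,\tau_{B,X})$ and $(XF,\tau_{B,XF})$ are objects of $\Tau(\A,B)$, so $\tau_{B,X}$ and $\tau_{B,F}$ each satisfy \equref{psi laws} and $\tau_{B,F}$ is in addition comonadic with respect to $\Delta_F,\Epsilon_F$; $\tau_{B,X}$ is natural with respect to $\rho$, which is exactly \equref{nat rcm}; $(X,\rho)$ is a counital, coassociative right $F$-comodule; and $B$ is a right $F$-module monad, \equref{F mod alg}--\equref{F mod alg unit}, with $\psi_{B,F}$ of the explicit shape \equref{psi_2 for BF}. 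Note that the 3-cocycle $\omega$ of the coquasi-bimonad plays no role here, and that \equref{psi_2 for BF} is exactly the $X=F$, $\rho=\Delta_F$ instance of \equref{psi_2 for X}, so the statement is self-consistent and it is that instance which gets plugged into \equref{F mod alg}.

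For the unit axiom I would precompose \equref{psi_2 for X} with $\eta_B\times\id_X$. Feeding $\eta_B$ into $\tau_{B,X}$ and using its unit law (the second identity in \equref{psi laws}) turns that crossing into $\id_X\times\eta_B$; the bottom right $F$-action then receives $\eta_B$ on its $B$-leg and collapses, by the module monad unit axiom \equref{F mod alg unit}, to $\Epsilon_F$ followed by $\eta_B$; finally $(\id_X\times\Epsilon_F)\comp\rho=\id_X$ by counitality of $\rho$. This gives $\psi_{B,X}\comp(\eta_B\times\id_X)=\id_X\times\eta_B$.

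Compatibility with $\mu_B$ is the substantive part. Writing out the left-hand side of the first identity in \equref{psi laws} yields two nested copies of \equref{psi_2 for X}: each contributes a coaction $\rho$, a crossing $\tau_{B,X}$ and a right $F$-action on one of the two $B$-strands, with $\mu_B$ at the bottom. The steps I would carry out: (i) use naturality \equref{nat rcm} to rewrite the coaction applied to the $X$-strand that the inner $\tau_{B,X}$ has already flipped, producing instead a coaction on the unflipped $X$-strand together with an extra crossing $\tau_{B,F}$ of the inner $B$-strand; (ii) use coassociativity of $(X,\rho)$ to merge the two coactions now sitting on the same $X$-strand into one $\rho$ followed by $\Delta_F$; (iii) use multiplicativity of $\tau_{B,X}$ (first identity in \equref{psi laws}) to fuse the two crossings of the two $B$-strands through the $X$-strand into a single crossing of $\mu_B(B_1,B_2)$; (iv) recognise the remaining block built from $\Delta_F$, the two right $F$-actions on $B_1$ and $B_2$, the crossing $\tau_{B,F}$ and $\mu_B$ as an instance of the right $F$-module monad axiom \equref{F mod alg} read backwards, which replaces the two actions plus $\mu_B$ by a single right $F$-action on $\mu_B(B_1,B_2)$ (here the precise shape \equref{psi_2 for BF} of $\psi_{B,F}$ is what makes the two sides of \equref{F mod alg} line up), possibly after a reshuffle using the comonad compatibilities of $\tau_{B,F}$ and coassociativity of $\Delta_F$. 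What remains is $\mu_B$ on the $B$'s followed by one $\rho$, one $\tau_{B,X}$ and one right $F$-action, i.e. $\psi_{B,X}\comp(\mu_B\times\id_X)$, the right-hand side; hence $(X,\psi_{B,X})\in\Tau(\A,B)$.

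The main obstacle will be step (iv): the module monad axiom \equref{F mod alg} is phrased through $\psi_{B,F}$, so before it can be applied the crossings $\tau_{B,F}$ and the two $F$-actions produced in (i)--(iii) must be arranged into exactly the pattern of \equref{psi_2 for BF}, and keeping track of which $F$-strand is paired with which $B$-strand, and in which order the comultiplications of $F$ have been applied, is where the bookkeeping is delicate and where an error is most likely to slip in. Steps (i), (ii), (iii) and the unit axiom are routine string-diagram manipulations.
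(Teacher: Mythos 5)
Your proposal is correct and matches the paper's own argument: the paper verifies the multiplicativity axiom of \equref{psi laws} by exactly the same chain of rewrites (the monadic distributive law for $\tau_{B,X}$, the module-monad axiom \equref{F mod alg} with $\psi_{B,F}$ of shape \equref{psi_2 for BF}, comodule coassociativity, and naturality \equref{nat rcm}), merely traversed in the opposite direction, starting from $\psi_{B,X}\comp(\mu_B\times\Id_X)$ and expanding outward to the nested double crossing. The unit axiom, which you spell out via the unit law for $\tau_{B,X}$, \equref{F mod alg unit} and counitality of $\rho$, is left to the reader in the paper but your argument for it is the right one.
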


\begin{proof}
We only prove the first distributive law for $\psi_{B,X}$:
$$
\gbeg{3}{5}
\got{1}{B}\got{1}{B}\got{1}{X}\gnl
\gmu \gcn{1}{1}{1}{0} \gnl
\gvac{1} \hspace{-0,34cm} \glmptb \gnot{\hspace{-0,34cm}\psi_{B,X}} \grmptb  \gnl
\gvac{1} \gcl{1} \gcl{1} \gnl
\gvac{1} \gob{1}{X} \gob{1}{B}
\gend=
\gbeg{4}{5}
\got{1}{B} \got{1}{B} \got{1}{\hspace{-0,24cm} X} \gnl
\gmu \hspace{-0,22cm} \grcm \gnl
\gvac{1} \glmptb \gnot{\hspace{-0,34cm}\tau_{B,X}} \grmptb \gcl{1} \gnl
\gvac{1} \gcl{1} \grm \gnl
\gvac{1} \gob{1}{X} \gob{1}{B} 
\gend\stackrel{d.l.}{=}
\gbeg{4}{7}
\got{1}{B} \got{1}{B} \got{1}{X} \gnl
\gcl{2} \gcl{1} \grcm \gnl
\gvac{1} \glmptb \gnot{\hspace{-0,34cm}\tau_{B,X}} \grmptb \gcl{2} \gnl
\glmptb \gnot{\hspace{-0,34cm}\tau_{B,X}} \grmptb \gcl{1} \gnl
\gcl{2} \gmu \gcn{1}{1}{1}{0} \gnl
\gvac{2} \hspace{-0,34cm} \grm \gnl
\gob{1}{X} \gob{3}{B} 
\gend\stackrel{\equref{F mod alg}}{=}
\gbeg{5}{8}
\got{1}{B} \got{1}{B} \got{1}{X} \gnl
\gcl{2} \gcl{1} \grcm \gnl
\gvac{1} \glmptb \gnot{\hspace{-0,34cm}\tau_{B,X}} \grmptb \gcn{1}{1}{1}{2} \gnl
\glmptb \gnot{\hspace{-0,34cm}\tau_{B,X}} \grmptb \gcl{1} \gcmu \gnl
\gcl{2} \gcl{1} \glmptb \gnot{\hspace{-0,34cm}\tau_{B,F}} \grmptb \gcl{1} \gnl
\gvac{1} \grm \grm \gnl
\gcl{1} \gwmu{3} \gnl
\gob{1}{X} \gob{3}{B} 
\gend\stackrel{comod.}{\stackrel{\equref{nat rcm}}{=}}
\gbeg{5}{9}
\got{1}{B} \gvac{1} \got{1}{B} \got{1}{X} \gnl
\gcl{1} \gvac{1} \gcl{1} \grcm \gnl
\gcl{1} \gvac{1} \glmptb \gnot{\hspace{-0,34cm}\tau_{B,X}} \grmptb \gcl{1} \gnl
\gcl{1} \gvac{1} \gcn{1}{1}{1}{-1} \grm \gnl
\gcl{1} \grcm \gcl{3} \gnl
\glmptb \gnot{\hspace{-0,34cm}\tau_{B,X}} \grmptb \gcl{1} \gnl
\gcl{1} \grm \gnl
\gcl{1} \gwmu{3} \gnl
\gob{1}{X} \gob{3}{B} 
\gend=
\gbeg{3}{5}
\got{1}{B}\got{1}{B}\got{1}{X}\gnl
\gcl{1} \glmpt \gnot{\hspace{-0,34cm}\psi_{B,X}} \grmptb \gnl
\glmptb \gnot{\hspace{-0,34cm}\psi_{B,X}} \grmptb \gcl{1} \gnl
\gcl{1} \gmu \gnl
\gob{1}{X} \gob{2}{B}
\gend
$$
\qed\end{proof}

A coquasi-bimonad in $\K$ is a proper comonad. Given a coquasi-bimonad $(F, \omega)$ and a monad $B:\A\to\A$ in $\K$. 
Then it is directly proved that the 2-cells $FF\to B$ in $\K$ form a monad in $\K$, that is actually a convolution algebra in the monoidal category $\K(\A)$.

\begin{prop} \prlabel{Sch case}
Let $(F, \omega)$ be a coquasi-bimonad and $B:\A\to\A$ a right $F$-module monad in $\K$ with $\psi_{B,F}$ given by \equref{psi_2 for BF}. 
The following are equivalent: 
\begin{enumerate}
\item 
set $\C={}_{\tau_B}\Tau^{(\A, F, \omega)}$ and let $\psi_{B,X}$ 
be given by \equref{psi_2 for X}; given a 2-cell $\sigma: FF\to B$ 
such that \equref{nat sigma} holds, the 2-cell $\crta\rho_{X,Y}$ given by \equref{crta ro Sch} 
satisfies the conditions in \thref{main}; 
\begin{center}
\begin{tabular} {p{6cm}p{0cm}p{6.8cm}} 
\begin{equation} \eqlabel{nat sigma}
\gbeg{3}{5}
\got{1}{F} \got{1}{F} \got{1}{X} \gnl
\gcl{1} \glmptb \gnot{\hspace{-0,34cm}\tau_{F,X}} \grmptb \gnl
\glmptb \gnot{\hspace{-0,34cm}\tau_{F,X}} \grmptb \gcl{1} \gnl
\gcl{1} \glmpt \gnot{\hspace{-0,34cm}\sigma} \grmptb \gnl
\gob{1}{X} \gob{3}{B}
\gend=
\gbeg{3}{5}
\got{1}{F} \got{1}{F} \got{1}{X} \gnl
\glmpt \gnot{\hspace{-0,34cm}\sigma} \grmptb \gcl{1} \gnl
\gvac{1} \glmptb \gnot{\hspace{-0,34cm}\tau_{B,X}} \grmptb \gnl
\gvac{1} \gcl{1} \gcl{1} \gnl
\gvac{1} \gob{1}{X} \gob{1}{B}
\gend
\end{equation} & &
\begin{equation} \eqlabel{crta ro Sch}
\crta\rho_{X,Y}=
\gbeg{3}{5}
\got{1}{X} \got{3}{Y} \gnl
\grcm \grcm \gnl
\gcl{1} \glmptb \gnot{\hspace{-0,34cm}\tau_{F,Y}} \grmptb \gcl{1} \gnl
\gcl{1} \gcl{1} \glmpt \gnot{\hspace{-0,34cm}\sigma} \grmptb \gnl
\gob{1}{X} \gob{1}{Y} \gob{3}{B} 
\gend
\end{equation}
\end{tabular}
\end{center}
\item $(B,F, \psi_{B,F}, \mu_M, \eta_M, \Epsilon_F, \beta)$ 
is a Sweedler's Hopf datum where $\mu_M$ and $\beta$ are given by: 
\begin{equation} \eqlabel{Sw datum}
\mu_M=
\gbeg{3}{5}
\got{2}{F} \got{2}{F} \gnl
\gcmu \gcmu \gnl
\gcl{1} \glmptb \gnot{\hspace{-0,34cm}\tau_{F,F}} \grmptb \gcl{1} \gnl
\gmu \glmpt \gnot{\hspace{-0,34cm}\sigma} \grmptb \gnl
\gob{2}{F} \gob{3}{B} \gnl
\gend
\hspace{2cm}\text{and}\hspace{1,6cm}
\beta=
\gbeg{6}{6}
\got{2}{F} \got{2}{F} \got{2}{F} \gnl
\gcmu \gcmu \gcmu \gnl
\gcl{1} \glmptb \gnot{\hspace{-0,34cm}\tau_{F,F}} \grmptb \glmptb \gnot{\hspace{-0,34cm}\tau_{F,F}} \grmptb \gcl{2} \gnl
\gcl{1} \gcl{1} \glmptb \gnot{\hspace{-0,34cm}\tau_{F,F}} \grmptb \gcn{2}{1}{1}{1} \gnl
\gcn{1}{1}{1}{2} \gelt{\hspace{0,17cm}\omega^{-1}} \gcn{1}{1}{1}{0} \gcl{1} \gcl{1} \gcl{1} \gnl   
\gvac{3} \gob{1}{F} \gob{1}{F} \gob{1}{F}  
\gend
\end{equation}
where the Sweedler's 2-cocycle $\sigma$ in $\K$ is invertible in the convolution algebra $\K(\A)(FF,B)$. Here $\eta_M=\eta_F\times\eta_B$.  
\end{enumerate}
\end{prop}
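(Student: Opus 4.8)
The plan is to reduce the claim, via the Remark following \thref{main}, to the five identities which together are equivalent to point~2) of that Theorem --- namely \equref{natur}, \equref{vert comp M}, \equref{2-cells EM^M one psi}, \equref{monad law ro} (equivalently \equref{monad law ro new}) and \equref{normalized in EM}, all required for $\crta\rho_{X,Y}$ as in \equref{crta ro Sch} --- and then to identify each of them with an axiom of a Sweedler's Hopf datum for $\sigma$. Before that one records that $\C={}_{\tau_B}\Tau^{(\A,F,\omega)}$ is monoidal, that the forgetful functor $\U:\C\to\Tau(\A,B)$, $\crta X\mapsto(X,\tau_{B,X})$, is faithful and quasi-monoidal, and that $\F:\crta X\mapsto(X,\psi_{B,X})$ with $\psi_{B,X}$ from \equref{psi_2 for X} factors through $\U$ --- the last point by \leref{psi_2} --- so that \thref{main} indeed applies to this $\C$. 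Throughout, \equref{nat sigma} is used as a standing compatibility between $\sigma$ and the distributive laws $\tau_{F,X},\tau_{B,X}$.

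The matching I would carry out is as follows. Identity \equref{natur} holds automatically in both directions: \equref{crta ro Sch} is assembled from the $F$-coactions, the $\tau_{F,-}$'s and $\sigma$, and morphisms of $\C$ are by definition $F$-colinear and $\tau_{F,-}$-natural (conditions \equref{morphisms in comod}), while the $B$-linear map on the module factor cancels exactly as in the proof of \thref{main}. Identity \equref{vert comp M}, unwound by means of comodule coassociativity, the comonadic distributive law for $\tau_{F,-}$ and the Yang--Baxter equations \equref{YBE BFX} and \equref{YBE BBX}, collapses --- after specializing $X=Y=F$ and composing with the relevant $\Epsilon_F$ cells --- precisely to convolution invertibility of $\sigma$ in $\K(\A)(FF,B)$, with $\crta\rho_{X,Y}^{-1}$ produced from $\sigma^{-1}$ by the same recipe \equref{crta ro Sch}; conversely this invertibility rebuilds \equref{vert comp M} for general $X,Y$. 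Identity \equref{2-cells EM^M one psi} --- that $\crta\rho_{X,Y}$ is a $2$-cell in $\EM^M(\K)$ --- is rewritten by substituting $\psi_{B,X},\psi_{B,F}$ from \equref{psi_2 for X} and \equref{psi_2 for BF} and applying \equref{nat rcm}, \equref{YBE BFX}, \equref{nat rm} and the right $F$-module monad axiom \equref{F mod alg}; after the $X=Y=F$ reduction and a unit composition this becomes the twisted action axiom \equref{weak action}, and conversely \equref{weak action} with the listed compatibilities yields \equref{2-cells EM^M one psi} in general. Finally, \equref{normalized in EM} becomes, using normality of $\omega$, the counit axioms of the comonad $F$ and the $\Epsilon_F$-conditions, exactly the normalized $2$-cocycle axiom \equref{normalized 2-cocycle}; the remaining two unit axioms \equref{F mod alg unit} and \equref{weak action unity} are, respectively, part of the hypothesis that $B$ is a right $F$-module monad and the unit part of the reduction of \equref{weak action}, with $\eta_M=\eta_F\times\eta_B$ as stated.

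The crux --- and the step I expect to be the hard one --- is the equivalence between the $2$-cocycle identity \equref{monad law ro} and the Sweedler $2$-cocycle condition \equref{2-cocycle condition} with the prescribed $\beta$ of \equref{Sw datum}. Expanding both sides of \equref{monad law ro} with \equref{crta ro Sch} and bringing the two occurrences of $\sigma$ adjacent, the associator $\alpha$ of $\C$ --- given by \equref{assoc. rcm}, built from $\omega$, and whose inverse involves $\omega^{-1}$ through \equref{dual cond} --- injects exactly the factor that, after the $X=Y=Z=F$ specialization, reassembles into the non-trivial $\beta$ (three $\tau_{F,F}$'s stacked with $\omega^{-1}$). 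Carrying this through requires transposing $F$-strands by \equref{YBE BFX}, \equref{YBE BBX} and the comonadic distributive laws, pushing coactions across by \equref{nat rcm}, commuting $\sigma$ past the $\tau_{B,-}$-strands by \equref{nat sigma}, and finally invoking the coquasi-coassociativity and the $3$-cocycle axiom of $(F,\omega)$, together with \equref{nat B omega}, to recognise the emergent $3$-cocycle as $\beta$; the reverse implication reconstructs \equref{monad law ro} for arbitrary $X,Y,Z$ out of \equref{2-cocycle condition} and \equref{weak action} by the same naturality manipulations. The other four equivalences are comparatively routine string-diagram chases once the $X=Y=Z=F$ specialization and the $\eta$-composition devices of \thref{main} are in place.
\qed
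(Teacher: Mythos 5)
Your proposal is correct and follows essentially the same route as the paper: reduce to the five identities \equref{natur}, \equref{vert comp M}, \equref{2-cells EM^M one psi}, \equref{monad law ro} and \equref{normalized in EM} from \thref{main} with $\F$ the forgetful functor factoring through $\U$, and match them respectively to naturality, convolution invertibility of $\sigma$, the twisted action \equref{weak action}, the $2$-cocycle condition \equref{2-cocycle condition} with the nontrivial $\beta$ entering through the associator \equref{assoc. rcm} (the paper's identity \equref{ab}), and the normalization \equref{normalized 2-cocycle}. This is exactly the paper's decomposition, including the identification of the $2$-cocycle step as the one carrying the bulk of the string-diagram work.
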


\begin{proof}
A faithful functor is provided by the forgetful functor $\F:{}_{\tau_B}\Tau^{(\A, F, \omega)}\to \Tau(\A,B)$, given by $\F(X, \tau_{B,X}, \tau_{F,X},\rho)=(X, \tau_{B,X})$, it is 
clearly quasi-monoidal. The 2-cell $\crta\rho_{X,Y}$ given in \equref{crta ro Sch} satisfies the condition \equref{natur} by \equref{morphisms in comod}. 
Let us check when
$\crta\rho_{X,Y}^{-1}:=
\gbeg{4}{5}
\got{1}{X} \got{3}{Y} \gnl
\grcm \grcm \gnl
\gcl{1} \glmptb \gnot{\hspace{-0,34cm}\tau_{F,Y}} \grmptb \gcl{1} \gnl
\gcl{1} \gcl{1} \glmpt \gnot{\hspace{-0,34cm}\tau} \grmptb \gnl
\gob{1}{X} \gob{1}{Y} \gob{3}{B} 
\gend$ 
together with the above $\crta\rho_{X,Y}$ fulfills the identity \equref{vert comp M}. We have: 
$$
\gbeg{6}{11}
\got{1}{X} \got{7}{Y} \gnl
\grcm \gvac{2} \gcl{2} \gnl
\gcl{1} \gcn{2}{2}{1}{5} \gnl
\gcl{1} \gvac{3} \grcm \gnl
\gcl{1} \gvac{2} \glmptb \gnot{\hspace{-0,34cm}\tau_{F,Y}} \grmptb \gcl{1} \gnl
\gcl{1} \gvac{2} \gcn{1}{1}{1}{-1} \glmpt \gnot{\hspace{-0,34cm}\sigma} \grmptb \gnl
\grcm \grcm \gvac{1} \gcl{3} \gnl
\gcl{1} \glmptb \gnot{\hspace{-0,34cm}\tau_{F,Y}} \grmptb \gcl{1} \gnl
\gcl{2} \gcl{2} \glmpt \gnot{\hspace{-0,34cm}\tau} \grmptb \gnl
\gvac{3} \gwmu{3} \gnl
\gob{1}{X} \gob{1}{Y} \gob{5}{B} 
\gend=
\gbeg{6}{9}
\got{1}{X} \got{5}{Y} \gnl
\grcm \gvac{1} \grcm \gnl
\gcl{1} \gcn{2}{1}{1}{2} \gcl{1} \gcn{1}{1}{1}{2} \gnl
\gcl{1} \gcmu \gcl{1} \gcmu \gnl
\gcl{1} \gcl{1} \glmptb \gnot{\hspace{-0,34cm}\tau_{F,Y}} \grmptb \gcl{1} \gcl{2} \gnl
\gcl{1} \glmptb \gnot{\hspace{-0,34cm}\tau_{F,Y}} \grmptb \glmptb \gnot{\hspace{-0,34cm}\tau_{F,F}} \grmptb \gnl
\gcl{2} \gcl{2} \glmpt \gnot{\hspace{-0,34cm}\tau} \grmptb \glmpt \gnot{\hspace{-0,34cm}\sigma} \grmptb \gnl
\gvac{3} \gwmu{3} \gnl
\gob{1}{X} \gob{1}{Y} \gob{5}{B} 
\gend=
\gbeg{6}{9}
\got{1}{X} \got{3}{Y} \gnl
\grcm \grcm \gnl
\gcl{1} \glmptb \gnot{\hspace{-0,34cm}\tau_{F,Y}} \grmptb \gcn{2}{2}{1}{4} \gnl
\gcl{1} \gcl{1} \gcn{1}{1}{1}{2} \gnl
\gcl{1} \gcl{1} \gcmu \gcmu \gnl
\gcl{1} \gcl{1} \gcl{1} \glmptb \gnot{\hspace{-0,34cm}\tau_{F,F}} \grmptb \gcl{1} \gnl
\gcl{2} \gcl{2} \glmpt \gnot{\hspace{-0,34cm}\tau} \grmptb \glmpt \gnot{\hspace{-0,34cm}\sigma} \grmptb \gnl
\gvac{3} \gwmu{3} \gnl
\gob{1}{X} \gob{1}{Y} \gob{5}{B} 
\gend\stackrel{*}{=}
\gbeg{4}{6}
\got{1}{X} \got{3}{Y} \gnl
\grcm \grcm \gnl
\gcl{1} \glmptb \gnot{\hspace{-0,34cm}\tau_{F,Y}} \grmptb \gcu{1}\gnl
\gcl{1} \gcl{1} \gcu{1} \gnl
\gcl{1} \gcl{1} \gu{1} \gnl
\gob{1}{X} \gob{1}{Y} \gob{1}{B} 
\gend=
\gbeg{3}{4}
\got{1}{X}  \got{1}{Y} \gnl
\gcl{2} \gcl{2} \gu{1} \gnl
\gvac{2} \gcl{1} \gnl
\gob{1}{X} \gob{1}{Y} \gob{1}{B}
\gend
$$
where in the first identity we applied the comodule law and naturality of $\tau_{F,Y}$ with respect to the comodule structure, 
in the second one the distributive law for $\tau_{F,Y}$, 
the identity * holds true if and only if $\tau$ is the convolution inverse for $\sigma$, 
and finally we applied the counital distributive law for $\tau_{F,Y}$ and a comodule property. The other identity is shown in the same way 
reversing the order of the 2-cocycles. 

Next, for the identity  \equref{2-cells EM^M one psi} we compute using the same techniques:
$$
L:=\gbeg{7}{13}
\got{1}{B} \got{1}{X} \got{7}{Y} \gnl
\gcl{1} \grcm \gvac{2} \gcl{2} \gnl
\gcl{1} \gcl{1} \gcn{2}{2}{1}{5} \gnl
\gcl{1} \gcl{1} \gvac{3} \grcm \gnl
\gcl{1} \gcl{1} \gvac{2} \glmptb \gnot{\hspace{-0,34cm}\tau_{F,Y}} \grmptb \gcl{1} \gnl
\gcl{1} \gcl{1} \gvac{2} \gcn{1}{1}{1}{-1} \glmpt \gnot{\hspace{-0,34cm}\sigma} \grmptb \gnl
\gcl{1} \grcm \grcm \gvac{1} \gcl{5} \gnl
\glmptb \gnot{\hspace{-0,34cm}\tau_{B,X}} \grmptb \glmptb \gnot{\hspace{-0,34cm}\tau_{F,Y}} \grmptb\gcl{1} \gnl
\gcl{4} \glmptb \gnot{\hspace{-0,34cm}\tau_{B,Y}} \grmptb \gmu \gnl
\gvac{1} \gcl{1} \gcl{1} \gcn{1}{1}{2}{1} \gnl
\gvac{1} \gcl{2} \grm \gnl
\gvac{2} \gwmu{5} \gnl
\gob{1}{X} \gob{1}{Y} \gob{5}{B} 
\gend\stackrel{comod.}{\stackrel{\equref{nat rcm}}{=}}
\gbeg{7}{11}
\got{1}{B} \got{1}{X} \got{5}{Y} \gnl
\gcl{1} \grcm \gvac{1} \grcm \gnl
\gcl{1} \gcl{1} \gcn{2}{1}{1}{2} \gcl{1} \gcn{1}{1}{1}{2} \gnl
\gcl{1} \gcl{1} \gcmu \gcl{1} \gcmu \gnl
\gcl{1} \gcl{1} \gcl{1} \glmptb \gnot{\hspace{-0,34cm}\tau_{F,Y}} \grmptb \gcl{1} \gcl{2} \gnl
\glmptb \gnot{\hspace{-0,34cm}\tau_{B,X}} \grmptb \glmptb \gnot{\hspace{-0,34cm}\tau_{F,Y}} \grmptb \glmpt \gnot{\hspace{-0,34cm}\tau_{F,F}} \grmptb \gnl
\gcl{4} \glmptb \gnot{\hspace{-0,34cm}\tau_{B,Y}} \grmptb \gmu \glmpt \gnot{\hspace{-0,34cm}\sigma} \grmptb \gnl
\gvac{1} \gcl{1} \gcl{1} \gcn{1}{1}{2}{1} \gvac{2} \gcl{2} \gnl
\gvac{1} \gcl{2} \grm \gnl
\gvac{2} \gwmu{5} \gnl
\gob{1}{X} \gob{1}{Y} \gob{5}{B} 
\gend\stackrel{com.d.l.}{=}
\gbeg{5}{7}
\got{1}{B} \got{1}{X} \got{3}{Y} \gnl
\gcl{1} \grcm \grcm \gnl
\glmptb \gnot{\hspace{-0,34cm}\tau_{B,X}} \grmptb \glmptb \gnot{\hspace{-0,34cm}\tau_{F,Y}} \grmptb \gcl{1} \gnl
\gcl{3} \glmptb \gnot{\hspace{-0,34cm}\tau_{B,Y}} \grmptb \glmptb \gnot{\hspace{-0,34cm}\mu_M} \grmptb \gnl
\gvac{1} \gcl{2} \grm \gcl{1} \gnl
\gvac{2} \gwmu{3} \gnl
\gob{1}{X} \gob{1}{Y} \gob{3}{B} 
\gend\stackrel{*}{=}
\gbeg{5}{8}
\got{1}{B} \got{1}{X} \got{3}{Y} \gnl
\gcl{1} \grcm \grcm \gnl
\glmptb \gnot{\hspace{-0,34cm}\tau_{B,X}} \grmptb \glmptb \gnot{\hspace{-0,34cm}\tau_{F,Y}} \grmptb \gcl{1} \gnl
\gcl{4} \glmptb \gnot{\hspace{-0,34cm}\tau_{B,Y}} \grmptb \gcl{1} \gcl{1} \gnl
\gvac{1} \gcl{3} \glmptb \gnot{\psi_{B,FF}} \gcmptb \grmptb \gnl
\gvac{2} \glmpt \gnot{\hspace{-0,34cm}\sigma} \grmptb \gcl{1} \gnl
\gvac{3} \gmu \gnl
\gob{1}{X} \gob{1}{Y} \gob{4}{B} 
\gend=
$$

$$
\gbeg{7}{11}
\got{1}{B} \got{1}{X} \got{3}{Y} \gnl
\gcl{1} \grcm \grcm \gnl
\glmptb \gnot{\hspace{-0,34cm}\tau_{B,X}} \grmptb \glmptb \gnot{\hspace{-0,34cm}\tau_{F,Y}} \grmptb \gcn{2}{2}{1}{4} \gnl
\gcl{7} \glmptb \gnot{\hspace{-0,34cm}\tau_{B,Y}} \grmptb \gcn{1}{1}{1}{2}  \gnl
\gvac{1} \gcl{6} \gcl{1} \gcmu \gcmu \gnl
\gvac{2} \glmptb \gnot{\hspace{-0,34cm}\tau_{B,F}} \grmptb \glmptb \gnot{\hspace{-0,34cm}\tau_{F,F}} \grmptb\gcl{1} \gnl
\gvac{2} \gcl{1} \glmptb \gnot{\hspace{-0,34cm}\tau_{B,F}} \grmptb \gmu \gnl
\gvac{2} \glmpt \gnot{\hspace{-0,34cm}\sigma} \grmptb \gcl{1} \gcn{1}{1}{2}{1} \gnl
\gvac{3} \gcl{1} \grm \gnl
\gvac{3} \gmu \gnl
\gob{1}{X} \gob{1}{Y} \gob{4}{B} 
\gend\stackrel{com.d.l.}{\stackrel{comod.}{=}}
\gbeg{7}{11}
\got{1}{B} \got{1}{X} \got{5}{Y} \gnl
\gcl{3} \grcm \gvac{1} \grcm \gnl
\gvac{1} \gcl{1} \gcn{2}{1}{1}{3} \grcm  \gcn{1}{1}{-1}{1} \gnl
\gvac{1} \grcm \glmptb \gnot{\hspace{-0,34cm}\tau_{F,Y}} \grmptb \gcl{1} \gcl{2} \gnl
\glmptb \gnot{\hspace{-0,34cm}\tau_{B,X}} \grmptb \glmptb \gnot{\hspace{-0,34cm}\tau_{F,Y}} \grmptb \glmptb \gnot{\hspace{-0,34cm}\tau_{F,F}} \grmptb  \gnl
\gcl{5} \glmptb \gnot{\hspace{-0,34cm}\tau_{B,Y}} \grmptb \gcl{1} \gcl{2} \gmu \gnl
\gvac{1} \gcl{4} \glmptb \gnot{\hspace{-0,34cm}\tau_{B,F}} \grmptb \gcn{1}{2}{4}{3} \gnl
\gvac{2} \gcl{1} \glmptb \gnot{\hspace{-0,34cm}\tau_{B,F}} \grmptb \gnl
\gvac{2} \glmpt \gnot{\hspace{-0,34cm}\sigma} \grmptb \grm  \gnl
\gvac{3} \gmu \gnl
\gob{1}{X} \gob{1}{Y} \gob{4}{B} 
\gend\stackrel{\equref{YBE BFX}}{\stackrel{\equref{nat rcm}}{=}}
\gbeg{6}{11}
\got{1}{B} \got{1}{X} \got{5}{Y} \gnl
\gcl{3} \grcm \gvac{1} \gcl{1} \gnl
\gvac{1} \gcl{1} \gcn{2}{1}{1}{3} \grcm  \gnl
\gvac{1} \grcm \glmptb \gnot{\hspace{-0,34cm}\tau_{F,Y}} \grmptb \gcl{1} \gnl
\glmptb \gnot{\hspace{-0,34cm}\tau_{B,X}} \grmptb \gcl{1} \gcl{1} \gmu  \gnl
\gcl{5} \glmptb \gnot{\hspace{-0,34cm}\tau_{B,F}} \grmptb \grcm  \gcn{1}{1}{0}{1} \gnl
\gvac{1} \gcl{1} \glmptb \gnot{\hspace{-0,34cm}\tau_{B,Y}} \grmptb \gcl{1} \gcl{2} \gnl
\gvac{1} \glmptb \gnot{\hspace{-0,34cm}\tau_{F,Y}} \grmptb \glmptb \gnot{\hspace{-0,34cm}\tau_{B,F}} \grmptb \gnl
\gcl{2} \gcl{2} \glmpt \gnot{\hspace{-0,34cm}\sigma} \grmptb \grm  \gnl
\gvac{3} \gmu \gnl
\gob{1}{X} \gob{1}{Y} \gob{4}{B} 
\gend\stackrel{\equref{nat rcm}}{=}
\gbeg{6}{9}
\gvac{1} \got{1}{B} \got{1}{X} \got{3}{Y} \gnl
\gvac{1} \gcl{1} \grcm \grcm \gnl
\gvac{1} \glmptb \gnot{\hspace{-0,34cm}\tau_{B,X}} \grmptb \glmptb \gnot{\hspace{-0,34cm}\tau_{F,Y}} \grmptb \gcl{1} \gnl
\gcn{2}{1}{3}{1} \glmptb \gnot{\hspace{-0,34cm}\tau_{B,Y}} \grmptb \gmu \gnl
\grcm \grcm \gcn{1}{1}{-1}{0} \gcn{1}{2}{0}{1} \gnl
\gcl{3} \glmptb \gnot{\hspace{-0,34cm}\tau_{B,F}} \grmptb \gcl{1} \gcn{1}{1}{0}{1} \gnl
\gcl{2} \gcl{2} \glmpt \gnot{\hspace{-0,34cm}\sigma} \grmptb \grm  \gnl
\gvac{3} \gmu \gnl
\gob{1}{X} \gob{1}{Y} \gob{4}{B} 
\gend:=R
$$
At the place * the identity holds true if and only if the $F$-action on $B$ is twisted by $\sigma$, {\em i.e.} if \equref{weak action} holds. 
Since the rest in the above computation uses the conditions holding in the category $\C$, the equality $L=R$ expressing \equref{2-cells EM^M one psi} 
is equivalent to \equref{weak action}. 

\medskip

Before we proceed we note that we have the following identity: 
\begin{equation} \eqlabel{ab}
\gbeg{6}{6}
\got{1}{X} \gvac{1} \got{1}{Y} \got{3}{Z} \gnl
\grcm \grcm \grcm \gnl
\gcl{1} \glmptb \gnot{\hspace{-0,34cm}\tau_{F,Y}} \grmptb \glmptb \gnot{\hspace{-0,34cm}\tau_{F,Z}} \grmptb \gcl{2} \gnl
\gcl{1} \gcl{1} \glmptb \gnot{\hspace{-0,34cm}\tau_{F,Z}} \grmptb \gcl{1} \gnl
\glmptb \gnot{\alpha} \gcmptb \grmptb \glmptb \gnot{\beta} \gcmptb \grmptb \gnl
\gob{1}{X} \gob{1}{Y} \gob{1}{Z}  \gob{1}{F} \gob{1}{F} \gob{1}{F} 
\gend=
\gbeg{3}{5}
\got{1}{XYZ} \gnl
\gcl{1} \hspace{-0,42cm} \glmf  \gnl
\gvac{1} \gcl{1} \gcn{1}{1}{1}{3} \gnl
\gvac{1} \gbmp{\alpha} \gvac{1} \gbmp{\beta} \gnl
\gvac{1} \gob{1}{XYZ}  \gob{3}{FFF} 
\gend= 
\gbeg{5}{6}
\got{1}{XYZ} \gnl
\gcl{2} \hspace{-0,42cm} \glmf \gnl
\gvac{2} \gcn{1}{1}{1}{6} \gnl
\gvac{1} \gcl{2} \hspace{-0,42cm} \glmf \gvac{1} \gcmu \gnl
\gvac{3} \gelt{\omega} \gvac{1} \gelt{\hspace{0,14cm}\omega^{-1}} \gcl{1} \gnl
\gvac{2} \gob{1}{XYZ}  \gob{7}{FFF} 
\gend\stackrel{\equref{FF module rule}}{\stackrel{coass.}{=}}
\gbeg{3}{5}
\got{1}{XYZ} \gnl
\gcl{3} \hspace{-0,42cm} \glmf \gnl
\gvac{2} \gcn{1}{1}{1}{3} \gnl
\gvac{3} \gcl{1} \gnl
\gvac{1} \gob{1}{XYZ} \gob{3}{FFF}
\gend=
\gbeg{6}{5}
\got{1}{X} \gvac{1} \got{1}{Y} \got{3}{Z} \gnl
\grcm \grcm \grcm \gnl
\gcl{1} \glmptb \gnot{\hspace{-0,34cm}\tau_{F,Y}} \grmptb \glmptb \gnot{\hspace{-0,34cm}\tau_{F,Z}} \grmptb \gcl{2} \gnl
\gcl{1} \gcl{1} \glmptb \gnot{\hspace{-0,34cm}\tau_{F,Z}} \grmptb \gcl{1} \gnl
\gob{1}{X} \gob{1}{Y} \gob{1}{Z}  \gob{1}{F} \gob{1}{F} \gob{1}{F} 
\gend
\end{equation}
where $\alpha$ is the associativity constraint from \thref{quasi-bim monoidal}. 
Now, as for \equref{monad law ro}, we find: 
$$\Sigma:=
\gbeg{7}{11}
\gvac{1} \got{1}{X} \gvac{1} \got{1}{Y} \got{3}{Z} \gnl
\gvac{1} \grcm \grcm \gcl{2} \gnl
\gvac{1} \gcl{1} \glmptb \gnot{\hspace{-0,34cm}\tau_{F,Y}} \grmptb \gcl{1} \gnl
\gcn{1}{1}{3}{1} \gvac{1} \gcl{1} \glmpt \gnot{\hspace{-0,34cm}\sigma} \grmptb \grcm \gnl
\grcm \grcm \glmptb \gnot{\hspace{-0,34cm}\tau_{B,Z}} \grmptb \gcl{1} \gnl
\gcl{1} \glmptb \gnot{\hspace{-0,34cm}\tau_{F,Y}} \grmptb \gcl{1} \gcn{1}{1}{1}{0} \grm \gnl
\gcl{3} \gcl{3} \gmu \hspace{-0,2cm} \grcm  \gcn{1}{1}{0}{1} \gnl
\gvac{3} \glmptb \gnot{\hspace{-0,34cm}\tau_{F,Z}} \grmptb \gcl{1} \gcl{2} \gnl
\gvac{3} \gcn{1}{1}{1}{0} \glmpt \gnot{\hspace{-0,34cm}\sigma} \grmptb \gnl
\gvac{1} \hspace{-0,22cm} \glmptb \gnot{\alpha} \gcmptb \grmptb  \gvac{2} \hspace{-0,2cm} \gmu \gnl
\gvac{2} \hspace{-0,24cm} \gob{1}{X} \gob{1}{Y} \gob{1}{Z} \gob{5}{B} 
\gend\stackrel{\equref{nat rcm}}{\stackrel{comod.}{=}}
\gbeg{8}{12}
\got{1}{X} \gvac{2} \got{1}{Y} \got{5}{Z} \gnl
\grcm \gvac{1} \grcm \gvac{1} \gcl{4} \gnl
\gcl{1} \gcn{2}{1}{1}{2} \gcl{1} \gcn{1}{1}{1}{2} \gnl
\gcl{1} \gcmu \gcl{1} \gcmu \gnl
\gcl{1} \gcl{1} \glmptb \gnot{\hspace{-0,34cm}\tau_{F,Y}} \grmptb \gcl{1} \gcl{2} \gnl
\gcl{1} \glmptb \gnot{\hspace{-0,34cm}\tau_{F,Y}} \grmptb \glmptb \gnot{\hspace{-0,34cm}\tau_{F,F}} \grmptb \gvac{1} \grcm \gnl
\gcl{4} \gcl{4} \gmu \glmpt \gnot{\hspace{-0,34cm}\sigma} \grmptb \gcl{1} \gcn{1}{1}{1}{2} \gnl
\gvac{2} \gcn{3}{1}{2}{5} \glmptb \gnot{\hspace{-0,34cm}\tau_{B,Z}} \grmptb \gcmu \gnl
\gvac{4} \glmptb \gnot{\hspace{-0,34cm}\tau_{F,Z}} \grmptb  \glmptb \gnot{\hspace{-0,34cm}\tau_{B,F}} \grmptb \gcl{1} \gnl
\gvac{2} \gcn{3}{1}{5}{1} \glmpt \gnot{\hspace{-0,34cm}\sigma} \grmptb \grm \gnl
\glmptb \gnot{\alpha} \gcmptb \grmptb  \gvac{3} \gmu \gnl
\gob{1}{X} \gob{1}{Y} \gob{1}{Z} \gob{8}{B} 
\gend\stackrel{com.d.l.}{\stackrel{mod.d.l.}{\stackrel{\equref{nat sigma}}{=}}}
\gbeg{8}{13}
\gvac{1} \got{1}{X} \gvac{1} \got{1}{Y} \got{6}{Z} \gnl
\gvac{1} \grcm \grcm \gcn{1}{2}{4}{4} \gnl
\gvac{1} \gcl{1} \glmptb \gnot{\hspace{-0,34cm}\tau_{F,Y}} \grmptb \gcn{1}{1}{1}{3} \gnl
\gcn{1}{1}{3}{1} \gcn{2}{1}{3}{1} \hspace{-0,22cm} \gcmu \gcmu \grcm \gnl
\gcn{1}{6}{2}{2} \gcn{2}{6}{2}{2} \gcl{3} \glmptb \gnot{\hspace{-0,34cm}\tau_{F,F}} \grmptb \glmptb \gnot{\hspace{-0,34cm}\tau_{F,Z}} \grmptb \gcl{4} \gnl
\gvac{4} \gcl{1} \glmptb \gnot{\hspace{-0,34cm}\tau_{F,Z}} \grmptb \gcl{2} \gnl
\gvac{4} \glmptb \gnot{\hspace{-0,34cm}\tau_{F,Z}} \grmptb \gcl{1} \gnl
\gvac{3} \glmptb \gnot{\hspace{-0,34cm}\tau_{F,Z}} \grmptb \gcl{1} \glmpt \gnot{\hspace{-0,34cm}\sigma} \grmptb \gnl
\gvac{3} \gcn{1}{2}{1}{0} \gmu \gvac{1} \glmptb \gnot{\hspace{-0,34cm}\psi_{B,F}} \grmptb \gnl 
\gvac{5} \gcn{1}{1}{0}{1} \gcn{2}{1}{3}{1} \gcl{2} \gnl 
\gvac{1} \hspace{-0,3cm} \glmptb \gnot{\alpha} \gcmptb \grmptb \gvac{2} \hspace{-0,24cm} \glmpt \gnot{\hspace{-0,34cm}\sigma} \grmptb \gnl
\gvac{2} \gcn{1}{1}{0}{0} \gcn{1}{1}{0}{0} \gcn{1}{1}{0}{0} \gvac{2} \gwmu{3} \gnl
\gvac{2} \hspace{-0,24cm} \gob{1}{X} \gob{1}{Y} \gob{1}{Z} \gob{7}{B} 
\gend
$$

$$\stackrel{YBE \equref{dual cond}}{=}
\gbeg{8}{13}
\gvac{1} \got{1}{X} \gvac{1} \got{1}{Y} \got{6}{Z} \gnl
\gvac{1} \grcm \grcm \gcn{1}{2}{4}{4} \gnl
\gvac{1} \gcl{1} \glmptb \gnot{\hspace{-0,34cm}\tau_{F,Y}} \grmptb \gcn{1}{1}{1}{3} \gnl
\gcn{1}{1}{3}{1} \gcn{2}{1}{3}{1} \hspace{-0,22cm} \gcmu \gcmu \grcm \gnl
\gcn{1}{5}{2}{2} \gcn{2}{5}{2}{2} \gcl{3} \gcl{1} \gcl{1} \glmptb \gnot{\hspace{-0,34cm}\tau_{F,Z}} \grmptb \gcl{5} \gnl
\gvac{4} \gcl{1} \glmptb \gnot{\hspace{-0,34cm}\tau_{F,Z}} \grmptb \gcl{3} \gnl
\gvac{4} \glmptb \gnot{\hspace{-0,34cm}\tau_{F,Z}} \grmptb \gcl{1} \gnl
\gvac{3} \glmptb \gnot{\hspace{-0,34cm}\tau_{F,Z}} \grmptb \glmptb \gnot{\hspace{-0,34cm}\tau_{F,F}} \grmptb \gnl
\gvac{3} \gcn{1}{1}{1}{0} \gmu \glmpt \gnot{\hspace{-0,34cm}\sigma} \grmptb \gnl
\gvac{1} \hspace{-0,2cm} \glmptb \gnot{\alpha} \gcmptb \grmptb \gcn{3}{1}{3}{6} \gvac{1} \hspace{-0,34cm} \glmptb \gnot{\hspace{-0,34cm}\psi_{B,F}} \grmptb \gnl
\gvac{2} \gcn{1}{2}{0}{0} \gcn{1}{2}{0}{0} \gcn{3}{2}{0}{0} \glmpt \gnot{\hspace{-0,34cm}\sigma} \grmptb \gcl{1} \gnl
\gvac{8} \gmu \gnl
\gvac{2} \hspace{-0,24cm} \gob{1}{X} \gob{1}{Y} \gob{1}{Z} \gob{9}{B} 
\gend\stackrel{2\times com.d.l.}{=}
\gbeg{8}{11}
\got{1}{X} \gvac{1} \got{1}{Y} \got{3}{Z} \gnl
\grcm \grcm \grcm \gnl
\gcl{1} \glmptb \gnot{\hspace{-0,34cm}\tau_{F,Y}} \grmptb \glmptb \gnot{\hspace{-0,34cm}\tau_{F,Z}} \grmptb \gcn{1}{1}{1}{4} \gnl
\gcl{4} \gcl{4} \glmptb \gnot{\hspace{-0,34cm}\tau_{F,Z}} \grmptb \gcn{2}{1}{1}{3} \gcn{1}{4}{2}{2} \gnl
\gvac{2} \gcl{3} \hspace{-0,22cm} \gcmu \gcmu \gnl
\gvac{3} \gcn{1}{1}{1}{1} \glmptb \gnot{\hspace{-0,34cm}\tau_{F,F}} \grmptb \gcl{1} \gnl
\gvac{3} \gmu \glmpt \gnot{\hspace{-0,34cm}\sigma} \grmptb \gnl
\gvac{1} \hspace{-0,2cm} \glmptb \gnot{\alpha} \gcmptb \grmptb \gcn{3}{1}{1}{4} \hspace{-0,22cm} \glmptb \gnot{\hspace{-0,34cm}\psi_{B,F}} \grmptb \gnl
\gvac{2} \gcn{1}{2}{0}{0} \gcn{1}{2}{0}{0} \gcn{2}{2}{0}{0} \glmpt \gnot{\hspace{-0,34cm}\sigma} \grmptb \gcl{1}  \gnl
\gvac{7} \gmu \gnl
\gvac{2} \hspace{-0,24cm} \gob{1}{X} \gob{1}{Y} \gob{1}{Z} \gob{7}{B} 
\gend\stackrel{*}{\stackrel{\mu_M}{\stackrel{\equref{2-cocycle condition}}{=}}}
\gbeg{8}{11}
\got{1}{X} \gvac{1} \got{1}{Y} \got{3}{Z} \gnl
\grcm \grcm \grcm \gnl
\gcl{1} \glmptb \gnot{\hspace{-0,34cm}\tau_{F,Y}} \grmptb \glmptb \gnot{\hspace{-0,34cm}\tau_{F,Z}} \grmptb \gcn{1}{2}{1}{3} \gnl
\gcl{1} \gcl{1} \glmptb \gnot{\hspace{-0,34cm}\tau_{F,Z}} \grmptb \gcn{2}{1}{1}{1} \gnl
\glmptb \gnot{\alpha} \gcmptb \grmptb \glmptb \gnot{\hspace{0,7cm}\beta} \gcmpt \gcmp \gcmpt \grmp \gnl
\gcl{5} \gcl{5} \gcl{5} \gcl{2} \gcmu \gcmu \gnl
\gvac{4} \gcl{1} \glmptb \gnot{\hspace{-0,34cm}\tau_{F,F}} \grmptb \gcl{1} \gnl
\gvac{3} \gcn{1}{1}{1}{2} \gmu \glmpt \gnot{\hspace{-0,34cm}\sigma} \grmptb \gnl
\gvac{4} \hspace{-0,24cm} \glmpt \gnot{\hspace{-0,34cm}\sigma} \grmptb \gvac{2} \gcn{1}{1}{0}{-1}  \gnl
\gvac{5} \gwmu{3} \gnl
\gob{1}{\hspace{0,4cm}X} \gob{1}{\hspace{0,4cm}Y} \gob{1}{\hspace{0,4cm}Z} \gob{7}{B} 
\gend
$$

$$\stackrel{\equref{ab}}{\stackrel{com.d.l.}{\stackrel{comod.}{=}}}
\gbeg{9}{11}
\gvac{1} \got{1}{X} \got{1}{Y} \got{6}{Z} \gnl
\gvac{1} \gcl{3} \grcm \gcn{1}{1}{4}{4} \gnl
\gvac{2} \gcl{4} \gcn{3}{1}{1}{3} \hspace{-0,22cm} \grcm \gnl
\gvac{5} \hspace{-0,4cm} \gcmu \grcm \gcn{1}{1}{-1}{1} \gnl
\gvac{1} \gcn{1}{1}{4}{2} \gvac{3}  \gcl{1} \glmptb \gnot{\hspace{-0,34cm}\tau_{F,Z}} \grmptb \gcl{1} \gcl{2}\gnl
\gvac{2} \hspace{-0,22cm} \grcm \gvac{2} \hspace{-0,22cm} \glmptb \gnot{\hspace{-0,34cm}\tau_{F,Z}} \grmptb \glmptb \gnot{\hspace{-0,34cm}\tau_{F,F}} \grmptb  \gnl
\gvac{3} \hspace{-0,22cm} \gcl{4} \glmptb \gnot{\hspace{-0,34cm}\tau_{F,Y}} \grmptb \gvac{1} \hspace{-0,2cm} \gcn{1}{1}{1}{0} \gmu \glmpt \gnot{\hspace{-0,34cm}\sigma} \grmptb \gnl
\gvac{5} \hspace{-0,22cm} \gcl{3} \glmptb \gnot{\hspace{-0,34cm}\tau_{F,Z}} \grmptb \gcn{1}{1}{3}{1} \gcn{2}{2}{6}{3} \gnl
\gvac{6} \gcl{2} \glmpt \gnot{\hspace{-0,34cm}\sigma} \grmptb \gnl
\gvac{8} \gwmu{3} \gnl
\gvac{4} \gob{1}{X} \gob{1}{Y} \gob{1}{Z} \gob{5}{B} 
\gend\stackrel{\equref{nat rcm}}{=}
\gbeg{8}{12}
\got{1}{X} \gvac{1} \got{1}{Y} \got{5}{Z} \gnl
\gcl{4} \gvac{1} \grcm \gcn{1}{1}{3}{3} \gnl
\gvac{2} \gcl{4} \gcn{2}{1}{1}{3} \grcm \gnl
\gvac{4} \glmptb \gnot{\hspace{-0,34cm}\tau_{F,Z}} \grmptb \gcl{1} \gnl
\gvac{4} \gcl{1} \glmpt \gnot{\hspace{-0,34cm}\sigma} \grmptb \gnl
\grcm \grcm \grcm \gcl{5} \gnl
\gcl{5} \gcl{1} \gcl{1} \glmptb \gnot{\hspace{-0,34cm}\tau_{F,Z}} \grmptb \gcl{1} \gnl
\gvac{1} \glmptb \gnot{\hspace{-0,34cm}\tau_{F,Y}} \grmptb \gcl{1} \gmu \gnl
\gvac{1} \gcl{3} \glmptb \gnot{\hspace{-0,34cm}\tau_{F,Z}} \grmptb \gcn{1}{1}{2}{1}  \gnl
\gvac{2} \gcl{2} \glmpt \gnot{\hspace{-0,34cm}\sigma} \grmptb \gnl
\gvac{4} \gwmu{3} \gnl
\gob{1}{X} \gob{1}{Y} \gob{1}{Z} \gob{5}{B} 
\gend:=\Omega
$$
In the equation * $\alpha$ is the associativity constraint given by \equref{assoc. rcm}. 
Similarly as in the previous computation, the equality $\Sigma=\Omega$ expressing \equref{monad law ro} holds true 
if and only if the 2-cocycle condition \equref{2-cocycle condition} for $\sigma$ at the place * holds true. 

At last, for \equref{normalized in EM} we find: 
$$
\gbeg{3}{4}
\got{1}{} \gnl
\grcm \gu{1} \gnl
\gcl{1} \glmpt \gnot{\hspace{-0,34cm}\sigma} \grmptb \gnl
\gob{1}{} \gob{3}{B} 
\gend=
\gbeg{2}{4}
\got{1}{} \gnl
\gcl{1} \gu{1} \gnl
\gcl{1} \gcl{1} \gnl
\gob{1}{} \gob{1}{B}
\gend=
\gbeg{3}{5}
\got{3}{} \gnl
\gu{1} \grcm \gnl
\glmptb \gnot{\hspace{-0,34cm}\tau_{F, \s -}} \grmptb \gcl{1} \gnl
\gcl{1} \glmpt \gnot{\hspace{-0,34cm}\sigma} \grmptb \gnl
\gob{1}{} \gob{3}{B} 
\gend=
\gbeg{3}{6}
\got{3}{} \gnl
\grcm \gnl
\gcl{1} \gcn{1}{1}{1}{3} \gnl
\gcl{1} \gu{1} \gcl{1} \gnl
\gcl{1} \glmpt \gnot{\hspace{-0,34cm}\sigma} \grmptb \gnl
\gob{1}{} \gob{3}{B} 
\gend
$$
the unlabeled 1-cell is taken to be $X$ on the left hand-side, which presents $\crta\rho_{X, \Id}$ (observe that $\tau_{X, \Id}=\id_X$) and it is taken to be $Y$ on the right hand-side, 
which presents $\crta\rho_{\Id,X}$.  Setting $X=Y=F$ and and applying $\Epsilon_F$ we see that \equref{normalized in EM} is fulfilled if and only if \equref{normalized 2-cocycle} holds. 
\qed\end{proof}

For a consequence of \thref{main} and \prref{Sch case} we have:

\begin{cor} \colabel{Sch-Balan}
There is an action of categories ${}_{\tau_B}\Tau^{(\A, F, \omega)}\times {}_B\K\to {}_B\K$ given by $(X,M)\mapsto XM$ where $XM$ is a left $B$-module via \equref{B act XY psi pon} 
and the action associativity isomorphism is given by \equref{ro Sch} 
\begin{center}
\begin{tabular} {p{6cm}p{0cm}p{6.8cm}} 
\begin{equation} \eqlabel{B act XY psi pon}
\gbeg{3}{4}
\got{1}{B} \got{3}{XM} \gnl
\gcn{1}{1}{1}{3} \gvac{1} \gcl{1} \gnl
\gvac{1} \glm \gnl
\gvac{2} \gob{1}{XM} 
\gend=
\gbeg{3}{7}
\got{1}{B} \got{1}{X} \got{3}{M} \gnl
\gcl{1} \grcm \gcl{4} \gnl
\glmptb \gnot{\hspace{-0,34cm}\tau_{B,X}} \grmptb \gcl{1} \gnl
\gcl{1} \grm \gnl
\gcl{1} \gcn{1}{1}{1}{3} \gnl
\gcl{1} \gvac{1} \glm \gnl
\gob{1}{X} \gob{5}{M}
\gend
\end{equation} & &
\begin{equation} \eqlabel{ro Sch}
\rho_{X,Y,M}=
\gbeg{3}{6}
\got{1}{X} \gvac{1} \got{1}{Y} \got{3}{M} \gnl
\grcm \grcm \gcl{3} \gnl
\gcl{1} \glmptb \gnot{\hspace{-0,34cm}\tau_{F,Y}} \grmptb \gcl{1} \gnl
\gcl{2} \gcl{2} \glmpt \gnot{\hspace{-0,34cm}\sigma} \grmptb \gnl
\gvac{3} \glm \gnl 
\gob{1}{X} \gob{1}{Y} \gob{5}{M} 
\gend
\end{equation} 
\end{tabular} 
\end{center}
if and only if  $(B,F, \psi_{B,F}, \mu_M, \eta_M)$ is a Sweedler's Hopf datum where $\psi_{B,F}$ and $\mu_M$ are given as in \equref{Sw datum}. 
\end{cor}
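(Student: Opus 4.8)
The plan is to deduce the Corollary by chaining \thref{main} with \prref{Sch case}, after matching the abstract data of \thref{main} to the concrete formulas displayed here. First I would set $\C := {}_{\tau_B}\Tau^{(\A, F, \omega)}$ and take for both $\U$ and $\F$ the forgetful functor $(X,\tau_{B,X},\tau_{F,X},\rho)\mapsto(X,\tau_{B,X})$ into $\Tau(\A,B)$; as noted in the proof of \prref{Sch case} this functor is faithful and quasi-monoidal and it factors through itself, so the hypotheses of \thref{main} are met, with $\psi_{B,X}$ the 2-cell produced by \leref{psi_2}, namely \equref{psi_2 for X} (and \equref{psi_2 for BF} for $X=F$).

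Next I would spell out the dictionary between \thref{main} and the present setting. Substituting \equref{psi_2 for X} into the $B$-action \equref{B act XY psi} on $XM$ and simplifying the resulting string diagram gives exactly \equref{B act XY psi pon}. Likewise, starting from a 2-cell $\crta\rho_{X,Y}:XY\to XYB$ of the form \equref{crta ro Sch}, that is, determined by a 2-cell $\sigma:FF\to B$ satisfying \equref{nat sigma}, and plugging it into the associativity isomorphism \equref{ro asoc isom} while unfolding the $B$-action on $XYM$, one recovers \equref{ro Sch}. Hence the $B$-module structure and the associativity isomorphism appearing in the Corollary are precisely the ones coming from \thref{main} through this identification, with the unity-action isomorphism being the identity as required there.

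The two directions then follow formally. By \thref{main} (and the remark after it) an action $\C\times{}_B\K\to{}_B\K$ of the stated shape exists if and only if the 2-cell $\crta\rho_{X,Y}$ governing its associativity isomorphism is an invertible normalized 2-cocycle over $B$ in $\EM^M(\K)$, equivalently satisfies \equref{natur}, \equref{vert comp M}, \equref{2-cells EM^M one psi}, \equref{monad law ro} and \equref{normalized in EM}. By \prref{Sch case}, for $\crta\rho_{X,Y}$ of the form \equref{crta ro Sch} these are exactly the conditions turning $(B,F,\psi_{B,F},\mu_M,\eta_M,\Epsilon_F,\beta)$ into a Sweedler's Hopf datum with $\psi_{B,F}$ and $\mu_M$ as in \equref{Sw datum}, $\eta_M=\eta_F\times\eta_B$, and $\sigma$ convolution invertible in $\K(\A)(FF,B)$. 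Combining the two equivalences gives the Corollary.

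The main obstacle will be the bookkeeping in the dictionary step: one must check that the $\psi_{B,X}$ and $\psi_{B,XY}$ occurring inside \thref{main} and \prref{Sch case} are literally those produced by \leref{psi_2} and by \prref{B act XM psi}/\rmref{psi versus tau}, so that \equref{B act XY psi} collapses to \equref{B act XY psi pon} and \equref{ro asoc isom} to \equref{ro Sch}. The string-diagram manipulations here are not deep, but they genuinely use the comodule axioms, the distributive-law compatibilities \equref{nat rcm} and \equref{YBE BFX}, and the module-monad axiom \equref{F mod alg}; everything else is a direct application of results already established in the excerpt.
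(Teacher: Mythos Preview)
Your approach is correct and matches the paper's, which simply presents the Corollary as an immediate consequence of \thref{main} and \prref{Sch case} without further proof. Note one small slip: $\U$ and $\F$ should not both be the forgetful functor---$\U$ sends the quadruple to $(X,\tau_{B,X})$ while $\F$ sends it to $(X,\psi_{B,X})$ with $\psi_{B,X}$ as in \equref{psi_2 for X}---but since your subsequent reasoning uses the correct $\psi_{B,X}$ this does not affect the argument.
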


\begin{ex}
Assume $\K$ is induced by the braided monoidal category of modules over a commutative ring $R$. Then $\tau$'s are the flip maps. \coref{Sch-Balan} 
then recovers the results from \cite[Section 4]{Sch} and \cite[Proposition 12]{Balan}. In the latter reference our Sweedler's Hopf datum was called {\em $H$-crossed system} for a coquasi-bialgebra $H$. 
\end{ex}

The following is directly proved and it explains the relation between Sweedler's 2-cocycle $\sigma$ in $K$ and a 2-cocycle in the sense of \deref{2-coc in K}.

\begin{lma} \lelabel{relation Sweedler cocycle}
If \equref{ro sigma} is a 2-cocycle in $\EM^M(\K)$ in the sense of \equref{monad law ro}, where $\rho_{FF,F}$ and $\rho_{F,FF}$ are given as below, then $\sigma$ satisfies \equref{2-cocycle condition}. 
\begin{center}
\begin{tabular} {p{7cm}p{0cm}p{6.8cm}} 
\begin{equation} \eqlabel{ro sigma}
\rho_{\sigma}=
\gbeg{5}{5}
\got{2}{F} \got{2}{F} \gnl
\gcmu \gcmu \gnl
\gcl{1} \glmptb \gnot{\hspace{-0,34cm}\tau_{F,F}} \grmptb \gcl{1} \gnl
\gcl{1} \gcl{1} \glmpt \gnot{\hspace{-0,34cm}\sigma} \grmptb \gnl
\gob{1}{F} \gob{1}{F} \gob{3}{B} \gnl
\gend=
\gbeg{4}{4}
\got{3}{FF} \gnl
\gwcm{3}  \gnl
\gcl{1} \gvac{1} \gbmp{\sigma} \gnl
\gob{1}{FF} \gob{3}{B} \gnl
\gend
\end{equation} & &
\begin{equation*}
\rho_{FF,F}=
\gbeg{4}{5}
\got{1}{F} \got{1}{F} \got{2}{F} \gnl
\gmu \gcn{1}{1}{2}{2} \gnl
\gvac{1} \hspace{-0,34cm} \glmptb \gnot{\rho_{\sigma}} \gcmpb \grmptb \gnl
\gvac{1} \hspace{-0,2cm} \gcmu \gcn{1}{1}{0}{0} \gcn{1}{1}{0}{0} \gnl
\gvac{1} \gob{1}{F} \gob{1}{F} \gob{1}{\hspace{-0,32cm}F} \gob{1}{\hspace{-0,32cm}B} \gnl
\gend, \qquad 
\rho_{F,FF}=
\gbeg{3}{5}
\got{1}{F} \got{2}{F} \got{1}{\hspace{-0,32cm}F} \gnl
\gcl{1}  \gvac{1} \hspace{-0,32cm} \gmu \gnl
\gvac{1} \hspace{-0,2cm} \glmptb \gnot{\rho_{\sigma}} \gcmpb \grmptb \gnl
\gvac{1} \gcl{1} \gcl{1} \hspace{-0,24cm} \gcmu \gnl
\gvac{1} \gob{1}{\hspace{0,32cm}F} \gob{1}{\hspace{0,32cm}F} \gob{1}{F} \gob{1}{B} \gnl
\gend
\end{equation*}
\end{tabular}
\end{center}
\end{lma}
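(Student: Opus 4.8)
The plan is to run a direct string-diagram computation, entirely parallel to the one carried out for \equref{alfa is Tambara} in the proof of \thref{quasi-bim monoidal}, but now in the coquasi setting of \prref{Sch case}. One specializes the 2-cocycle identity \equref{monad law ro} to $X=Y=Z=F$ inside $\C={}_{\tau_B}\Tau^{(\A, F, \omega)}$: there $\crta\rho_{X,Y}=\crta\rho_{Y,Z}=\rho_\sigma$ is \equref{ro sigma}, the components $\crta\rho_{XY,Z}$ and $\crta\rho_{X,YZ}$ are the composites $\rho_{FF,F}$ and $\rho_{F,FF}$ displayed in the statement, the distributive law $\psi_{B,Z}=\psi_{B,F}$ is the one from \equref{psi_2 for BF}, and the associativity constraint $\alpha=\alpha_{F,F,F}$ is \equref{assoc. rcm} evaluated at $X=Y=Z=F$ (so the three right $F$-coactions become the comultiplication $\Delta$ and the bottom leg becomes $\omega$). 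Plugging all this in, both sides of \equref{monad law ro} become 2-cells $FFF\to FFFB$ assembled from copies of $\Delta$, of $\tau_{F,F}$ (and of $\tau_{B,F}$, which enters through $\psi_{B,F}$), of $\sigma$, of $\omega$ (inside $\alpha$), of $\omega^{-1}$ (inside $\beta$), together with $\mu_B$ and the right $F$-action on $B$.

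The first step is to bring both composites to a common normal form by transporting every occurrence of $\Delta$ to the top, using coassociativity of $\Delta$, the fact that $\tau_{F,F}$ and $\tau_{B,F}$ are comonadic distributive laws with respect to $F$ (so that they slide through $\Delta$ via the relevant instances of \equref{comonadic d.l.}), and the Yang--Baxter identity for $\tau_{F,F}$ from \equref{dual cond}; after this each side consists of one ``big'' comultiplication on $FFF$, a braiding pattern of $\tau$'s, three outgoing $F$-legs, and a single outgoing $B$-leg obtained by feeding the remaining strands through the appropriate arrangement of $\sigma$'s, $\omega$, $\omega^{-1}$, $\mu_B$ and the $F$-action on $B$. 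The second step is to post-compose both (now equal) 2-cells with $\Epsilon_F\times\Epsilon_F\times\Epsilon_F\times\id_B\colon FFFB\to B$: the three counits meet exactly the three outgoing $F$-legs, and by the counit axioms of the comonad $F$, the counital parts of the comonadic distributive laws for $\tau_{F,F}$ and $\tau_{B,F}$, the counit compatibilities of $\mu$ and $\eta$ in a coquasi-bimonad, and the right $F$-module-monad structure of $B$, the entire ``$FFF$-carrying'' part of each diagram collapses to identity strands, while the $\omega$ coming from $\alpha$ cancels the $\omega^{-1}$ inside $\beta$ by the counital normalization of $\omega$ and by \equref{nat B omega}. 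What survives on the left-hand side is $\mu_B\comp(\sigma\times\id_B)\comp(\id_F\times\mu_M)\comp(\beta\times\id_{FF})$ and on the right-hand side $\mu_B\comp(\sigma\times\id_B)\comp(\id_F\times\psi_{B,F})\comp(\mu_M\times\id_F)$, which are exactly the two sides of \equref{2-cocycle condition} (reading $\mu_M$ and $\beta$ off from \equref{Sw datum} and using $\sigma=(\Epsilon_F\times\id_B)\comp\mu_M$). Hence $\sigma$ satisfies \equref{2-cocycle condition}.

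The step I expect to be the main obstacle is precisely this last collapsing: one must keep the $\tau$-braidings in the correct relative order while pushing the counits inward, so that the two occurrences of $\omega^{\pm1}$ annihilate cleanly and no stray comultiplication survives on the $B$-leg. The role played here by \equref{nat B omega} is the analogue of the role played by \equref{Phi nat new} in the proof of \equref{alfa is Tambara}, and keeping track of that correspondence is the safest way to organize the calculation.
\qed
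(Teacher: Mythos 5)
The paper offers no written proof of this lemma (it is declared ``directly proved''), so your argument has to stand on its own. Your overall strategy --- specialize the hypothesized identity \equref{monad law ro} to the triple $F,F,F$, post-compose with $\Epsilon_F\times\Epsilon_F\times\Epsilon_F\times\Id_B$, and let counitality of $\Delta$, of the comonadic distributive laws for $\tau_{F,F}$ and $\tau_{B,F}$, and the normalizations collapse everything onto \equref{2-cocycle condition} --- is surely the intended ``direct'' computation, and the key facts $(\mu\times\Id_B)\comp\rho_\sigma=\mu_M$ and $(\Epsilon_F\times\Epsilon_F\times\Id_B)\comp\rho_\sigma=\sigma$ are exactly what make it work. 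But there are two genuine problems. First, you identify the components $\crta\rho_{XY,Z}$ and $\crta\rho_{X,YZ}$ of \prref{Sch case} (built from the tensor-product coaction \equref{F coact XY tau} on $FF$) with the 2-cells $\rho_{FF,F}$ and $\rho_{F,FF}$ displayed in the lemma (built from $\rho_\sigma$ by pre-composing with $\mu$ and post-composing with $\Delta$). These are different 2-cells $FFF\to FFFB$ in general --- compare where the two middle $F$-legs of $\rho_{F,FF}$ originate in each formula --- and the lemma's hypothesis is explicitly about the displayed ones; that is the point of the lemma, namely that $\sigma$ extends to an operator on the composable 1-cells $F,FF,\dots$ in the sense of \deref{2-coc in K}. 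Your derivation survives only because the two families happen to have the same image under $\Epsilon_F\times\Epsilon_F\times\Epsilon_F\times\Id_B$, but as written you are proving a different implication and the asserted coincidence is unjustified.

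Second, the mechanism you describe for producing $\beta$ is wrong. Neither $\beta$ nor $\omega^{-1}$ occurs anywhere in the hypothesis --- not in \equref{monad law ro} and not in $\rho_\sigma$, $\rho_{FF,F}$, $\rho_{F,FF}$ --- so there is nothing for ``the $\omega$ coming from $\alpha$'' to cancel against. What actually happens is that $(\Epsilon_F\times\Epsilon_F\times\Epsilon_F)\comp\alpha_{F,F,F}=\omega$ survives the collapse as a convolution factor on one side of the resulting identity in $\K(\A)(FFF,B)$, and one must convolve with $\omega^{-1}$ to move it to the other side; it is this step that assembles the $\beta$ of \equref{Sw datum} in front of $\mu_M$ in \equref{2-cocycle condition}. (If instead the lemma is read with $\alpha=\Id$, then $\beta=\Id$ and your appeal to $\omega$, $\omega^{-1}$ and \equref{nat B omega} is simply out of place.) Note also that ``$\comp(\beta\times\Id_{FF})$'' does not typecheck, since $\beta$ already acts on all of $FFF$, and your assignment of which side of \equref{monad law ro} collapses to which side of \equref{2-cocycle condition} is reversed; both slips suggest the final collapse was not actually carried through.
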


\section{Representation category of quasi-bimonads in $\K$}  \selabel{rep quasi}

In this Section let $(F, \Phi)$ denote a quasi-bimonad in $\K$. In \thref{quasi-bim monoidal} we proved that the 
representation category of $(F, \Phi)$ is monoidal. 
We now assume that $B$ is a left $F$-comodule monad in the sense of \equref{F comod alg}--\equref{F comod alg unit} and 
we are going to consider the category ${}_{\tau_B (\A,F,\Phi)}\Tau$ that is analogous to the category 
${}_{\tau_B}\Tau^{(\A, F, \omega)}$ we introduced before \leref{psi_2}. We describe it here.

The objects of ${}_{\tau_B (\A,F,\Phi)}\Tau$ are triples $(X, \tau_{B,X}, \tau_{F,X})$ - including when $X=F$ - where 
$(X, \tau_{F,X})\in {}_{(\A,F,\Phi)}\Tau$ (from \deref{module cat over F Phi}), 
$(X, \tau_{B,X})\in\Tau(\A, B)$, 
$(X, \nu)$ is a left $F$-module so that 
$\nu: (FX, \tau_{B, FX})\to (X, \tau_{B, X})$ is a morphism in $\Tau(\A,B)$, 
$\tau_{B,X}$ is natural with respect to the left $F$-coaction on $B$ (see \equref{nat lcm}),
and the Yang-Baxter equation \equref{YBE BFX} for $(\tau_{B,F}, \tau_{B,X}, \tau_{F,X})$ holds true. 
In particular, for $X=F$ we have 
that $\tau_{B,F}$ is monadic with respect to $F$ and obeys \equref{nat lcm} for $X=F$. 
We also require that \equref{Phi B nat new} holds, saying that $\tau_{B,\Id_{\A}}$ is natural with respect to $\Phi$. 
\begin{center} 
\begin{tabular}{p{4.5cm}p{0cm}p{4.3cm}p{0cm}p{5.3cm}}
\begin{equation} \eqlabel{nat lm BFX}
\gbeg{3}{5}
\got{1}{B} \got{1}{F} \got{1}{X} \gnl
\gcl{1} \glm \gnl
\glmptb \gnot{\tau_{B,X}} \gcmp \grmptb \gnl
\gcl{1} \gvac{1} \gcl{1} \gnl
\gob{1}{X} \gob{3}{B}
\gend=
\gbeg{3}{5}
\got{1}{B} \got{1}{F} \got{1}{X} \gnl
\glmptb \gnot{\hspace{-0,34cm}\tau_{B,F}} \grmptb \gcl{1} \gnl
\gcl{1} \glmptb \gnot{\hspace{-0,34cm}\tau_{B,X}} \grmptb \gnl
\glm \gcl{1} \gnl
\gvac{1} \gob{1}{X} \gob{1}{B}
\gend
\end{equation} & &
\begin{equation} \eqlabel{nat lcm}
\gbeg{3}{5}
\got{1}{B} \got{3}{X} \gnl
\gcl{1} \gvac{1} \gcl{1} \gnl
\glmptb \gnot{\tau_{B,X}} \gcmp \grmptb \gnl
\gcl{1} \glcm \gnl
\gob{1}{X} \gob{1}{F} \gob{1}{B} 
\gend=
\gbeg{3}{5}
\got{1}{} \got{1}{B} \got{1}{X} \gnl
\glcm \gcl{1}\gnl 
\gcl{1} \glmptb \gnot{\hspace{-0,34cm}\tau_{B,X}} \grmptb \gnl
\glmptb \gnot{\hspace{-0,34cm}\tau_{F,X}} \grmptb \gcl{1} \gnl
\gob{1}{X} \gob{1}{F} \gob{1}{B} 
\gend
\end{equation} & &
\begin{equation} \eqlabel{Phi B nat new} \hspace{-2cm}
\gbeg{4}{5}
\got{7}{B} \gnl
\gcn{1}{1}{2}{1} \gelt{\s\Phi} \gcn{1}{1}{0}{1} \gcl{3} \gnl  %
\gcl{2} \gcl{2} \gcl{2} \gnl
\gob{1}{F} \gob{1}{F} \gob{1}{F} \gob{1}{B} 
\gend=
\gbeg{2}{6}
\got{1}{B} \gnl
\gcl{1} \gcn{1}{1}{2}{1} \gelt{\s\Phi} \gcn{1}{1}{0}{1} \gnl 
\glmptb \gnot{\hspace{-0,34cm}\tau_{B,F}} \grmptb \gcl{1} \gcl{1} \gnl
\gcl{1} \glmptb \gnot{\hspace{-0,34cm}\tau_{B,F}} \grmptb \gcl{1} \gnl
\gcl{1} \gcl{1} \glmptb \gnot{\hspace{-0,34cm}\tau_{B,F}} \grmptb \gnl
\gob{1}{F} \gob{1}{F} \gob{1}{F} \gob{1}{B} 
\gend
\end{equation} 
\end{tabular}
\end{center} 
The morphisms of ${}_{\tau_B (\A,F,\Phi)}\Tau$ are $\zeta: (X, \tau_{B,X}, \tau_{F,X})\to (Y, \tau_{B,Y}, \tau_{F,Y})$ given by left $F$-linear 
morphisms $\zeta: (X, \tau_{B,X})\to (Y, \tau_{B,Y})$ in $\Mnd(\K)(B)$ and $\zeta: (X, \tau_{F,X})\to (Y, \tau_{F,Y})$ in $\Mnd(\K)(F)$ 
(two last identities in \equref{morphisms in comod}). Analogously as before, we have that 
the category ${}_{\tau_B (\A,F,\Phi)}\Tau$ is monoidal.

\medskip

\begin{lma} \lelabel{psi_3}
Let $(F, \Phi)$ be a quasi-bimonad, $B:\A\to\A$ be a left $F$-comodule monad with $\psi_{B,F}$ given by \equref{psi_3 for BF}, 
$(X, \nu)$ a left $F$-module so that $\nu: (FX, \tau_{B, FX})\to (X, \tau_{B, X})$ is a morphism in $\Tau(\A,B)$.  
Then $(X, \psi_{B,X})\in\Tau(\A, B)$ with $\psi_{B,X}$ being given via \equref{psi_3 for X}:  
\begin{center} 
\begin{tabular}{p{5cm}p{1cm}p{5cm}}
\begin{equation}\eqlabel{psi_3 for BF}
\psi_{B,F}= 
\gbeg{3}{5}
\gvac{1} \got{1}{B} \got{1}{F} \gnl
\glcm \gcl{1} \gnl
\gcl{1} \glmptb \gnot{\hspace{-0,34cm}\tau_{B,F}} \grmptb \gnl
\gmu \gcl{1} \gnl
\gob{2}{F} \gob{1}{B} 
\gend
\end{equation} & &
\begin{equation}\eqlabel{psi_3 for X}
\psi_{B,X}=
\gbeg{3}{5}
\gvac{1} \got{1}{B} \got{1}{X} \gnl
\glcm \gcl{1} \gnl
\gcl{1} \glmptb \gnot{\hspace{-0,34cm}\tau_{B,X}} \grmptb \gnl
\glm \gcl{1} \gnl
\gvac{1} \gob{1}{X} \gob{1}{B.} 
\gend
\end{equation}
\end{tabular}
\end{center}
\end{lma}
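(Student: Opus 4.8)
The argument will mirror, up to reflection, the proof of \leref{psi_2}: each ingredient is replaced by its up-side-down counterpart. Where \leref{psi_2} uses that $B$ is a \emph{right} $F$-module monad (axioms \equref{F mod alg}--\equref{F mod alg unit}) and that $X$ carries a \emph{right} $F$-coaction, here we use that $B$ is a \emph{left} $F$-comodule monad (axioms \equref{F comod alg}--\equref{F comod alg unit}) and that $(X,\nu)$ carries a \emph{left} $F$-action. Concretely, it suffices to verify the two identities of \equref{psi laws} for the 2-cell $\psi_{B,X}$ of \equref{psi_3 for X}; as in \leref{psi_2}, I would write out only the first (multiplicativity in the $B$-slot), the unit axiom being entirely routine. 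In particular \equref{psi_3 for BF} is the special case $X=F$, $\nu=\mu_F$, of \equref{psi_3 for X}, so no separate check is needed for it.

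First I would prove the multiplicativity identity, i.e.\ that $\psi_{B,X}\comp(\mu_B\times\Id_X)$ equals $(\Id_X\times\mu_B)\comp(\psi_{B,X}\times\Id_B)\comp(\Id_B\times\psi_{B,X})$. Starting from the left-hand side, I expand $\psi_{B,X}$ into its three constituents --- the left $F$-coaction on $B$, the distributive law $\tau_{B,X}$, and the left $F$-action $\nu$ on $X$ --- and push $\mu_B$ upward through the diagram by applying, in this order: the fact that $\tau_{B,X}$ is a monadic distributive law (to move $\mu_B$ past $\tau_{B,X}$); the left comodule-monad identity \equref{F comod alg}, which governs the interaction of $\mu_B$ with the left $F$-coaction on $B$; and the associativity of the $F$-action $\nu$ on $X$ together with the naturality of $\tau_{B,X}$ with respect to the left $F$-action \equref{nat lm BFX} and with respect to the left $F$-coaction on $B$ \equref{nat lcm} (and, should the final regrouping need it, the Yang--Baxter identity \equref{YBE BFX} for $(\tau_{B,F},\tau_{B,X},\tau_{F,X})$). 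Collecting the terms of the resulting diagram then yields exactly the right-hand side; this string-diagram chase is the reflection of the displayed computation in \leref{psi_2}, with \equref{F comod alg} playing the role of \equref{F mod alg} and \equref{nat lm BFX}/\equref{nat lcm} playing the role of \equref{nat rcm}.

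For the unit axiom I would precompose $\psi_{B,X}$ with $\eta_B\times\Id_X$, use the comodule-monad unity \equref{F comod alg unit} to replace the left $F$-coaction applied to $\eta_B$ by $\eta_F\times\eta_B$, and then invoke the unitality of the left $F$-action $\nu$ on $X$ (which absorbs the $\eta_F$) together with the unit normalization of $\tau_{B,X}$; the composite collapses to $\Id_X\times\eta_B$, as required.

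The only genuine difficulty lies in the bookkeeping of the multiplicativity step --- deciding precisely where each of \equref{nat lm BFX}, \equref{nat lcm} and, possibly, \equref{YBE BFX} is invoked --- but since the target diagram is literally the up-side-down mirror of the one appearing in \leref{psi_2}, the argument involves no new idea, and I would display only the first distributive law, exactly as done there.
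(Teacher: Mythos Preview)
Your approach is correct and matches the paper's: it too displays only the first distributive law and proceeds by expanding $\psi_{B,X}$, applying the comodule-monad axiom \equref{F comod alg}, then the module axiom and the monadic distributive law for $\tau_{B,X}$, and finally \equref{nat lm BFX}. One small remark: the paper's chain uses \emph{only} those four ingredients --- neither \equref{nat lcm} nor the Yang--Baxter identity \equref{YBE BFX} is invoked (nor are they hypotheses of the lemma as stated). The exact dual of \equref{nat rcm} here is \equref{nat lm BFX} alone, so you can drop the hedging on \equref{nat lcm} and \equref{YBE BFX}.
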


\begin{proof}
We only prove the first distributive law for $\psi_{B,X}$:
$$
\gbeg{3}{5}
\got{1}{B}\got{1}{B}\got{1}{X}\gnl
\gmu \gcn{1}{1}{1}{0} \gnl
\gvac{1} \hspace{-0,34cm} \glmptb \gnot{\hspace{-0,34cm}\psi} \grmptb  \gnl
\gvac{1} \gcl{1} \gcl{1} \gnl
\gvac{1} \gob{1}{X} \gob{1}{B}
\gend=
\gbeg{3}{6}
\got{1}{B} \got{1}{B} \got{1}{X} \gnl
\gmu \gcl{1} \gnl
\hspace{-0,32cm} \glcm  \gcn{1}{1}{2}{1} \gnl
\gcl{1} \glmptb \gnot{\hspace{-0,34cm}\tau_{B,X}} \grmptb \gnl
\glm \gcl{1} \gnl
\gob{1}{} \gob{1}{X} \gob{1}{B} 
\gend\stackrel{\equref{F comod alg}}{=}
\gbeg{5}{7}
\gvac{1} \got{1}{B} \gvac{1} \got{1}{B} \got{1}{X} \gnl
\glcm \glcm \gcl{2} \gnl
\gcl{1} \glmptb \gnot{\hspace{-0,34cm}\tau_{B,F}} \grmptb \gcl{1} \gnl
\gmu \gmu \gcn{1}{1}{1}{0} \gnl
\gcn{1}{1}{2}{4} \gvac{2} \hspace{-0,34cm} \glmptb \gnot{\hspace{-0,34cm}\tau_{B,X}} \grmptb \gnl
\gvac{2} \glm \gcl{1} \gnl
\gvac{3} \gob{1}{X} \gob{1}{B} 
\gend\stackrel{mod.}{\stackrel{d.l.}{=}}
\gbeg{5}{7}
\gvac{1} \got{1}{B} \gvac{1} \got{1}{B} \got{1}{X} \gnl
\glcm \glcm \gcl{1} \gnl
\gcl{1} \glmptb \gnot{\hspace{-0,34cm}\tau_{B,F}} \grmptb \glmptb \gnot{\hspace{-0,34cm}\tau_{B,X}} \grmptb \gnl
\gcl{1} \gcl{1} \glmptb \gnot{\hspace{-0,34cm}\tau_{B,X}} \grmptb \gcl{1} \gnl
\gcn{1}{1}{1}{3} \glm \gmu \gnl
\gvac{1} \glm \gcn{1}{1}{2}{2} \gnl
\gvac{2} \gob{1}{X} \gob{2}{B} 
\gend\stackrel{\equref{nat lm BFX}}{=}
\gbeg{5}{8}
\gvac{1} \got{1}{B} \gvac{1} \got{1}{B} \got{1}{X} \gnl
\gvac{1} \gcl{3}  \glcm \gcl{1} \gnl
\gvac{2} \gcl{1} \glmptb \gnot{\hspace{-0,34cm}\tau_{B,X}} \grmptb  \gnl
\gvac{2} \glm \gcl{3} \gnl
\glcm \gcn{1}{1}{3}{1} \gnl
\gcl{1} \glmptb \gnot{\hspace{-0,34cm}\tau_{B,X}} \grmptb  \gnl
\glm \gwmu{3} \gnl
\gvac{1} \gob{1}{X} \gob{3}{B} 
\gend=
\gbeg{3}{5}
\got{1}{B}\got{1}{B}\got{1}{X}\gnl
\gcl{1} \glmpt \gnot{\hspace{-0,34cm}\psi} \grmptb \gnl
\glmptb \gnot{\hspace{-0,34cm}\psi} \grmptb \gcl{1} \gnl
\gcl{1} \gmu \gnl
\gob{1}{X} \gob{2}{B}
\gend
$$
\qed\end{proof}

A quasi-bimonad in $\K$ is a proper monad. Given a quasi-bimonad $(F, \Phi)$ and a monad $B:\A\to\A$ in $\K$. Then $FFB$ is a monad, $\Id_{\A}$ is trivially a comonad 
and we can consider the monad of the 2-cells $\Id_{\A}\to FFB$ in $\K$, which is indeed a convolution algebra in the monoidal category $\K(\A)$.

\begin{prop} \prlabel{Martin case}
Let $(F, \Phi)$ be a quasi-bimonad and $B:\A\to\A$ a left $F$-comodule monad in $\K$ with $\psi_{B,F}$ given by \equref{psi_3 for BF}. 
The following are equivalent: 
\begin{enumerate}
\item 
set $\C={}_{\tau_B(\A, F, \Phi)}\Tau$ and let $\psi_{B,X}$ 
be given by \equref{psi_3 for X}; given a 2-cell $\Phi_{\lambda}: \Id_{\A}\to FFB$, 
the 2-cell $\crta\rho_{X,Y}$ given by \equref{crta ro Martin} 
satisfies the conditions in \thref{main}; 
\begin{equation} \eqlabel{crta ro Martin}
\crta\rho_{X,Y}=
\gbeg{7}{6}
\gvac{3} \got{1}{X} \got{1}{Y} \gnl
\gcn{1}{1}{2}{1} \gelt{\Phi_{\lambda}} \gcn{1}{1}{0}{1} \gcl{1} \gcl{2} \gnl 
\gcl{1} \gcl{1} \glmptb \gnot{\hspace{-0,34cm}\tau_{B,X}} \grmptb \gnl
\gcl{1} \glmptb \gnot{\hspace{-0,34cm}\tau_{F,X}} \grmptb \glmptb \gnot{\hspace{-0,34cm}\tau_{B,Y}} \grmptb \gnl
\glm \glm \gcl{1} \gnl
\gvac{1} \gob{1}{X} \gob{1}{} \gob{1}{Y} \gob{1}{B} 
\gend
\end{equation}
\item $(B,F, \psi_{B,F}, \Delta_M, \Epsilon_M, \eta_F, \beta)$ 
is a Hausser-Nill datum where $\Delta_M$ and $\beta$ are given by: 
\begin{equation} \eqlabel{HN datum}
\Delta_M=
\gbeg{5}{7}
\gvac{1} \got{5}{F} \gnl
\gcn{1}{2}{2}{0} \gelt{\s\Phi_{\lambda}} \gcn{1}{1}{0}{1} \gcl{1} \gnl
\gvac{1} \gcn{1}{1}{1}{0} \glmptb \gnot{\hspace{-0,34cm}\tau_{B,F}} \grmptb \gnl
\gcn{1}{2}{0}{0} \gcn{1}{1}{0}{0} \hspace{-0,22cm} \gcmu \gcn{1}{1}{0}{1} \gnl
\gvac{1} \glmptb \gnot{\hspace{-0,34cm}\tau_{F,F}} \grmptb \gcl{1}  \gcl{2} \gnl
\gmu \gmu \gnl
\gob{2}{F} \gob{2}{F} \gob{1}{B} \gnl
\gend
\qquad\text{and}\qquad
\beta=
\gbeg{7}{6}
\got{1}{F} \got{1}{F} \got{1}{F} \gnl
\gcl{3} \gcl{2} \gcl{1} \gcn{1}{1}{2}{1} \gelt{\s\Phi^{-1}} \gcn{1}{1}{0}{1} \gnl 
\gvac{2} \glmptb \gnot{\hspace{-0,34cm}\tau_{F,F}} \grmptb \gcl{1} \gcl{1} \gnl
\gcl{1} \glmptb \gnot{\hspace{-0,34cm}\tau_{F,F}} \grmptb \glmptb \gnot{\hspace{-0,34cm}\tau_{F,F}} \grmptb \gcl{1} \gnl
\gmu \gmu \gmu \gnl
\gob{2}{F} \gob{2}{F} \gob{2}{F} 
\gend 
\end{equation}
where the Hausser-Nill 2-cocycle $\Phi_{\lambda}$ in $\K$ is invertible in the convolution algebra $\K(\A)(\Id_{\A},FFB)$. 
Here $\Epsilon_M=\eta_B\comp\Epsilon_F$. 
\end{enumerate}
\end{prop}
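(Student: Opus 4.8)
The plan is to mirror the proof of \prref{Sch case}, with the comodule structure of $B$, Sweedler's $2$-cocycle $\sigma$ and the Yang--Baxter move \equref{YBE BFX} for a coquasi-bimonad replaced by the left $F$-module structure of $B$, the Hausser--Nill cocycle $\Phi_\lambda$, and the Yang--Baxter equations available in ${}_{\tau_B(\A,F,\Phi)}\Tau$. First I would produce the faithful quasi-monoidal functor: it is the forgetful functor $\F:{}_{\tau_B(\A,F,\Phi)}\Tau\to\Tau(\A,B)$, $\F(X,\tau_{B,X},\tau_{F,X},\nu)=(X,\tau_{B,X})$, and by \leref{psi_3} the $2$-cell $\psi_{B,X}$ of \equref{psi_3 for X} makes $(X,\psi_{B,X})$ an object of $\Tau(\A,B)$, so \thref{main} applies. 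It then suffices, by the remark following \thref{main}, to show that $\crta\rho_{X,Y}$ from \equref{crta ro Martin} satisfies \equref{natur}, \equref{vert comp M}, \equref{2-cells EM^M one psi}, \equref{monad law ro} and \equref{normalized in EM} precisely when the data in item (2) form a Hausser--Nill datum.

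I would dispatch the light conditions first. Naturality \equref{natur} is immediate from the morphism axioms of ${}_{\tau_B(\A,F,\Phi)}\Tau$ (the analogues of \equref{morphisms in comod} for $\tau_{B,X}$, $\tau_{F,X}$ and the left $F$-action). For \equref{vert comp M} I would take $\crta\rho_{X,Y}^{-1}$ of the same shape as \equref{crta ro Martin}, with $\Phi_\lambda$ replaced by a formal symbol $\Phi_\lambda^{-1}$; unfolding the vertical composite and simplifying by the left $F$-module axiom, \equref{YBE BFX} and the (co)monadic distributive laws for $\tau_{F,X}$ reduces the identity to the requirement that $\Phi_\lambda^{-1}$ be the convolution inverse of $\Phi_\lambda$ in $\K(\A)(\Id_\A,FFB)$. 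Similarly, after setting $X=Y=F$ and applying $\Epsilon_F$, identity \equref{normalized in EM} becomes exactly the normalization \equref{normalized 3-cocycle fi-lambda} of $\Phi_\lambda$, using that $\Phi$ is normalized.

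The $B$-linearity condition \equref{2-cells EM^M one psi}, spelled out with $\psi_{B,XY}$ as in \equref{B act XY psi}, should be rewritten using \equref{nat lm BFX}, \equref{nat lcm}, \equref{YBE BFX} and the (co)monadic distributive laws so that both sides are expressed through the left $F$-action on $B$; the resulting equality then holds if and only if $B$ is a left $F$-comodule monad, i.e.\ \equref{F comod alg}--\equref{F comod alg unit} (equivalently \equref{quasi coaction}--\equref{quasi coaction counity}) hold. The substantial computation, exactly as in \prref{Sch case}, is the $2$-cocycle identity \equref{monad law ro}. Here I would first record the analogue of \equref{ab} for the associativity constraint $\alpha$ of \thref{quasi-bim monoidal}, obtained from \prref{FFF rules} together with the quasi-coassociativity \equref{quasi coass.}, and then transform the two sides of \equref{monad law ro} through \equref{nat lm BFX}, \equref{nat lcm}, repeated applications of the Yang--Baxter moves \equref{YBE BFX} and \equref{YBE BBX}, the monadic distributive law, and \equref{quasi coass.}, until the equality is recognized to be equivalent to the $2$-cocycle condition \equref{3-cocycle cond fi-lambda} for $\Phi_\lambda$ (with $\beta$ as in \equref{HN datum}); choosing $X=Y=F$ and composing with $\eta_F$ then yields \equref{3-cocycle cond fi-lambda} verbatim, and that condition feeds back into the general case. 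I expect this last step --- keeping track of the many $\tau$'s while threading \equref{quasi coass.} and \equref{3-cocycle cond fi-lambda} through the string diagrams --- to be the main obstacle; everything else is a routine transcription, ``upside down'' relative to \prref{Sch case}.
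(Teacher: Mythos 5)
Your overall architecture is the right one and matches the paper's: forgetful functor $\F(X,\tau_{B,X},\tau_{F,X},\nu)=(X,\tau_{B,X})$, then match each condition of \thref{main} (via the remark after it) to an axiom of the Hausser--Nill datum, with the heavy lifting in the pentagon/cocycle identity. However, there is a concrete misidentification in your dictionary. You claim that the $B$-linearity condition \equref{2-cells EM^M one psi} reduces to ``$B$ is a left $F$-comodule monad, i.e.\ \equref{F comod alg}--\equref{F comod alg unit} (equivalently \equref{quasi coaction}--\equref{quasi coaction counity})''. This is wrong on two counts. First, \equref{F comod alg}--\equref{F comod alg unit} and \equref{quasi coaction}--\equref{quasi coaction counity} are \emph{different} axioms of the Hausser--Nill datum, not equivalent reformulations: the former is the comeasuring condition, the latter is the quasi-coaction condition twisted by $\Phi_\lambda$ via $\Delta_M$. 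Second, the comodule monad property is already a standing hypothesis of the Proposition, so under your reading \equref{2-cells EM^M one psi} would impose no new constraint and the quasi-coaction axiom \equref{quasi coaction} would never be produced; the implication $(1)\Rightarrow(2)$ would then be missing one of the defining identities of the Hausser--Nill datum. What the computation actually yields (using \equref{nat lm BFX}, the comonadic distributive law for $\tau_{B,F}$, and the comodule monad hypothesis as \emph{ingredients}) is precisely \equref{quasi coaction} with $\Delta_M$ as in \equref{HN datum} --- this is the correct analogue of the twisted-action axiom \equref{weak action} that plays this role in \prref{Sch case}.

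Two smaller slips in the same ``upside-down transcription'' spirit: for \equref{normalized in EM} the element-extraction step in the module setting is to set $X=Y=F$ and precompose with the unit $\eta_F$ (not to apply $\Epsilon_F$, which is the move for the comodule/coquasi case), and the normalization of $\Phi$ itself is not what is needed there --- the condition obtained is exactly \equref{normalized 3-cocycle fi-lambda} for $\Phi_\lambda$. The rest of your outline (convolution invertibility of $\Phi_\lambda$ from \equref{vert comp M}, the analogue of \equref{ab} for the constraint $\alpha$ of \thref{quasi-bim monoidal}, and extraction of \equref{3-cocycle cond fi-lambda} from the pentagon by multiplying with $\Phi^{-1}\times\eta_B$ in the convolution algebra) is consistent with the paper's argument.
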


\begin{proof}
A faithful functor is provided by the forgetful functor $\F:{}_{\tau_B(\A, F, \Phi)}\Tau\to \Tau(\A,B)$ which is given by $\F(X, \tau_{B,X}, \tau_{F,X}, \nu)=(X, \tau_{B,X})$, it is 
obviously quasi-monoidal. 
The 2-cell $\crta\rho_{X,Y}$ from \equref{crta ro Martin} clearly satisfies the condition \equref{natur}  (recall module version of \equref{morphisms in comod}). 
For the condition \equref{vert comp M} with 
$\crta\rho_{X,Y}^{-1}=
\gbeg{5}{6}
\gvac{3} \got{1}{X} \got{1}{Y} \gnl
\gcn{1}{1}{2}{1} \gelt{\chi} \gcn{1}{1}{0}{1} \gcl{1} \gcl{2} \gnl 
\gcl{1} \gcl{1} \glmptb \gnot{\hspace{-0,34cm}\tau_{F,X}} \grmptb \gnl
\gcl{1} \glmptb \gnot{\hspace{-0,34cm}\tau_{F,X}} \grmptb \glmptb \gnot{\hspace{-0,34cm}\tau_{F,Y}} \grmptb \gnl
\glm \glm \gcl{1} \gnl
\gvac{1} \gob{1}{X} \gob{1}{} \gob{1}{Y} \gob{1}{B} 
\gend
$ we find: 
$$
\gbeg{8}{10}
\gvac{5} \got{1}{X} \got{1}{Y} \gnl
\gvac{2} \gcn{1}{1}{2}{1} \gelt{\Phi_{\lambda}} \gcn{1}{1}{0}{1} \gcl{1} \gcl{2} \gnl 
\gvac{2} \gcl{1} \gcl{1} \glmptb \gnot{\hspace{-0,34cm}\tau_{B,X}} \grmptb \gnl
\gvac{2} \gcl{1} \glmptb \gnot{\hspace{-0,34cm}\tau_{F,X}} \grmptb \glmptb \gnot{\hspace{-0,34cm}\tau_{B,Y}} \grmptb \gnl
\gvac{2} \glm \glm \gcl{4} \gnl
\gcn{1}{1}{2}{1} \gelt{\chi} \gcn{1}{1}{0}{1} \gcl{1} \gvac{1} \gcl{1} \gnl 
\gcl{1} \gcl{1} \glmptb \gnot{\hspace{-0,34cm}\tau_{B,X}} \grmptb \gcn{1}{1}{3}{1} \gnl
\gcl{1} \glmptb \gnot{\hspace{-0,34cm}\tau_{F,X}} \grmptb \glmptb \gnot{\hspace{-0,34cm}\tau_{B,Y}} \grmptb \gnl
\glm \glm \gwmu{3} \gnl
 \gvac{1} \gob{1}{X} \gob{1}{} \gob{1}{Y} \gob{3}{B} 
\gend\stackrel{3\times \equref{nat lm BFX}}{\stackrel{mod.}{=}}
\gbeg{8}{9}
\gvac{6} \got{1}{X} \got{1}{Y} \gnl
\gcn{1}{1}{2}{1} \gelt{\chi} \gcn{1}{1}{0}{1} \gcn{1}{1}{2}{1} \gelt{\Phi_{\lambda}} \gcn{1}{1}{0}{1} \gcl{1} \gcl{2} \gnl 
\gcl{2} \gcl{1} \glmptb \gnot{\hspace{-0,34cm}\tau_{B,F}} \grmptb \gcl{1} \glmptb \gnot{\hspace{-0,34cm}\tau_{B,X}} \grmptb \gnl
\gvac{1} \glmptb \gnot{\hspace{-0,34cm}\tau_{F,F}} \grmptb \glmptb \gnot{\hspace{-0,34cm}\tau_{B,F}} \grmptb \gcl{1} \glmptb \gnot{\hspace{-0,34cm}\tau_{B,Y}} \grmptb \gnl
\gmu \gmu \glmptb \gnot{\hspace{-0,34cm}\tau_{B,X}} \grmptb \gcl{1} \gcl{2} \gnl
\gcn{1}{1}{2}{2} \gvac{1} \gcn{1}{1}{2}{3} \gvac{1} \gcl{1} \glmptb \gnot{\hspace{-0,34cm}\tau_{B,Y}} \grmptb \gnl 
\gcn{2}{1}{2}{5} \gvac{1} \glmptb \gnot{\hspace{-0,34cm}\tau_{F,X}} \grmptb \gcl{1} \gmu \gnl
\gvac{2} \glm \glm \gcn{1}{1}{2}{2} \gnl
 \gvac{3} \gob{1}{X} \gob{1}{} \gob{1}{Y} \gob{2}{B} 
\gend\stackrel{2\times d.l.}{=}
\gbeg{8}{9}
\gvac{6} \got{1}{X} \got{1}{Y} \gnl
\gcn{1}{1}{2}{1} \gelt{\chi} \gcn{1}{1}{0}{1} \gcn{1}{1}{2}{1} \gelt{\Phi_{\lambda}} \gcn{1}{1}{0}{1} \gcl{3} \gcl{4} \gnl 
\gcl{2} \gcl{1} \glmptb \gnot{\hspace{-0,34cm}\tau_{B,F}} \grmptb \gcl{1} \gcl{2} \gnl
\gvac{1} \glmptb \gnot{\hspace{-0,34cm}\tau_{F,F}} \grmptb \glmptb \gnot{\hspace{-0,34cm}\tau_{B,F}} \grmptb \gnl
\gmu \gmu \gmu \gcn{1}{1}{1}{0} \gnl
\gcn{1}{1}{2}{2} \gvac{1} \gcn{1}{1}{2}{4} \gvac{2} \hspace{-0,32cm} \glmptb \gnot{\hspace{-0,34cm}\tau_{B,X}} \grmptb \gcn{1}{1}{2}{1} \gnl 
\gcn{2}{1}{3}{7} \gvac{2} \glmptb \gnot{\hspace{-0,34cm}\tau_{F,X}} \grmptb \glmptb \gnot{\hspace{-0,34cm}\tau_{B,Y}} \grmptb \gnl
\gvac{3} \glm \glm \gcl{1} \gnl
 \gvac{4} \gob{1}{X} \gob{1}{} \gob{1}{Y} \gob{1}{B} 
\gend=
\gbeg{3}{4}
\got{1}{X}  \got{1}{Y} \gnl
\gcl{2} \gcl{2} \gu{1} \gnl
\gvac{2} \gcl{1} \gnl
\gob{1}{X} \gob{1}{Y} \gob{1}{B}
\gend
$$
This together with the analogous computation with the reversed order of $\chi$ and $\Phi_{\lambda}$ means that $\chi$ is the inverse of the latter in the convolution algebra 
$\K(\A)(\Id_{\A},FFB)$ (take $X=Y=F$ and compose with two coppies of the unit of $F$). 

\medskip

To check \equref{2-cells EM^M one psi} we should prove: 
$$
\gbeg{7}{11}
\gvac{4} \got{1}{B} \got{1}{X} \got{1}{Y} \gnl
\gvac{3} \glcm \gcl{1} \gcl{2} \gnl 
\gvac{2} \gcn{2}{1}{3}{2} \glmptb \gnot{\hspace{-0,34cm}\tau_{B,X}} \grmptb \gnl
\gvac{2} \gcmu \gcl{1} \glmptb \gnot{\hspace{-0,34cm}\tau_{B,Y}} \grmptb \gnl
\gvac{2} \gcl{1} \glmptb \gnot{\hspace{-0,34cm}\tau_{F,X}} \grmptb \gcl{1} \gcl{5} \gnl
\gvac{2} \glm \glm \gnl
\gcn{1}{1}{2}{1} \gelt{\Phi_{\lambda}} \gcn{1}{1}{0}{1} \gcl{1} \gvac{1} \gcl{1} \gnl 
\gcl{1} \gcl{1} \glmptb \gnot{\hspace{-0,34cm}\tau_{B,X}} \grmptb \gcn{1}{1}{3}{1} \gnl
\gcl{1} \glmptb \gnot{\hspace{-0,34cm}\tau_{F,X}} \grmptb \glmptb \gnot{\hspace{-0,34cm}\tau_{B,Y}} \grmptb \gnl
\glm \glm \gwmu{3} \gnl
 \gvac{1} \gob{1}{X} \gob{1}{} \gob{1}{Y} \gob{3}{B} 
\gend=
\gbeg{7}{10}
\gvac{2} \got{1}{B} \gvac{3} \got{1}{X} \got{1}{Y} \gnl
\gvac{2} \gcl{3} \gcn{1}{1}{2}{1} \gelt{\Phi_{\lambda}} \gcn{1}{1}{0}{1} \gcl{1} \gcl{2} \gnl 
\gvac{3} \gcl{2} \gcl{1} \glmptb \gnot{\hspace{-0,34cm}\tau_{B,X}} \grmptb \gnl
\gvac{4} \glmptb \gnot{\hspace{-0,34cm}\tau_{F,X}} \grmptb \glmptb \gnot{\hspace{-0,34cm}\tau_{B,Y}} \grmptb  \gnl
\gvac{1} \glcm \glm \glm \gcl{2} \gnl
\gcn{2}{1}{3}{2} \gcl{1} \gcn{1}{1}{3}{1} \gcn{2}{2}{5}{1} \gnl 
\gcmu \glmptb \gnot{\hspace{-0,34cm}\tau_{B,X}} \grmptb \gcn{1}{2}{7}{5}\gnl
\gcl{1} \glmptb \gnot{\hspace{-0,34cm}\tau_{F,X}} \grmptb \glmptb \gnot{\hspace{-0,34cm}\tau_{B,Y}} \grmptb  \gnl
\glm \glm \gwmu{3} \gnl
 \gvac{1} \gob{1}{X} \gob{1}{} \gob{1}{Y} \gob{3}{B} 
\gend
$$
By exactly the same arguments as in the previous computation  this is equivalent to 
$$
\gbeg{6}{8}
\gvac{5} \got{1}{B} \gnl
\gvac{4} \glcm \gnl
\gvac{3} \gcn{2}{1}{3}{2} \gcl{2} \gnl
\gcn{1}{1}{2}{1} \gelt{\Phi_{\lambda}} \gcn{1}{1}{0}{1} \gcmu \gnl 
\gcl{2} \gcl{1} \glmptb \gnot{\hspace{-0,34cm}\tau_{B,F}} \grmptb \gcl{1} \gcl{2} \gnl
\gvac{1} \glmptb \gnot{\hspace{-0,34cm}\tau_{F,F}} \grmptb \glmptb \gnot{\hspace{-0,34cm}\tau_{B,F}} \grmptb \gnl
\gmu \gmu \gmu \gnl
\gob{2}{F} \gob{2}{F} \gob{2}{B} 
\gend\stackrel{\tau_{B,F}}{\stackrel{com.d.l.}{=}}
\gbeg{5}{7}
\gvac{4} \got{1}{B} \gnl
\gcn{1}{1}{2}{0} \gelt{\Phi_{\lambda}} \gcn{1}{1}{0}{1} \glcm \gnl
\gcn{1}{3}{0}{0} \gcn{1}{1}{1}{0} \glmptb \gnot{\hspace{-0,34cm}\tau_{B,F}} \grmptb \gcl{1} \gnl
\gvac{1} \gcn{1}{1}{0}{0} \hspace{-0,24cm} \gcmu \hspace{-0,2cm} \gmu \gnl 
\gvac{2} \hspace{-0,2cm} \glmptb \gnot{\hspace{-0,34cm}\tau_{F,F}} \grmptb \gcl{1} \gcl{2} \gnl
\gvac{1} \gmu \gmu \gnl
\gvac{1} \gob{2}{F} \gob{2}{F} \gob{1}{B} 
\gend=
\gbeg{6}{8}
\gvac{2} \got{1}{B} \gnl
\gvac{1} \glcm \gnl
\gcn{2}{1}{3}{2} \gcl{2} \gnl
\gcmu \gvac{1} \gcn{1}{1}{2}{1} \gelt{\Phi_{\lambda}} \gcn{1}{1}{0}{1} \gnl 
\gcl{2} \gcl{1} \glmptb \gnot{\hspace{-0,34cm}\tau_{B,F}} \grmptb \gcl{1} \gcl{2} \gnl
\gvac{1} \glmptb \gnot{\hspace{-0,34cm}\tau_{F,F}} \grmptb \glmptb \gnot{\hspace{-0,34cm}\tau_{B,F}} \grmptb \gnl
\gmu \gmu \gmu \gnl
\gob{2}{F} \gob{2}{F} \gob{2}{B} 
\gend
$$
This is precisely \equref{quasi coaction} with $\Delta_M$ being as in \equref{HN datum}.

We now investigate when \equref{monad law ro new} is fulfilled. We find that this identity becomes: 
$$
\gbeg{9}{16}
\gvac{6} \got{1}{X} \got{1}{Y} \got{1}{Z} \gnl
\gvac{3} \gcn{1}{1}{2}{1} \gelt{\Phi} \gcn{1}{1}{0}{1} \gcl{1} \gcl{2}  \gcl{3} \gnl 
\gvac{3} \gcl{1} \gcl{1} \glmptb \gnot{\hspace{-0,34cm}\tau_{F,X}} \grmptb \gnl
\gvac{3} \gcl{1} \glmptb \gnot{\hspace{-0,34cm}\tau_{F,X}} \grmptb \glmptb \gnot{\hspace{-0,34cm}\tau_{F,Y}} \grmptb \gnl
\gvac{3} \glm \glm \glm \gnl
\gvac{1} \gcn{1}{1}{2}{1} \gelt{\Phi_{\lambda}} \gcn{1}{1}{0}{1} \gcl{1} \gcn{1}{2}{3}{1} \gcn{1}{3}{5}{2} \gnl 
\gvac{1} \gcl{1} \gcl{1} \glmptb \gnot{\hspace{-0,34cm}\tau_{B,X}} \grmptb \gnl
\gvac{1} \gcl{1} \glmptb \gnot{\hspace{-0,34cm}\tau_{F,X}} \grmptb \glmptb \gnot{\hspace{-0,34cm}\tau_{B,Y}} \grmptb \gnl
\gvac{1} \glm \hspace{-0,22cm}\gcmu \gcn{1}{1}{0}{1} \gcn{1}{1}{0}{1} \gcl{1} \gnl
\gvac{1} \gcn{2}{2}{4}{-1} \gcl{1} \glmptb \gnot{\hspace{-0,34cm}\tau_{F,Y}} \grmptb  \glmptb \gnot{\hspace{-0,34cm}\tau_{B,Z}} \grmptb \gnl
\gvac{3} \glm \glm \gcl{4} \gnl
\gcl{4} \gcn{1}{1}{2}{1} \gelt{\Phi_{\lambda}} \gcn{1}{1}{0}{1} \gcl{1} \gvac{1} \gcn{1}{2}{1}{-1} \gnl
\gvac{1} \gcl{2} \gcl{1} \glmptb \gnot{\hspace{-0,34cm}\tau_{B,Y}} \grmptb \gnl
\gvac{2} \glmptb \gnot{\hspace{-0,34cm}\tau_{F,Y}} \grmptb \glmptb \gnot{\hspace{-0,34cm}\tau_{B,Z}} \grmptb \gnl
\gvac{1} \glm \glm \gwmu{3} \gnl
\gob{1}{X} \gob{1}{} \gob{1}{Y} \gob{3}{Z} \gob{1}{B} 
\gend=
\gbeg{9}{14}
\gvac{6} \got{1}{X} \got{1}{Y} \got{1}{Z} \gnl
\gvac{3} \gcn{1}{1}{2}{1} \gelt{\Phi_{\lambda}} \gcn{1}{1}{0}{1} \gcl{1} \gcl{2}  \gcl{3} \gnl 
\gvac{3} \gcn{1}{1}{1}{0} \gcl{1} \glmptb \gnot{\hspace{-0,34cm}\tau_{F,X}} \grmptb \gnl
\gvac{2} \gcmu \glmptb \gnot{\hspace{-0,34cm}\tau_{F,X}} \grmptb \glmptb \gnot{\hspace{-0,34cm}\tau_{F,Y}} \grmptb \gnl
\gvac{2} \gcl{1} \glmptb \gnot{\hspace{-0,34cm}\tau_{F,X}} \grmptb \glmptb \gnot{\hspace{-0,34cm}\tau_{F,Y}} \grmptb \glmptb \gnot{\hspace{-0,34cm}\tau_{F,Z}} \grmptb \gnl
\gvac{2} \glm \glm \glm \gcl{7} \gnl
\gcn{1}{1}{2}{1} \gelt{\Phi_{\lambda}} \gcn{1}{1}{0}{1} \gcl{1} \gcn{1}{2}{3}{1} \gcn{1}{3}{5}{3} \gnl 
\gcl{1} \gcl{1} \glmptb \gnot{\hspace{-0,34cm}\tau_{B,X}} \grmptb \gnl
\gcl{1} \glmptb \gnot{\hspace{-0,34cm}\tau_{F,X}} \grmptb \glmptb \gnot{\hspace{-0,34cm}\tau_{B,Y}} \grmptb \gnl
\glm \glm \gvac{1} \gcn{1}{1}{-1}{1} \gcl{2} \gnl
\gvac{1} \gcl{3} \gvac{1} \gcl{3} \glcm  \gnl
\gvac{4} \gcl{1} \glmptb \gnot{\hspace{-0,34cm}\tau_{B,Z}} \grmptb \gnl
\gvac{4} \glm \gwmu{3} \gnl
\gvac{1} \gob{1}{X} \gob{1}{} \gob{1}{Y} \gob{3}{Z} \gob{1}{B} 
\gend
$$
Applying the comonadic distributive law for $\tau_{F,X}$, monadic for $\tau_{B,X}$, naturality of both with respect to the left $F$-module action, 
naturality of $\tau_{B,X}$ with respect to the left $F$-comodule action and associativity of $F$, we see that this is equivalent to: 
$$
\gbeg{10}{12}
\gvac{1} \gcn{1}{2}{2}{-2} \gelt{\s\Phi_{\lambda}} \gcn{1}{1}{0}{2} \gvac{1} \gcn{1}{1}{2}{0} \gelt{\s\Phi} \gcn{2}{2}{0}{4}  \gnl
\gvac{2} \gcn{2}{1}{1}{2} \hspace{-0,22cm} \glmptb \gnot{\hspace{-0,34cm}\tau_{B,F}} \grmptb \gcn{1}{1}{2}{5} \gnl
\gcl{1} \gvac{2} \glmptb \gnot{\hspace{-0,34cm}\tau_{F,F}} \grmptb \gcn{1}{1}{1}{5} \gvac{2} \gcl{4} \gcl{5} \gnl
\gwmu{4} \gcn{1}{1}{1}{5} \gvac{2} \gcl{1} \gnl
\gvac{2} \gcn{1}{7}{0}{0} \gcn{1}{2}{2}{0} \gelt{\s\Phi_{\lambda}} \gcn{1}{1}{0}{1} \gcl{1} \gcl{1} \gnl
\gvac{4} \gcn{1}{1}{1}{0} \glmptb \gnot{\hspace{-0,34cm}\tau_{B,F}} \grmptb \gcl{1} \gnl
\gvac{2} \gcn{1}{3}{0}{0} \gcn{1}{2}{0}{0} \gcn{1}{1}{0}{0} \hspace{-0,22cm} \gcmu \hspace{-0,2cm} \gmu \gcn{1}{1}{1}{0} \gnl
\gvac{5} \hspace{-0,34cm} \glmptb \gnot{\hspace{-0,34cm}\tau_{F,F}} \grmptb \gcl{1} \glmptb \gnot{\hspace{-0,34cm}\tau_{B,F}} \grmptb \gcn{1}{1}{2}{1} \gnl
\gvac{4} \gmu \gmu \gcn{1}{1}{1}{0}  \glmptb \gnot{\hspace{-0,34cm}\tau_{B,F}} \grmptb\gnl
\gvac{5} \hspace{-0,22cm} \gcl{1} \gvac{1} \glmptb \gnot{\hspace{-0,34cm}\tau_{F,F}} \grmptb \gcn{1}{1}{2}{1} \gcn{1}{2}{2}{2} \gnl
\gvac{5} \gwmu{3} \gmu \gnl
\gvac{3} \gob{2}{F} \gob{3}{F} \gob{2}{F} \gob{2}{B} \gnl
\gend=
\gbeg{7}{7}
\gcn{1}{1}{2}{1} \gelt{\s\Phi_{\lambda}} \gcn{1}{1}{0}{1} \gcn{1}{1}{2}{1} \gelt{\s\Phi_{\lambda}} \gcn{1}{1}{0}{1} \gnl
\gcn{1}{1}{1}{0} \gcn{1}{1}{1}{0} \glmptb \gnot{\hspace{-0,34cm}\tau_{F,F}} \grmptb\gcn{2}{2}{1}{4} \gcn{1}{2}{-1}{2}  \gnl
\gcn{1}{2}{0}{0} \gcn{1}{1}{0}{0} \hspace{-0,22cm} \gcmu \gcn{1}{1}{0}{3} \gnl
\gvac{1} \glmptb \gnot{\hspace{-0,34cm}\tau_{F,F}} \grmptb \gcl{1} \glcm\gcl{1} \gcl{2} \gnl
\gmu \gmu \gcl{1} \glmptb \gnot{\hspace{-0,34cm}\tau_{B,F}} \grmptb \gnl
\gcn{1}{1}{2}{2} \gvac{1} \gcn{1}{1}{2}{2} \gvac{1} \gmu \gmu \gnl
\gob{2}{F} \gob{2}{F} \gob{2}{F} \gob{2}{B} \gnl
\gend
$$
Multiplying this from the right in the convolution algebra $\K(\Id_{A}, FFFB)$ by $\Phi^{-1}\times \eta_B$ we get precisely \equref{3-cocycle cond fi-lambda} with 
$\Delta_M$ and $\beta$ as in \equref{HN datum}. Observe that 
$\beta=
\gbeg{3}{5}
\got{1}{FFF} \gnl
\gcl{2} \gvac{1}  \gelt{\s\Phi^{-1}}  \gnl 
\gvac{2} \gcl{1} \gnl
\gwmu{3} \gnl
\gob{3}{FFF} 
\gend$. Similarly as in \equref{ab}, we have: 
$$
\gbeg{3}{5}
\got{1}{FFF} \got{3}{XYZ} \gnl
\gbmp{\beta} \gvac{1} \gbmp{\alpha} \gnl
\gcn{1}{1}{1}{3} \gvac{1} \gcl{1} \gnl
\gvac{1} \glmf \hspace{-0,42cm} \gcl{1}  \gnl
\gvac{1} \gob{5}{XYZ}  
\gend= 
\gbeg{3}{5}
\got{1}{FFF} \got{3}{XYZ} \gnl
\gcn{1}{1}{1}{3} \gvac{1} \gcl{3} \gnl
\gvac{1} \glmf \gnl
\gvac{1} \gob{3}{XYZ} 
\gend
$$
where $\alpha$ is the associativity constraint \equref{assoc. rcm}. 

Finally, for \equref{normalized in EM} we find: 
$$
\gbeg{4}{6}
\gvac{3} \got{1}{X} \gnl
\gcn{1}{1}{2}{1} \gelt{\Phi_{\lambda}} \gcn{1}{1}{0}{1} \gcl{1} \gnl 
\gcl{1} \gcl{1} \glmptb \gnot{\hspace{-0,34cm}\tau_{B,X}} \grmptb \gnl
\gcl{1} \glmptb \gnot{\hspace{-0,34cm}\tau_{F,X}} \grmptb \gcl{2} \gnl
\glm \gcu{1} \gnl
\gvac{1} \gob{1}{X} \gob{3}{B} 
\gend=
\gbeg{4}{6}
\gvac{3} \got{1}{X} \gnl
\gcn{1}{1}{2}{1} \gelt{\Phi_{\lambda}} \gcn{1}{1}{0}{1} \gcl{1} \gnl 
\gcl{1} \gcu{1} \glmptb \gnot{\hspace{-0,34cm}\tau_{B,X}} \grmptb \gnl
\gcn{2}{1}{1}{3}  \gcl{1} \gcl{2} \gnl
\gvac{1} \glm \gnl
\gvac{2} \gob{1}{X} \gob{1}{B} 
\gend\stackrel{*}{=}
\gbeg{2}{4}
\got{1}{X} \gnl
\gcl{1} \gu{1} \gnl
\gcl{1} \gcl{1} \gnl
\gob{1}{X} \gob{1}{B}
\gend\qquad\text{and}\qquad
\gbeg{4}{5}
\gvac{3} \got{1}{Y} \gnl
\gcn{1}{1}{2}{1} \gelt{\Phi_{\lambda}} \gcn{1}{1}{0}{1} \gcl{1} \gnl 
\gcu{1} \gcl{1} \glmptb \gnot{\hspace{-0,34cm}\tau_{B,Y}} \grmptb \gnl
\gvac{1} \glm \gcl{1} \gnl
\gvac{2} \gob{1}{Y} \gob{1}{B} 
\gend\stackrel{*}{=}
\gbeg{2}{4}
\got{1}{Y} \gnl
\gcl{1} \gu{1} \gnl
\gcl{1} \gcl{1} \gnl
\gob{1}{Y} \gob{1}{B}
\gend
$$
where the equalities at the places * hold if and only if \equref{normalized 3-cocycle fi-lambda} is fulfilled (set $X=Y=F$ and apply $\eta_F$). 
\qed\end{proof}

For a consequence of \thref{main}  and \prref{Martin case} we get:

\begin{cor} \colabel{Martin}
There is an action of categories ${}_{\tau_B(\A, F, \Phi)}\Tau\times {}_B\K\to {}_B\K$ given by $(X,M)\mapsto XM$ where $XM$ is a left $B$-module via \equref{B act XY psi pon2} 
and the action associativity isomorphism is given by \equref{ro Martin} 
\begin{center}
\begin{tabular} {p{7cm}p{0cm}p{7.6cm}} 
\begin{equation} \eqlabel{B act XY psi pon2}
\gbeg{3}{4}
\got{1}{B} \got{3}{XM} \gnl
\gcn{1}{1}{1}{3} \gvac{1} \gcl{1} \gnl
\gvac{1} \glm \gnl
\gvac{2} \gob{1}{XM} 
\gend=
\gbeg{3}{5}
\gvac{1} \got{1}{B} \got{1}{X} \got{1}{M} \gnl
\glcm \gcl{1} \gcl{3} \gnl
\gcl{1} \glmptb \gnot{\hspace{-0,34cm}\tau_{B,X}} \grmptb \gnl
\glm \glm \gnl
\gob{3}{X} \gob{1}{M}
\gend
\end{equation} & & \vspace{-0,6cm}
\begin{equation} \eqlabel{ro Martin}
\rho_{X,Y,M}=
\gbeg{6}{6}
\gvac{3} \got{1}{X} \got{1}{Y} \got{1}{M} \gnl
\gcn{1}{1}{2}{1} \gelt{\Phi_{\lambda}} \gcn{1}{1}{0}{1} \gcl{1} \gcl{2} \gcl{3} \gnl
\gcl{1} \gcl{1} \glmptb \gnot{\hspace{-0,34cm}\tau_{B,X}} \grmptb \gnl
\gcl{1} \glmptb \gnot{\hspace{-0,34cm}\tau_{F,X}} \grmptb \glmptb \gnot{\hspace{-0,34cm}\tau_{B,Y}} \grmptb \gnl
\glm \glm \glm \gnl 
\gob{3}{X} \gob{1}{Y} \gob{3}{M} 
\gend
\end{equation} 
\end{tabular} 
\end{center}
if and only if  $(B,F, \psi_{B,F}, \Delta_M, \Epsilon_M, \eta_F, \beta)$ is a Hausser-Nill datum where $\psi_{B,F},\Delta_M$ and $\beta$ are given as in \equref{HN datum}. 
\end{cor}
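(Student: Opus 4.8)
The plan is to obtain \coref{Martin} by specializing \thref{main} to $\C={}_{\tau_B(\A,F,\Phi)}\Tau$ and then invoking \prref{Martin case}. First I would observe that $\C$ is a monoidal category (as established after \leref{psi_3}) and that the forgetful functor $\F:\C\to\Tau(\A,B)$, $\F(X,\tau_{B,X},\tau_{F,X},\nu)=(X,\tau_{B,X})$, is faithful and quasi-monoidal; taking $\F$ itself also in the role of $\U$ in \thref{main} (so that the required factorization is the identity), all the hypotheses of \thref{main} are met, with $\psi_{B,X}$ the distributive law attached to $(X,\tau_{B,X})\in\Tau(\A,B)$, which by \leref{psi_3} is exactly \equref{psi_3 for X}, and with $\psi_{B,F}$ as in \equref{psi_3 for BF}.

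By \thref{main}, the action $\C\times{}_B\K\to{}_B\K$, $(X,M)\mapsto XM$, with $XM$ a left $B$-module via \equref{B act XY psi} and with an action associativity isomorphism $r_{X,Y,M}$ of the form \equref{ro asoc isom novo}, exists if and only if the 2-cell $\crta\rho_{X,Y}:=r_{X,Y,B}(\id_{XY}\times\eta_B):XY\to XYB$ induces an invertible normalized 2-cocycle in $\EM^M(\K)$. Now I would quote \prref{Martin case}: such a normalized 2-cocycle exists precisely when $(B,F,\psi_{B,F},\Delta_M,\Epsilon_M,\eta_F,\beta)$ is a Hausser-Nill datum with $\Delta_M$ and $\beta$ given by \equref{HN datum} and $\Epsilon_M=\eta_B\comp\Epsilon_F$, and in that case the 2-cocycle is the 2-cell $\crta\rho_{X,Y}$ displayed in \equref{crta ro Martin}. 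Combining the two equivalences gives the equivalence asserted in the Corollary.

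Finally it remains to translate the two abstract formulas into the explicit ones claimed in the statement. Substituting $\psi_{B,X}$ from \equref{psi_3 for X} into the left $B$-action \equref{B act XY psi} on $XM$ yields \equref{B act XY psi pon2} after a routine string-diagram rewriting. Substituting $\crta\rho_{X,Y}$ from \equref{crta ro Martin} into \equref{ro asoc isom} and simplifying using the associativity of the $B$-module action on $M$ together with the naturality of $\tau_{B,Y}$ and $\tau_{F,X}$ with respect to the $F$-module action gives \equref{ro Martin}. I expect the only real work here to be bookkeeping: checking that the placement of the comultiplications, of $\tau_{B,X}$, $\tau_{F,X}$ and of $\Phi_\lambda$ in \equref{ro Martin} coincides with the composite coming out of \equref{ro asoc isom}, and verifying that taking the unity-action isomorphism to be the identity is consistent with the normalization \equref{normalized 3-cocycle fi-lambda} of $\Phi_\lambda$. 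No argument beyond those already carried out in \prref{Martin case} is needed.
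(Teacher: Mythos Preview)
Your overall strategy is exactly the paper's: the Corollary is stated as an immediate consequence of \thref{main} and \prref{Martin case}, and your derivation of \equref{B act XY psi pon2} and \equref{ro Martin} from \equref{B act XY psi} and \equref{ro asoc isom} is the intended unpacking.

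There is, however, a confusion in your first paragraph about the roles of $\U$ and $\F$ from \thref{main}. You define the forgetful functor by $(X,\tau_{B,X},\tau_{F,X},\nu)\mapsto(X,\tau_{B,X})$ and then say you take this same functor ``also in the role of $\U$'' so that the factorization is the identity. If that were so, the $\psi_{B,X}$ appearing in \thref{main} would simply be $\tau_{B,X}$, contradicting your next sentence where you say $\psi_{B,X}$ is given by \equref{psi_3 for X}. These two 2-cells are genuinely different (they differ by the $F$-coaction on $B$). The correct setup is: $\U$ is the forgetful functor to $(X,\tau_{B,X})$, while $\F$ sends $(X,\tau_{B,X},\tau_{F,X},\nu)$ to $(X,\psi_{B,X})$ with $\psi_{B,X}$ as in \equref{psi_3 for X}; \leref{psi_3} guarantees that $(X,\psi_{B,X})\in\Tau(\A,B)$, so $\F$ is well-defined and factors through $\U$. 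Once you distinguish the two functors, your argument goes through as written. (The paper itself is slightly loose on this point in the proof of \prref{Martin case}, which may be the source of the confusion.)
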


\begin{ex}
The above Corollary is a 2-categorical generalization of the result in \cite[Section 9]{HN}. 
\end{ex}

\medskip

The following result is proved directly, it justifies the name ``2-cocycle'' for $\Phi_{\lambda}$ in \equref{3-cocycle cond fi-lambda}.

\begin{lma} \lelabel{relation HN cocycle}
If \equref{ro fi} is a 2-cocycle in $\EM^M(\K)$ in the sense of \equref{monad law ro new}, where $r_{FF,F}$ and $r_{F,FF}$ are given as below, then $\Phi_{\lambda}$ satisfies \equref{3-cocycle cond fi-lambda}. 
\begin{center}
\begin{tabular} {p{7cm}p{0cm}p{6.8cm}} 
\begin{equation} \eqlabel{ro fi}
r_{\Phi_{\lambda}}=
\gbeg{5}{6}
\gvac{3} \got{1}{F} \got{1}{F} \gnl
\gcn{1}{1}{2}{1} \gelt{\Phi_{\lambda}} \gcn{1}{1}{0}{1} \gcl{1} \gcl{2} \gnl 
\gcl{1} \gcl{1} \glmptb \gnot{\hspace{-0,34cm}\tau_{B,F}} \grmptb \gnl
\gcl{1} \glmptb \gnot{\hspace{-0,34cm}\tau_{F,F}} \grmptb \glmptb \gnot{\hspace{-0,34cm}\tau_{B,F}} \grmptb \gnl
\gmu \gmu \gcl{1} \gnl
\gob{2}{F} \gob{2}{F} \gob{1}{B} 
\gend=
\gbeg{5}{5}
\gvac{3} \got{1}{FF} \gnl
\gcn{1}{1}{2}{1} \gelt{\Phi_{\lambda}} \gcn{1}{1}{0}{1} \gcl{1} \gnl 
\gcl{1} \gvac{1} \glmptb \gnot{\hspace{-0,34cm}\tau_{B,FF}} \grmptb \gnl
\gwmu{3} \gcl{1} \gnl
\gob{3}{FF} \gob{1}{B} 
\gend
\end{equation} & &
\begin{equation*}
r_{FF,F}=
\gbeg{4}{5}
\got{1}{F} \got{1}{F} \got{2}{F} \gnl
\gmu \gcn{1}{1}{2}{2} \gnl
\gvac{1} \hspace{-0,34cm} \glmptb \gnot{r_{\Phi_{\lambda}}} \gcmpb \grmptb \gnl
\gvac{1} \hspace{-0,2cm} \gcmu \gcn{1}{1}{0}{0} \gcn{1}{1}{0}{0} \gnl
\gvac{1} \gob{1}{F} \gob{1}{F} \gob{1}{\hspace{-0,32cm}F} \gob{1}{\hspace{-0,32cm}B} \gnl
\gend, \qquad 
r_{F,FF}=
\gbeg{3}{5}
\got{1}{F} \got{2}{F} \got{1}{\hspace{-0,32cm}F} \gnl
\gcl{1}  \gvac{1} \hspace{-0,32cm} \gmu \gnl
\gvac{1} \hspace{-0,2cm} \glmptb \gnot{r_{\Phi_{\lambda}}} \gcmpb \grmptb \gnl
\gvac{1} \gcl{1} \gcl{1} \hspace{-0,24cm} \gcmu \gnl
\gvac{1} \gob{1}{\hspace{0,32cm}F} \gob{1}{\hspace{0,32cm}F} \gob{1}{F} \gob{1}{B} \gnl
\gend
\end{equation*}
\end{tabular}
\end{center}
\end{lma}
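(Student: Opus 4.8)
The plan is to mimic, in the forward direction only, the computation already carried out inside the proof of \prref{Martin case} that established the equivalence between the 2-cocycle condition \equref{monad law ro new} and \equref{3-cocycle cond fi-lambda}, reading the datum $\crta\rho_{F,F}$ there as the prescribed 2-cell $r_{\Phi_{\lambda}}$ of \equref{ro fi}. First I would record that the 2-cells $r_{FF,F}$ and $r_{F,FF}$ displayed in the statement are exactly what one obtains by extending $r_{\Phi_{\lambda}}\colon FF\to FFB$ along the monad multiplication of $FF$, respectively of $F$ (using the monad structure \equref{wreath (co)product} on $FF$ and the composition rule \equref{tau B XY} for distributive laws), so that together with $r_{\Phi_{\lambda}}=\rho_{F,F}$ they are precisely the three values of the candidate 2-cocycle operator $\rho$ that enter the cocycle identity for the triple $(F,F,F)$. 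Hence the hypothesis amounts to the identity $(\Id_F\times r_{\Phi_{\lambda}})\comp r_{F,FF}=(r_{\Phi_{\lambda}}\times\Id_F)\comp r_{FF,F}$ in $\EM^M(\K)$, which in string-diagram form is \equref{monad law ro new} with $X=Y=Z=F$ and $\alpha=\alpha_{F,F,F}$ the associativity constraint of \thref{quasi-bim monoidal}.

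Next I would substitute \equref{ro fi} (and the derived forms of $r_{FF,F}$ and $r_{F,FF}$) into that identity and simplify both composites. The manipulations are the same as in \prref{Martin case}: one uses the comonadic distributive law for $\tau_{F,F}$, the monadic distributive law for $\tau_{B,F}$, naturality of $\tau_{B,F}$ with respect to the left $F$-module action \equref{nat lm BFX} and the left $F$-comodule action \equref{nat lcm}, together with coassociativity and associativity of $F$, to bring the two sides into the shape of the two sides of \equref{3-cocycle cond fi-lambda}, up to one surviving occurrence of $\Phi$ produced by $\alpha_{F,F,F}$ (recall that $\alpha_{F,F,F}$ is the $\Phi$-twisted map of \equref{assoc. lm}). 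Precomposing with $\eta_F\times\eta_F\times\eta_F$ to pass to the convolution algebra $\K(\A)(\Id_\A,FFFB)$ and then multiplying on the right by $\Phi^{-1}\times\eta_B$ cancels that factor, since $\Phi$ is convolution invertible, and leaves exactly \equref{3-cocycle cond fi-lambda} (with $\Delta_M$ and $\beta$ as in \equref{HN datum}).

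The step I expect to be the main obstacle is organizational rather than conceptual: the 2-cells $r_{\Phi_{\lambda}}$, $r_{FF,F}$, $r_{F,FF}$ are already sizeable diagrams, so the two composites that have to be equated are large, and the chain of distributive-law and naturality rewrites must be applied in precisely the order dictated by the proof of \prref{Martin case}. No genuinely new identity beyond those already invoked there (and beyond the pattern of its Sweedler counterpart \leref{relation Sweedler cocycle}) is required, which is why the lemma is ``directly proved''.
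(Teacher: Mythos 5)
The paper offers no written proof here (the lemma is announced as ``proved directly''), so your proposal has to stand on its own, and its first step does not hold up. You claim that the 2-cells $r_{FF,F}$ and $r_{F,FF}$ of the statement coincide with the values $\crta\rho_{FF,F}$ and $\crta\rho_{F,FF}$ of the cocycle operator of \prref{Martin case} on the composite Tambara modules, so that the hypothesis is literally the $X=Y=Z=F$ instance of the pentagon verified there. That identification is false. The lemma's 2-cells are conjugates of $r_{\Phi_{\lambda}}$ by the (co)multiplication, namely $r_{FF,F}=(\Delta\times\Id\times\Id)\comp r_{\Phi_{\lambda}}\comp(\mu\times\Id)$ and $r_{F,FF}=(\Id\times\Delta\times\Id)\comp r_{\Phi_{\lambda}}\comp(\Id\times\mu)$, whereas the operator \equref{crta ro Martin} evaluated on the pair $(FF,F)$, with $FF$ carrying the composite module structure of \prref{B act XY quasi-bim}, applies $\Delta$ to the first leg of $\Phi_{\lambda}$ and then lets the two resulting legs act on the two tensorands separately. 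In the classical picture (all $\tau$'s flips, $\Phi_{\lambda}=\Phi_{\lambda}^{(1)}\otimes\Phi_{\lambda}^{(2)}\otimes\Phi_{\lambda}^{(3)}$) the first reads $\Delta(\Phi_{\lambda}^{(1)}xy)\otimes\Phi_{\lambda}^{(2)}z\otimes\Phi_{\lambda}^{(3)}$ while the second reads $\Delta(\Phi_{\lambda}^{(1)})(x\otimes y)\otimes\Phi_{\lambda}^{(2)}z\otimes\Phi_{\lambda}^{(3)}$; since $\Delta$ is multiplicative rather than trivial on $xy$, these are genuinely different 2-cells. Hence the lemma's hypothesis is not the identity whose equivalence with \equref{3-cocycle cond fi-lambda} was established in \prref{Martin case}, and the wholesale import of that computation is unjustified.

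The repair is also the reason the author can call the lemma direct: precisely because $r_{FF,F}$ and $r_{F,FF}$ are built by precomposing with $\mu$, composing the cocycle identity with $\eta_F\times\eta_F\times\eta_F$ collapses them at once (unitality of $\mu$, of the left $F$-action, and of the distributive laws) to $(\Delta\times\Id_F\times\Id_B)\comp\Phi_{\lambda}$ and $(\Id_F\times\Delta\times\Id_B)\comp\Phi_{\lambda}$, and the remaining occurrences of $r_{\Phi_{\lambda}}$ in \equref{monad law ro new} then produce exactly $\Delta_M$ of \equref{HN datum} applied to the corresponding leg of $\Phi_{\lambda}$, while the $\psi_{B,Z}$ and $\mu_B$ of \equref{monad law ro new} reproduce the $\psi$ and $\mu$ of \equref{3-cocycle cond fi-lambda}. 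So the ``evaluate at units'' step that you postpone to the very end is essentially the whole proof and should be performed first, on the lemma's own formulas; no re-run of the long rewriting chain of \prref{Martin case} is needed. One point you must still settle explicitly is which $\alpha$ enters the hypothesis \equref{monad law ro new}: with the strict identity of $\EM^M(\K)$ one lands on \equref{3-cocycle cond fi-lambda} with trivial $\beta$, whereas taking $\alpha=\alpha_{F,F,F}$ inserts a copy of $\Phi$ that has to be pushed through $r_{F,FF}$ before it can be absorbed into the $\beta$ of \equref{HN datum}; your closing remark that ``$\Phi^{-1}$ cancels that factor'' glosses over exactly this.
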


\section{Yetter-Drinfel'd modules and relative Hopf modules} \selabel{Yetter}

When we introduced bimonads, the 2-category of bimonads and biwreaths in \cite{Femic5}, as a part of the data appeared what we suggested as a definition of 
one-sided Yetter-Drinfel`d modules in a 2-category. 
We recall here the definition of a (left) bimonad.

\begin{defn} \delabel{bimonad}
A {\em bimonad} in $\K$ is a quintuple $(\A, F, \mu, \eta, \Delta, \Epsilon, \lambda)$ where $(\A, F, \mu, \eta)$ is a monad and $(\A, F, \Delta, \Epsilon)$ is a comonad so that 
the following compatibility conditions hold: 
$$
\gbeg{3}{5}
\got{1}{F} \got{3}{F} \gnl
\gwmu{3} \gnl
\gvac{1} \gcl{1} \gnl
\gwcm{3} \gnl
\gob{1}{F}\gvac{1}\gob{1}{F}
\gend=
\gbeg{4}{5}
\got{1}{F} \got{3}{F} \gnl
\gcl{1} \gwcm{3} \gnl
\glmptb \gnot{\hspace{-0,34cm}\lambda} \grmptb \gvac{1} \gcl{1} \gnl
\gcl{1} \gwmu{3} \gnl
\gob{1}{F} \gvac{1} \gob{2}{\hspace{-0,34cm}F}
\gend, \quad
\gbeg{2}{3}
\got{1}{F} \got{1}{F} \gnl
\gcl{1} \gcl{1} \gnl
\gcu{1}  \gcu{1} \gnl
\gend=
\gbeg{2}{3}
\got{1}{F} \got{1}{F} \gnl
\gmu \gnl
\gvac{1} \hspace{-0,2cm} \gcu{1} \gnl
\gob{1}{}
\gend, \quad
\gbeg{2}{3}
\gu{1}  \gu{1} \gnl
\gcl{1} \gcl{1} \gnl
\gob{1}{F} \gob{1}{F}
\gend=
\gbeg{2}{3}
\gu{1} \gnl
\hspace{-0,34cm} \gcmu \gnl
\gob{1}{F} \gob{1}{F}
\gend, \quad
\gbeg{1}{2}
\gu{1} \gnl
\gcu{1} \gnl
\gob{1}{}
\gend=
\Id_{id_{\A}}
$$
and the 2-cell $\lambda: FF\to FF$ is such that $(F, \lambda)$ is a 1-cell both in $\Mnd(\K)$ and in $\Comnd(\K)$. 
\end{defn}

We rectify the definition of the 2-category of bimonads from \cite{Femic5} slightly changing the definition of 2-cells. We define the 2-category of bimonads $\Bimnd(\K)$ 
as follows. It has bimonads in $\K$ for 0-cells, 1-cells are triples $(X,\psi,\phi)$ 
where $(X, \psi)$ is a 1-cell in $\Mnd(\K)$, $(X, \phi)$ is a 1-cell in $\Comnd(\K)$ so that the compatibility: 
\begin{equation}  \eqlabel{psi-lambda-phi 2-bimonad}
\gbeg{3}{5}
\got{1}{F'} \got{1}{X} \got{1}{F} \gnl
\glmptb \gnot{\hspace{-0,34cm}\psi} \grmptb \gcl{1} \gnl
\gcl{1} \glmptb \gnot{\hspace{-0,34cm}\lambda} \grmptb \gnl
\glmptb \gnot{\hspace{-0,34cm}\phi} \grmptb \gcl{1} \gnl
\gob{1}{F'} \gob{1}{X} \gob{1}{F}
\gend=
\gbeg{3}{5}
\got{1}{F'} \got{1}{X} \got{1}{F} \gnl
\gcl{1} \glmptb \gnot{\hspace{-0,34cm}\phi} \grmptb \gnl
\glmptb \gnot{\hspace{-0,34cm}\lambda'} \grmptb \gcl{1} \gnl
\gcl{1} \glmptb \gnot{\hspace{-0,34cm}\psi} \grmptb \gnl
\gob{1}{F'} \gob{1}{X} \gob{1}{F}
\gend
\end{equation} 
holds, and 2-cells are 2-cells both in $\Mnd(\K)$ and $\Comnd(\K)$ simultaneously. The compositions of the latter and the identity 1- and 2-cells are defined in the obvious way.

\begin{rem}
With this rectified definition we still have the embedding 2-functor $\Bimnd(\K)\\ \hookrightarrow\bEM(\K)$ to what we conjectured to be the Eilenberg-Moore category for bimonads $\bEM(\K)$. 
We lose though the projection from the latter to the former 2-category, but the projection exists under an assumption that we used in all the examples that we treated in \cite{Femic5} 
(that the canonical restrictions of monadic and comonadic components of the 2-cells in $\bEM(\K)$ coincide). 
The definition of (2-cells in) $\Bimnd(\K)$ does not affect our results in \cite{Femic5}. 
\end{rem}

\medskip

Now a {\em strong Yetter-Drinfel`d module in $\K$} we define as an endomorphism 1-cell $(X,\psi,\phi)$ over a 0-cell $(\A, F, \lambda)$ in $\Bimnd(\K)$. 
We will differ Yetter-Drinfel`d modules and strong Yetter-Drinfel`d modules. In order to stress the difference between the two 
we give the next Definition.

\begin{defn} \delabel{left YD}
Let $(\A, F, \lambda)$ be a left bimonad in $\K$.  
A {\em Yetter-Drinfel`d module in $\K$} is a triple $(X, \psi, \phi)$, where $(X, \psi)$ is a 1-cell in $\Mnd(\K)$ and $(X, \phi)$ is a 1-cell in $\Comnd(\K)$, 
so that the compatibility condition \equref{YD condition} 
\begin{center} 
\begin{tabular}{p{5cm}p{2cm}p{5cm}}
\begin{equation} \eqlabel{YD condition}
\gbeg{3}{5}
\got{1}{F} \got{1}{X} \gnl
\glmptb \gnot{\hspace{-0,34cm}\psi} \grmptb \gu{1} \gnl
\gcl{1} \glmptb \gnot{\hspace{-0,34cm}\lambda} \grmptb \gnl
\glmptb \gnot{\hspace{-0,34cm}\phi} \grmptb \gcu{1} \gnl
\gob{1}{F} \gob{1}{X}
\gend=
\gbeg{3}{5}
\got{1}{F} \got{3}{X}  \gnl
\gcl{1} \glcm \gnl
\glmptb \gnot{\hspace{-0,34cm}\lambda} \grmptb \gcl{1} \gnl
\gcl{1} \glm \gnl
\gob{1}{F} \gob{3}{X}
\gend
\end{equation} & & 
\begin{equation}\eqlabel{psi-lambda-phi}
\gbeg{3}{5}
\got{1}{F} \got{1}{X} \got{1}{F} \gnl
\glmptb \gnot{\hspace{-0,34cm}\psi} \grmptb \gcl{1} \gnl
\gcl{1} \glmptb \gnot{\hspace{-0,34cm}\lambda} \grmptb \gnl
\glmptb \gnot{\hspace{-0,34cm}\phi} \grmptb \gcl{1} \gnl
\gob{1}{F} \gob{1}{X} \gob{1}{F}
\gend=
\gbeg{3}{5}
\got{1}{F} \got{1}{X} \got{1}{F} \gnl
\gcl{1} \glmptb \gnot{\hspace{-0,34cm}\phi} \grmptb \gnl
\glmptb \gnot{\hspace{-0,34cm}\lambda} \grmptb \gcl{1} \gnl
\gcl{1} \glmptb \gnot{\hspace{-0,34cm}\psi} \grmptb \gnl
\gob{1}{F} \gob{1}{X} \gob{1}{F}
\gend
\end{equation}
\end{tabular}
\end{center}
holds true, where 
$\gbeg{2}{3}
\got{1}{F} \got{1}{X} \gnl
\glm \gnl
\gob{3}{X}
\gend=
\gbeg{2}{4}
\got{1}{F} \got{1}{X} \gnl
\glmptb \gnot{\hspace{-0,34cm}\psi} \grmptb \gnl
\gcl{1} \gcu{1} \gnl
\gob{1}{X}
\gend$ and 
$\gbeg{2}{3}
\got{3}{X} \gnl
\glcm \gnl
\gob{1}{F} \gob{1}{X}
\gend=
\gbeg{2}{4}
\got{1}{X} \gnl
\gcl{1} \gu{1} \gnl
\glmptb \gnot{\hspace{-0,34cm}\phi} \grmptb \gnl
\gob{1}{F} \gob{1}{X}
\gend$. A Yetter-Drinfel`d module in $\K$ is called {\em strong} if \equref{psi-lambda-phi} holds. 
\end{defn}

Note that every strong Yetter-Drinfel`d module is a Yetter-Drinfel`d module (apply $\eta_F$ and $\Epsilon_F$ on the right 
hand-side lag of $F$ in \equref{psi-lambda-phi}). 
It is directly proved, as we showed in \cite{Femic5}, that the above 2-cells 
$\gbeg{2}{1}
\glm \gnl
\gend$ and 
$\gbeg{2}{1}
\glcm \gnl
\gend$ equip $X$ with structures of a proper left $F$-module and left $F$-comodule. For the same reason the 2-cells in $\Bimnd(\K)$ turn out to be 
left $F$-linear and left $F$-colinear. 




Let us investigate when Yetter-Drinfel`d modules in $\K$ together with (left $F$-linear and $F$-colinear) morphisms in $\Bimnd(\K)(F)$ form 
a monoidal category. Observe that the comodule version of the identity \equref{B act XY psi} is: 
\begin{equation} \eqlabel{F coact XM psi}
\gbeg{3}{4}
\got{5}{XM} \gnl
\gvac{1} \glcm \gnl
\gcn{1}{1}{3}{1} \gvac{1} \gcl{1} \gnl
\gob{1}{F} \gob{3}{XM} 
\gend=
\gbeg{3}{4}
\got{1}{X} \got{1}{} \got{1}{M} \gnl
\gcl{1} \glcm \gnl
\glmptb \gnot{\hspace{-0,34cm}\phi_{X,F}} \grmptb \gcl{1} \gnl
\gob{1}{F} \gob{1}{X} \gob{1}{M.}
\gend
\end{equation}
for $M:\A'\to\A$ a left $F$-comodule in $\K$ and $(X, \phi_{X,F})\in\Comnd(\K)(F)$. This applies then to two Yetter-Drinfel`d modules
$(X,\psi_X,\phi_X)$ and $(Y,\psi_Y,\phi_Y)$ in $\K$. We find: 
$$
\gbeg{3}{5}
\got{1}{F} \got{1}{XY} \gnl
\glmptb \gnot{\hspace{-0,34cm}\psi} \grmptb \gu{1} \gnl
\gcl{1} \glmptb \gnot{\hspace{-0,34cm}\lambda} \grmptb \gnl
\glmptb \gnot{\hspace{-0,34cm}\phi} \grmptb \gcu{1} \gnl
\gob{1}{F} \gob{1}{XY}
\gend\stackrel{\equref{tau B XY}}{=}
\gbeg{4}{7}
\got{1}{F} \got{1}{X} \got{1}{Y} \gnl
\glmptb \gnot{\hspace{-0,34cm}\psi_X} \grmptb \gcl{1} \gnl
\gcl{1} \glmptb \gnot{\hspace{-0,34cm}\psi_Y} \grmptb \gu{1} \gnl
\gcl{1} \gcl{1} \glmptb \gnot{\hspace{-0,34cm}\lambda} \grmptb \gnl
\gcl{1} \glmptb \gnot{\hspace{-0,34cm}\phi_Y} \grmptb \gcu{1} \gnl
\glmptb \gnot{\hspace{-0,34cm}\phi_X} \grmptb \gcl{1} \gnl
\gob{1}{F} \gob{1}{X} \gob{1}{Y} 
\gend\stackrel{YD_Y}{=}
\gbeg{4}{5}
\got{1}{F} \got{1}{X} \got{3}{Y} \gnl
\glmptb \gnot{\hspace{-0,34cm}\psi_X} \grmptb \glcm \gnl
\gcl{1} \glmptb \gnot{\hspace{-0,34cm}\lambda} \grmptb \gcl{1} \gnl
\glmptb \gnot{\hspace{-0,34cm}\phi_X} \grmptb \glm \gnl
\gob{1}{F} \gob{1}{X} \gob{3}{Y} 
\gend\stackrel{strong}{\stackrel{YD_X}{=}}
\gbeg{4}{7}
\got{1}{F} \got{1}{X} \got{3}{Y} \gnl
\gcl{1} \gcl{1} \glcm \gnl
\gcl{1} \glmptb \gnot{\hspace{-0,34cm}\phi_X} \grmptb \gcl{3} \gnl
\glmptb \gnot{\hspace{-0,34cm}\lambda} \grmptb \gcl{1} \gnl
\gcl{1} \glmptb \gnot{\hspace{-0,34cm}\psi_X} \grmptb \gnl
\gcl{1} \gcl{1} \glm \gnl
\gob{1}{F} \gob{1}{X} \gob{3}{Y} 
\gend\stackrel{\equref{B act XY psi}}{\stackrel{\equref{F coact XM psi}}{=}}
\gbeg{3}{5}
\got{1}{F} \got{3}{XY}  \gnl
\gcl{1} \glcm \gnl
\glmptb \gnot{\hspace{-0,34cm}\lambda} \grmptb \gcl{1} \gnl
\gcl{1} \glm \gnl
\gob{1}{F} \gob{3}{XY.}
\gend
$$
So, we find that Yetter-Drinfel`d modules in $\K$ should be strong in order to form a monoidal category. As endomorphism 1-cells in $\Bimnd(\K)$ 
strong Yetter-Drinfel`d modules, together with the respective 2-cells, form a monoidal category $\Bimnd(\K)(F)$. 
We already have seen that $\psi_{F,XY}$ is defined as in \equref{tau B XY}, similarly holds for $\phi_{XY,F}$. 
That these structures are appropriate for the monoidal structure of strong Yetter-Drinfel`d modules it is easy to show: 
$$
\gbeg{3}{5}
\got{1}{F} \got{1}{XY} \got{1}{F} \gnl
\glmptb \gnot{\hspace{-0,34cm}\psi} \grmptb \gcl{1} \gnl
\gcl{1} \glmptb \gnot{\hspace{-0,34cm}\lambda} \grmptb \gnl
\glmptb \gnot{\hspace{-0,34cm}\phi} \grmptb \gcl{1} \gnl
\gob{1}{F} \gob{1}{XY} \gob{1}{F} 
\gend\stackrel{\equref{tau B XY}}{=}
\gbeg{4}{7}
\got{1}{F} \got{1}{X} \got{1}{X} \got{1}{F} \gnl
\glmptb \gnot{\hspace{-0,34cm}\psi_X} \grmptb \gcl{1} \gcl{2} \gnl
\gcl{1} \glmptb \gnot{\hspace{-0,34cm}\psi_Y} \grmptb \gnl
\gcl{1} \gcl{1} \glmptb \gnot{\hspace{-0,34cm}\lambda} \grmptb \gnl
\gcl{1} \glmptb \gnot{\hspace{-0,34cm}\phi_Y} \grmptb \gcl{2} \gnl
\glmptb \gnot{\hspace{-0,34cm}\phi_X} \grmptb \gcl{1} \gnl
\gob{1}{F} \gob{1}{X} \gob{1}{Y} \gob{1}{F} 
\gend\stackrel{YD_Y}{=}
\gbeg{4}{7}
\got{1}{F} \got{1}{X} \got{1}{Y} \got{1}{F} \gnl
\glmptb \gnot{\hspace{-0,34cm}\psi_X} \grmptb \gcl{1} \gcl{1} \gnl
\gcl{1} \gcl{1} \glmptb \gnot{\hspace{-0,34cm}\phi_Y} \grmptb \gnl
\gcl{1} \glmptb \gnot{\hspace{-0,34cm}\lambda} \grmptb \gcl{1} \gnl
\gcl{1} \gcl{1} \glmptb \gnot{\hspace{-0,34cm}\psi_Y} \grmptb \gnl
\glmptb \gnot{\hspace{-0,34cm}\phi_X} \grmptb \gcl{1} \gcl{1} \gnl
\gob{1}{F} \gob{1}{X} \gob{1}{Y} \gob{1}{F} 
\gend\stackrel{strong}{\stackrel{YD_X}{=}}
\gbeg{4}{7}
\got{1}{F} \got{1}{X} \got{1}{Y}  \got{1}{F} \gnl
\gcl{1} \gcl{1} \glmptb \gnot{\hspace{-0,34cm}\phi_Y} \grmptb \gnl
\gcl{1} \glmptb \gnot{\hspace{-0,34cm}\phi_X} \grmptb \gcl{3} \gnl
\glmptb \gnot{\hspace{-0,34cm}\lambda} \grmptb \gcl{1} \gnl
\gcl{1} \glmptb \gnot{\hspace{-0,34cm}\psi_X} \grmptb \gnl
\gcl{1} \gcl{1} \glmptb \gnot{\hspace{-0,34cm}\psi_Y} \grmptb \gnl
\gob{1}{F} \gob{1}{X} \gob{1}{Y}  \gob{1}{F}
\gend\stackrel{\equref{tau B XY}}{=}
\gbeg{3}{5}
\got{1}{F} \got{1}{XY} \got{1}{F} \gnl
\gcl{1} \glmptb \gnot{\hspace{-0,34cm}\phi} \grmptb \gnl
\glmptb \gnot{\hspace{-0,34cm}\lambda} \grmptb \gcl{1} \gnl
\gcl{1} \glmptb \gnot{\hspace{-0,34cm}\psi} \grmptb \gnl
\gob{1}{F} \gob{1}{XY} \gob{1}{F}
\gend
$$

\begin{prop}
The category ${}^F_F\YD_s(\K,\A)=\Bimnd(\K)(F)$ of strong Yetter-Drinfel`d modules in $\K$ is monoidal. 
\end{prop}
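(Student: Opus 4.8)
The plan is to identify ${}^F_F\YD_s(\K,\A)$ with the hom-category of $1$-endocells of the $2$-category $\Bimnd(\K)$ over the fixed $0$-cell $(\A,F,\lambda)$, and to observe that horizontal composition of $1$-cells in a (strict) $2$-category always equips such a hom-category with a strict monoidal structure; the only thing requiring work is to check that the relevant pieces of data actually stay inside $\Bimnd(\K)$. First I would recall, from the discussion preceding the statement, that a strong Yetter-Drinfel'd module is by definition a $1$-endocell $(X,\psi,\phi)$ in $\Bimnd(\K)$ over $(\A,F,\lambda)$, so that $(X,\psi)\in\Mnd(\K)(F)$, $(X,\phi)\in\Comnd(\K)(F)$, and the bimonad-$1$-cell compatibility \equref{psi-lambda-phi} (which is \equref{psi-lambda-phi 2-bimonad} specialised to endocells) holds; the Yetter-Drinfel'd condition \equref{YD condition} is then automatic, obtained by precomposing and postcomposing with $\eta_F$ and $\Epsilon_F$ on the right-hand leg of $F$ in \equref{psi-lambda-phi}. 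The morphisms are the $2$-cells of $\Bimnd(\K)$ over this $0$-cell, i.e. the $2$-cells in $\K$ that are simultaneously left $F$-linear and left $F$-colinear.

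Next I would fix the candidate monoidal structure. The tensor product of $(X,\psi_X,\phi_X)$ and $(Y,\psi_Y,\phi_Y)$ is $(XY,\psi_{F,XY},\phi_{XY,F})$, where $\psi_{F,XY}$ is the horizontal composite in $\Mnd(\K)$ given by \equref{tau B XY} and $\phi_{XY,F}$ is the dual horizontal composite in $\Comnd(\K)$; the unit object is $(\Id_\A,\id_F,\id_F)$ with trivial structure $2$-cells; and, since $\Bimnd(\K)$ is strict, the associativity and unit constraints are identities. The substantive point is that this tensor product is well defined on objects, i.e. that $(XY,\psi_{F,XY},\phi_{XY,F})$ is again a strong Yetter-Drinfel'd module. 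Because $\Mnd(\K)$ and $\Comnd(\K)$ are $2$-categories, $(XY,\psi_{F,XY})\in\Mnd(\K)(F)$ and $(XY,\phi_{XY,F})\in\Comnd(\K)(F)$ come for free; what has to be verified is \equref{psi-lambda-phi} for the composite $XY$. This is exactly the string-diagram chain displayed immediately before the statement: one expands $\psi_{F,XY}$ and $\phi_{XY,F}$ via \equref{tau B XY}, applies the Yetter-Drinfel'd compatibility of the factor $Y$, then the strongness together with the Yetter-Drinfel'd compatibility of $X$, and finally re-folds via \equref{tau B XY}. I would also flag, as in the first displayed computation there, that strongness of the factors is genuinely used: \equref{YD condition} for $XY$ does not follow from \equref{YD condition} for $X$ and $Y$ alone, which is the reason the monoidal category is the one of strong modules.

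It then remains to dispose of the routine items. The tensor of two morphisms is again a $2$-cell in $\Bimnd(\K)$ (hence left $F$-linear and left $F$-colinear) by functoriality of horizontal composition; the interchange law, pentagon and triangle hold strictly by the strict associativity and unitality of horizontal composition in $\Bimnd(\K)$; and $(\Id_\A,\id_F,\id_F)$ trivially satisfies \equref{YD condition} and \equref{psi-lambda-phi}, so it is a strong Yetter-Drinfel'd module. The one step I expect to carry real content is the stability of objects under the tensor product, that is the verification of \equref{psi-lambda-phi} for $XY$, since this is the only place where the mutual interaction of the monadic component $\psi$, the comonadic component $\phi$, and the bimonad distributive law $\lambda$ is used essentially; the rest is formal $2$-categorical bookkeeping that is already packaged in the fact that $\Bimnd(\K)$ is a $2$-category.
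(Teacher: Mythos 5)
Your proposal is correct and follows essentially the same route as the paper: identify ${}^F_F\YD_s(\K,\A)$ with the endo-hom category $\Bimnd(\K)(F)$, get the strict monoidal structure from horizontal composition, and reduce the whole matter to checking that \equref{psi-lambda-phi} is stable under the composite structure maps \equref{tau B XY} — which is exactly the second string-diagram chain the paper displays before the statement (using the strongness of $Y$, then the strongness of $X$). Your additional remark that \equref{YD condition} for $XY$ genuinely needs strongness of the factors matches the paper's first displayed computation and its motivating comment.
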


\subsection{The action of the category of strong Yetter-Drinfel`d modules} \sslabel{action YD}

Now suppose that  $B$ is a 
monad and that there is a 1-cell $(F, \psi_{B,F})$ in $\Tau(\A, B)$. 
Let ${}_{\psi_B}{}^F_F\YD_s(\K,\A)$ denote the following category. 
Its objects are strong Yetter-Drinfel`d modules $(X, \psi_X, \phi_X)$ for which there is a 2-cell $\psi_{B,X}: BX\to XB$ so that 
$(X, \psi_{B,X})$ is a 1-cell in $\Tau(\A, B)$ and the 
following compatibility Yang-Baxter equations hold true: 
\begin{center} 
\begin{tabular} {p{6cm}p{0cm}p{6.8cm}} 
\begin{equation} \eqlabel{YBE psi BFX}
\gbeg{3}{5}
\got{1}{B} \got{1}{F} \got{1}{X} \gnl
\gcl{1} \glmptb \gnot{\hspace{-0,34cm}\psi_{F,X}} \grmptb \gnl
\glmptb \gnot{\hspace{-0,34cm}\psi_{B,X}} \grmptb \gcl{1} \gnl
\gcl{1} \glmptb \gnot{\hspace{-0,34cm}\psi_{B,F}} \grmptb \gnl
\gob{1}{X} \gob{1}{F} \gob{1}{B}
\gend=
\gbeg{3}{5}
\got{1}{B} \got{1}{F} \got{1}{X} \gnl
\glmptb \gnot{\hspace{-0,34cm}\psi_{B,F}} \grmptb \gcl{1} \gnl
\gcl{1} \glmptb \gnot{\hspace{-0,34cm}\psi_{B,X}} \grmptb \gnl
\glmptb \gnot{\hspace{-0,34cm}\psi_{F,X}} \grmptb \gcl{1} \gnl
\gob{1}{X} \gob{1}{F} \gob{1}{B}
\gend
\end{equation} & & 
\begin{equation}\eqlabel{YBE BXF}
\gbeg{3}{5}
\got{1}{B} \got{1}{X} \got{1}{F} \gnl
\gcl{1} \glmptb \gnot{\hspace{-0,34cm}\phi_{X,F}} \grmptb \gnl
\glmptb \gnot{\hspace{-0,34cm}\psi_{B,F}} \grmptb \gcl{1} \gnl
\gcl{1} \glmptb \gnot{\hspace{-0,34cm}\psi_{B,X}} \grmptb \gnl
\gob{1}{F} \gob{1}{X} \gob{1}{B}
\gend=
\gbeg{3}{5}
\got{1}{B} \got{1}{X} \got{1}{F} \gnl
\glmptb \gnot{\hspace{-0,34cm}\psi_{B,X}} \grmptb \gcl{1} \gnl
\gcl{1} \glmptb \gnot{\hspace{-0,34cm}\psi_{B,F}} \grmptb \gnl
\glmptb \gnot{\hspace{-0,34cm}\phi_{X,F}} \grmptb \gcl{1} \gnl
\gob{1}{F} \gob{1}{X} \gob{1}{B}
\gend
\end{equation} 
\end{tabular}
\end{center}
Note that these two identities can be restated so that $\psi_{B,FX}$ is natural with respect to $\psi_{F,X}$ and that $\psi_{B,XF}$ is natural with respect to $\phi_{X,F}$. 
Morphisms of ${}_{\psi_B}{}^F_F\YD_s(\K,\A)$ are (left $F$-linear and $F$-colinear) morphisms in $\Bimnd(\K)(F)$ and in $\Mnd(\K)(B)$. 
It is straightforwardly showed that ${}_{\psi_B}{}^F_F\YD_s(\K,\A)$ is a monoidal category. Observe that the identity \equref{YBE psi BFX} is the same one as in \equref{YBE BFX}. 

\bigskip

\begin{defn} \delabel{relative}
Let $B$ be a monad, $F$ a comonad, both on a 0-cell $\A$ in $\K$, and suppose there is a 1-cell $(F, \psi_{B,F})$ in $\Tau(\A, B)$. 
A {\em left relative $(F,B)$-module in $\K$} is a left $F$-comodule and a left $B$-module $M: \A'\to\A$ so that
$$
\gbeg{2}{4}
\got{1}{B} \got{1}{M}  \gnl
\glm \gnl
\glcm \gnl
\gob{1}{F} \gob{1}{M}
\gend=
\gbeg{3}{5}
\got{1}{B} \got{3}{M}  \gnl
\gcl{1} \glcm \gnl
\glmptb \gnot{\hspace{-0,34cm}\psi_{B,F}} \grmptb \gcl{1} \gnl
\gcl{1} \glm \gnl
\gob{1}{F} \gob{3}{M}
\gend
$$
holds. 

The category of left relative $(F,B)$-modules in $\K$ and left $F$-colinear and left $B$-linear 2-cells in $\K$ we will denote by ${}^F_B\K$. 
\end{defn}

Note that if $B$ is a left $F$-comodule monad (in the sense of \equref{F comod alg}), then it is an object of ${}^F_B\K$. Hence, if one wants 
to have $B$ inside, one should add the comodule monad property on $B$. 

On one hand, this definition generalizes relative/Hopf/entwined modules to 2-categories. On the other hand, it may be seen as a 2-categorical generalization of the result in 
\cite{Wolff}, recalled in \cite[Section 2.1]{Wisb}, that the category of algebras over a lifted monad $\hat B$ on the category of coalgebras over a comonad $F$ on an ordinary 
category $\C$ is isomorphic to the category of coalgebras over a lifted comonad $\hat F$ of the category of algebras over a monad $B$: $(\C^F)_{\hat B}\iso(\C_B)^{\hat F}$, 
provided a mixed distributed law $\psi: BF\to FB$ from a monad to a comonad. 
In our context we take this isomorphism of categories as a definition, so our $\psi_{B,F}$ is a distributive law with respect only to 
the monadic structure of $B$.

\begin{thm} \thlabel{YD action}
There is an action of categories ${}_{\psi_B}{}^F_F\YD_s(\K,\A)\times {}^F_B\K\to{}^F_B\K$ given by $(X,M)\mapsto XM$, where $XM$ is a left $F$-comodule by 
\equref{F coact XM psi} and a left $B$-module via \equref{B act XY psi}. 
\end{thm}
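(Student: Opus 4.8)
The plan is to verify directly that the functor $(X,M)\mapsto XM$ defines an action of the monoidal category ${}_{\psi_B}{}^F_F\YD_s(\K,\A)$ on the category ${}^F_B\K$. There are three things to check: (i) $XM$ is indeed an object of ${}^F_B\K$, i.e. the left $F$-coaction \equref{F coact XM psi} and the left $B$-action \equref{B act XY psi} on $XM$ are compatible in the sense of \deref{relative}; (ii) the functor is well-defined on morphisms, i.e. given morphisms $\zeta\colon X\to X'$ in ${}_{\psi_B}{}^F_F\YD_s(\K,\A)$ and $g\colon M\to M'$ in ${}^F_B\K$, the horizontal composite $\zeta\times g$ is again left $F$-colinear and left $B$-linear; and (iii) the coherence data — the associativity isomorphism $r_{X,Y,M}\colon (XY)M\to X(YM)$ and the unit isomorphisms — are morphisms in ${}^F_B\K$ and satisfy the pentagon and triangle axioms.

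First I would establish (i). That $XM$ is a left $B$-module via \equref{B act XY psi} is \prref{B act XM psi} (applied with the 1-cell $(X,\psi_{B,X})\in\Tau(\A,B)$ and the left $B$-module $M$), and dually that $XM$ is a left $F$-comodule via \equref{F coact XM psi} follows from the comonadic version of the same Proposition, using that $(X,\phi_{X,F})\in\Comnd(\K)(F)$ and $M$ is a left $F$-comodule. The real content is the relative-module compatibility square. Here I would draw the string diagram for $B$ acting then $F$ coacting on $XM$: the $B$ first hits $X$ via $\psi_{B,X}$ and then $M$ via its $B$-action; the $F$-coaction then pulls out of $X$ via $\phi_{X,F}$ and out of $M$ via its $F$-coaction. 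Rewriting the other side of the desired identity, one uses in order: the relative-module axiom for $M$ (from \deref{relative}), the fact that $X$ is a strong Yetter-Drinfel`d module (the compatibility \equref{psi-lambda-phi}, or rather its consequence that $\psi_X$ interacts with $\phi_X$ through $\lambda$), and the Yang-Baxter equations \equref{YBE psi BFX} and \equref{YBE BXF} that were built into the definition of ${}_{\psi_B}{}^F_F\YD_s(\K,\A)$ precisely so that $\psi_{B,FX}$ is natural with respect to $\psi_{F,X}$ and $\psi_{B,XF}$ is natural with respect to $\phi_{X,F}$. This is a bookkeeping computation with string diagrams, slid against each other the way the analogous verifications were done in the proofs of \leref{psi_2} and \leref{psi_3}.

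Next, (ii) is routine: left $B$-linearity of $\zeta\times g$ uses that $\zeta$ is $B$-linear (it is a morphism in $\Mnd(\K)(B)$, so $\psi_{B,X}$ is natural w.r.t. $\zeta$) and that $g$ is $B$-linear, combined with the explicit form \equref{B act XY psi}; dually for $F$-colinearity one uses that $\zeta$ is $F$-colinear and $g$ is $F$-colinear against \equref{F coact XM psi}. For (iii), the associativity constraint is the one coming from the monoidal structure on strong Yetter-Drinfel`d modules together with the action formula; since $F$ here is an honest comonad and $B$ an honest monad (no quasi-structure, no $\Phi$ or $\omega$), the associator is in fact the identity on underlying 1-cells, so the pentagon and triangle hold strictly — or, if one prefers to route through \thref{main}, one observes that the forgetful functor ${}_{\psi_B}{}^F_F\YD_s(\K,\A)\to\Tau(\A,B)$ is faithful and quasi-monoidal and that the cocycle $\crta\rho_{X,Y}$ one would need is trivial, so condition 2) of \thref{main} is automatically met and condition 1) gives the action. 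One must only check that $r_{X,Y,M}$ (here an identity) is $F$-colinear, which is immediate, and $B$-linear, which is \prref{B act XM psi} again; this is where the fact from \deref{relative} that the mixed distributive law $\psi_{B,F}$ is required to be distributive only over the monadic structure of $B$ is used, consistently with the action on the $B$-module side.

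I expect the main obstacle to be step (i), the verification of the relative-module compatibility for $XM$: it is the one place where all of the hypotheses on $X$ — being a genuinely \emph{strong} Yetter-Drinfel`d module, not merely a Yetter-Drinfel`d module, and satisfying both Yang-Baxter equations \equref{YBE psi BFX}–\equref{YBE BXF} — are genuinely needed, and getting the string-diagram manipulation to close requires applying the $YD$-condition and the Yang-Baxter moves in exactly the right order (and in the right region of the diagram). Everything else is formal, and much of it is parallel to computations already carried out in the paper, so I would present (i) in full diagrammatic detail and merely indicate (ii) and (iii).
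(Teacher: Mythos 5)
Your proposal is correct and follows essentially the same route as the paper: the associator is taken to be the identity, the only substantive check is that $XM$ satisfies the relative $(F,B)$-module compatibility of \deref{relative}, and the rest (functoriality on morphisms, coherence) is noted to be routine. One small correction to your step (i): in the paper's computation the compatibility square for $XM$ closes using only the relative-module axiom for $M$ and the single Yang-Baxter equation \equref{YBE BXF}; the strong Yetter-Drinfel`d condition \equref{psi-lambda-phi} and the other Yang-Baxter equation \equref{YBE psi BFX} are not needed at that point --- they serve to make ${}_{\psi_B}{}^F_F\YD_s(\K,\A)$ monoidal beforehand, as the remark following the theorem confirms by observing that the action exists ``precisely because \equref{YBE BXF} holds''.
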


\begin{proof} 
We will consider the category action associativity isomorphism to be identity. Let us prove that a 1-cell $XM$ is indeed a left relative $(F,B)$-module 
with the indicated structures: 
$$
\gbeg{3}{6}
\got{1}{B} \got{3}{XM}  \gnl
\gcn{2}{1}{1}{3} \gcl{1} \gnl
\gvac{1} \glm \gnl
\gvac{1} \glcm \gnl
\gcn{2}{1}{3}{1} \gcl{1} \gnl
\gob{1}{F} \gob{3}{XM}
\gend=
\gbeg{3}{6}
\got{1}{B} \got{1}{X} \got{1}{M} \gnl
\glmptb \gnot{\hspace{-0,34cm}\psi_{B,X}} \grmptb \gcl{1} \gnl
\gcl{1} \glm \gnl
\gcl{1} \glcm \gnl
\glmptb \gnot{\hspace{-0,34cm}\phi_{X,F}} \grmptb \gcl{1} \gnl
\gob{1}{F} \gob{1}{X} \gob{1}{M}
\gend\stackrel{M \s rel.}{=}
\gbeg{4}{5}
\got{1}{B} \got{1}{X} \got{3}{M} \gnl
\glmptb \gnot{\hspace{-0,34cm}\psi_{B,X}} \grmptb \glcm \gnl
\gcl{1} \glmptb \gnot{\hspace{-0,34cm}\psi_{B,F}} \grmptb \gcl{1} \gnl
\glmptb \gnot{\hspace{-0,34cm}\phi_{X,F}} \grmptb \glm \gnl
\gob{1}{F} \gob{1}{X} \gob{3}{M}
\gend\stackrel{\equref{YBE BXF}}{=}
\gbeg{4}{7}
\got{1}{B} \got{1}{X} \got{3}{M}  \gnl
\gcl{1} \gcl{1} \glcm \gnl
\gcl{1} \glmptb \gnot{\hspace{-0,34cm}\phi_{X,F}} \grmptb \gcl{2} \gnl
\glmptb \gnot{\hspace{-0,34cm}\psi_{B,F}} \grmptb \gcl{1} \gnl
\gcl{1}  \glmptb \gnot{\hspace{-0,34cm}\psi_{B,X}} \grmptb \gcl{1} \gnl
\gcl{1} \gcl{1} \glm \gnl
\gob{1}{F} \gob{1}{X} \gob{3}{M}
\gend =
\gbeg{3}{5}
\got{1}{B} \got{3}{XM}  \gnl
\gcl{1} \glcm \gnl
\glmptb \gnot{\hspace{-0,34cm}\psi_{B,F}} \grmptb \gcl{1} \gnl
\gcl{1} \glm \gnl
\gob{1}{F} \gob{3}{XM.}
\gend 
$$
The proof that given a morphism $F:M\to N$ in ${}^F_B\K$, the newly obtained morphism $X\times M: XM\to XN$ is in ${}^F_B\K$, is direct, 
as well as that the defined action of categories is compatible with composition of morphisms and identity morphisms. 
\qed\end{proof}

\begin{rem}
We could have considered the category ${}^F_F\YD_s(\K,\A)$ and require that for every $X\in {}^F_F\YD_s(\K,\A)$ there exists $\psi_{B,X}$ so that 
$(X, \psi_{B,X})$ is a 1-cell in $\Tau(\A, B)$ and the relations \equref{YBE psi BFX} and \equref{YBE BXF} hold true. Then there is an action of 
${}^F_F\YD_s(\K,\A)$ on ${}^F_B\K$ (precisely because \equref{YBE BXF} holds). 
\end{rem}

\bigskip

In the right hand-side version of \deref{left YD} the right strong Yetter-Drinfel`d modules are triples $(X, \psi'_{X, F}, \phi'_{F,X})$ where 
$\psi'_{X, F}: XF\to FX$ and $\phi'_{F,X}: XF\to FX$, so that $(X, \psi'_{X, F})\in\Mnd(\K^{op})$ and $(X, \phi'_{F,X})\in\Comnd(\K^{op})$. The corresponding 
category we denote by $\YD_s(\K,\A)^F_F$. Moreover, we consider the category ${}_{\psi_B}\YD_s(\K,\A)^F_F$ analogous to the category ${}_{\psi_B}{}^F_F\YD_s(\K,\A)$ 
from the beginning of this Subsection. 
The only difference in the definition is that the 2-cells $\psi_{F,X}$ and $\phi_{X,F}$ are now substituted by 
$\phi_{F,X}'$ and $\psi_{X,F}'$\hspace{0,04cm}, respectively. Observe that this change, although formally minor, implies changing from $\Mnd(\K)$ to $\Comnd(\K^{op})$, and 
from $\Comnd(\K)$ to $\Mnd(\K^{op})$. Also in this setting we have a 1-cell $(F, \psi_{B,F})$ in $\Tau(\A, B)$ and 2-cells $\psi_{B,X}: BX\to XB$ so that 
$(X, \psi_{B,X})\in\Tau(\A, B)$ for every $(X, \psi'_{X, F}, \phi'_{F,X})\in \YD_s(\K,\A)^F_F$ and so that the  
following compatibility Yang-Baxter equations hold: 
\begin{center} 
\begin{tabular} {p{6cm}p{0cm}p{6.8cm}} 
\begin{equation} \eqlabel{YBE phi' BFX}
\gbeg{3}{5}
\got{1}{B} \got{1}{F} \got{1}{X} \gnl
\gcl{1} \glmptb \gnot{\hspace{-0,34cm}\phi'_{F,X}} \grmptb \gnl
\glmptb \gnot{\hspace{-0,34cm}\psi_{B,X}} \grmptb \gcl{1} \gnl
\gcl{1} \glmptb \gnot{\hspace{-0,34cm}\psi_{B,F}} \grmptb \gnl
\gob{1}{X} \gob{1}{F} \gob{1}{B}
\gend=
\gbeg{3}{5}
\got{1}{B} \got{1}{F} \got{1}{X} \gnl
\glmptb \gnot{\hspace{-0,34cm}\psi_{B,F}} \grmptb \gcl{1} \gnl
\gcl{1} \glmptb \gnot{\hspace{-0,34cm}\psi_{B,X}} \grmptb \gnl
\glmptb \gnot{\hspace{-0,34cm}\phi'_{F,X}} \grmptb \gcl{1} \gnl
\gob{1}{X} \gob{1}{F} \gob{1}{B}
\gend  
\end{equation} & & 
\begin{equation}\eqlabel{YBE BXF'}
\gbeg{3}{5}
\got{1}{B} \got{1}{X} \got{1}{F} \gnl
\gcl{1} \glmptb \gnot{\hspace{-0,34cm}\psi'_{X,F}} \grmptb \gnl
\glmptb \gnot{\hspace{-0,34cm}\psi_{B,F}} \grmptb \gcl{1} \gnl
\gcl{1} \glmptb \gnot{\hspace{-0,34cm}\psi_{B,X}} \grmptb \gnl
\gob{1}{F} \gob{1}{X} \gob{1}{B}
\gend=
\gbeg{3}{5}
\got{1}{B} \got{1}{X} \got{1}{F} \gnl
\glmptb \gnot{\hspace{-0,34cm}\psi_{B,X}} \grmptb \gcl{1} \gnl
\gcl{1} \glmptb \gnot{\hspace{-0,34cm}\psi_{B,F}} \grmptb \gnl
\glmptb \gnot{\hspace{-0,34cm}\psi'_{X,F}} \grmptb \gcl{1} \gnl
\gob{1}{F} \gob{1}{X} \gob{1}{B}
\gend
\end{equation} 
\end{tabular}
\end{center}
This is a monoidal category and we have:

\begin{thm} \thlabel{YD-right action}
There is an action of categories ${}_{\psi_B}\YD_s(\K,\A)^F_F\times {}^F_B\K\to{}^F_B\K$ given by $(X,M)\mapsto XM$, where $XM$ is a left $B$-module via \equref{B act XY psi}
and a left $F$-comodule by: 
$$
\gbeg{3}{4}
\got{5}{XM} \gnl
\gvac{1} \glcm \gnl
\gcn{1}{1}{3}{1} \gvac{1} \gcl{1} \gnl
\gob{1}{F} \gob{3}{XM} 
\gend=
\gbeg{3}{4}
\got{1}{X} \got{1}{} \got{1}{M} \gnl
\gcl{1} \glcm \gnl
\glmptb \gnot{\hspace{-0,34cm}\psi'_{X,F}} \grmptb \gcl{1} \gnl
\gob{1}{F} \gob{1}{X} \gob{1}{M.}
\gend
$$
\end{thm}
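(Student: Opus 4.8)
The plan is to prove \thref{YD-right action} as the exact right-handed mirror of \thref{YD action}, so the argument reduces to checking that the new coaction on $XM$ makes it a left $F$-comodule and a left relative $(F,B)$-module, and that the assignment $(X,M)\mapsto XM$ is functorial. First I would verify that $XM$ is a left $F$-comodule: coassociativity follows from the right strong Yetter-Drinfel'd compatibility \equref{psi-lambda-phi} written in its $\K^{op}$-form (i.e.\ for $\psi'_{X,F}$ and $\phi'_{F,X}$), exactly as in the unprimed case where $\phi_{XY,F}$ was shown compatible with $\lambda$; counitality uses the $\Epsilon_F$-normalization of $\phi'_{F,X}$ together with the $F$-comodule counit axiom on $M$. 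That $XM$ is a left $B$-module via \equref{B act XY psi} is already \prref{B act XM psi}, so nothing new is needed there.

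Next I would check the relative module compatibility from \deref{relative}, i.e.\ that the left $B$-action followed by the left $F$-coaction on $XM$ equals $\psi_{B,F}$ conjugated by the coaction and the action. The computation is the diagrammatic mirror of the displayed chain in the proof of \thref{YD action}: expand both $\psi_{B,XM}$ (via \equref{B act XY psi}) and the new coaction, push the $B$-strand past the $M$-coaction using the fact that $M$ is relative, then move it past $\psi'_{X,F}$ using the Yang-Baxter identity \equref{YBE BXF'} (this plays the role that \equref{YBE BXF} played before), and finally reassemble. The counital/unital bookkeeping is identical to before. I would then note that functoriality --- a morphism $f:M\to N$ in ${}^F_B\K$ induces $X\times f: XM\to XN$ in ${}^F_B\K$, and compatibility with composition and identities --- is immediate and can be left to the reader, just as in \thref{YD action}; here one uses that morphisms of ${}_{\psi_B}\YD_s(\K,\A)^F_F$ are left $F$-colinear and left $B$-linear and that $f$ is too. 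Finally, as in the previous theorem, the category action associativity and unit isomorphisms are taken to be identities, which is coherent because the tensor product of right strong Yetter-Drinfel'd modules, of relative modules, and the $\psi$-data are all strictly associative at the level of underlying 1-cells.

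The main obstacle is purely notational rather than conceptual: one must be careful that passing from ${}^F_F\YD_s(\K,\A)$ to $\YD_s(\K,\A)^F_F$ swaps $\Mnd(\K)\leftrightarrow\Comnd(\K^{op})$ and $\Comnd(\K)\leftrightarrow\Mnd(\K^{op})$, so the direction of every distributive-law axiom and every string-diagram slide is reversed; the danger is mismatching the side on which \equref{YBE phi' BFX} versus \equref{YBE BXF'} is applied. Once the correct mirrored version of each ingredient (right strong YD compatibility, the two Yang-Baxter equations, the relative-module axiom) is pinned down, the proof is a line-by-line transcription of the proof of \thref{YD action}, so I would present it as such, giving the single relative-module computation in full diagrammatic form and dispatching the remaining verifications by reference to the right-left symmetry.
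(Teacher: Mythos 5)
Your proposal is correct and follows exactly the route the paper takes: the paper's entire proof is the remark that the argument is analogous to that of \thref{YD action}, and your mirrored computation (relative-module compatibility of $M$ followed by the Yang--Baxter identity \equref{YBE BXF'} in place of \equref{YBE BXF}, with the remaining module/comodule and functoriality checks dispatched by left--right symmetry) is precisely that analogy spelled out. The only quibble is that coassociativity of the new coaction on $XM$ rests on the distributive-law axioms carried by $\psi'_{X,F}$ as part of the data of an object of $\YD_s(\K,\A)^F_F$ (the mirror of \equref{F coact XM psi}), rather than on the strong compatibility \equref{psi-lambda-phi}, but this does not affect the argument.
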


The proof of this result is analogous as in \thref{YD action}.

\subsection{Relation to classical Yetter-Drinfel`d modules}

We are first going to prove some results and these will lead to the introduction of another 2-category which resembles more (and generalizes) 
the classical Yetter-Drinfel`d modules. 

\begin{prop} \prlabel{prepairing psi-phi YD}
Assume the following: 
\begin{enumerate}
\item let $F$ be a bimonad with $\lambda$ indicated as below, where $\tau_{F,F}$ is both left monadic and comonadic and right monadic and 
comonadic distributive law; 
\item let $(X, \nu)$ be a left $F$-module so that $\nu: (FX, \tau_{F,FX})\to (X, \tau_{F,X})$ is a morphism in $\Mnd(\K)(F)$ 
(in other words, $\tau_{F,X}$ is a left monadic distributive law and natural with respect to 
$\gbeg{2}{1}
\glm \gnl
\gend$);
\item let $(X, l)$ be a left $F$-comodule so that $l:(X, \tau_{X,F})\to (FX, \tau_{FX,F})$ is a morphism in $\Comnd(\K)(F)$ (in other words, 
$\tau_{X,F}$ is left comonadic and natural with respect to 
$\gbeg{2}{1}
\glcm \gnl
\gend$). 
\end{enumerate}
Then $(X, \psi_{F,X})$ is a 1-cell in $\Mnd(\K)(F)$ and $(X, \phi_{X,F})$ is a 1-cell in $\Comnd(\K)(F)$, where:
\begin{equation} \eqlabel{YD concrete}
\lambda=
\gbeg{4}{5}
\got{2}{F} \got{1}{F} \gnl
\gcmu \gcl{1} \gnl
\gcl{1} \glmptb \gnot{\hspace{-0,34cm}\tau_{F,F}} \grmptb \gnl
\gmu \gcl{1} \gnl
\gob{2}{F} \gob{1}{F} 
\gend,\quad
\psi_{F,X}=
\gbeg{4}{5}
\got{2}{F} \got{1}{X} \gnl
\gcmu \gcl{1} \gnl
\gcl{1} \glmptb \gnot{\hspace{-0,34cm}\tau_{F,X}} \grmptb \gnl
\glm \gcl{1} \gnl
\gvac{1} \gob{1}{X} \gob{1}{F} 
\gend, \quad
\phi_{X,F}=
\gbeg{4}{5}
\gvac{1} \got{1}{X} \got{1}{F} \gnl
\glcm \gcl{1} \gnl
\gcl{1} \glmptb \gnot{\hspace{-0,34cm}\tau_{X,F}} \grmptb \gnl
\gmu \gcl{1} \gnl
\gob{2}{F} \gob{1}{X.} 
\gend
\end{equation}
\end{prop}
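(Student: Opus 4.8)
The plan is to verify the four required identities one at a time: (i) $(X,\psi_{F,X})$ is a $1$-cell in $\Mnd(\K)$, i.e. $\psi_{F,X}$ is a (left) monadic distributive law; (ii) $(X,\psi_{F,X})$ is natural with respect to the $F$-module action $\nu$, so that it lies in $\Mnd(\K)(F)$ — but note this is exactly \rmref{psi versus tau} applied with $B=F$ and $\tau_{F,F}$ playing the role of $\tau_{B,B}$, so (i)+(ii) together are precisely the content of \prref{B act XY quasi-bim}/\rmref{psi versus tau}; (iii) the $\Comnd(\K^{op})$ analogue, namely $\phi_{X,F}$ is a comonadic distributive law; and (iv) $\phi_{X,F}$ is natural with respect to the $F$-coaction $l$. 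So the genuinely new work is (iii)--(iv), and these are the left-right/op-dual mirror images of (i)--(ii).

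First I would dispatch (i)--(ii) by citing \prref{B act XY quasi-bim} (with the monad there taken to be $F$): the hypotheses we are given — $\tau_{F,F}$ left and right monadic, $(X,\nu)$ a left $F$-module, $\nu$ a morphism in $\Mnd(\K)(F)$ meaning $\tau_{F,X}$ natural w.r.t. $\nu$ and a left monadic distributive law — are exactly the hypotheses of that Proposition, and the $F$-action it produces on $X$ via the comultiplication-like $2$-cell $\Delta=$ (comultiplication of the bimonad $F$) has associated distributive law given by the formula in \rmref{psi versus tau}, which is our $\psi_{F,X}$. This simultaneously gives that $\psi_{F,X}$ is a monadic distributive law (the first assertion) and that the action is a morphism in $\Tau(\A,F)$ (the naturality, i.e. membership in $\Mnd(\K)(F)$). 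The only thing to double-check is that the axioms used in \prref{B act XY quasi-bim} on the comultiplication $\Delta$ — the compatibility $(\mu\times\mu)(\id\times\tau_{F,F}\times\id)(\Delta\times\Delta)=\Delta\mu$ and $\Delta\eta=\eta\times\eta$ — are available here: they are part of the bimonad data (\deref{bimonad}), so this is immediate.

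Then (iii)--(iv) I would prove by running the same argument in $\K^{op}$ with monads replaced by comonads: $\phi_{X,F}$ is built from $\mu_F$ (the monad multiplication of $F$, playing in $\Comnd(\K^{op})$ the role that $\Delta$ played above) and $\tau_{X,F}$, and the hypotheses we are handed — $\tau_{F,F}$ left and right comonadic, $(X,l)$ a left $F$-comodule, $l$ a morphism in $\Comnd(\K)(F)$ so $\tau_{X,F}$ comonadic and natural w.r.t. $l$ — are exactly the $\Comnd(\K^{op})$-translation of the hypotheses of \prref{B act XY quasi-bim}. The bimonad axioms again supply the needed compatibility of $\mu_F$ with $\tau_{F,F}$ (here the same identity $(\mu\times\mu)(\id\times\tau_{F,F}\times\id)(\Delta\times\Delta)=\Delta\mu$ read as a statement about $\mu$ in the op-direction, plus $\mu(\eta\times\eta)\text{-type}$ normalization) and the unit/counit conditions. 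Thus (iii) and (iv) follow by duality, dispatching them with the remark "the proof is the op-dual of that of \prref{B act XY quasi-bim}" rather than re-doing the string diagrams.

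The main obstacle, and the one place I would actually write out a computation, is making sure the \emph{mixed} compatibility is never secretly needed: \prref{B act XY quasi-bim} only asks $\tau_{F,F}$ to be left and right monadic, whereas here $\lambda$ is the bimonad distributive law and $\tau_{F,F}$ is assumed simultaneously left/right monadic \emph{and} comonadic. One must check that in proving (i)--(ii) only the monadic part of $\tau_{F,F}$ is invoked, and in (iii)--(iv) only the comonadic part, so that the two halves are truly independent dual statements and no compatibility between $\mu_F$ and $\Delta_F$ beyond the bimonad axioms sneaks in. Once that bookkeeping is confirmed, the proof is essentially "apply \prref{B act XY quasi-bim} and \rmref{psi versus tau}, then dualize," and one can close with \qed.
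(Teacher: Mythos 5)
Your proposal is correct and matches the paper's own proof, which likewise disposes of the $\psi_{F,X}$ claim by invoking \rmref{psi versus tau} (i.e.\ the computation inside \prref{B act XY quasi-bim} with $B=F$) and then obtains the $\phi_{X,F}$ claim by duality. Your extra bookkeeping remark — that only the monadic half of the $\tau_{F,F}$ hypotheses enters the first claim and only the comonadic half enters the dual one — is a sound (and, if anything, more careful) reading of the same argument; note only that membership in $\Mnd(\K)(F)$ by itself requires just the two distributive-law axioms \equref{psi laws}, so your step (ii) verifies slightly more than is needed for the stated conclusion.
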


\begin{proof}
The proof that $\lambda$ is a desired distributive law making $F$ a left bimonad is direct. Then the claim for $(X, \psi_{F,X})$ holds by 
\rmref{psi versus tau}, and the one for $(X, \psi_{F,X})$ holds by duality. 
\qed\end{proof}


\begin{prop} \prlabel{YD are strong}
Let $(X, \psi_{F,X}, \phi_{X,F})$ be a Yetter-Drinfel`d module determined by \equref{YD concrete}. 
Under the hypotheses as in the above Proposition and moreover assuming: 
\begin{itemize}
\item  $l: (X, \tau_{F,X})\to (FX, \tau_{F,FX})$ is a morphism in $\Mnd(\K)(F)$ (in other words, 
$\tau_{F,X}$ is a right comonadic distributive law and natural with respect to  
$\gbeg{2}{1}
\glcm \gnl
\gend$);
\item $\nu:(FX, \tau_{FX,F})\to (X, \tau_{X,F})$ is a morphism in $\Comnd(\K)(F)$ (in other words, 
$\tau_{X,F}$ is right monadic and natural with respect to 
$\gbeg{2}{1}
\glm \gnl
\gend$); 
\item the Yang-Baxter equation
\begin{equation} \eqlabel{YBE tau FXF}
\gbeg{3}{5}
\got{1}{F} \got{1}{X} \got{1}{F} \gnl
\gcl{1} \glmptb \gnot{\hspace{-0,34cm}\tau_{X,F}} \grmptb \gnl
\glmptb \gnot{\hspace{-0,34cm}\tau_{F,F}} \grmptb \gcl{1} \gnl
\gcl{1} \glmptb \gnot{\hspace{-0,34cm}\tau_{F,X}} \grmptb \gnl
\gob{1}{F} \gob{1}{X} \gob{1}{F}
\gend=
\gbeg{3}{5}
\got{1}{F} \got{1}{X} \got{1}{F} \gnl
\glmptb \gnot{\hspace{-0,34cm}\tau_{F,X}} \grmptb \gcl{1} \gnl
\gcl{1} \glmptb \gnot{\hspace{-0,34cm}\tau_{F,F}} \grmptb \gnl
\glmptb \gnot{\hspace{-0,34cm}\tau_{X,F}} \grmptb \gcl{1} \gnl
\gob{1}{F} \gob{1}{X} \gob{1}{F}
\gend
\end{equation} 
holds;
\end{itemize}
the Yetter-Drinfel`d module $(X, \psi_{F,X}, \phi_{X,F})$ is a strong Yetter-Drinfel`d module.  
\end{prop}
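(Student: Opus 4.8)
The plan is to verify the defining identity \equref{psi-lambda-phi} of a strong Yetter-Drinfel`d module for the concrete data \equref{YD concrete}, that is, to show that
$$
\phi_{X,F}\comp(\Id_F\times\lambda)\comp(\psi_{F,X}\times\Id_F)
=
(\Id_F\times\psi_{F,X})\comp(\lambda\times\Id_X)\comp(\Id_F\times\phi_{X,F})
$$
as 2-cells $FXF\to FXF$. By \prref{YD are strong} we already know (from \prref{prepairing psi-phi YD}) that $(X,\psi_{F,X},\phi_{X,F})$ is a Yetter-Drinfel`d module, so \equref{YD condition} holds; what the extra hypotheses must buy us is precisely the ``full'' compatibility on $FXF$ rather than only the version obtained after plugging $\eta_F$, $\Epsilon_F$ on the right.

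First I would expand both sides using the definitions \equref{YD concrete}. On the left, $\psi_{F,X}$ introduces a comultiplication on the incoming $F$, a $\tau_{F,X}$ crossing and a left action $\nu$; then $\lambda$ introduces a comultiplication–$\tau_{F,F}$–multiplication block on the middle-and-right $F$'s; then $\phi_{X,F}$ introduces a left coaction $l$ on $X$, a $\tau_{X,F}$ crossing and a multiplication. On the right the order of the three blocks is reversed. The idea is to massage one side into the other by a sequence of local moves: coassociativity/associativity of $F$ (available since $F$ is a bimonad), the various distributive-law axioms for $\tau_{F,F}$ (left/right monadic and comonadic), the monadic/comonadic naturality of $\tau_{F,X}$ and $\tau_{X,F}$ with respect to $\nu$ and $l$ (hypotheses (2),(3) of \prref{prepairing psi-phi YD} together with the two new bullet hypotheses that make $l$ a morphism in $\Mnd(\K)(F)$ and $\nu$ a morphism in $\Comnd(\K)(F)$), and finally the Yang-Baxter identity \equref{YBE tau FXF} to exchange the order in which the $\tau_{F,X}$, $\tau_{F,F}$ and $\tau_{X,F}$ strands cross. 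I would also use the module and comodule axioms for $(X,\nu)$ and $(X,l)$, and the bimonad compatibility (the first relation in \deref{bimonad}, which for $\lambda$ as in \equref{YD concrete} is exactly the $\Delta$–$\mu$ compatibility controlled by $\tau_{F,F}$).

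Concretely, I would proceed in the order: (i) push the comultiplications coming from $\psi_{F,X}$ and $\lambda$ together and use coassociativity of $F$ to present the top of the diagram as a single $\Delta^{(n)}$; (ii) use that $l$ is comonadic and natural w.r.t.\ $\nu$ to slide the coaction on $X$ past the action, and that $\nu$ is comonadic w.r.t.\ $l$ so the $F$-comodule structure produced by $\phi_{X,F}$ commutes with $\nu$; (iii) apply \equref{YBE tau FXF} once (possibly twice) to reorder the three crossing strands so that the $\tau_{F,F}$ block on the right side of the left-hand expression lines up with the $\tau_{F,F}$ block of the right-hand expression; (iv) collapse the multiplications using associativity of $F$ and the right-monadic/right-comonadic distributive-law axioms for $\tau_{F,F}$; this should land exactly on the expanded right-hand side. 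The string-diagram computation is of the same flavor as (and longer than) the one already displayed just before \prref{prepairing psi-phi YD} proving that Yetter-Drinfel`d modules compose, so I would present it as a chain of labeled diagram equalities rather than prose.

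The main obstacle I expect is bookkeeping: keeping track of which of the four ``handedness'' properties of each $\tau$ (left/right, monadic/comonadic) is invoked at each step, and in particular making sure that the single use of the Yang-Baxter equation \equref{YBE tau FXF} is applied on the correct triple of strands and in the correct direction — a wrong orientation there produces a term that cannot be reabsorbed. A secondary subtlety is that \equref{psi-lambda-phi} must be checked on the full $FXF$, so one cannot simplify by feeding in $\eta_F$ or $\Epsilon_F$ as in the Yetter-Drinfel`d case; the new hypotheses (right comonadic naturality of $\tau_{F,X}$ with $l$, right monadic naturality of $\tau_{X,F}$ with $\nu$, and \equref{YBE tau FXF}) are exactly what replaces that shortcut, and the proof should make transparent where each of them is used — otherwise it is easy to accidentally only reprove \equref{YD condition}.
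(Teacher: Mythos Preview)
Your overall strategy---expand both sides via \equref{YD concrete}, then manipulate using (co)associativity of $F$, the distributive-law axioms for the $\tau$'s, naturality, and the Yang--Baxter relation \equref{YBE tau FXF}---is exactly the route the paper takes. The chain of equalities in the paper is: expand; apply a distributive law; apply (co)associativity; apply the Yetter--Drinfel'd condition \equref{YD condition}; apply (co)associativity and naturality; apply a comonadic distributive law together with \equref{YBE tau FXF}; apply a comonadic distributive law again; apply a monadic distributive law; done.

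The one point where your plan goes astray is step~(ii). You write ``use that $l$ is comonadic and natural w.r.t.\ $\nu$ to slide the coaction on $X$ past the action, and that $\nu$ is comonadic w.r.t.\ $l$\ldots''. No such hypotheses are available: the assumptions tell you that $\tau_{F,X}$ and $\tau_{X,F}$ are natural with respect to $\nu$ and $l$, but there is no axiom that lets you directly commute $l$ past $\nu$. What achieves this interchange is precisely the Yetter--Drinfel'd condition \equref{YD condition}, which you correctly note is part of the hypotheses but then do not invoke in your concrete steps. In the paper's computation this is the step marked ``YD'': an inner sub-block of the form $\phi_{X,F}\comp\psi_{F,X}$ (i.e.\ comultiply, $\tau_{F,X}$, act, coact, $\tau_{X,F}$, multiply) is replaced, using \equref{YD condition} expanded via \equref{YD concrete}, by the block coact, $\tau_{F,F}$, act, sandwiched between comultiplication and multiplication. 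Without this step the two sides cannot be matched; the naturality hypotheses alone only move the $\tau$'s around, not the action past the coaction. So rewrite step~(ii) as ``apply \equref{YD condition} to the inner $F\otimes X$ sub-diagram'' and your outline becomes correct and essentially identical to the paper's proof.
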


\begin{proof}
We prove the relation \equref{psi-lambda-phi} 
for a Yetter-Drinfel`d module $X$: 
$$\gbeg{3}{5}
\got{1}{F} \got{1}{X} \got{1}{F} \gnl
\glmptb \gnot{\hspace{-0,34cm}\psi_{F,X}} \grmptb \gcl{1} \gnl
\gcl{1} \glmptb \gnot{\hspace{-0,34cm}\lambda} \grmptb \gnl
\glmptb \gnot{\hspace{-0,34cm}\phi_{X,F}} \grmptb \gcl{1} \gnl
\gob{1}{F} \gob{1}{X} \gob{1}{F}
\gend=
\gbeg{5}{11}
\got{2}{F} \got{1}{X} \got{3}{F} \gnl
\gcmu \gcl{1} \gvac{1} \gcl{3} \gnl
\gcl{1} \glmptb \gnot{\hspace{-0,34cm}\tau_{F,X}} \grmptb \gnl
\glm \gcn{1}{1}{1}{2} \gnl
\gvac{1} \gcl{1}  \gcmu \gcl{1} \gnl
\gvac{1} \gcl{1} \gcl{1} \glmptb \gnot{\hspace{-0,34cm}\tau_{F,F}} \grmptb \gnl
\gvac{1} \gcl{1} \gmu \gcl{1} \gnl
\glcm \gcn{2}{1}{2}{1} \gcl{3} \gnl
\gcl{1} \glmptb \gnot{\hspace{-0,34cm}\tau_{X,F}} \grmptb \gnl
\gmu \gcl{1} \gnl
\gob{2}{F} \gob{1}{X} \gob{3}{F}
\gend\stackrel{d.l.}{=}
\gbeg{5}{12}
\got{2}{F} \got{1}{\hspace{0,28cm}X} \got{2}{F} \gnl
\gcmu \gcn{1}{2}{2}{2} \gcn{1}{3}{2}{2} \gnl
\gcn{1}{1}{1}{0} \hspace{-0,22cm} \gcmu \gnl
\gcl{2} \gcl{1} \glmptb \gnot{\hspace{-0,34cm}\tau_{F,X}} \grmptb \gnl
\gvac{1} \glmptb \gnot{\hspace{-0,34cm}\tau_{F,X}} \grmptb \glmptb \gnot{\hspace{-0,34cm}\tau_{F,F}} \grmptb \gnl
\glm \gcl{2} \gcl{3} \gcl{6} \gnl 
\glcm \gnl
\gcl{2} \glmptb \gnot{\hspace{-0,34cm}\tau_{X,F}} \grmptb \gnl
\gvac{1} \gcl{1} \glmptb \gnot{\hspace{-0,34cm}\tau_{X,F}} \grmptb \gnl
\gcn{1}{1}{1}{2} \gmu \gcl{2} \gnl 
\gvac{1} \hspace{-0,3cm} \gmu \gnl
\gob{3}{\hspace{0,3cm}F} \gob{1}{\hspace{0,24cm}X} \gob{2}{F}
\gend\stackrel{ass.}{\stackrel{coass.}{=}}
\gbeg{5}{12}
\gvac{1} \got{1}{F} \gvac{1} \got{1}{X} \got{1}{F} \gnl
\gwcm{3} \gcl{1} \gcl{3} \gnl
\gcn{1}{1}{1}{2} \gvac{1} \glmptb \gnot{\hspace{-0,34cm}\tau_{F,X}} \grmptb \gnl
\gcmu \gcl{1} \gcl{1} \gnl
\gcl{1} \glmptb \gnot{\hspace{-0,34cm}\tau_{F,X}} \grmptb \glmptb \gnot{\hspace{-0,34cm}\tau_{F,F}} \grmptb \gnl
\glm \gcl{2} \gcl{3} \gcl{6} \gnl 
\glcm \gnl
\gcl{1} \glmptb \gnot{\hspace{-0,34cm}\tau_{X,F}} \grmptb \gnl
\gmu \glmptb \gnot{\hspace{-0,34cm}\tau_{X,F}} \grmptb \gnl
\gcn{2}{1}{2}{1} \gcl{1} \gcl{2} \gnl
\gwmu{3} \gnl
\gob{3}{F} \gob{1}{X} \gob{1}{F}
\gend\stackrel{YD}{=}
\gbeg{6}{9}
\gvac{2} \got{1}{F} \gvac{1} \got{1}{X} \got{1}{F} \gnl
\gvac{1} \gwcm{3} \gcl{1} \gcl{2} \gnl
\gcn{3}{1}{3}{2} \glmptb \gnot{\hspace{-0,34cm}\tau_{F,X}} \grmptb \gnl
\gcmu \glcm  \glmptb \gnot{\hspace{-0,34cm}\tau_{F,F}} \grmptb \gnl
\gcl{1} \glmptb \gnot{\hspace{-0,34cm}\tau_{F,F}} \grmptb \gcl{1} \gcl{2} \gcl{4} \gnl
\gmu \glm \gnl 
\gcn{3}{1}{2}{3} \glmptb \gnot{\hspace{-0,34cm}\tau_{X,F}} \grmptb \gnl
\gvac{1} \gwmu{3} \gcl{1} \gnl
\gvac{2} \gob{1}{F} \gvac{1} \gob{1}{X} \gob{1}{F}
\gend\stackrel{ass.}{\stackrel{coass.}{\stackrel{nat.}{=}}}
\gbeg{5}{11}
\got{2}{F} \gvac{1} \got{1}{\hspace{0,28cm}X} \got{2}{F} \gnl
\gcmu \gvac{1} \gcn{1}{1}{2}{2} \gcn{1}{4}{2}{2} \gnl
\gcn{1}{1}{1}{0} \hspace{-0,22cm} \gcmu \glcm \gnl
\gcl{5} \gcl{2} \glmptb \gnot{\hspace{-0,34cm}\tau_{F,F}} \grmptb \gcl{1} \gnl
\gvac{2} \gcl{1} \glmptb \gnot{\hspace{-0,34cm}\tau_{F,X}} \grmptb \gnl
\gcl{1} \glmptb \gnot{\hspace{-0,34cm}\tau_{F,F}} \grmptb \gcl{1} \glmptb \gnot{\hspace{-0,34cm}\tau_{F,F}} \grmptb \gnl 
\gvac{1} \gcl{2} \gcl{1} \glmptb \gnot{\hspace{-0,34cm}\tau_{X,F}} \grmptb \gcl{4} \gnl
\gvac{2} \glmptb \gnot{\hspace{-0,34cm}\tau_{F,F}} \grmptb \gcl{3} \gnl
\gcn{1}{1}{1}{2} \gmu \glm \gnl 
\gvac{1} \hspace{-0,2cm} \gmu \gnl
\gob{4}{F} \gob{1}{\hspace{0,24cm}X} \gob{2}{F}
\gend
$$

$$\stackrel{com. d.l.}{\stackrel{\equref{YBE tau FXF}}{=}}
\gbeg{7}{9}
\got{3}{F} \gvac{1} \got{1}{X} \got{2}{F} \gnl
\gwcm{3} \glcm \gcn{1}{2}{2}{2} \gnl
\gcl{5} \gvac{1} \glmptb \gnot{\hspace{-0,34cm}\tau_{F,F}} \grmptb \gcn{1}{1}{1}{2}  \gnl
\gvac{2} \gcn{1}{1}{1}{0} \hspace{-0,2cm} \gcmu \glmptb \gnot{\hspace{-0,34cm}\tau_{X,F}} \grmptb  \gnl
\gvac{2} \gcl{1} \gcl{1} \glmptb \gnot{\hspace{-0,34cm}\tau_{F,F}} \grmptb \gcl{1} \gnl
\gvac{2} \gcl{1} \glmptb \gnot{\hspace{-0,34cm}\tau_{F,F}} \grmptb \glmptb \gnot{\hspace{-0,34cm}\tau_{F,X}} \grmptb \gnl
\gvac{2} \gmu \glm \gcl{2} \gnl
\gvac{1} \hspace{-0,32cm} \gwmu{3} \gvac{1} \gcn{1}{1}{2}{2} \gnl
\gob{5}{F} \gob{1}{\hspace{0,32cm}X} \gob{2}{F}
\gend\stackrel{com. d.l.}{=}
\gbeg{7}{8}
\got{3}{F} \gvac{1} \got{1}{X} \got{1}{F} \gnl
\gwcm{3} \glcm \gcl{1} \gnl
\gcl{2} \gvac{1} \glmptb \gnot{\hspace{-0,34cm}\tau_{F,F}} \grmptb \glmptb \gnot{\hspace{-0,34cm}\tau_{X,F}} \grmptb \gnl
\gvac{2} \gcl{1} \glmptb \gnot{\hspace{-0,34cm}\tau_{F,F}} \grmptb \gcl{1} \gnl
\gcn{2}{1}{1}{2} \gmu \hspace{-0,2cm} \gcmu \gcn{1}{1}{0}{1} \gnl
\gvac{1} \gwmu{3} \gcl{1} \glmptb \gnot{\hspace{-0,34cm}\tau_{F,X}} \grmptb \gnl
\gvac{2} \gcl{1} \gvac{1} \glm \gcl{1} \gnl
\gob{5}{F} \gob{1}{X} \gob{1}{F}
\gend\stackrel{mon. d.l.}{=}
\gbeg{5}{7}
\got{2}{F} \gvac{1} \got{1}{X} \got{1}{F} \gnl
\gcmu \glcm \gcl{1} \gnl
\gcl{2} \gcl{1} \gcl{1} \glmptb \gnot{\hspace{-0,34cm}\tau_{X,F}} \grmptb \gnl
\gvac{1} \gcn{1}{1}{1}{2} \gmu \gcl{1} \gnl
\gcn{2}{1}{1}{2} \hspace{-0,22cm} \glmptb \gnot{\hspace{-0,34cm}\tau_{F,F}} \grmptb \gcn{1}{1}{2}{1} \gnl
\gvac{1} \gmu \glmptb \gnot{\hspace{-0,34cm}\psi_{F,X}} \grmptb \gnl
\gob{3}{\hspace{0,28cm} F} \gob{1}{X} \gob{1}{F}
\gend=
\gbeg{3}{5}
\got{1}{F} \got{1}{X} \got{1}{F} \gnl
\gcl{1} \glmptb \gnot{\hspace{-0,34cm}\phi_{X,F}} \grmptb \gnl
\glmptb \gnot{\hspace{-0,34cm}\lambda} \grmptb \gcl{1} \gnl
\gcl{1} \glmptb \gnot{\hspace{-0,34cm}\psi_{F,X}} \grmptb \gnl
\gob{1}{F} \gob{1}{X} \gob{1}{F}
\gend
$$
\qed\end{proof}

\begin{cor}
In any braided monoidal category $\C$ Yetter-Drinfel`d modules over a bialgebra $H$ in $\C$ are strong in the 2-category induced by $\C$. 
\end{cor}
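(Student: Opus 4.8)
The plan is to deduce the corollary as a direct specialization of \prref{YD are strong}, by checking that in the setting of an ordinary braided monoidal category $\C$ all the extra hypotheses of that Proposition are automatically satisfied. First I would make explicit the dictionary: the 2-category $\K$ induced by $\C$ has a single 0-cell, 1-cells are objects of $\C$, horizontal composition is the tensor product, and 2-cells are morphisms in $\C$. A bialgebra $H$ in $\C$ is then exactly a bimonad $F=H$ in $\K$ where the distributive law $\tau_{F,F}=\lambda$ of \equref{YD concrete} is built from the braiding $c_{H,H}$ of $\C$; indeed the formula for $\lambda$ in \equref{YD concrete} with $\tau_{F,F}=c_{H,H}$ is precisely the standard bialgebra distributive law $\lambda_H=(\mu_H\ot\id_H)\circ(\id_H\ot c_{H,H})\circ(\Delta_H\ot\id_H)$. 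Similarly a left-left Yetter-Drinfel`d module $X$ over $H$ in $\C$ is, by definition, a left $H$-module and a left $H$-comodule satisfying the usual compatibility; I would observe that this compatibility is exactly \equref{YD condition} once one sets $\tau_{F,X}=c_{H,X}$ and $\tau_{X,F}=c_{X,H}$ in \equref{YD concrete}, so that $(X,\psi_{F,X},\phi_{X,F})$ as produced by \prref{prepairing psi-phi YD} is the Yetter-Drinfel`d module in $\K$ corresponding to $X$.

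Next I would verify hypotheses (1)--(3) of \prref{prepairing psi-phi YD} together with the three bullet hypotheses of \prref{YD are strong}, all of which reduce to naturality of the braiding and the hexagon axioms. Concretely: $c_{H,H}$ is a left and right monadic and comonadic distributive law because the braiding is natural in both arguments and compatible with $\mu_H,\eta_H,\Delta_H,\Epsilon_H$ by the hexagon/coherence axioms of a braided category; this gives (1). Hypothesis (2) says $\tau_{F,X}=c_{H,X}$ is a monadic distributive law natural with respect to the action $\nu$, which is again naturality of $c$ in the second variable plus its compatibility with $\mu_H$; dually (3) for $\tau_{X,F}=c_{X,H}$ and the coaction $l$. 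The first two bullets of \prref{YD are strong} — that $l$ is a morphism in $\Mnd(\K)(F)$ and $\nu$ a morphism in $\Comnd(\K)(F)$ — are naturality of $c$ combined with the bialgebra compatibility of $\mu_H$ and $\Delta_H$, i.e. exactly the conditions that make the module structure on $X$ "braided" and the comodule structure "braided". Finally the Yang-Baxter equation \equref{YBE tau FXF} for $(\tau_{F,X},\tau_{F,F},\tau_{X,F})=(c_{H,X},c_{H,H},c_{X,H})$ is the genuine Yang-Baxter/braid relation $c_{H,X}$ between the three strands $H$, $X$, $H$, which holds in any braided monoidal category as a consequence of the hexagon axioms and naturality.

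Having checked all hypotheses, \prref{YD are strong} applies verbatim and yields that $(X,\psi_{F,X},\phi_{X,F})$ is a strong Yetter-Drinfel`d module in $\K$, i.e. that \equref{psi-lambda-phi} holds; translating back through the dictionary, \equref{psi-lambda-phi} for the braiding-induced 2-cells is exactly the statement that the classical Yetter-Drinfel`d module $X$ over $H$ in $\C$ satisfies the "strong" compatibility, which is the assertion of the corollary. I would end by noting that the converse implication (strong $\Rightarrow$ Yetter-Drinfel`d) is automatic by the remark after \deref{left YD}, so in $\C$ the two notions coincide.

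The only real work, and hence the main obstacle, is the bookkeeping of the string-diagram identities: making sure that each of the abstract naturality/distributivity hypotheses of the two Propositions corresponds precisely to one hexagon axiom, one naturality square, or one module/comodule axiom in $\C$, with the strands in the correct order. None of these steps is deep — each is a one- or two-move string-diagram manipulation — but there are enough of them that one must be careful not to conflate the left- and right-handed versions of "monadic" and "comonadic" distributive law. No genuinely new computation beyond those already performed in \prref{YD are strong} is needed; the corollary is purely an instantiation.
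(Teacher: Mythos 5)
Your proposal is correct and follows exactly the route the paper intends: the corollary is stated as an immediate instantiation of \prref{YD are strong} (via \prref{prepairing psi-phi YD}), taking all the distributive laws $\tau_{F,F},\tau_{F,X},\tau_{X,F}$ to be the braiding, so that every hypothesis---left/right (co)monadic distributivity, naturality with respect to $\nu$ and $l$, and the Yang--Baxter identity \equref{YBE tau FXF}---reduces to naturality of the braiding and the hexagon axioms. Your careful checklist of these hypotheses is precisely the (omitted) verification the paper relies on.
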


The classically studied Yetter-Drinfel`d modules over a bialgebra $H$ over a field all turn out to be strong in 
the sense of \equref{psi-lambda-phi}.

\medskip

We now introduce the following 2-category, which we will call {\em the 2-category of left $\tau$-bimonads in $\K$} and denote by $\tau\x\Bimnd^l(\K)$. 
A left $\tau$-bimonad in $\K$ is a left bimonad $(F, \lambda)$ in $\K$ such that $\lambda$ is given as in \equref{YD concrete}. We 
dealt with $\tau$-bimonads in \cite{Femic6}.

\underline{0-cells:} are $\tau$-bimonads $(\A, F, \tau_{F,F})$ in $\K$. 

\underline{1-cells:} are quintuples $(X,\tau_{F,X}, \tau_{X,F}, \nu, l): (\A, F, \tau_{F,F})\to(\A', F', \tau_{F',F'})$ where: 

$(X, \tau_{F,X})$ are 1-cells in $\Mnd(\K)$ and in $\Mnd(\K^{op})$ and $(X, \tau_{X,F})$ are 1-cells in $\Comnd(\K)$ and in $\Comnd(\K^{op})$ 
(observe the notation: $\tau_{F,X}: F'X\to XF$ and $\tau_{X,F}: XF\to F'X$), 

$(X, \nu)$ is a left $F'$-module and a left $F'$-comodule, so that 

$\nu: (F'X, \tau_{F,F'X})\to (X, \tau_{F,X})$ and $l: (X, \tau_{F,X})\to (F'X, \tau_{F,F'X})$ are morphisms in $\Mnd(\K)$, 
$\nu:(F'X, \tau_{F'X,F})\to (X, \tau_{X,F})$ and $l:(X, \tau_{X,F})\to (F'X, \tau_{F'X,F})$ are morphisms in $\Comnd(\K)$, recall: 
$$\tau_{F,F'X}=
\gbeg{3}{4}
\got{1}{F'} \got{1}{F'} \got{1}{X} \gnl
\glmptb \gnot{\hspace{-0,34cm}\tau_{F', F'}} \grmptb \gcl{1} \gnl
\gcl{1} \glmptb \gnot{\hspace{-0,34cm}\tau_{F,X}} \grmptb \gnl
\gob{1}{F'} \gob{1}{X} \gob{1}{F}
\gend\qquad\text{and}\qquad
\tau_{F'X,F}=
\gbeg{3}{4}
\got{1}{F'} \got{1}{X} \got{1}{F} \gnl
\gcl{1} \glmptb \gnot{\hspace{-0,34cm}\tau_{X,F}} \grmptb \gnl
\glmptb \gnot{\hspace{-0,34cm}\tau_{F', F'}} \grmptb \gcl{1} \gnl
\gob{1}{F'} \gob{1}{F'} \gob{1}{X}
\gend
$$
(in other words, $\tau_{F,X}$ is left monadic and right comonadic and natural with respect to 
$\gbeg{2}{1}
\glm \gnl
\gend$ and 
$\gbeg{2}{1}
\glcm \gnl
\gend$, and 
$\tau_{X,F}$ is left comonadic and right monadic and natural with respect to 
$\gbeg{2}{1}
\glm \gnl
\gend$ and 
$\gbeg{2}{1}
\glcm \gnl
\gend$), 

and the following compatibility conditions among the data in the quintuple and $\tau_{F,F}$ and $\tau_{F',F'}$ hold: 
$$
\gbeg{3}{5}
\got{1}{F'} \got{1}{X} \got{1}{F} \gnl
\gcl{1} \glmptb \gnot{\hspace{-0,34cm}\tau_{X,F}} \grmptb \gnl
\glmptb \gnot{\hspace{-0,34cm}\tau_{F',F'}} \grmptb \gcl{1} \gnl
\gcl{1} \glmptb \gnot{\hspace{-0,34cm}\tau_{F,X}} \grmptb \gnl
\gob{1}{F'} \gob{1}{X} \gob{1}{F}
\gend=
\gbeg{3}{5}
\got{1}{F'} \got{1}{X} \got{1}{F} \gnl
\glmptb \gnot{\hspace{-0,34cm}\tau_{F,X}} \grmptb \gcl{1} \gnl
\gcl{1} \glmptb \gnot{\hspace{-0,34cm}\tau_{F',F'}} \grmptb \gnl
\glmptb \gnot{\hspace{-0,34cm}\tau_{X,F}} \grmptb \gcl{1} \gnl
\gob{1}{F'} \gob{1}{X} \gob{1}{F}
\gend\hspace{3cm}
\gbeg{3}{8}
\got{2}{F'}  \got{1}{X}  \gnl
\gcmu \gcl{1} \gnl
\gcl{1} \glmptb \gnot{\hspace{-0,34cm}\tau_{F,X}} \grmptb \gnl
\glm \gcl{2} \gnl 
\glcm \gnl
\gcl{1} \glmptb \gnot{\hspace{-0,34cm}\tau_{X,F}} \grmptb \gnl
\gmu \gcl{1} \gnl
\gob{2}{F'} \gob{1}{X} 
\gend=
\gbeg{3}{5}
\got{2}{F'} \gvac{1} \got{1}{X} \gnl
\gcmu \glcm  \gnl
\gcl{1} \glmptb \gnot{\hspace{-0,34cm}\tau_{F,F}} \grmptb \gcl{1} \gnl
\gmu \glm \gnl 
\gob{2}{F'} \gvac{1} \gob{1}{X} 
\gend
$$

\underline{2-cells:} are the same as in $\Bimnd(\K)$, they are 2-cells in $\Mnd(\K)$ and $\Comnd(\K)$ simultaneously (recall that they are left $F$-linear and left $F$-colinear). 

\underline{The composition of 1-cells:} given $(X,\tau_{F,X}, \tau_{X,F}, \nu_X, l_X): (\A, F, \tau_{F,F})\to(\A', F', \tau_{F',F'})$ and 
$(Y,\tau_{F,Y}, \tau_{Y,F}, \nu_Y, l_Y): (\A', F', \tau_{F',F'})\to(\A'', F'', \tau_{F'',F''})$, their composition is a quintuple 
$(YX,\tau_{F,YX}, \tau_{YX,F}, \nu_{YX}, l_{YX}): (\A, F, \tau_{F,F})\to(\A'', F'', \tau_{F'',F''})$ where 
$$
\tau_{F,YX}
\gbeg{4}{4}
\got{1}{F''} \got{1}{Y} \got{1}{X} \gnl
\glmptb \gnot{\hspace{-0,34cm}\tau_{F', Y}} \grmptb \gcl{1} \gnl
\gcl{1} \glmptb \gnot{\hspace{-0,34cm}\tau_{F,X}} \grmptb \gnl
\gob{1}{Y} \gob{1}{X} \gob{1}{F}
\gend, \quad
\tau_{YX,F}=
\gbeg{3}{4}
\got{1}{Y} \got{1}{X} \got{1}{F} \gnl
\gcl{1} \glmptb \gnot{\hspace{-0,34cm}\tau_{X,F}} \grmptb \gnl
\glmptb \gnot{\hspace{-0,34cm}\tau_{Y, F'}} \grmptb \gcl{1} \gnl
\gob{1}{F''} \gob{1}{Y} \gob{1}{X}
\gend, \quad
\nu_{YX}=
\gbeg{4}{5}
\got{2}{F''} \got{1}{Y} \got{1}{X} \gnl
\gcmu \gcl{1} \gcl{3} \gnl
\gcl{1} \glmptb \gnot{\hspace{-0,34cm}\tau_{F', Y}} \grmptb \gnl
\glm \glmo \gnl
\gvac{1} \gob{1}{Y} \gob{3}{X}
\gend, \quad
l_{YX}=
\gbeg{3}{5}
\gvac{1} \got{1}{Y} \got{3}{X} \gnl
\glcm \grmo \gvac{1} \gcl{1} \gnl
\gcl{1} \glmptb \gnot{\hspace{-0,34cm}\tau_{Y, F'}} \grmptb \gcl{1} \gnl
\gmu \gcl{1} \gcl{1} \gnl
\gob{2}{F''} \gob{1}{Y} \gob{1}{X.} 
\gend
$$

\bigskip

The above two Propositions allow to define the embedding 2-functor 
\begin{equation} \eqlabel{left embedding}
\tau\x\Bimnd^l(\K)\to\Bimnd^l(\K)
\end{equation}  
(here $\Bimnd^l(\K)$ is the 2-category that so far we denoted by $\Bimnd(\K)$, we now stress that this is the 2-category of {\em left} bimonads in $\K$). 
It sends a $\tau$-bimonad $(F, \tau_{F,F})$ to the left bimonad $(F, \lambda)$ and a 1-cell $(X,\tau_{F,X}, \tau_{X,F}, \nu_X, l_X)$ from $\tau\x\Bimnd^l(\K)$ 
to the left strong Yetter-Drinfel`d module $(X, \psi_{F,X}, \phi_{X,F})$ where $\lambda, \psi_{F,X}$ and $\phi_{X,F}$ are given by \equref{YD concrete}.

\medskip

On the other hand, classical Yetter-Drinfel`d modules (over a filed $k$) are endomorphism 1-cells in $\tau\x\Bimnd(\K)$ when $\K$ is induced by the braided monoidal category of vector spaces. 
This inspires:

\begin{defn}
A {\em classical Yetter-Drinfel'd module in $\K$} is an endomorphism 1-cell in $\tau\x\Bimnd(\K)$ (left and right versions). 
\end{defn}

Now it is clear that we have:

\begin{cor}
The category of classical left Yetter-Drinfel'd modules over a bimonad $F$ in $\K$ is ${}^F_F\YD_{cl}(\K,\A):=\tau\x\Bimnd^l(\K)(F)$ and it is monoidal. 
Moreover, there is a monoidal embedding of categories: 
$$\tau\x\Bimnd^l(\K)(F)\to\Bimnd^l(\K)(F)$$ 
{\em i.e.}
$${}^F_F\YD_{cl}(\K,\A)\to{}^F_F\YD_s(\K,\A).$$
\end{cor}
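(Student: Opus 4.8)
The plan is to treat the two assertions separately and in both cases to reduce them to formal $2$-categorical facts, leaving only a bounded amount of string-diagram bookkeeping.

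\emph{Monoidality.} By the definition stated just above the Corollary, ${}^F_F\YD_{cl}(\K,\A)$ is nothing but $\tau\x\Bimnd^l(\K)(F)$, the category whose objects are the $1$-endocells over the $0$-cell $F$ in $\tau\x\Bimnd^l(\K)$ and whose morphisms are the $2$-cells between them. I would invoke the general principle recalled in Section~2 and in the Introduction: in any $2$-category the $1$-endocells over a fixed $0$-cell form a strict monoidal category, with tensor product the horizontal composition of $1$-cells, unit the identity $1$-cell, and morphisms the $2$-cells. So the real task is to confirm that $\tau\x\Bimnd^l(\K)$ is a bona fide $2$-category, which amounts to checking that the composite quintuple $(YX,\tau_{F,YX},\tau_{YX,F},\nu_{YX},l_{YX})$ given in the definition is again a legitimate $1$-cell (its components satisfying the $\tau$-compatibility identities), and that the horizontal composition of $1$-cells is associative and unital. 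Each of these reduces to the corresponding statement in $\Mnd(\K)$ and $\Comnd(\K)$ together with the $\tau$-bimonad axioms, verified by the same kind of string-diagram computations as in \prref{prepairing psi-phi YD} and \prref{YD are strong}; I would not reproduce them in full. This gives the monoidality of ${}^F_F\YD_{cl}(\K,\A)$.

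\emph{The embedding.} Here I would use the $2$-functor \equref{left embedding} $\tau\x\Bimnd^l(\K)\to\Bimnd^l(\K)$, whose existence is exactly what \prref{prepairing psi-phi YD} and \prref{YD are strong} establish: it sends $(\A,F,\tau_{F,F})$ to the bimonad $(\A,F,\lambda)$, a $1$-cell $(X,\tau_{F,X},\tau_{X,F},\nu,l)$ to the strong Yetter-Drinfel'd module $(X,\psi_{F,X},\phi_{X,F})$, with $\lambda,\psi_{F,X},\phi_{X,F}$ as in \equref{YD concrete}, and it is the identity on $2$-cells, since $2$-cells of $\tau\x\Bimnd^l(\K)$ and of $\Bimnd^l(\K)$ are in both cases precisely the $2$-cells of $\K$ lying simultaneously in $\Mnd(\K)$ and in $\Comnd(\K)$. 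Any $2$-functor restricts, over a chosen $0$-cell, to a strict monoidal functor between the categories of $1$-endocells, because it carries horizontal composition of $1$-cells to horizontal composition and the identity $1$-cell to the identity $1$-cell; applying this to the $0$-cell $F$ produces the monoidal functor $\tau\x\Bimnd^l(\K)(F)\to\Bimnd^l(\K)(F)$, and $\Bimnd^l(\K)(F)={}^F_F\YD_s(\K,\A)$ by the Proposition immediately preceding the Corollary. Faithfulness is immediate: on morphisms the functor acts as the identity. For the ``subcategory'' reading I would also check injectivity on objects by recovering the quintuple from its image --- capping the output $F$-leg of $\psi_{F,X}$ in \equref{YD concrete} with $\Epsilon_F$ and using that $\tau_{F,X}$ is a comonadic distributive law returns $\nu$, dually $\eta_F$ applied to $\phi_{X,F}$ returns $l$, and $\tau_{F,X},\tau_{X,F}$ are the distributive laws out of which $\psi_{F,X},\phi_{X,F}$ were built --- so two quintuples with the same image agree; if the recovery of the $\tau$-components proves awkward one simply falls back on ``embedding $=$ faithful functor'', which the weaker reading of the paper's ``monoidal subcategory'' permits.

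The step I expect to cost the most effort is verifying that \equref{left embedding} is genuinely a $2$-functor, specifically that the image of a horizontal composite of $1$-cells in $\tau\x\Bimnd^l(\K)$ equals the horizontal composite of their images in $\Bimnd^l(\K)$ --- i.e.\ that $\psi_{F,YX}$ and $\phi_{YX,F}$ computed from \equref{YD concrete} for the composite quintuple coincide with the composition formula \equref{tau B XY} applied to $\psi_{F,X},\psi_{F,Y}$ and to $\phi_{X,F},\phi_{Y,F}$ --- and, relatedly, the $2$-category axioms for $\tau\x\Bimnd^l(\K)$ itself. Everything else in the argument is formal $2$-category nonsense once these diagrammatic checks are in place.
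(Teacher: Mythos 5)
Your proposal matches the paper's own (essentially unwritten) argument: the paper derives the Corollary directly from the embedding 2-functor \equref{left embedding} established by \prref{prepairing psi-phi YD} and \prref{YD are strong}, restricted to the 0-cell $F$, exactly as you do. Your additional remarks on faithfulness, injectivity on objects, and the need to check that \equref{left embedding} preserves horizontal composition are sensible elaborations of the same route rather than a different proof.
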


\bigskip


We next want to apply the results of \thref{YD action} to the above setting. As before, assume that  $B$ is a monad and that there is a 1-cell $(F, \tau_{B,F})$ in $\Tau(\A, B)$. 
Let ${}_{\tau_B}{}^F_F\YD_{cl}(\K,\A)$ denote the following category. 
Its objects are classical Yetter-Drinfel`d modules $(X, \psi_X, \phi_X)$ for which there is a 2-cell $\tau_{B,X}: BX\to XB$ so that 
$(X, \tau_{B,X})$ is a 1-cell in $\Tau(\A, B)$ and the following Yang-Baxter equations hold true:
\begin{center} 
\begin{tabular} {p{6cm}p{0cm}p{6.8cm}} 
\begin{equation} \eqlabel{YBE tau BFX}
\gbeg{3}{5}
\got{1}{B} \got{1}{F} \got{1}{X} \gnl
\gcl{1} \glmptb \gnot{\hspace{-0,34cm}\tau_{F,X}} \grmptb \gnl
\glmptb \gnot{\hspace{-0,34cm}\tau_{B,X}} \grmptb \gcl{1} \gnl
\gcl{1} \glmptb \gnot{\hspace{-0,34cm}\tau_{B,F}} \grmptb \gnl
\gob{1}{X} \gob{1}{F} \gob{1}{B}
\gend=
\gbeg{3}{5}
\got{1}{B} \got{1}{F} \got{1}{X} \gnl
\glmptb \gnot{\hspace{-0,34cm}\tau_{B,F}} \grmptb \gcl{1} \gnl
\gcl{1} \glmptb \gnot{\hspace{-0,34cm}\tau_{B,X}} \grmptb \gnl
\glmptb \gnot{\hspace{-0,34cm}\tau_{F,X}} \grmptb \gcl{1} \gnl
\gob{1}{X} \gob{1}{F} \gob{1}{B}
\gend
\end{equation} & & 
\begin{equation}\eqlabel{YBE tau BXF}
\gbeg{3}{5}
\got{1}{B} \got{1}{X} \got{1}{F} \gnl
\gcl{1} \glmptb \gnot{\hspace{-0,34cm}\tau_{X,F}} \grmptb \gnl
\glmptb \gnot{\hspace{-0,34cm}\tau_{B,F}} \grmptb \gcl{1} \gnl
\gcl{1} \glmptb \gnot{\hspace{-0,34cm}\tau_{B,X}} \grmptb \gnl
\gob{1}{F} \gob{1}{X} \gob{1}{B}
\gend=
\gbeg{3}{5}
\got{1}{B} \got{1}{X} \got{1}{F} \gnl
\glmptb \gnot{\hspace{-0,34cm}\tau_{B,X}} \grmptb \gcl{1} \gnl
\gcl{1} \glmptb \gnot{\hspace{-0,34cm}\tau_{B,F}} \grmptb \gnl
\glmptb \gnot{\hspace{-0,34cm}\tau_{X,F}} \grmptb \gcl{1} \gnl
\gob{1}{F} \gob{1}{X} \gob{1}{B.}
\gend
\end{equation} 
\end{tabular}
\end{center}
Here $\nu_X:(FX,\tau_{B,FX})\to (X,\tau_{B,X}),l_X: (X,\tau_{B,X})\to (FX,\tau_{B,FX}), l_B:(B, \tau_{F,B})\to(FB, \tau_{F,FB})\in\Tau(\A, B)$ 
($\tau_{B,X}$ is natural with respect to the left $F$-action and left $F$-coaction on $X$, and $\tau_{B,F}$ with respect to the left $F$-coaction on $B$).   
The morphisms of ${}_{\tau_B}{}^F_F\YD_{cl}(\K,\A)$ are the morphisms in $\Mnd(\K)(F)$ and in $\Comnd(\K)(F)$. 
Then ${}_{\tau_B}{}^F_F\YD_{cl}(\K,\A)$ is a monoidal category.

\begin{prop}
Let $F$ be a left bimonad in $\K$, $B$ a left $F$-comodule monad with $\psi_{B,F}$ given by \equref{psi_3 for BF}. 
Let $(X, \psi_{F,X}, \phi_{X,F})$ be an object in ${}_{\tau_B}{}^F_F\YD_{cl}(\K,\A)$ and let $\psi_{B,X}$ be as in \equref{psi_3 for X}. 
Then the Yang-Baxter equations \equref{YBE psi BFX} and \equref{YBE BXF} are fulfilled. 

Consequently, 
there is a monoidal embedding of categories ${}_{\tau_B}{}^F_F\YD_{cl}(\K,\A)\to {}_{\psi_B}{}^F_F\YD_s(\K,\A)$. 
\end{prop}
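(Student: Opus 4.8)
The plan is to verify the two Yang--Baxter equations \equref{YBE psi BFX} and \equref{YBE BXF} by unfolding the concrete forms of $\psi_{F,X}$, $\phi_{X,F}$ and $\psi_{B,X}$ given by \equref{YD concrete} and \equref{psi_3 for X}, and then reducing each identity to a hypothesis that is already available in the definition of $\C={}_{\tau_B}{}^F_F\YD_{cl}(\K,\A)$. First I would recall that $\psi_{B,F}$ is \equref{psi_3 for BF}, so that the left $F$-comodule structure on $B$ is exactly $(\mu_F\x\Id_B)\comp(\Id_F\x\tau_{B,F})\comp(\glcm\x\Id_B)$ in string-diagram form; in particular the comodule monad property of $B$, \equref{F comod alg}, is in force and $\tau_{B,F}$ is monadic with respect to $F$ and natural with respect to $\glcm$ acting on $B$ (these are the $X=F$ instances of the hypotheses of $\C$).

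For \equref{YBE psi BFX}, which compares $(\tau_{B,F})(\tau_{B,X})(\psi_{F,X})$ with $(\psi_{F,X})(\tau_{B,X})(\tau_{B,F})$ on $BFX$: I would substitute $\psi_{F,X}=(\nu\x\Id_F)\comp(\Id_F\x\tau_{F,X})\comp(\Delta_F\x\Id_X)$, then push the comultiplication $\Delta_F$ past the two $\tau_{B,-}$'s (using that $\tau_{B,F}$ is comonadic with respect to $F$, its $X=F$ comonadic property), use the Yang--Baxter relation \equref{YBE tau BFX} to swap $\tau_{B,X}$ through $\tau_{F,X}$, use naturality of $\tau_{B,X}$ with respect to the left $F$-action $\nu$ (i.e.\ that $\nu_X$ is a morphism in $\Tau(\A,B)$), and finally reassemble. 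I expect this to be the more routine of the two, since each step is a named hypothesis of $\C$. For \equref{YBE BXF}, comparing $(\tau_{B,X})(\tau_{B,F})(\phi_{X,F})$ with $(\phi_{X,F})(\tau_{B,F})(\tau_{B,X})$ on $BXF$: substitute $\phi_{X,F}=(\mu_F\x\Id_X)\comp(\Id_F\x\tau_{X,F})\comp(\Id_X\x\glcm)$ — here the coaction $\glcm$ is on the $F$-leg, i.e.\ it uses $\psi_{B,F}$ — then I would need to commute the internal $\tau_{B,F}$-derived coaction on $B$'s copy of $F$ past $\tau_{B,X}$; this forces the use of \equref{nat B omega}'s analogue, namely the naturality of $\tau_{B,F}$ with respect to the left $F$-coaction on $B$ (the $X=F$ case of \equref{nat lcm}), together with \equref{YBE tau BXF} to slide $\tau_{B,X}$ through $\tau_{X,F}$, and the monadic distributive-law property of $\tau_{B,F}$ to absorb the multiplication $\mu_F$.

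The main obstacle I anticipate is bookkeeping in \equref{YBE BXF}: because $\phi_{X,F}$ on the nose involves a coaction on the \emph{$F$}-strand that is itself built from $\psi_{B,F}=$\equref{psi_3 for BF}, the left-hand side has a nested occurrence of $\tau_{B,F}$ that must be disentangled before any Yang--Baxter move applies, and one must be careful that the ``$\tau_{B,F}$ natural with respect to the $F$-coaction on $B$'' hypothesis is really the right tool (as opposed to the monadic one). Once both equations are established, the two Yang--Baxter conditions \equref{YBE psi BFX}--\equref{YBE BXF} defining objects of ${}_{\psi_B}{}^F_F\YD_s(\K,\A)$ hold for every object $(X,\psi_{F,X},\phi_{X,F})$ of ${}_{\tau_B}{}^F_F\YD_{cl}(\K,\A)$, so the assignment $X\mapsto X$ (with $\psi_{B,X}$ as in \equref{psi_3 for X}) is a well-defined functor; that it is a full, faithful, strict monoidal embedding follows from \prref{YD are strong} together with the already-established monoidal embedding $\tau\x\Bimnd^l(\K)(F)\to\Bimnd^l(\K)(F)$ and the fact that on both sides the tensor products of $\tau_{B,-}$ are given by the common formula \equref{tau B XY}, so no further verification is needed for the ``consequently'' clause beyond invoking these.
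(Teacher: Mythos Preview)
There is a genuine gap: you have misidentified the 2-cells appearing in \equref{YBE psi BFX} and \equref{YBE BXF}. These are Yang--Baxter equations for the triples $(\psi_{B,F},\psi_{B,X},\psi_{F,X})$ and $(\psi_{B,X},\psi_{B,F},\phi_{X,F})$, where \emph{all three} entries are the composite 2-cells --- $\psi_{B,F}$ and $\psi_{B,X}$ are given by \equref{psi_3 for BF} and \equref{psi_3 for X} (each built from the $F$-coaction on $B$, a $\tau$, and an $F$-action), not the bare $\tau_{B,F},\tau_{B,X}$. Your plan unfolds only $\psi_{F,X}$ (resp.\ $\phi_{X,F}$) and treats the other two as if they were $\tau$'s; that is not what the equations assert, and with only one of the three 2-cells expanded you cannot reduce the identity to the $\tau$-level hypotheses of ${}_{\tau_B}{}^F_F\YD_{cl}(\K,\A)$. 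You have also misread $\phi_{X,F}$: in \equref{YD concrete} the coaction is the $F$-coaction on $X$ applied to the $X$-leg (so $\phi_{X,F}=(\mu_F\times\Id_X)\comp(\Id_F\times\tau_{X,F})\comp(l_X\times\Id_F)$), and it has nothing to do with $\psi_{B,F}$.

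The correct approach, which the paper takes, is to unfold all three 2-cells in each equation and then observe that the resulting string-diagram computation is formally identical to the one already carried out in \prref{YD are strong} for \equref{psi-lambda-phi}, under a dictionary of substitutions. For \equref{YBE psi BFX}: coassociativity of $F\leadsto F$-comodule law on $B$, comonadic distributive law $\leadsto$ naturality of $\tau_{B,-}$ with respect to the $F$-coaction, and \equref{YBE tau FXF} $\leadsto$ \equref{YBE tau BFX}. For \equref{YBE BXF} one additionally replaces associativity of $F$ by the $F$-module law, the monadic distributive law by naturality with respect to the $F$-action, and \equref{YBE tau FXF} by \equref{YBE tau BXF}. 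Once you recognise that all three $\psi$'s must be expanded, the argument is a verbatim rerun of \prref{YD are strong} with these replacements, and your ``consequently'' paragraph is then fine.
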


\begin{proof}
The proof that the identities \equref{YBE psi BFX} and \equref{YBE BXF} hold is analogous to the proof in \prref{YD are strong} that the relation \equref{psi-lambda-phi} holds. 
For the proof of \equref{YBE psi BFX} apply the following changes in the arguments in the proof: 
coassociativity of $F$ $\leadsto$ $F$-comodule law, comonadic distributive law property $\leadsto$ naturality with respect to the $F$-comodule structure, \equref{YBE tau FXF} $\leadsto$ 
\equref{YBE tau BFX}, whereas to prove \equref{YBE BXF} change additionally: associativity of $F$ $\leadsto$ $F$-module law, monadic distributive law property $\leadsto$ naturality with 
respect to the $F$-module structure, and change \equref{YBE tau FXF} $\leadsto$ \equref{YBE tau BXF}. 
\qed\end{proof}

Let us denote by ${}_{\psi_B}{}^F_F\u{\YD_s(\K,\A)}$ the image of ${}_{\tau_B}{}^F_F\YD_{cl}(\K,\A)$ by the latter embedding functor.

\begin{cor} \colabel{known action Rob-Mar}
Under the hypotheses of the previous Proposition there is an action of categories ${}_{\psi_B}{}^F_F\u{\YD_s(\K,\A)}\times {}^F_B\K\to{}^F_B\K$ as described in \thref{YD action}. 
\end{cor}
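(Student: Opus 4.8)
The statement in question is \coref{known action Rob-Mar}: under the hypotheses of the preceding Proposition there is an action of categories ${}_{\psi_B}{}^F_F\u{\YD_s(\K,\A)}\times {}^F_B\K\to{}^F_B\K$ as described in \thref{YD action}.

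\medskip

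The plan is to obtain this as a straightforward consequence of \thref{YD action} applied to the image subcategory. First I would recall that the previous Proposition establishes a monoidal embedding functor $E:{}_{\tau_B}{}^F_F\YD_{cl}(\K,\A)\to {}_{\psi_B}{}^F_F\YD_s(\K,\A)$: for every classical Yetter-Drinfel`d module $(X,\psi_{F,X},\phi_{X,F})$ with $\tau_{B,X}$ satisfying \equref{YBE tau BFX} and \equref{YBE tau BXF}, the associated strong Yetter-Drinfel`d module $(X,\psi_{F,X},\phi_{X,F})$ with $\psi_{B,X}$ given by \equref{psi_3 for X} satisfies exactly the compatibility Yang-Baxter equations \equref{YBE psi BFX} and \equref{YBE BXF} that define objects of ${}_{\psi_B}{}^F_F\YD_s(\K,\A)$. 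By definition, ${}_{\psi_B}{}^F_F\u{\YD_s(\K,\A)}$ is the full image of this functor, hence a monoidal subcategory of ${}_{\psi_B}{}^F_F\YD_s(\K,\A)$ (it is replete-closed under the monoidal product because $E$ is monoidal, so the tensor of two objects in the image is again in the image, and the unit object, being the image of the unit classical Yetter-Drinfel`d module, lies in it).

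\medskip

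Next I would invoke \thref{YD action} directly for the ambient monoidal category ${}_{\psi_B}{}^F_F\YD_s(\K,\A)$: it gives an action ${}_{\psi_B}{}^F_F\YD_s(\K,\A)\times {}^F_B\K\to{}^F_B\K$, $(X,M)\mapsto XM$, where $XM$ carries the left $F$-comodule structure \equref{F coact XM psi} and the left $B$-module structure \equref{B act XY psi}, with associativity constraint taken to be the identity. The point is simply that this action restricts to the monoidal subcategory ${}_{\psi_B}{}^F_F\u{\YD_s(\K,\A)}$. Concretely: for $X$ in the subcategory and $M\in{}^F_B\K$, the 1-cell $XM$ together with the structure 2-cells \equref{F coact XM psi} and \equref{B act XY psi} is a left relative $(F,B)$-module by the computation carried out in the proof of \thref{YD action} (which only uses the relative-module property of $M$, the Yang-Baxter equation \equref{YBE BXF} for $X$, and the naturality properties already assumed) — nothing in that verification leaves the subcategory. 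Functoriality on morphisms, compatibility with composition and identities, and coherence of the (identity) associativity and unit constraints are inherited verbatim from \thref{YD action} since the subcategory inclusion is a strict monoidal functor.

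\medskip

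There is essentially no obstacle here; the only thing to be careful about is the bookkeeping of which hypotheses of \thref{YD action} are automatically satisfied for objects of ${}_{\psi_B}{}^F_F\u{\YD_s(\K,\A)}$. This is exactly guaranteed by the previous Proposition, which was proved precisely so that the defining axioms of ${}_{\psi_B}{}^F_F\YD_s(\K,\A)$ — notably \equref{YBE BXF}, the key input in the relative-module check of \thref{YD action} — hold for every image object. Hence the corollary follows by combining the monoidal embedding of the previous Proposition with \thref{YD action}, restricting the action functor along the inclusion ${}_{\psi_B}{}^F_F\u{\YD_s(\K,\A)}\hookrightarrow{}_{\psi_B}{}^F_F\YD_s(\K,\A)$, and noting that all coherence data descend.
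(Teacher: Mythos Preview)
Your proposal is correct and matches the paper's approach: the paper gives no explicit proof for this corollary, treating it as immediate from combining the monoidal embedding of the previous Proposition with \thref{YD action}, which is exactly the restriction argument you spell out.
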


\begin{ex}
When $\K$ is induced by a braided monoidal category $\C$, our \coref{known action Rob-Mar} recovers \cite[Theorem 2.3]{RL}. 
\end{ex}

If we forget the left $F$-comodule action both on Yetter-Drinfel`d modules and on $B$, the 2-cells $\psi_{B,X}, \psi_{F,X}$ and $\phi_{X,F}$ pass to be the 2-cells 
$\tau_{B,X}, \tau_{F,X}$ and $\tau_{X,F}$, respectively. Moreover, the category ${}_{\psi_B}{}^F_F\u{\YD_s(\K,\A)}$ becomes ${}_{\tau_B}{}_{(\A,F)}\Tau$. 
On the other hand, if we consider a left bimonad $F$ in $\K$ as a quasi-bimonad (with $\Phi=\eta_F\times\eta_F\times\eta_F$) and take $\sigma$ to be trivial in \coref{known action Rob-Mar}, 
then we see that the latter result and \coref{Martin} are generalizations in two different directions of one and the same result. 

\bigskip

In the right hand-side version of \equref{left embedding} we have an embedding of 2-categories $\tau\x\Bimnd^r(\K)\to\Bimnd^r(\K)$ determined by:  
\begin{equation} \eqlabel{YD-right concrete}
\lambda=
\gbeg{4}{5}
\got{1}{F} \got{2}{F} \gnl
\gcl{1} \gcmu \gnl
\glmptb \gnot{\hspace{-0,34cm}\tau_{F,F}} \grmptb \gcl{1} \gnl
\gcl{1} \gmu \gnl
\gob{1}{F} \gob{2}{F} 
\gend,\quad
\psi'_{X,F}=
\gbeg{4}{5}
\got{1}{X} \got{2}{F} \gnl
\gcl{1} \gcmu \gnl
\glmptb \gnot{\hspace{-0,34cm}\tau_{X,F}} \grmptb \gcl{1} \gnl
\gcl{1} \grm \gnl
\gob{1}{F} \gob{1}{X} 
\gend, \quad
\phi'_{F,X}=
\gbeg{4}{5}
\got{1}{F} \got{1}{X} \gnl
\gcl{1} \grcm \gnl
\glmptb \gnot{\hspace{-0,34cm}\tau_{F,X}} \grmptb \gcl{1} \gnl
\gcl{1} \gmu \gnl
\gob{1}{X} \gob{2}{F} 
\gend
\end{equation}
which induces an embedding of monoidal categories $\tau\x\Bimnd^r(\K)(F)\to\Bimnd^r(\K)(F)$, that is: 
$$\YD_{cl}(\K,\A)^F_F\to\YD_s(\K,\A)^F_F.$$
For a monad $B$ and supposing that there is a 1-cell $(F, \tau_{B,F})$ in $\Tau(\A, B)$, the category  ${}_{\tau_B}\YD_{cl}(\K,\A)^F_F$ is defined analogously as 
${}_{\tau_B}{}^F_F\YD_{cl}(\K,\A)$. Its objects are classical right Yetter-Drinfel`d modules $(X, \psi'_{X, F}, \phi'_{F,X})$ for which there is a 2-cell 
$\tau_{B,X}: BX\to XB$ so that $(X, \tau_{B,X})$ is a 1-cell in $\Tau(\A, B)$ and the same Yang-Baxter equations \equref{YBE tau BFX} and \equref{YBE tau BXF} hold true.

\begin{thm} \thlabel{known action Rob-Sch}
Let $F$ be a right bimonad in $\K$, $B$ a right $F$-module monad (in the sense of \equref{F mod alg}--\equref{F mod alg unit}). We have the following: 
\begin{enumerate}
\item Let $(X, \psi'_{X, F}, \phi'_{F,X})$ be an object in ${}_{\tau_B}\YD_{cl}(\K,\A)^F_F$ and let 
$\psi_{B,X}$ and $\psi_{B,F}$ be as in \equref{psi_2 for X} and \equref{Sw datum}. 
Then the Yang-Baxter equations \equref{YBE phi' BFX} and \equref{YBE BXF'} are fulfilled. 
\item There is a monoidal embedding of categories ${}_{\tau_B}\YD_{cl}(\K,\A)^F_F\to {}_{\psi_B}\YD_s(\K,\A)^F_F$. 
\item There is an action of categories: 
$${}_{\psi_B}\u{\YD_s(\K,\A)^F_F}\times {}^F_B\K\to{}^F_B\K$$ 
as described in \thref{YD-right action}, where ${}_{\psi_B}\u{\YD_s(\K,\A)^F_F}$ denotes the image of ${}_{\tau_B}\YD_{cl}(\K,\A)^F_F$ by the above embedding functor. 
\end{enumerate}
\end{thm}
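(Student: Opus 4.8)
The statement of \thref{known action Rob-Sch} is the right-hand-side mirror of the chain of results \prref{YD are strong}, the discussion preceding \colabel{known action Rob-Mar} and \coref{known action Rob-Mar}, obtained by passing from $\K$ to $\K^{op}$ (equivalently, by reading all string diagrams left-right symmetrically) and by replacing the left $F$-comodule monad hypothesis on $B$ with the right $F$-module monad hypothesis \equref{F mod alg}--\equref{F mod alg unit}. Accordingly, the plan is to verify the three items in order, each time reducing to a result already proved and only indicating which substitutions turn one proof into the other.

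For item (1) I would argue exactly as in \prref{YD are strong}: one expands $\psi_{B,X}$ as in \equref{psi_2 for X} and $\psi_{B,F}$ as in \equref{Sw datum} (so $\psi_{B,F}$ is \equref{psi_2 for BF}), then pushes the $F$-module action on $B$ through the two distributive laws using, successively, the module-monad axiom \equref{F mod alg}, the right monadic and right comonadic distributive law properties of $\tau_{F,F}$, naturality of $\tau_{X,F}$ and $\tau_{F,X}$ with respect to the right $F$-coaction and right $F$-action on $X$, associativity/coassociativity of $F$, and finally the Yang-Baxter equation \equref{YBE tau FXF} (in its $\K^{op}$ form). The dictionary is the obvious mirror of the one listed at the end of the proof of the previous Proposition: coassociativity of $F \leadsto$ $F$-module law on $B$, comonadic distributive law $\leadsto$ naturality with respect to the $F$-module structure on $B$, \equref{YBE tau FXF} $\leadsto$ \equref{YBE tau BFX} for \equref{YBE phi' BFX}, and additionally associativity of $F \leadsto$ the comodule law, monadic distributive law $\leadsto$ naturality with respect to the $F$-comodule structure, \equref{YBE tau FXF} $\leadsto$ \equref{YBE tau BXF} for \equref{YBE BXF'}. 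This establishes that every object of ${}_{\tau_B}\YD_{cl}(\K,\A)^F_F$, equipped with $\psi_{B,X}$ from \equref{psi_2 for X}, lands in ${}_{\psi_B}\YD_s(\K,\A)^F_F$.

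For item (2), once (1) guarantees that the two Yang-Baxter compatibilities \equref{YBE phi' BFX}--\equref{YBE BXF'} defining ${}_{\psi_B}\YD_s(\K,\A)^F_F$ hold, the assignment $(X,\psi'_{X,F},\phi'_{F,X},\tau_{B,X})\mapsto (X,\psi'_{X,F},\phi'_{F,X},\psi_{B,X})$ is a functor, and it is faithful since it is the identity on underlying 1-cells and on $\tau_{F,F}$-data. Monoidality is inherited from the monoidal embedding $\YD_{cl}(\K,\A)^F_F\to\YD_s(\K,\A)^F_F$ already recorded (the mirror of the Corollary after \coref{prepairing psi-phi YD}): the tensor product of objects is given by \equref{tau B XY} in all the relevant components — $\psi'_{XY,F}$, $\phi'_{F,XY}$ and $\psi_{B,XY}$ are all the horizontal composites built from the corresponding data on $X$ and $Y$ — and the coherence isomorphisms are identities, so the embedding is strict monoidal. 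Thus ${}_{\tau_B}\YD_{cl}(\K,\A)^F_F\to {}_{\psi_B}\YD_s(\K,\A)^F_F$ is a monoidal embedding, and its essential image ${}_{\psi_B}\u{\YD_s(\K,\A)^F_F}$ is a monoidal subcategory of ${}_{\psi_B}\YD_s(\K,\A)^F_F$.

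Item (3) is then immediate by transport along the embedding: by \thref{YD-right action} there is an action ${}_{\psi_B}\YD_s(\K,\A)^F_F\times {}^F_B\K\to {}^F_B\K$, $(X,M)\mapsto XM$, with $XM$ a left $B$-module via \equref{B act XY psi} and a left $F$-comodule via the formula displayed in \thref{YD-right action}; restricting this action along the monoidal embedding of (2) yields the desired action ${}_{\psi_B}\u{\YD_s(\K,\A)^F_F}\times {}^F_B\K\to {}^F_B\K$. I expect the only genuinely laborious point to be item (1), specifically the bookkeeping in the diagrammatic manipulation that turns the $F$-module-monad axiom \equref{F mod alg} and the right-handed distributive-law and naturality conditions into \equref{YBE phi' BFX} and \equref{YBE BXF'}; but since this is a term-by-term mirror of the already-completed computation in \prref{YD are strong}, it requires no new idea, only care in reflecting the string diagrams. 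Everything else is formal.
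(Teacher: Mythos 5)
Your proposal is correct and follows exactly the route the paper intends: the paper states \thref{known action Rob-Sch} without a written proof, treating it as the left--right mirror of the Proposition preceding \coref{known action Rob-Mar} (itself reduced to \prref{YD are strong} by a substitution dictionary) together with \thref{YD-right action}. Your explicit dictionary for item (1) and the transport arguments for items (2) and (3) are precisely the omitted details, so there is nothing to correct.
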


Forgetting the right $F$-module action both on Yetter-Drinfel`d modules and on $B$ 
we obtain the forgetful functor ${}_{\psi_B}\u{\YD_s(\K,\A)}^F_F\to {}_{\tau_B}\Tau^{(\A,F)}$. Taking $\Phi_{\lambda}$ in \coref{Sch-Balan} to be trivial and 
considering a right bimonad $F$ in $\K$ as a coquasi-bimonad (with $\omega=\Epsilon_F\times\Epsilon_F\times\Epsilon_F$), we see that \coref{Sch-Balan} and 
\thref{known action Rob-Sch} are generalization of the same result in two different directions. 

\section{Actions of monoidal categories as pseudofunctors} \selabel{pseudo}

In this last section we want to show how the action theorems \thref{main} and \thref{YD action} come out from a more general situation 
that occurs in 2-categories. Let $\Ll$ be a bicategory and $\Cat$ the 2-category of categories. Given a pseudofunctor $\Tau: \Ll\to\Cat$ there is a 
``pre-monoidal'' functor $\Tau_{\A, \B}: \Ll(\A, \B)\to\Fun(\Tau(\A), \Tau(\B))$ for every pair of 0-cells $\A, \B$ in $\Ll$, where 
$\Fun(\Tau(\A), \Tau(\B))$ is the category of functors from $\Tau(\A)$ to $\Tau(\B)$. The adjective 
``pre-monoidal'' here means that the functors $\Tau_{-, -}$ are compatible with the composition of composable 1-cells. 
For $\A=\B$ we have a monoidal category $\Ll(\A)=\Ll(\A, \A)$, a strict monoidal category $\Fun(\Tau(\A), \Tau(\A))$ and a monoidal functor 
$\Tau_{\A}=\Tau_{\A, \A}$ between them. 
This in turn yields an action of the monoidal category $\Ll(\A)$ on the category $\Tau(\A)$. Resuming we have:  
$$ \qquad \Tau: \Ll\to\Cat \quad pseudofunctor \quad \Rightarrow$$
\begin{equation} \eqlabel{pseudo ->action}
\Tau_{\A}: \Ll(\A)\to\Fun(\Tau(\A), \Tau(\A)) \quad \Leftrightarrow \quad \Ll(\A)\times\Tau(\A)\to\Tau(\A). \quad
\end{equation} 
$$ \textnormal{ \footnotesize \hspace{-0,8cm} monoidal functor}  \hspace{6cm}  \textnormal{\footnotesize action} $$ 
We are going to show that the action of categories in \thref{main} and \thref{YD action} come from two pseudofunctors $\Tau: \Ll\to\Cat$ for appropriate 
bi/2-categories $\Ll$.

\subsection{A 2-functor from the 2-category of Tambara bimonads}

We define the 2-category of Tambara bimonads ${}_{\psi}\Bimnd(\K)$ as follows. 

\underline{0-cells:} are quadriples $(\A, F, B, \psi_{B,F})$ where $(\A, F)$ is a bimonad, $(\A, B)$ a monad in $\K$ and $(F, \psi_{B,F})$ is a 1-cell in $\Mnd(\K)$. 

\underline{1-cells:} are quadriples $(X,\psi_{F,X},\phi_{X,F}, \psi_{B,X}): (\A, F, B, \psi_{B,F})\to(\A', F', B', \psi_{B',F'})$ where: 
$(X,\psi_{F,X},\phi_{X,F})$ is a 1-cell in $\Bimnd(\K)$, recall \equref{psi-lambda-phi 2-bimonad}, and $(X, \psi_{B,X})$ a 1-cell in $\Mnd(\K)$ such that the conditions: 
\begin{center} 
\begin{tabular} {p{6cm}p{0cm}p{6.8cm}} 
\begin{equation} \eqlabel{YBE psi BFX gen}
\gbeg{3}{5}
\got{1}{B'} \got{1}{F'} \got{1}{X} \gnl
\gcl{1} \glmptb \gnot{\hspace{-0,34cm}\psi_{F,X}} \grmptb \gnl
\glmptb \gnot{\hspace{-0,34cm}\psi_{B,X}} \grmptb \gcl{1} \gnl
\gcl{1} \glmptb \gnot{\hspace{-0,34cm}\psi_{B,F}} \grmptb \gnl
\gob{1}{X} \gob{1}{F} \gob{1}{B}
\gend=
\gbeg{3}{5}
\got{1}{B'} \got{1}{F'} \got{1}{X} \gnl
\glmptb \gnot{\hspace{-0,34cm}\psi_{B',F'}} \grmptb \gcl{1} \gnl
\gcl{1} \glmptb \gnot{\hspace{-0,34cm}\psi_{B,X}} \grmptb \gnl
\glmptb \gnot{\hspace{-0,34cm}\psi_{F,X}} \grmptb \gcl{1} \gnl
\gob{1}{X} \gob{1}{F} \gob{1}{B}
\gend
\end{equation} & & 
\begin{equation}\eqlabel{YBE BXF gen}
\gbeg{3}{5}
\got{1}{B'} \got{1}{X} \got{1}{F} \gnl
\gcl{1} \glmptb \gnot{\hspace{-0,34cm}\phi_{X,F}} \grmptb \gnl
\glmptb \gnot{\hspace{-0,34cm}\psi_{B',F'}} \grmptb \gcl{1} \gnl
\gcl{1} \glmptb \gnot{\hspace{-0,34cm}\psi_{B,X}} \grmptb \gnl
\gob{1}{F'} \gob{1}{X} \gob{1}{B}
\gend=
\gbeg{3}{5}
\got{1}{B'} \got{1}{X} \got{1}{F} \gnl
\glmptb \gnot{\hspace{-0,34cm}\psi_{B,X}} \grmptb \gcl{1} \gnl
\gcl{1} \glmptb \gnot{\hspace{-0,34cm}\psi_{B,F}} \grmptb \gnl
\glmptb \gnot{\hspace{-0,34cm}\phi_{X,F}} \grmptb \gcl{1} \gnl
\gob{1}{F'} \gob{1}{X} \gob{1}{B}
\gend
\end{equation} 
\end{tabular}
\end{center}
hold. Observe that the 2-cells involved are: $\psi_{F,X}: F\s'X\to XF,\phi_{X,F}: XF\to F\s'X, \psi_{B,X}: B'X\to XB, \psi_{B,F}: BF\to FB, \psi_{B',F\s'}: B'F\s'\to F\s'B'$. 

\medskip

\underline{2-cells:} $\zeta: (X,\psi_{F,X},\phi_{X,F}, \psi_{B,X})\to (Y,\psi_{F,Y},\phi_{Y,F}, \psi_{B,Y})$ where 
$\zeta: (X,\psi_{F,X},\phi_{X,F})\to (Y,\psi_{F,Y},\phi_{Y,F})$ is a 2-cell in $\Bimnd(\K)$ and $\zeta: (X, \psi_{B,X})\to (Y, \psi_{B,Y})$ a 2-cell in $\Mnd(\K)$.

\bigskip

We now define a 2-functor 
$$\Tau: {}_{\psi}\Bimnd(\K)\to\Cat$$ 
as follows. It sends a 0-cell $(\A, F, B, \psi_{B,F})$ to the category ${}^F_B\K$ (from \deref{relative}). 

Given two 0-cells $(\A, F, B, \psi_{B,F})$ and $(\A', F\s', B', \psi_{B',F\s'})$ in ${}_{\psi}\Bimnd(\K)$ we define a functor 
\begin{equation} \eqlabel{T F,F'}
\Tau_{F,F\s'}: {}_{\psi}\Bimnd(\K)(F, F\s')\to\Fun({}^F_B\K, {}^{F\s'}_{B'}\K)
\end{equation}
$$\hspace{2cm} X\mapsto\Tau_X=X\x$$
$$\hspace{2cm} \zeta\mapsto\Tau_{\zeta}=\zeta\times\x$$
like this. An object $(X,\psi_{F,X},\phi_{X,F}, \psi_{B,X})$ the functor $\Tau_{F, F\s'}$ sends to the functor $\Tau_X: {}^F_B\K \to {}^{F\s'}_{B'}\K$, where $\Tau_X(M)=XM$, for $M\in {}^F_B\K$, 
so that $XM$ is endowed with the following structures of a left $B'$-module and $F\s'$-comodule: 
\begin{center} 
\begin{tabular}{p{5cm}p{0,8cm}p{5cm}}
\begin{equation} \eqlabel{B act XY psi gen}
\gbeg{3}{4}
\got{1}{B'} \got{3}{XM} \gnl
\gcn{1}{1}{1}{3} \gvac{1} \gcl{1} \gnl
\gvac{1} \glm \gnl
\gvac{2} \gob{1}{XM} 
\gend=
\gbeg{3}{4}
\got{1}{B'} \got{1}{X} \got{1}{M} \gnl
\glmptb \gnot{\hspace{-0,34cm}\psi_{B,X}} \grmptb \gcl{1} \gnl
\gcl{1} \glm \gnl
\gob{1}{X} \gob{3}{M}
\gend 
\end{equation} & & \vspace{0,1cm}
\begin{equation*} 
\gbeg{3}{4}
\got{5}{XM} \gnl
\gvac{1} \glcm \gnl
\gcn{1}{1}{3}{1} \gvac{1} \gcl{1} \gnl
\gob{1}{F'} \gob{3}{XM} 
\gend=
\gbeg{3}{4}
\got{1}{X} \got{1}{} \got{1}{M} \gnl
\gcl{1} \glcm \gnl
\glmptb \gnot{\hspace{-0,34cm}\phi_{X,F}} \grmptb \gcl{1} \gnl
\gob{1}{F'} \gob{1}{X} \gob{1}{M.}
\gend
\end{equation*}
\end{tabular}
\end{center}
That $XM\in {}^{F\s'}_{B'}\K$ is proved analogously as in \thref{YD action}: 
$$
\gbeg{3}{6}
\got{1}{B'} \got{3}{XM}  \gnl
\gcn{2}{1}{1}{3} \gcl{1} \gnl
\gvac{1} \glm \gnl
\gvac{1} \glcm \gnl
\gcn{2}{1}{3}{1} \gcl{1} \gnl
\gob{1}{F'} \gob{3}{XM}
\gend=
\gbeg{3}{6}
\got{1}{B'} \got{1}{X} \got{1}{M} \gnl
\glmptb \gnot{\hspace{-0,34cm}\psi_{B,X}} \grmptb \gcl{1} \gnl
\gcl{1} \glm \gnl
\gcl{1} \glcm \gnl
\glmptb \gnot{\hspace{-0,34cm}\phi_{X,F}} \grmptb \gcl{1} \gnl
\gob{1}{F'} \gob{1}{X} \gob{1}{M}
\gend\stackrel{M \s rel.}{=}
\gbeg{4}{5}
\got{1}{B'} \got{1}{X} \got{3}{M} \gnl
\glmptb \gnot{\hspace{-0,34cm}\psi_{B,X}} \grmptb \glcm \gnl
\gcl{1} \glmptb \gnot{\hspace{-0,34cm}\psi_{B,F}} \grmptb \gcl{1} \gnl
\glmptb \gnot{\hspace{-0,34cm}\phi_{X,F}} \grmptb \glm \gnl
\gob{1}{F'} \gob{1}{X} \gob{3}{M}
\gend\stackrel{\equref{YBE BXF gen}}{=}
\gbeg{4}{7}
\got{1}{B'} \got{1}{X} \got{3}{M}  \gnl
\gcl{1} \gcl{1} \glcm \gnl
\gcl{1} \glmptb \gnot{\hspace{-0,34cm}\phi_{X,F}} \grmptb \gcl{2} \gnl
\glmptb \gnot{\hspace{-0,34cm}\psi_{B',F\s'}} \grmptb \gcl{1} \gnl
\gcl{1}  \glmptb \gnot{\hspace{-0,34cm}\psi_{B,X}} \grmptb \gcl{1} \gnl
\gcl{1} \gcl{1} \glm \gnl
\gob{1}{F'} \gob{1}{X} \gob{3}{M}
\gend =
\gbeg{3}{5}
\got{1}{B'} \got{3}{XM}  \gnl
\gcl{1} \glcm \gnl
\glmptb \gnot{\hspace{-0,34cm}\psi_{B,F}} \grmptb \gcl{1} \gnl
\gcl{1} \glm \gnl
\gob{1}{F'} \gob{3}{XM.}
\gend 
$$
A morphism $F:M\to N$ in ${}^F_B\K$ the functor $\Tau_X$ sends to the morphism $X\times F:XM\to XN$, the proof that it is a morphism in 
${}^{F\s'}_{B'}\K$ is straightforward. 

A morphism $\zeta: (X,\psi_{F,X},\phi_{X,F}, \psi_{B,X})\to (Y,\psi_{F,Y},\phi_{Y,F}, \psi_{B,Y})$ in ${}_{\psi}\Bimnd(\K)(F, F\s')$ the functor 
$\Tau_{F,F\s'}$ sends to the natural transformation $\Tau_\zeta: \Tau_X\to\Tau_Y: {}^F_B\K \to {}^{F\s'}_{B'}\K$. Given $M\in {}^F_B\K$ we have 
a morphism $\Tau_\zeta(M)=\zeta\times M: XM\to YM$. It is $B$-linear, since $(X,\psi_{B,X})$ is a 1-cell in $\Mnd(\K)$, and it is $F$-colinear, 
since $(X,\phi_{X,F})$ is a 1-cell in $\Comnd(\K)$. The morphism $\zeta\times M$ is trivially natural in $M$. We also clearly have: $\Tau_{F, F\s'}(\xi\zeta)=
\Tau_{F, F\s'}(\xi)\Tau_{F, F\s'}(\zeta)$ and $\Tau_{F, F\s'}(id_X)=\Tau_{id_X}=\Id_{\Tau_X}$. 

\medskip

Given two composable 1-cells in ${}_{\psi}\Bimnd(\K)$: 
$$(\A, F, B, \psi_{B,F}) \stackrel{ (X,\psi_{F,X},\phi_{X,F}, \psi_{B,X}) } {\longrightarrow} (\A', F\s', B', \psi_{B',F'}) 
\stackrel{ (Y,\psi_{F\s',Y},\phi_{Y,F\s'}, \psi_{B',Y}) } {\longrightarrow} (\A'', F\s'', B'', \psi_{B'',F\s''})$$ 
we take the identities for the natural isomorphisms 
$$s_{Y,X}: \Tau_{F\s', F\s''}(Y)\comp\Tau_{F, F\s'}(X) \to \Tau_{F, F\s''}(YX), \qquad 
s_0: \Id_{\Tau(F)} \to \Tau_{F, F}(id_F)$$
which actually map 
$$s_{Y,X}: \Tau_Y\comp\Tau_X\to\Tau_{YX}, \qquad s_0: \Id_{{}^F_B\K}\to\Tau_{id_F}.$$

\begin{thm} \thlabel{pseudo bimonad}
The above defines a 2-functor $\Tau: {}_{\psi}\Bimnd(\K)\to\Cat$. Fixing a 0-cell $(\A, F, B, \psi_{B,F})$ in ${}_{\psi}\Bimnd(\K)$ by \equref{pseudo ->action} 
we recover the action of categories ${}_{\psi_B}{}^F_F\YD_s(\K,\A)\times {}^F_B\K\to{}^F_B\K$ from \thref{YD action}. 
\end{thm}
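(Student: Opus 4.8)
The plan is to verify the three axioms of a 2-functor for the assignment $\Tau$ just described, and then to read off the action theorem from the specialization to a single 0-cell. Most of the hard work has already been distributed into the preceding constructions: the fact that $XM$ lands in ${}^{F'}_{B'}\K$ was proved above using \equref{YBE BXF gen} and the relative-module axiom, exactly as in \thref{YD action}; the functoriality of each $\Tau_X$ on morphisms and its well-definedness on $B$-linear/$F$-colinear maps is routine; and the assignment $\zeta\mapsto\Tau_\zeta$ was checked to be functorial in $\zeta$ and to send identities to identities. So the remaining content of the statement is (i) that $\Tau_{F,F'}$ is genuinely a functor (compatibility with vertical composition and identities of 2-cells), (ii) that the chosen $s_{Y,X}$ and $s_0$ — here taken to be identities — are natural isomorphisms of functors and satisfy the coherence (associativity and unit) conditions making $\Tau$ a pseudofunctor, and since everything is on the nose, in fact a 2-functor, and (iii) the translation step via \equref{pseudo ->action}.

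First I would spell out (i): a morphism in ${}_{\psi}\Bimnd(\K)(F,F')$ is a 2-cell $\zeta$ which is simultaneously a 2-cell in $\Bimnd(\K)$ and in $\Mnd(\K)$, and the vertical composite of two such is again of this form; $\Tau_{\xi\zeta}=\Tau_\xi\Tau_\zeta$ and $\Tau_{\id_X}=\Id_{\Tau_X}$ hold because horizontal composition $\zeta\times\x$ with a fixed object $M$ is strictly functorial in $\zeta$ in any 2-category, which is a one-line remark. For (ii), with $s_{Y,X}$ and $s_0$ taken as identities, naturality is automatic; the only genuine checks are that the composite left $B''$-module and $F''$-comodule structure on $(YX)M$ induced by the composition 1-cell $(YX,\psi_{F,YX},\phi_{YX,F},\psi_{B,YX})$ coincides with the structure on $Y(XM)$ obtained by applying $\Tau_Y$ to $\Tau_X(M)$. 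This is where \equref{B act XY psi gen}, the horizontal-composition formula \equref{tau B XY} for $\psi_{B,YX}$, and the analogous formula for $\phi_{YX,F}$ all come in: one unfolds both sides and sees they agree term-by-term, essentially the same computation that underlies \prref{B act XM psi} and \rmref{psi versus tau}. The coherence pentagon and triangle for a pseudofunctor then reduce, since all structural 2-cells are identities, to the strict associativity and unit laws of horizontal composition of 1-cells in $\K$, hence hold trivially; this is exactly the statement that $\Tau$ is in fact a (strict on composition) 2-functor.

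For the final clause, fixing the 0-cell $(\A,F,B,\psi_{B,F})$ makes $\Tau(F)={}^F_B\K$ and, by the generalities recorded in \equref{pseudo ->action}, the monoidal functor $\Tau_{F}:\Ll(F)\to\Fun({}^F_B\K,{}^F_B\K)$ corresponds to an action of the monoidal category $\Ll(F)={}_{\psi}\Bimnd(\K)(F)$ on ${}^F_B\K$. It then remains to identify ${}_{\psi}\Bimnd(\K)(F)$ with ${}_{\psi_B}{}^F_F\YD_s(\K,\A)$: an endomorphism 1-cell of $(\A,F,B,\psi_{B,F})$ is precisely a quadruple $(X,\psi_{F,X},\phi_{X,F},\psi_{B,X})$ with $(X,\psi_{F,X},\phi_{X,F})$ an endocell of $(\A,F,\lambda)$ in $\Bimnd(\K)$ — i.e.\ a strong Yetter-Drinfel'd module by the discussion preceding \delabel{left YD} and the monoidality proposition — together with $(X,\psi_{B,X})\in\Tau(\A,B)$ satisfying \equref{YBE psi BFX gen} and \equref{YBE BXF gen}, which specialize to \equref{YBE psi BFX} and \equref{YBE BXF}; and the morphisms match by definition. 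Under this identification the action functor $\Tau_X(M)=XM$ with the structures \equref{B act XY psi gen} is exactly the one in \thref{YD action}, completing the recovery. The main obstacle I anticipate is purely bookkeeping: carefully matching, string-diagram by string-diagram, the composite structure maps on $(YX)M$ against those on $Y(XM)$ so that $s_{Y,X}=\id$ is legitimate — conceptually nothing new happens, but it is the one place where a sign or an order-of-composition slip would break the claim.
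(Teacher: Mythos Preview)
Your proposal is correct and follows essentially the same approach as the paper: the proof is absorbed into the construction preceding the theorem statement, and you recapitulate exactly those checks (well-definedness of $\Tau_X$ and $\Tau_\zeta$, functoriality in $\zeta$, strictness of $s_{Y,X}$ and $s_0$, and the identification ${}_{\psi}\Bimnd(\K)(F)={}_{\psi_B}{}^F_F\YD_s(\K,\A)$). You are in fact slightly more explicit than the paper about why $s_{Y,X}=\id$ is legitimate, namely that the $B''$-module and $F''$-comodule structures on $(YX)M$ and $Y(XM)$ coincide via \equref{tau B XY}; the paper simply declares the structure 2-cells to be identities without spelling this out.
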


\subsection{A pseudofunctor from a bicategory over the 2-category of monads} \sslabel{pseudo}

Let $\K$ be a 2-category and $\Ll$ a bicategory. We say that $\Ll$ is a {\em bicategory over the 2-category of monads}, if there is a 
{\em faithful quasi 2-functor} $\u{\U}: \Ll\to\Mnd(\K)$ and a {quasi 2-functor} $\u{\F}: \Ll\to\Mnd(\K)$ that factorizes through $\u{\U}$. 
The adjective ``quasi'' means that the 2-functor does not satisfy the monoidal coherence axiom, while ``faithful'' refers to the fact that each functor on 
hom-categories is faithful. We fix the following notation: 
\begin{center} 
\begin{tabular}{p{0cm}p{0cm}p{6cm}}  
\begin{equation} \eqlabel{F-U-L triangle} \hspace{-6cm}
\qtriangle<1`1`1;500>[\Ll`\Mnd(\K)`\Mnd(\K); \u{\U}`\u{\F}`\u{\iota}]  
\end{equation} & & 
\begin{equation*}
\qtriangle<2`2`2;500>[\crta B`B`B; ``]
\hspace{1,4cm} \qtriangle<2`2`2;500>[\crta X`(X, \tau_{B,X})`(X, \psi_{B,X}); ``] \hspace{1,4cm} \qtriangle<2`2`2;500>[\crta\zeta`\zeta`\zeta; ``]
\put(-2220,300){\fbox{{\tiny 0-cells}}}
\put(-1310,300){\fbox{{\tiny 1-cells}}}
\put(-270,300){\fbox{{\tiny 2-cells}}}
\end{equation*}
\end{tabular}
\end{center}
where $B:\A\to\A, X:\A\to\A', \tau_{B,X}:B'X\to XB$, and define a pseudofunctor 
$$\Tau: \Ll\to \Cat$$ 
on 0-cells:
$$\crta B\mapsto {}_B\K$$ 
and for 0-cells $\crta B, \crta B'$ in $\Ll$ we define the functor 
$$ \Tau_{\crta B, \crta B'}: \Ll(\crta B, \crta B')\to \Fun({}_B\K, {}_{B'}\K)$$
on objects: 
$$ \hspace{2cm}\crta X \quad \mapsto \quad \Tau_X: {}_B\K\to {}_{B'}\K$$
$$\hspace{4,7cm} M\mapsto XM$$
$$\hspace{6,8cm} (f:M\to N)\mapsto (X\times f: X\times M\to X\times N).$$
and on morphisms: 
$$ \hspace{2cm}\crta\zeta:\crta X\to \crta Y \quad \mapsto \quad \Tau_\zeta: \Tau_X\to \Tau_Y: {}_B\K\to {}_{B'}\K$$
$$\hspace{4,7cm} \Tau_\zeta(M)=\zeta\times M: XM\to YM$$  
analogously as in \equref{T F,F'}. Here $XM$ is a left $B'$-module as in \equref{B act XY psi gen}, with $\psi_{B,X}$ from $\u{\F}(\crta X)=(X, \psi_{B,X})$, 
as depicted in the third diagram above. As in the previous subsection we have that the functor $\Tau_{\crta B, \crta B'}$ is well-defined.

For two composable 1-cells in $\Ll$: $\crta B \stackrel{ \crta X } {\longrightarrow} \crta B' \stackrel{ \crta Y } {\longrightarrow} \crta B'' $ 
we consider the natural isomorphisms 
$$s_{Y,X}: \Tau_{\crta B', \crta B''}(Y)\comp\Tau_{\crta B, \crta B'}(X) \to \Tau_{\crta B, \crta B''}(YX), \qquad 
s_0: \Id_{\Tau(\crta B)} \to \Tau_{\crta B,\crta B}(id_{\crta B})$$
which actually map 
$$s_{Y,X}: \Tau_Y\comp\Tau_X\to\Tau_{YX}, \qquad s_0: \Id_{{}_B\K}\to\Tau_{id_{\crta B}}$$
and should satisfy a commutative hexagon and two triangles. Observe that $\Tau_X(M)=XM$ and that $(\Tau_Y\comp\Tau_X)(M)=Y(XM)$, then 
the arrow $\Tau_Z\comp(\Tau_Y\comp\Tau_X)\to(\Tau_Z\comp \Tau_Y)\comp\Tau_X$ in the hexagon is identity and we get a commutative pentagon. Setting $r_{Y,X,M}:=s_{Y,X}(M)$ 
this pentagon amounts to the identity \equref{pentagon new} between morphisms in ${}_{B'}\K$ (with the difference that $X,Y,Z$ there are 1-endocells, and here we have: 
$\A \stackrel{ X } {\longrightarrow} \A' \stackrel{ Y } {\longrightarrow} \A'' \stackrel{ Z } {\longrightarrow} \A'''$).  
We consider $s_0$ to be identity, 
thus the two triangles become: $r_{I,X,M}=id_{XM}=r_{X,I,M}$. Having a natural isomorphism $s_{Y,X}$ such that the above-mentioned conditions are fulfilled, we obtain 
a pseudofunctor $\Tau: \Ll\to \Cat$. 
As we proved in \thref{main}, considering relations \equref{ro asoc isom novo} and \equref{ro asoc isom}, the conditions that $s_{Y,X}$, {\em i.e.} $r_{Y,X,M}$ should 
satisfy express the existence of an invertible 2-cocycle in $\EM^M(\K)$. 

Fixing a 0-cell $\crta B$ in $\Ll$, we have a quasi-monoidal faithful functor $\U$ from the monoidal category $\C=\Ll(\crta B, \crta B)$ to 
the strict monoidal category $\Mnd(B,B)=\Tau(\A, B)$, and another quasi-monoidal functor $\F:\C\to \Tau(\A, B)$ factorizing through $\U$. 

\begin{thm}  \thlabel{pseudo bicat}
Let $\u{\U}, \u{\F}, \Ll$ be as in \equref{F-U-L triangle}. The above defines a pseudofunctor $\Tau: \Ll\to \Cat$. Fix $\crta B\in\Ll$, then 
\equref{pseudo ->action} yields the action of categories $\C\times {}_B\K\to {}_B\K$ characterized in \thref{main}. 
\end{thm}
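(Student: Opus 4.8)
\textbf{Proof strategy for \thref{pseudo bicat}.} The plan is to assemble the pseudofunctor $\Tau:\Ll\to\Cat$ from the data we have already verified piecemeal, and then read off the action from \equref{pseudo ->action}. First I would check that $\Tau$ is well-defined on hom-categories: on 0-cells $\crta B\mapsto {}_B\K$; on a 1-cell $\crta X$ with $\u{\F}(\crta X)=(X,\psi_{B,X})$, the assignment $M\mapsto XM$ with the left $B'$-action \equref{B act XY psi gen} lands in ${}_{B'}\K$ precisely by \prref{B act XM psi} (which gives a $B'$-module structure on $XM$ from the $\Mnd(\K)$-1-cell $(X,\psi_{B,X})$ and the $B$-module $M$). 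Functoriality of $\Tau_X$ on morphisms $f:M\to N$ (i.e.\ $X\times f$ is $B'$-linear) is immediate from horizontal composition in $\K$. That $\crta\zeta:\crta X\to\crta Y$ induces a natural transformation $\Tau_\zeta$ with components $\zeta\times M$, and that $\zeta\times M$ is $B'$-linear, follows because $\u{\F}$ sends $\crta\zeta$ to a 2-cell $\zeta$ in $\Mnd(\K)$, hence $\zeta$ satisfies \equref{2-cells in Mnd} with $B=B'$; naturality in $M$ and the equalities $\Tau_{\crta B,\crta B'}(\xi\comp\zeta)=\Tau_{\crta B,\crta B'}(\xi)\comp\Tau_{\crta B,\crta B'}(\zeta)$, $\Tau_{\crta B,\crta B'}(id)=\Id$ are then routine.

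Next I would supply the coherence data of the pseudofunctor. For the unit I take $s_0:\Id_{{}_B\K}\to\Tau_{id_{\crta B}}$ to be the identity; this is legitimate because $\u{\U}$ (hence $\u{\F}$ through the factorization $\u{\iota}$) sends $id_{\crta B}$ to the identity 1-cell $(\id_\A,\Id_B)$ of $\Tau(\A,B)$, so $\Tau_{id_{\crta B}}(M)=\Id_\A M=M$ with the evident $B$-action. For composability, given $\crta B\xrightarrow{\crta X}\crta B'\xrightarrow{\crta Y}\crta B''$, observe $\Tau_Y\comp\Tau_X$ sends $M$ to $Y(XM)$ and $\Tau_{YX}$ sends it to $(YX)M$; these agree on underlying 1-cells of $\K$, so I must produce an invertible $B''$-linear $r_{Y,X,M}=s_{Y,X}(M):Y(XM)\to(YX)M$ natural in all three arguments and coherent (a commutative pentagon after the associator-hexagon collapses to a pentagon because $\Tau_Z\comp(\Tau_Y\comp\Tau_X)=(\Tau_Z\comp\Tau_Y)\comp\Tau_X$ strictly on $W(Z(XM))$, and two triangles which reduce to $r_{I,X,M}=id_{XM}=r_{X,I,M}$ since $s_0=\id$). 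This is exactly the shape of data analyzed in \thref{main}: via \equref{ro asoc isom novo}--\equref{ro asoc isom}, giving such $r_{Y,X,M}$ is equivalent to giving a natural $\crta\rho_{X,Y}:XY\to XYB$, and the pentagon plus naturality plus invertibility plus normalization translate respectively into \equref{monad law ro new}/\equref{monad law ro}, \equref{natur}, \equref{vert comp M}, \equref{2-cells EM^M one psi} and \equref{normalized in EM}, i.e.\ into the assertion that $\crta\rho$ induces an invertible normalized 2-cocycle $\hat\rho$ over $B$ in $\EM^M(\K)$. Thus the pseudofunctor $\Tau$ with the stipulated $s_{Y,X}$ exists exactly when such a 2-cocycle exists; I would simply cite \thref{main} for this equivalence rather than rerunning the string-diagram computations.

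Finally I would invoke the general mechanism \equref{pseudo ->action}: fixing the 0-cell $\crta B$, the hom-category $\Ll(\crta B,\crta B)=\C$ is a (possibly non-strict) monoidal category, $\Fun({}_B\K,{}_B\K)$ is strictly monoidal, and the quasi-2-functoriality of $\u{\U}$ makes $\U=\u{\U}_{\crta B,\crta B}:\C\to\Tau(\A,B)=\Mnd(\K)(B)$ a faithful quasi-monoidal functor, while $\u{\F}$ gives a quasi-monoidal $\F:\C\to\Tau(\A,B)$ factoring through $\U$; hence the hypotheses of \thref{main} are met. The pseudofunctor axioms for $\Tau$ restricted to $\crta B$ say precisely that $\Tau_{\crta B}:\C\to\Fun({}_B\K,{}_B\K)$ is a monoidal functor, which is the same thing as an action $\C\times{}_B\K\to{}_B\K$, $(\crta X,M)\mapsto XM$, whose associativity isomorphism is the $r_{X,Y,M}$ above — i.e.\ the action of \thref{main}, point (1). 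Conversely, an action of the stated form produces the $s_{Y,X}$ needed to upgrade the pre-assignment to a genuine pseudofunctor. So the correspondence is a bijection mediated entirely by \thref{main}.

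\textbf{Main obstacle.} The only nontrivial point is matching the pentagon/triangle coherence of the pseudofunctor with the 2-cocycle condition in $\EM^M(\K)$; but this matching is exactly the content of \thref{main} (the translation \equref{pentagon new}$\leftrightarrow$\equref{monad law ro new} etc.), so the proof is essentially a bookkeeping argument that every piece of pseudofunctor data corresponds, under $\u{\F}$ and the dictionary of \thref{main}, to a piece of the 2-cocycle data, and that well-definedness of $\Tau_{\crta B,\crta B'}$ on 1- and 2-cells is handled by \prref{B act XM psi} together with the axioms \equref{psi laws}, \equref{2-cells in Mnd} that hold by faithful quasi-2-functoriality of $\u{\U},\u{\F}$. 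I would write this out in roughly the order: (i) $\Tau$ on 0-cells and the functor $\Tau_{\crta B,\crta B'}$, with well-definedness; (ii) the unit and composition coherence data $s_0,s_{Y,X}$ and their axioms, reduced to \thref{main}; (iii) the conclusion via \equref{pseudo ->action} that fixing $\crta B$ yields the action of \thref{main}.
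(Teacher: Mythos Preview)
Your proposal is correct and follows essentially the same approach as the paper: the paper's ``proof'' is the construction laid out in \ssref{pseudo} itself (well-definedness of $\Tau_{\crta B,\crta B'}$ via \prref{B act XM psi} and the $\Mnd(\K)$-axioms, $s_0$ taken as identity, and the reduction of the pentagon/triangle coherence for $s_{Y,X}$ to the 2-cocycle conditions of \thref{main} through \equref{ro asoc isom novo}--\equref{ro asoc isom}), after which the theorem is stated without further argument. Your write-up is more explicit and better organized than the paper's terse treatment, but the logical content and the key reduction to \thref{main} are the same.
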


\subsection{A pseudofunctor from the bicategory of Tambara modules over a quasi-bimonad}

In this subsection we want to see how the action of categories in \coref{Martin} fits the framework of a pseudofunctor $\Tau:\Ll\to\Cat$ from the previous subsection. 
For that purpose we introduce a new bicategory. For the action of categories in \coref{Sch-Balan} the construction is similar. 

We call the bicategory of Tambara modules over quasi-bimonads in $\K$, and denote by ${}_\tau\QB\x\Mod(\K)$, the following bicategory. 

\medskip

\underline{0-cells:} are quintuples $(\A, F, \Phi, B, \tau_{B,F})$ where $(\A, F, \Phi)$ is a quasi-bimonad in $\K$ such that \equref{Phi nat new} holds, 
$(\A, B)$ is a monad in $\K$, $(F, \tau_{B,F})$ is a 1-cell in $\Mnd(\K)$ so that $B$ is a left $F$-comodule monad in $\K$ by \equref{psi_3 for BF}, and \equref{Phi B nat new} holds.

\underline{1-cells:} are quartuples $(X,\tau_{F,X}, \tau_{B,X}, \nu): (\A, F, \Phi, B, \psi_{B,F})\to(\A', F\s', \Phi', B', \psi_{B',F\s'})$ where 
$(X, \tau_{F,X}): (\A, F, \Phi)\to(\A', F', \Phi')$ is a 1-cell in $\QB(\K)$, satisfying identities \equref{monadic d.l.} -- \equref{YBE BBX}, 
$(X, \nu)$ is a left $F\s'$-module so that the action $\nu$ seen as 2-cells: $\nu: (F\s'X, \tau_{F, F\s'X})\to (X, \tau_{F, X})$ and  
$\nu: (F\s'X, \tau_{B, F\s'X})\to (X, \tau_{B, X})$ are 2-cells in $\Mnd(\K)$, meaning that \equref{nat lm -FX} holds both when the unlabeled 1-cell stands for $B$ and $F$, 
and moreover the condition \equref{nat lcm'} holds true:
\begin{center} 
\begin{tabular}{p{5cm}p{0cm}p{5cm}} 
\begin{equation} \eqlabel{nat lm -FX}
\gbeg{3}{5}
\got{1}{} \got{1}{F} \got{1}{X} \gnl
\gcl{1} \glm \gnl
\glmptb \gnot{\tau_{-,X}} \gcmp \grmptb \gnl
\gcl{1} \gvac{1} \gcl{1} \gnl
\gob{1}{X} \gob{3}{}
\gend=
\gbeg{3}{5}
\got{1}{} \got{1}{F} \got{1}{X} \gnl
\glmptb \gnot{\hspace{-0,34cm}\tau_{-,F}} \grmptb \gcl{1} \gnl
\gcl{1} \glmptb \gnot{\hspace{-0,34cm}\tau_{-,X}} \grmptb \gnl
\glm \gcl{1} \gnl
\gvac{1} \gob{1}{X} \gob{1}{}
\gend
\end{equation} & &
\begin{equation} \eqlabel{nat lcm'}
\gbeg{3}{5}
\got{1}{B'} \got{3}{X} \gnl
\gcl{1} \gvac{1} \gcl{1} \gnl
\glmptb \gnot{\tau_{B,X}} \gcmp \grmptb \gnl
\gcl{1} \glcm \gnl
\gob{1}{X} \gob{1}{F} \gob{1}{B} 
\gend=
\gbeg{3}{5}
\got{1}{} \got{1}{B'} \got{1}{X} \gnl
\glcm \gcl{1}\gnl 
\gcl{1} \glmptb \gnot{\hspace{-0,34cm}\tau_{B,X}} \grmptb \gnl
\glmptb \gnot{\hspace{-0,34cm}\tau_{F,X}} \grmptb \gcl{1} \gnl
\gob{1}{X} \gob{1}{F} \gob{1}{B} 
\gend
\end{equation} 
\end{tabular}
\end{center}

\medskip

\underline{2-cells:} $\zeta: (X,\tau_{F,X}, \tau_{B,X}, \nu) \to (Y,\tau_{F,Y}, \tau_{B,Y}, \nu')$ are left $F'$-linear 2-cells 
$\zeta: (X,\tau_{F,X})\to(Y,\tau_{F,Y})$ and $\zeta: (X,\tau_{B,X})\to(Y,\tau_{B,Y})$ in $\Mnd(\K)$. 

\medskip

In the above defined bicategory the identity 1- and 2-cells, as well as the horizontal and vertical composition of 2-cells, are defined in the obvious way (as in $\Mnd(\K)$). 
Let us show how the composition of 1-cells is defined. This will show that ${}_\tau\QB\x\Mod(\K)$ is indeed a bicategory and not a 2-category. Given two composable 1-cells 
$$(\A, F, \Phi, B, \tau_{B,F}) \stackrel{ (X,\tau_{F,X}, \tau_{B,X}, \nu) } {\longrightarrow} (\A', F\s', \Phi', B', \tau_{B',F\s'}) 
\stackrel{ (Y,\tau_{F\s',Y}, \tau_{B',Y}) } {\longrightarrow} (\A'', F\s'', \Phi'', B'', \tau_{B'',F\s''})$$ 
their composition 
is induced by the one in the 2-category $\Mnd(\K)$, where the $F\s''$-action on $YX$ is given by  
\begin{equation} \eqlabel{F act YX tau}
\gbeg{3}{5}
\got{1}{F\s''} \got{3}{YX} \gnl
\gcl{1} \gvac{1} \gcl{2} \gnl
\gcn{1}{1}{1}{3}  \gnl
\gvac{1} \glm \gnl
\gvac{2} \gob{1}{YX} 
\gend=
\gbeg{3}{5}
\got{2}{F\s''} \got{1}{Y} \got{1}{X} \gnl
\gcmu \gcl{1} \gcl{2} \gnl
\gcl{1} \glmptb \gnot{\hspace{-0,34cm}\tau_{F\s',Y}} \grmptb \gnl
\glm \glm \gnl
\gvac{1} \gob{1}{Y} \gob{3}{X.}
\gend
\end{equation}
If $(\A'', F\s'', \Phi'', B'', \tau_{B'',F\s''}) \stackrel{ (Z,\tau_{F\s'',Z}, \tau_{B'',Z}) } {\longrightarrow} (\A''', F\s''', \Phi''', B''', \tau_{B''',F\s'''})$ is 
a third with the previous two composable 1-cell, we define a natural isomorphism $\alpha: (ZY)X\to Z(YX)$ by 
$$
\alpha_{Z,Y,X}=
\gbeg{6}{6}
\gvac{3} \got{1}{Z} \got{1}{Y} \got{1}{X} \gnl
\gcn{1}{1}{2}{1} \gelt{\Phi'''} \gcn{1}{1}{0}{1} \gcl{1} \gcl{2} \gcl{3} \gnl 
\gcl{1} \gcl{1} \glmptb \gnot{\hspace{-0,36cm}\tau_{F\s'',Z}} \grmptb \gnl
\gcl{1} \glmptb \gnot{\hspace{-0,34cm}\tau_{F\s'',Z}} \grmptb \glmptb \gnot{\hspace{-0,34cm}\tau_{F\s',Y}} \grmptb \gcl{1} \gnl
\glm \glm \glm \gnl
\gvac{1} \gob{1}{Z} \gob{1}{} \gob{1}{Y} \gob{1}{} \gob{1}{X.} 
\gend
$$
It is clear that $\alpha_{Z,Y,X}$ is $F\s'''$-linear. The proof that it is an isomorphism 2-cell in  ${}_\tau\QB\x\Mod(\K)$ satisfying the pentagon axiom, and that 
the above composition of 1-cells is well defined, is analogous as in the proof of \thref{quasi-bim monoidal}. 

\medskip

Taking $\Ll={}_\tau\QB\x\Mod(\K)$ in the setting of \ssref{pseudo} it is clear that we recover \coref{Martin}. Observe that $\u{\U}: {}_\tau\QB\x\Mod(\K)\to\Mnd(\K)$ 
given by  $\u{\U}(\A, F, \Phi, B, \tau_{B,F})=(\A, B)$ on 0-cells and by $(X,\tau_{F,X}, \tau_{B,X}, \nu)\mapsto(X, \tau_{B,X}), \zeta\mapsto\zeta$ on 1- and 2-cells 
is a faithful quasi 2-functor. Moreover, that $\u{\F}: {}_\tau\QB\x\Mod(\K)\to\Mnd(\K)$, differing from $\u{\U}$ in that it maps a 1-cell 
$(X,\tau_{F,X}, \tau_{B,X}, \nu)$ into $(X, \psi_{B,X})$, where $\psi_{B,X}$ is given by \equref{psi_3 for BF}, is a quasi 2-functor that factors 
through $\u{\U}$.

\begin{prop} \prlabel{pseudo Martin}
The pseudofunctor $\Tau: \Ll\to \Cat$ from \thref{pseudo bicat} with $\Ll={}_\tau\QB\x\Mod(\K)$ yields the action of categories characterized in \coref{Martin}. 
\end{prop}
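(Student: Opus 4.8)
\textit{Proof proposal.}
The plan is to verify that the construction of the previous subsection, instantiated with $\Ll={}_\tau\QB\x\Mod(\K)$, does indeed meet the hypotheses of \thref{pseudo bicat}, and then simply read off the resulting action. First I would check that the two assignments $\u{\U}$ and $\u{\F}$ described just before the statement are genuine quasi $2$-functors: on $0$-cells both send $(\A, F, \Phi, B, \tau_{B,F})$ to $(\A, B)$; on $1$-cells $\u{\U}$ sends $(X,\tau_{F,X}, \tau_{B,X}, \nu)$ to $(X,\tau_{B,X})$ and $\u{\F}$ sends it to $(X,\psi_{B,X})$ with $\psi_{B,X}$ as in \equref{psi_3 for X} (note $\psi_{B,X}$ is well-defined as a $1$-cell in $\Mnd(\K)$ precisely by \leref{psi_3}, using that $B$ is a left $F$-comodule monad via \equref{psi_3 for BF}); on $2$-cells both act as the identity. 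Functoriality on hom-categories is immediate, and faithfulness of $\u{\U}$ is clear because it is the identity on $2$-cells. The only point requiring care is that $\u{\F}$ \emph{factors through} $\u{\U}$: one must exhibit the comparison $\u{\iota}$ as in \equref{F-U-L triangle}, i.e. check that the rule $(X,\tau_{B,X})\mapsto(X,\psi_{B,X})$ is $2$-functorial on the image of $\u{\U}$; this is exactly \rmref{psi versus tau} read in the present setting together with the compatibility conditions \equref{nat lm -FX}--\equref{nat lcm'} packed into the $1$-cells of ${}_\tau\QB\x\Mod(\K)$.

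Next I would record that a fixed $0$-cell $\crta B=(\A, F, \Phi, B, \tau_{B,F})$ yields $\C:={}_\tau\QB\x\Mod(\K)(\crta B)={}_{\tau_B(\A,F,\Phi)}\Tau$ as a monoidal category, with faithful quasi-monoidal functor $\U=\u{\U}_{\crta B}:\C\to\Tau(\A,B)$ and quasi-monoidal $\F=\u{\F}_{\crta B}:\C\to\Tau(\A,B)$ factoring through $\U$; this is literally the hypothesis ``$\C$ as above'' of \thref{main}. Hence \thref{pseudo bicat} applies verbatim and produces a pseudofunctor $\Tau:{}_\tau\QB\x\Mod(\K)\to\Cat$ whose value at $\crta B$ gives, via \equref{pseudo ->action}, an action $\C\times {}_B\K\to {}_B\K$. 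It remains only to identify this action with the one in \coref{Martin}: the object map is $(X,M)\mapsto XM$ with the $B$-module structure \equref{B act XY psi} using $\psi_{B,X}$ from $\u{\F}$, which by \leref{psi_3} is exactly \equref{B act XY psi pon2}; and the action-associativity isomorphism $r_{Y,X,M}=s_{Y,X}(M)$ is, by the recipe \equref{ro asoc isom novo}--\equref{ro asoc isom} from \thref{main}, determined by $\crta\rho_{X,Y}=\alpha_{X,Y,B}(\id_{XY}\times\eta_B)$ where $\alpha$ is the associativity $2$-cell of ${}_\tau\QB\x\Mod(\K)$ displayed just above; computing this composite reproduces \equref{crta ro Martin}, whence $r_{X,Y,M}$ reproduces \equref{ro Martin}. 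The equivalence with the existence of a Hausser-Nill datum is then nothing but \prref{Martin case}, already invoked in \coref{Martin}.

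The genuinely non-trivial point — and the one I expect to be the main obstacle — is verifying that ${}_\tau\QB\x\Mod(\K)$ is a well-defined bicategory, specifically that the associativity constraint $\alpha_{Z,Y,X}$ displayed before the statement is a $2$-cell in the bicategory (i.e. $F'''$-linear and compatible with $\tau_{B'''}$) and satisfies the pentagon. This is handled exactly as in the proof of \thref{quasi-bim monoidal}: $F$-linearity of $\alpha$ follows from \equref{Phi nat new} and \equref{nat FFF}-type manipulations, and the pentagon reduces, via the module rules of \prref{FFF rules} and the monadic/comonadic distributive-law axioms \equref{monadic d.l.}--\equref{comonadic d.l.}, to the $3$-cocycle condition \equref{3-coc. cond.} for $\Phi$. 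Once this coherence is in place, all the remaining verifications (well-definedness of the composition of $1$-cells, of the functors $\Tau_{\crta B,\crta B'}$, and the hexagon/triangle coherences for $s_{Y,X}$, $s_0$, which we take to be identities) are routine and identical in form to those already carried out in \ssref{pseudo} and \thref{pseudo bimonad}; I would state them and refer back rather than repeat the diagrams.

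\qed
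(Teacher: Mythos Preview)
Your proposal contains a genuine gap in the identification of the action-associativity isomorphism. You write that
\[
\crta\rho_{X,Y}=\alpha_{X,Y,B}(\id_{XY}\times\eta_B)
\]
where $\alpha$ is the associativity constraint of the bicategory ${}_\tau\QB\x\Mod(\K)$, and you claim that computing this composite reproduces \equref{crta ro Martin}. This cannot be right, for two reasons. First, $B$ is not a 1-cell in ${}_\tau\QB\x\Mod(\K)$: the 1-cells are quadruples $(X,\tau_{F,X},\tau_{B,X},\nu)$ carrying in particular a left $F$-module structure $\nu$, and the monad $B$ is only assumed to be a left $F$-\emph{comodule} monad, so the expression $\alpha_{X,Y,B}$ is not defined. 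Second, the bicategory associator $\alpha$ displayed just before \prref{pseudo Martin} is built from the quasi-bimonad reassociator $\Phi:\Id_\A\to FFF$, whereas \equref{crta ro Martin} is built from the Hausser--Nill 2-cocycle $\Phi_\lambda:\Id_\A\to FFB$. These are independent 2-cells; one cannot be obtained from the other.

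The point you are missing is that in the construction of \ssref{pseudo} the pseudofunctor structure $s_{Y,X}$ (equivalently $r_{Y,X,M}$) is \emph{extra data}, not determined by the bicategory $\Ll$. The pentagon \equref{pentagon new} features \emph{both} $r$ and the bicategory associator $\alpha$. For $\Ll={}_\tau\QB\x\Mod(\K)$ one takes $\crta\rho_{X,Y}$ of the form \equref{crta ro Martin}, parameterized by a 2-cell $\Phi_\lambda$, and the pentagon for $s$ --- with $\alpha$ built from $\Phi$ --- becomes precisely the 2-cocycle condition \equref{3-cocycle cond fi-lambda} of the Hausser--Nill datum; this is exactly the computation performed in \prref{Martin case}. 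The paper's argument is simply to observe that specializing \ssref{pseudo} to this $\Ll$ places one verbatim in the hypotheses of \prref{Martin case} and \coref{Martin}; there is no identification of $s$ with $\alpha$. Your verification of $\u{\U}$, $\u{\F}$ and of the bicategory coherence is fine, but you must treat $s$ as separate input determined by $\Phi_\lambda$, not as something read off from $\alpha$.
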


\bigskip

\bigskip

{\bf Acknowledgements.}
The author wishes to thank to Ignacio L\'opez for the observation that actions of monoidal categories may be seen as coming from a pseudofunctor between 2-categories. 
This lead the author to develop and add \seref{pseudo}. This research was supported by PEDECIBA and ANII Uruguay.

\end{document}